\newcommand{\EG}{\mathbf{E}}
\newcommand{\BG}{\mathbf{B}}
\newcommand{\MMt}{\mathds{M}^{C_2}_2} 
\newcommand{\MMtn}{\mathds{M}^{C_2}_n}
\newcommand{\mytitle}{$RO(C_2\times C_2)$-graded cohomology ring of a point and applications}
\title{\mytitle}
\author{  
Bill Deng,
Mircea Voineagu}
\date{}
\tikzset{close/.style={near start,outer sep=2pt}}
\newcommand{\R}{\mathbb{R}}
\newcommand{\F}{\mathbb{F}}
\newcommand{\ul}{\underline}
\newcommand{\rp}{\mathbb{RP}}
\newcommand{\ot}{\otimes}
\newcommand{\al}{\alpha}
\newcommand{\ga}{\gamma}
\newcommand{\op}{\oplus}
\newcommand{\Z}{\mathbb{Z}}
\newcommand{\hr}{\tilde{H}}
\newcommand{\susp}{\Sigma}
\newcommand{\si}{\sigma}
\newcommand{\Si}{\Sigma}
\newcommand{\sd}{_{\Si_2}}
\newcommand{\ep}{\varepsilon}
\newcommand{\tl}{\tilde}
\newcommand{\st}{\star}
\newcommand{\dl}{\delta}
\newcommand{\so}{{\si\ot\ep}}
\newcommand{\xr}{\xrightarrow}
\newcommand{\xl}{\xleftarrow}
\newcommand{\spec}{\mathrm{Spec}}
\newcommand{\ti}{\times}
\newcommand{\te}{\tl{E}}
\newcommand{\norm}[1]{\left\lVert#1\right\rVert}
\newcommand{\xx}{\text}
\newcommand{\ct}{{C_2}}
\newcommand{\hc}{H^}
\newcommand{\tc}{\tl{H}^}
\newcommand{\hs}{\hc\st}
\newcommand{\fc}{\frac}
\newcommand{\ta}{\theta}
\newcommand{\tx}{\quad\text}
\newcommand{\txa}{\quad\text{and}\quad}
\theoremstyle{definition}
\newtheorem{theorem}{Theorem}
\numberwithin{theorem}{subsection}
\newtheorem{corollary}[theorem]{Corollary}
\newtheorem{lemma}[theorem]{Lemma}
\newtheorem{proposition}[theorem]{Proposition}
\newtheorem{remark}[theorem]{Remark}
\newcommand{\ec}{{{E_{\Si_2}C_2}_+}}
\newcommand{\ek}{{E\sd\ct}}
\newcommand{\et}{{\te\sd\ct}}
\newcommand{\bc}{{B\sd\ct}}
\newcommand{\hra}{\hookrightarrow}
\newcommand{\cd}{\cdots}
\newcommand{\ld}{\ldots}
\newcommand{\sss}{\subsubsection}
\newcommand{\sh}{\wedge}
\newcommand{\se}{\simeq}
\newcommand{\ka}{\kappa}
\newcommand{\be}{\beta}
\newcommand{\io}{\iota}
\newcommand{\ic}{\mathbf{1}}
\newcommand{\ub}{\underbrace}
\newcommand{\ob}{\overbrace}
\newcommand{\cp}{\cdot}
\newcommand{\mt}{\mapsto}
\newcommand{\rH}{\widetilde{H}}
\newcommand{\wt}[1]{\widetilde{#1}}
\newcommand{\Br}{{Br}}
\DeclareMathOperator*{\colim}{\mathrm{colim}}
	\theoremstyle{plain}
	\newcommand\im{\mathrm{Im}}
	\tikzset{
		labl/.style={anchor=south, rotate=90, inner sep=.5mm}
	}
	\titleformat*{\section}{\large\bfseries}
	\titleformat*{\subsection}{\normalsize\bfseries}
	\titleformat*{\subsubsection}{\small\bfseries}
\begin{document} 
  \maketitle
  \begin{abstract}
We describe the main properties of the $RO(C_2\times \Sigma_2)$-graded cohomology ring of a point and apply the results to compute the subring of motivic classes given by the Bredon motivic cohomology of real numbers and to compute $RO(C_2\times \Sigma_2)$-graded cohomology ring of $E_{\Sigma_2}C_2$. 

This generalizes Voevodsky's identification of motivic cohomology of real numbers with the positive cone of $RO(C_2)$ graded cohomology of a point.
 
\paragraph{Keywords.}
  equivariant homotopy theory, Bredon (motivic) cohomology.
   
\paragraph{Mathematics Subject Classification 2010.}
Primary:
    \href{https://mathscinet.ams.org/msc/msc2010.html?t=14Fxx&btn=Current}{14F42},
    \href{https://mathscinet.ams.org/msc/msc2010.html?t=55Pxx&btn=Current}{55P91}.
Secondary:
    \href{https://mathscinet.ams.org/msc/msc2010.html?t=55Pxx&btn=Current}{55P42},
    \href{https://mathscinet.ams.org/msc/msc2010.html?t=55Pxx&btn=Current}{55P92}.
    
\end{abstract} 

\tableofcontents
  
\section{Introduction}

         Recently, there is a flourished interest in the study of $RO(G)$-graded cohomology theory properties and $RO(G)$-graded cohomology group computations for certain topological $G$-spaces, mainly due to the recent paper of Hill, Hopkins and Ravenel on the non-existence of elements of Kervaire invariant one \cite{HHR}. 
  
  One of the directions that is particularly studied is when the group $G=\mathbb{Z}/2$; i.e. $RO(C_2)$-graded cohomology theories  and $C_2$-equivariant homotopy theory (for example  \cite{dug:kr},\cite{CM},\cite{CHT}). One of the most fundamental results in $C_2$-equivariant stable homotopy is the computation of the $RO(C_2)$-graded cohomology ring of a point by Stong (unpublished); the computation appears published  in \cite{dug:kr} or \cite{HKO} for example. In this paper, we describe the $RO(C_2\times \Sigma_2)$-graded cohomology ring of a point, an essential input in the $C_2\times \Sigma_2$-equivariant stable homotopy, with a view towards the motivic world and towards computations of $RO(C_2\times \Sigma_2)$-graded cohomologies. Both  $C_2$ and $\Sigma_2$ here mean $C_2$ but the notation distinguishes between copies of $C_2$.
  
  Another direction in equivariant homotopy is given by the relation between real/complex motivic homotopy theory and $C_2$-equivariant homotopy theory. The complexity of the $C_2$-equivariant situation is typically understood by first determining the complex motivic and real motivic situations \cite{BGI}, \cite{BS}. $C_2$-cohomological invariants are computed using real motivic cohomological invariants, and the understanding is that the latter are somehow easier to compute (and with better properties) than the former \cite{BGI},\cite{BS}. For example, Voevodsky showed that the motivic cohomology of real numbers is a polynomial subring in $RO(C_2)$-graded cohomology of a point given by its positive cone \cite{Voc}.  In particular, although the $RO(C_2)$-graded cohomology of a point contains lots of nilpotents, none of the nilpotents live in the motivic subring.
  
  It is natural to ask whether the interplay between the motivic cohomology of real/complex numbers and the $RO(C_2)$-graded cohomology of a point can be extended to other relevant finite groups like the next simple case of $C_2\times \Sigma_2$. 
  
  In \cite{HKO} the authors construct a $C_2$-equivariant stable motivic category and define the motivic real cobordism spectrum in it. In \cite{HOV1} the authors construct a $C_2$-motivic cohomology spectrum (called Bredon motivic cohomology)  and proved a Beilinson-Lichtenbaum type of theorem for its realization map over real/complex numbers. The Bredon motivic cohomology, denoted $H^{*,*}_{C_2}(-,\Z/2)$ below, is a $C_2$-equivariant motivic cohomology bi-indexed by $C_2$-virtual representations (in both dimension and weight) that extends Voevodsky's motivic cohomology to $C_2$-equivariant smooth schemes and $C_2$-representation bi-indexes. We have the following commutative diagram
  \[\begin{tikzcd}
	{H^{*,*}(\R,\Z/2)} & {H^{\st} _{Br,C_2}(pt,\Z/2)} \\
	{H^{\star,\star} _{C_2}(\R,\Z/2)} & {H^{\st} _{Br,K}(pt,\Z/2)}
	\arrow[tail, from=1-1, to=1-2]
	\arrow[tail, from=1-2, to=2-2]
	\arrow[tail, from=1-1, to=2-1]
	\arrow[tail, from=2-1, to=2-2]
\end{tikzcd}\]
  where the vertical maps are inclusions and the horizontal maps are realization maps. The upper horizontal map (between motivic cohomology of real numbers and $RO(C_2)-$graded cohomology of a point) is an inclusion according to Voevodsky's computation and the lower horizontal map (between Bredon motivic cohomology and $RO(C_2\times \Sigma_2)-$graded cohomology of a point) is proved to be a monomorphism in \cite{DV1} that extends the top horizontal monomorphism.  We denote the $RO(C_2)-$graded cohomology by $H^{\st} _{Br,C_2}(-,\Z/2)$ and the $RO(C_2\times\Sigma_2)-$graded cohomology by $H^{\st} _{Br,K}(-,\Z/2)$, where $K$ is the Klein group. We only consider $\Z/2$ coefficients for the above cohomologies.
  
    The main objective of this paper is to study in detail the lower horizontal ring map that appears in the above diagram. In \cite{DV1} the Bredon motivic cohomology groups $H^{\star,\star} _{C_2}(\R,\Z/2)$ are computed; in \cite{KHo} the $RO(C_2\times \Sigma_2)$-graded cohomology groups of a point are also computed- therefore the additive groups in the above diagram are known. In \cite{BH}, the ring structure of the positive cone of the $RO(C_2\times \Sigma_2)$-graded cohomology of a point is determined (see also \cite{HS} for the same computation using a different method) and applications of this computation are given in \cite{YL}. The multiplicative structure of the positive cone is also implied in \cite{KHo}. In \cite{BH} all the cones of $RO(C_2\times \Sigma_2)$-graded cohomology ring of a point are described as the homology of two stage chain complexes of chains of the form 
    $$0\rightarrow T_1\stackrel{f}{\rightarrow} T_2\rightarrow 0$$
  with $T_1$ and $T_2$ two nonzero graded $\Z/2-$vector spaces of chains.
  Moreover in \cite{BH}, there is a description of some important cohomology classes (and their relations) from the positive cone and outside the positive cone.  This recollection is reviewed in Section 3.1. 
  
   While the $Coker(f)$, for $f$ as in the above display, has a representation in terms of cohomology classes in \cite{BH}, the $Ker(f)$ doesn't have one in $\cite{BH}$; its representation is in terms of chains that form a $\Z/2$ vector subspace of $T_1$. In consequence, there is no defined multiplication among the chains of $Ker(f)$ in $\cite{BH}$. Moreover, a complete discussion of the $RO(C_2\times \Sigma_2)-$graded cohomology ring of a point in terms of generating cohomology classes is missing from the literature and this is the aim of Section 3 of this paper. 
    
    One of the main results of this paper is a complete $\mathbb{Z}/2$-vector space description of the Mixed Cones of Types I and II and of the negative cone of the $RO(C_2\times \Sigma_2)$-graded cohomology of a point in terms of generators and relations (see Theorem \ref{kconj}, Theorem \ref{negcone} and Propositions \ref{propcase1v2} and \ref{propcase2v2}) and a description of the main multiplicative properties between the existing cones (see Corollary \ref{ncm1}, Corollary \ref{ncm2} and Corollary \ref{ncm3}). Our methods are essentially counting methods based on the computations from \cite{KHo} and \cite{BH}. We give a cohomological representation of the elements in $Ker(f)$ in the negative cone (and in some different particular cases like Corollary \ref{excl} (2)), but in general we don't attempt to describe $Ker(f)$; rather, we use the finite dimensionality of a graded piece of $Ker(f)$ to construct enough non-zero cohomology classes which together with cohomology classes from $Coker(f)$ give an additive basis for the finite dimensional $\Z/2-$vector space $H^{a+p\sigma+b\epsilon+q\sigma\otimes\epsilon}_{Br,K}(pt,\Z/2)$ when $a,p,b,q$ vary. We notice and describe the non-nilpotent cohomology classes outside of the positive cone of $RO(C_2\times\Sigma_2)-$graded ring of a point and we also describe the nilpotent part of the $RO(C_2\times\Sigma_2)-$graded ring of a point.
    
    Denote $\MMt$ the $RO(C_2)-$graded cohomology ring of a point with $\Z/2$ coefficients. One of the main theorems of this paper is:
     \begin{theorem} \label{M1}
 We have a $\MMt$-algebra isomorphism
 $$H^{\star}_{Br,K}(pt,\Z/2)\simeq \frac{\Z/2[x_i,y_i,\ka_i]}{(\ka_i\ka_j=y_k^2,\ka_ix_i=x_jy_k+x_ky_j,\ka_iy_i=y_ky_j)}+ nilpotents_2.$$ The first component is non-nilpotent and the component $nilpotents_2$ is completely described in Section 3 and its products are not necessary zero.  This is Theorem \ref{Bmt}. The cohomological classes are described in Section 3.1. 
 
\end{theorem}
      In \cite{DV1} the Bredon motivic cohomology groups of real numbers are computed together with their realization maps into $RO(C_2\times \Sigma_2)-$graded cohomology of a point. We described there the Bredon motivic cohomology ring as a direct sum between an abstract subring and a  $\MMt-$submodule called $NC$ that has zero products, but the direct sum's components are only given as abstract objects without a description of their cohomology classes as cohomology classes of $RO(C_2\times \Sigma_2)$-graded cohomology of a point. In \cite{DV1} we defined $NC=\oplus _{b\geq 0,b+q<0}H^{\star,b+q\sigma}_{C_2}(\R,\Z/2)$.
      
   As a first application, we describe the cohomological ring of the Bredon motivic cohomology of real numbers as a subring in $H^{\st} _{Br,K}(pt,\Z/2)$ in terms of cohomology classes and their relations. As we can also notice from \cite{DV1} it contains nilpotent elements which is a new property that is not true in the case of Voevodsky's motivic cohomology. In this paper, we describe the entire subset of motivic nilpotents and prove that it has zero multiplication and that contains strictly the previously determined $NC$ (which we also describe in terms of our cohomology generators, see Theorem \ref{ncg}).  We prove that, as in the case of Voevodsky's motivic cohomology of real numbers,  Bredon motivic cohomology of real numbers and $RO(C_2\times\Sigma_2)-$graded ring of a point have the same non-nilpotent part.  Therefore we conclude that  the subring of $C_2$-motivic cohomology classes is better behaved than the ring of $C_2\times \Sigma_2$-topological cohomology classes because the motivic nilpotents have zero multiplication (while the topological nilpotents can have non-zero multiplication) and the two rings have the same non-nilpotent part.  
    
    One of the main theorems of this paper is the following:
  \begin{theorem}\label{1st}  We have a $\MMt$-algebra isomorphism
 $$H^{\star,\star}_{C_2}(\R,\Z/2)\simeq \frac{\Z/2[x_i,y_i,\ka_i]}{(\ka_i\ka_j=y_k^2,\ka_ix_i=x_jy_k+x_ky_j,\ka_iy_i=y_ky_j)}+ nilpotents_1.$$ 
 The component $nilpotents_1$ has only zero products among its elements, contains only nilpotents and it is completely described in Section 5. It is strictly contained in the component $nilpotents_2$ from Theorem \ref{M1}. The non-nilpotent component coincides with the non-nilpotent component of Theorem \ref{M1}. This is Theorem \ref{d1}.
   \end{theorem}
 
  Theorem \ref{1st} is a vast generalization of Voevodsky's description of the motivic cohomology of real numbers as the positive cone of the $RO(C_2)$-graded cohomology of a point. Indeed, if we consider only integer indexes and the corresponding realization monomorphism in the theorem above we obtain Voevodsky's identification of motivic cohomology of real numbers with the positive cone of $RO(C_2)-$graded cohomology of a point. The above result is a conclusion of Sections 3 and 5.
  
  In Section 4 we apply the results of Section 3 to determine the $RO(C_2\times \Sigma_2)$-graded cohomology ring of the $\Sigma_2$-equivariant universal space free $C_2$ space $E_{\Sigma_2}C_2$ which is the complex realization of the motivic space $\EG C_2=colim_n \mathbb{A}(n\sigma)\setminus\{0\}$ over real numbers and an homotopically important $C_2\times \Sigma_2$ topological space. This is a generalization (and an independent proof) of the computation of the integer graded cohomology subring of $E_{\Sigma_2}C_2$ that was determined in \cite{Kr} and \cite{HK1}. The proof of the latest in \cite{Kr} was based on Voevodsky's computation of the motivic cohomology ring of $\mathbf{B}C_2$, yet more evidence of the interplay between motivic homotopy ideas and $C_2$-equivariant homotopy ideas. The idea of computation of $RO(C_2\times\Sigma_2)-$graded cohomology of $E_{\Sigma_2}C_2$ relies on studying the cohomological properties of the  $C_2\times \Sigma_2$-topological isotropy cofiber sequence $$E_{\Sigma_2}C_{2+}\rightarrow pt_+\rightarrow \tilde{E}_{\Sigma_2}C_2.$$

  Moreover, for both $RO(C_2\times \Sigma_2)$ graded cohomologies of a point and $E_{\Sigma_2}C_2$ we determine all the levels of their corresponding cohomology Mackey functor in Subsections 3.2 and 4.1.
  
  One of the main theorems of Section 4 is the following (this result was already used in \cite{DV1}):
  \begin{theorem} \label{2nd} As rings,
$$H^{\st}_{Br, K}(\ek,\Z/2)=\frac{H^{\st}_{Br,\Sigma_2}(pt,\Z/2)[x_1,y_1,x_3,y_3,\ka_2^{\pm 1}]}{(\ka_2x_2=x_1y_3+x_3y_1,\ka_2y_2=y_1y_3)}.$$
The cohomology classes in this theorem are the same as those in Theorem \ref{M1} via Theorem \ref{vanishing}. This is Theorem \ref{cEC}.
  \end{theorem}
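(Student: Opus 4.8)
The plan is to present $\ek$ as a filtered colimit of unit spheres of $K$-representations, reduce by isotropy separation to a computation on the point, and then read off the answer from the explicit description of $H^{\st}_{\Br, K}(pt,\Z/2)$ given in Section 3. Since $\ek$ is the complex realization over $\R$ of $\EG C_2=\colim_n(\af^n\setminus 0)$, with $C_2$ acting by scalar multiplication by $-1$, passing to $\C$-points and retracting onto unit spheres gives $\ek\se\colim_n S(n\rho)$, where $S(-)$ denotes the unit sphere and $\rho=\si_1\op\si_3$ is the sum of the two characters of $K=C_2\ti\sg$ that are nontrivial on the free copy of $C_2$ (on $\C^n=\R^n\op i\R^n$ that $C_2$ negates both summands, while complex conjugation fixes $\R^n$ and negates $i\R^n$, so $\R^n$ carries $n\si_1$ and $i\R^n$ carries $n\si_3$). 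Dually, the cofiber of $\ea\to S^0$ is $\et\se S^{\iy\rho}=\colim_n S^{n\rho}$, and since the Euler class of $\rho$ is $x_1x_3$, the group $H^{\st}_{\Br, K}(\et,\Z/2)$ is the inverse limit of the tower $\Compactcdots\xr{x_1x_3}H^{\st-\rho}_{\Br, K}(pt,\Z/2)\xr{x_1x_3}H^{\st}_{\Br, K}(pt,\Z/2)$; its $\lim^1$ vanishes because every $RO(K)$-degree of $H^{\st}_{\Br, K}(pt,\Z/2)$ is finite-dimensional, so the tower is Mittag--Leffler. The defining cofiber sequence $\ea\to S^0\to\et$ then recovers $H^{\st}_{\Br, K}(\ea,\Z/2)$ from a single long exact sequence.

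First I would compute $H^{\st}_{\Br, K}(\et,\Z/2)$, the effect of inverting $x_1x_3$ on $H^{\st}_{\Br, K}(pt,\Z/2)$, using Section 3. Every class of the summand $nilpotents_2$, and every class of the mixed and negative cones that is not coherently $x_1x_3$-divisible, is annihilated by a power of $x_1x_3$ and disappears; the coherently divisible negative-cone classes survive, and --- this is the analogue of Voevodsky's phenomenon --- among them the family denoted $\ka_2^{-m}$ becomes an honest system of inverse powers of $\ka_2$, so $\ka_2$ becomes a unit (equivalently the total orientation class $u_\rho=y_1y_3$ becomes a unit). Once $\ka_2$ is invertible, the inherited relations $\ka_2y_2=y_1y_3$, $\ka_2x_2=x_1y_3+x_3y_1$, $\ka_1\ka_2=y_3^2$, $\ka_2\ka_3=y_1^2$ express $x_2,y_2,\ka_1,\ka_3$ in terms of the remaining classes.

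It then remains to descend along $\ea\to S^0\to\et$. The restriction $H^{\st}_{\Br, K}(pt,\Z/2)\to H^{\st}_{\Br, K}(\et,\Z/2)$ kills exactly the $x_1x_3$-power torsion (in particular all of $nilpotents_2$), and the long exact sequence exhibits $H^{\st}_{\Br, K}(\ea,\Z/2)$ as an extension of the torsion-free image by the part of the localization not seen on the point; tracking the $RO(K)$-degrees through this extension (as in the model case $\tc\st_{C_2}(\ect\ps)=\Z/2[u_\si^{\pm1},a_\si]$, where $a_\si$ is \emph{not} inverted while its orientation class is) one finds that the classes surviving on $\ek$ are precisely $x_1,y_1,x_3,y_3$, the unit $\ka_2^{\pm1}$ coming from the surviving negative-cone family, and a copy of Stong's $RO(C_2)$-graded cohomology $H^{*+*\ep}_{\Br}(pt,\Z/2)$ --- the part of $H^{\st}_{\Br, K}(pt,\Z/2)$ supported where the free $C_2$ acts trivially --- subject only to $\ka_2x_2=x_1y_3+x_3y_1$ and $\ka_2y_2=y_1y_3$. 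Since $\ek\se\colim_n S(n\rho)$ is a filtered homotopy colimit whose structure maps are ring maps, the additive identification is automatically one of rings, and one checks that $x_1,y_1,x_3,y_3,\ka_2^{\pm1}$ restrict to the named generators. (An equivalent but more hands-on route filters $\ek$ by the $S(n\rho)$ and runs the cofiber sequences $S(n\rho)_+\to S((n+1)\rho)_+\to S^{n\rho}\sh S(\rho)_+$ inductively, closer to the counting methods emphasized elsewhere in the paper.)

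I expect the main obstacle to lie in the bookkeeping of the last two steps: deciding, class by class from the Section 3 description and the multiplication tables of the Appendix, which basis elements of $H^{\st}_{\Br, K}(pt,\Z/2)$ are $x_1x_3$-power torsion and disappear, which coherently divisible negative-cone families survive and supply the inverse of $\ka_2$, which classes survive in the positive and mixed ranges, and that no relations beyond $\ka_2x_2=x_1y_3+x_3y_1$ and $\ka_2y_2=y_1y_3$ persist. This is the tedious computation the introduction warns about, and it is precisely where a complete generators-and-relations description of $H^{\st}_{\Br, K}(pt,\Z/2)$, including its negative cone, is indispensable.
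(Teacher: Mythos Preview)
Your overall strategy---use the isotropy cofiber sequence $\ea \to S^0 \to \et$---is also the paper's, but there is a genuine confusion in the execution that would cause the argument to fail as written. You claim that on $\et$ the class $\ka_2$ becomes a unit via a family of negative-cone classes you call ``$\ka_2^{-m}$''. No such classes exist: the negative cone of $H^{\st}_{Br,K}(pt)$ is generated by nilpotent elements of the form $\ka_1^{-p}\frac{\theta_2}{x_2^{n_2}y_2^{m_2}}\frac{\theta_3}{x_3^{n_3}y_3^{m_3}}$ (Theorem~\ref{negcone}), and none behave as inverse powers of $\ka_2$. What becomes invertible on $\et$ is $x_1$ and $x_3$ (these are the $\sigma$- and $\sigma\otimes\ep$-periodicities of $\et$), not $\ka_2$; the paper computes $\rH^{\st+1}_{Br,K}(\et) \simeq \rH^{\st}_{\Sigma_2}(B_{\Sigma_2}C_2)[x_1^{\pm 1}, x_3^{\pm 1}]$ via Lemma~\ref{quot} and Theorem~\ref{comp1}, a very different ring from what you describe. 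It is on $\ek$, not $\et$, that $\ka_2$ is a unit, and this invertibility is not read off from $H^{\st}_{Br,K}(pt)$: the paper imports it as the realization of the invertible motivic class in $H^{2\sigma-2,\sigma-1}_{C_2}(\EG C_2)$ from \cite{HOV1}. You also conflate the inverse limit along multiplication by $x_1x_3$ (which computes $\rH^{\st}_{Br,K}(\et)$) with the localization $H^{\st}_{Br,K}(pt)[(x_1x_3)^{-1}]$; in your sentence ``annihilated by a power of $x_1x_3$ and disappears'' you are describing the colimit, not the limit.

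The paper's actual argument avoids a full degree-by-degree chase through the long exact sequence. Using the vanishing Theorem~\ref{vanishing}, it shows that the subring $IP := \bigoplus_{p,q\geq 0} H^{a+p\sigma+b\ep+q\sigma\otimes\ep}_{Br,K}(pt)$ injects into $H^{\st}_{Br,K}(\ek)$, and Theorem~\ref{kconj} (the Mixed Cone of Type I computation) identifies $IP$ with $H^{*+*\ep}_{Br}(pt)[x_1,y_1,x_3,y_3,\ka_2]$ modulo the two relations $\ka_2x_2=x_1y_3+x_3y_1$ and $\ka_2y_2=y_1y_3$. Since $\ka_2$ is a unit on $\ek$ by the motivic input, $IP[\ka_2^{-1}] \subseteq H^{\st}_{Br,K}(\ek)$; for the reverse inclusion one multiplies an arbitrary class by a high power of $\ka_2$ to land in a degree where $\rH^{\st}_{Br,K}(\et)=0$ (using Theorem~\ref{et}) and hence $H^{\st}_{Br,K}(pt) \to H^{\st}_{Br,K}(\ek)$ is an isomorphism. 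Your route could in principle be repaired, but only after correcting which element is inverted on which space, and you would still need an independent argument for the invertibility of $\ka_2$ on $\ek$.
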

  Besides the results of Section 3, the proof of the Theorem \ref{2nd} and the appendix of this paper contain  a study of some multiplicative properties of the $C_2\times \Sigma_2$-topological isotropy cofiber sequence.
 Also, in \cite{DV1} we proved that the ring in Theorem \ref{2nd} contains as a subring the Borel motivic cohomology of real numbers (indexed by virtual representations). In Remark \ref{Breal} we discuss some particularities of this embedding.
 
  Notice that in Theorem \ref{1st} the multiplications between non-nilpotent elements and the multiplications between nilpotent elements are either zero or, if non-zero, completely expressed in terms of given motivic generators. In the Appendix, we also give some of the non-trivial multiplications in the Bredon motivic cohomology ring  (these multiplications are used in the previous sections) in terms of the given generators, more precisely multiplications of some non-nilpotent motivic cohomology classes ($y_1, y_3,x_3$) with nilpotent motivic cohomology classes that we describe in terms of nilpotent motivic generators (more multiplications of this type can be found in Appendix of \cite{BD}). We also give proofs here to some counting arguments that we used in the previous sections of the paper.
   
\sss*{Notations and Conventions:}
  \begin{itemize}[label = {$\cdot$}]
  	\item $K:=C _2\times \Sigma _2$ is the Klein four-group. We denote by $\Sigma_2$ the second copy of $C_2$. The one dimensional real representations of $K$ are denoted by $\sigma$ (the action of $C_2$ is the sign action and the action of $\Sigma_2$ is trivial), $\epsilon$ (the action of $C_2$ is trivial and the action of $\Sigma_2$ is nontrivial is the sign action) and $\sigma\otimes\epsilon$ ( the action of the diagonal subgroup $\Delta$ is trivial and the elements from $K\setminus\Delta$ act by $-1$ on $\R$). As above $C_2=Ker(\epsilon)$, $\Sigma_2=Ker(\sigma)$ and $\Delta=Ker(\sigma\otimes\epsilon)$ denote the maximal subgroups of $K$ that act trivially on the respective representation. We write $$\rho_K=1+\sigma+\epsilon+\sigma\otimes \epsilon$$ for the regular $K-$representation. We denote $\rho_{\Sigma_2}=1+\epsilon$,  $\rho_{C_2}=1+\sigma$ and $\rho_{\Delta}=1+\sigma\otimes\epsilon$.
	\item $H^{a+V}_{\Br,G}(X,A)$ is the $RO(G)-$graded cohomology of a $G$-topological space $X$ with coefficients in the constant Mackey functor $A$ with $V$ a virtual $G-$representation. We only consider the case where $A=\Z/2$ therefore when we omit the coefficients we understand $\Z/2$ coefficients. We write $H^{a+V}_{\Br,K}(X,A)$ for the same cohomology, where $K$ is the Klein four-group.
  	\item $H^{a+p\sigma,b+q\sigma} _{C _2}(X,A)$ is the Bredon motivic cohomology of a $C _2$-smooth scheme, with coefficients $A$.  All cohomology that appears in this paper is assumed to be with $\Z/2$. The bi-index is given by two arbitrary $C_2$ representations, with $\sigma$ being the sign representation. We call motivic all cohomology classes from Bredon motivic cohomology of real numbers (thess include the cohomology classes of motivic cohomology of real numbers). 
	\item We denote $H^n_{sing}(X)$ the $n^{th}$ singular cohomology of a topological space $X$.
	\item For a cohomology class $\alpha$ in $RO(K)-$graded cohomology $H^{a+p\sigma+b\epsilon+q\sigma\otimes\epsilon}_{Br,K}(X)$ we denote by $|\alpha|$ its 4-tuple degree $(a,p,b,q)$. We denote by $|\alpha|_\Sigma$ the 2-tuple degree associated to the representation $\epsilon$ and trivial representation i.e. $(a,b)$. We use $(a,p,b,q)$ as notations for coefficients of $K$ representation $a+p\sigma+b\epsilon+q\sigma\otimes\epsilon$. We use everywhere in the  paper the integer $a'$ to mean the integer $-a$. It is used as a notation for the integer index of the stable equivariant homotopy group $\pi^K_{a'}(S^V\wedge H\Z/2)$. Therefore we have  $$\pi^K_{a'}(S^V\wedge H\Z/2)=H^{a+V}_{Br,K}(pt,\Z/2)=\pi_{a'}((S^V\wedge H\Z/2)^K).$$
  	\item $H^{a,b}(X,A)$ is the motivic cohomology of a smooth scheme $X$. We only consider the case where $A=\Z/2$ therefore when we omit the coefficients we understand $\Z/2$ coefficients.
	\item We have the convention that $\st$ denotes a $RO(G)$-grading, while $*$ denotes an integer grading. For example, we have the following rings
	$$H^{\star,\star} _{C _2}(X)=\oplus_{a,b,p,q}  H^{a+p\sigma,b+q\sigma} _{C _2}(X), H^{*,*}(X)=\oplus_{a,b} H^{a,b}(X)$$ or 
  	$$H^{\star} _{Br,K}(X)=\oplus_{a,b,p,q}  H^{a+p\sigma+b\epsilon+q\sigma\otimes \epsilon} _{Br,K}(X).$$  
  	\item $H^{\st}_{\Br,C_2}(pt,\Z/2)\simeq \Z/2[x_1,y_1]\oplus \Z/2\{\frac{\theta_1}{x^n_1y_1^m}\}$ is the Bredon cohomology of a point with coefficients in the constant Mackey functor $\Z/2$ (generated by the classes $x_1,y_1,\theta_1$ of degrees $deg(x_1)=\sigma$, $deg(y_1)=\sigma-1$ and $deg(\theta_1)=2-2\sigma$). If instead of $\sigma$ we write $\epsilon,$ we mean the same cohomology generated by $x_2,y_2, \theta_2$ (both rings are viewed as being subrings in the $RO(C_2\times \Sigma_2)$-graded cohomology of a point). 
	\item $\MMtn:=H^{*+*\sigma}_{\Br,C_2}(pt, \Z/n)$.
	\item We call the direct sum $\oplus_{a\in\Z, p\geq 0} H^{a+p\sigma}_{\Br,C_2}(pt,\Z/2)$ the positive cone of $\MMt$. We call the direct sum  $\oplus_{a\in \Z,p<0} H^{a+p\sigma}_{\Br,C_2}(pt,\Z/2)$ the negative cone of $\MMt$.
  	\item We write $\ul{H}^{Br,K}_{V}(X)$, $\ul{H}^V_{Br,K}(X)$ for the usual Mackey functors associated to $H^{Br,K}_{V}$ or $H^V_{Br,K}$ where $V$ is a real $K$-representation. For $H\leq K$ and a based $K-$ space $X$ we write $$\ul{H}^{Br,K}_{V}(X)(K/H):=\rH^{Br,K}_{V}(K/H_+ \wedge X)$$ and $$\ul{H}^V_{Br,K}(X)(K/H):=\rH^V_{Br,K}(K/H_+\wedge X).$$ We also write $\ul{\pi}^K_V$ for the Mackey functor associated to the stable equivariant homotopy group $\pi^K_V$. For a Mackey functor $M$ and $H_1\leq H_2\leq K$ we denote $$Res^{H_2}_{H_1}:M(K/H_2)\rightarrow M(K/H_1)$$ the restriction map.
  	\item  $S^V$ is the one-point compactification of a $C_2\times \Sigma_2$ representation $V$. $S(V)$ is the unit sphere of a $C_2\times \Sigma_2$ representation $V$ included in the unit disk $D(V)$. 
	\item For $\mathcal{F}$ a family of subgroups of $K$ closed under taking subgroups we denote $E\mathcal{F}$ to be the $K$-CW complex uniquely determined up to $K$-homotopy equivalence by the fact that $E\mathcal{F}^H\simeq *$ (homotopy equivalence) for $H\in \mathcal{F}$ and $E\mathcal{F}^H=\emptyset$ if $H\notin \mathcal {F}$ \cite{L:book}. We denote $\mathcal{F}[H]=\{L\leq K|H\notin L\}$. We have a $K-$equivariant cofiber sequence
	$$E\mathcal{F}[H]_+\rightarrow S^0\rightarrow \tilde{E}\mathcal{F}[H]$$
induced by the $K-$equivariant collapsing map of $E\mathcal{F}[H]$. In this paper we denote $E_{\Sigma_2}C_2:=E\mathcal{F}[C_2]$ and $\tilde{E}_{\Sigma_2}C_2:=\tilde{E}\mathcal{F}[C_2]$. According to the definition, this is the $\Sigma_2$-equivariant universal free $C_2$ space (see \cite[VII.1]{May:equi}). The models we use for these spaces are 
$$E_{\Sigma_2}C_2=\colim_n S(n\sigma+n\sigma\otimes\ep)$$
and respectively
$$\tilde{E}_{\Sigma_2}C_2=\colim_n S^{n\sigma+n\sigma\otimes\ep}.$$
We call the $C_2\times \Sigma_2$ cofiber sequence 
$$E_{\Sigma_2}C_{2+}\rightarrow S^0\rightarrow \tilde{E}_{\Sigma_2}C_2$$
the $C_2\times \Sigma_2$ topological isotropy cofiber sequence.

  	\item  All $C_2$-varieties are over $\R$, and we view $C_2$ as the group scheme $C_2=\spec(\R)\sqcup \spec(\R)$.
  	
  	\item The $C_2$-equivariant Tate spheres are denoted by $S^1_t=\mathbb{G}_m$ with trivial $C_2$ action and $S^\sigma_t=\mathbb{G}_m$ with the $C_2$ action given by $x\rightarrow x^{-1}$.
	
	We define $\EG C_2=colim_n \mathbb{A}(n\sigma)\setminus\{0\}.$ We denote by $\kappa_2$ the invertible element in the Bredon motivic cohomology of $\EG C_2$ as well as its Betti realization in the $RO(C_2\times \Sigma_2)$-graded cohomology of $E_{\Sigma_2}C_2$ (which also gives $E_{\Sigma_2}C_2$ its unique periodicity in cohomology). We have a $C_2$-motivic cofiber sequence 
	$$ {\EG C_2}_+\rightarrow S^0\rightarrow \wt\EG C_2$$ with the motivic cofiber  given by $\wt\EG C_2=colim_n S^{n\sigma}\wedge S^{n\sigma}_t$.
	\item We refer to the direct sum
  	\begin{align*}
  		\bigoplus_{a\in \Z,p,b,q\geq 0} H^{a+p\sigma+b\epsilon+q\sigma\otimes \epsilon} _{\Br,K}(pt,\Z/2)
  	\end{align*}
  	 as the positive cone. This is a subring in $H^{\st}_{\Br,K}(pt, \Z/2)$.
  	\item We refer to the direct sum
  	\begin{align*}
  		\bigoplus_{a\in \Z,p\leq -1,b,q\geq 0} H^{a+p\sigma+b\epsilon+q\sigma\otimes \epsilon} _{\Br,K}(pt,\Z/2)
  	\end{align*}
  	where $p\leq -1,$ and $b,q \geq 0$ as the Mixed Cone of Type I corresponding to $\ka_1$; $\ka_1$ is a cohomology class of $deg(\ka_1)=-1-\sigma+\epsilon+\sigma\otimes\epsilon)$. By symmetry, we have analogous definitions for cohomology classes $\ka_2$ (with $deg(\ka_2)=-1+\sigma-\epsilon+\sigma\otimes\epsilon$) and $\ka_3$ (with $deg(\ka_3)=-1+\sigma+\epsilon-\sigma\otimes\epsilon$). 
	
	The Mixed Cone of Type I is the direct sum of the Mixed Cones of Type I associated to $\ka_i$ for $i=1,2,3.$

  	  \item We refer to the direct sum
  	\begin{align*}
  		\bigoplus_{a\in \Z,p\geq 0,b,q\leq -1} H^{a+p\sigma+b\epsilon+q\sigma\otimes \epsilon} _{\Br,K}(pt,\Z/2)
  	\end{align*}
  	 as the Mixed Cone of Type II corresponding to the cohomology class $\io_1$ (of $deg(\io_1)= 1+\sigma-\epsilon-\sigma\otimes\epsilon$). By symmetry, we have analogous definitions for $\io_2$ (of $deg(\io_2)=1-\sigma+\epsilon-\sigma\otimes\epsilon$) and $\io_3$ (of $deg(\io_3)=1-\sigma-\epsilon+\sigma\otimes\epsilon$). 
	 
	 The Mixed Cone of Type II is the direct sum of the Mixed Cones of Type II associated to $\io_i$ for $i=1,2,3.$
  	\item We refer to the direct sum
  	\begin{align*}
  		\bigoplus_{a\in \Z,p,b,q\leq -1} H^{a+p\sigma+b\epsilon+q\sigma\otimes \epsilon} _{\Br,K}(pt,\Z/2)
  	\end{align*}
  	as the negative cone. 
  \item We denote $$IP:=\oplus_{p,q\geq 0}\hc{a+p\si+b\ep+q\so}_{Br,K}(pt)$$ the direct sum of the Mixed Cone of Type I corresponding to $\ka_2$ with the positive cone. 
\end{itemize}
\sss*{Acknowledgements} The first author would like to thank Benjamin Ellis-Bloor for helpful discussions; the authors thank Benjamin Ellis-Bloor and Vigleit Angeltveit for the impetus the work \cite{BH} gave to the research in this paper. The authors thank the referee for many useful comments that improved the initial preprint.
  
\section{$C_2\times \Sigma_2$ topological isotropy sequence}
In this section, we review the motivic isotropy sequence for Bredon motivic cohomology and its realization the $C_2\times\Sigma_2$-topological isotropy sequence.
\subsection{The $C_2\times\Sigma_2$ topological isotropy sequence}
Consider the cofiber sequences
\begin{equation}\label{eqn:cof3}
	{K/\Sigma_2}_+\to pt_+\to S^{\sigma}.
\end{equation}
and 
\begin{equation}\label{eqn:cof4}
	E_{\Sigma_2}C_{2+}\rightarrow pt_+\rightarrow \tilde{E}_{\Sigma_2} C_2.
\end{equation}
The cofiber sequence \ref{eqn:cof3} is associated to the $C_2\times \Sigma_2$ representation $\sigma$; consequently we have by symmetry two other cofibrations associated to the $C_2\times \Sigma_2$ representations $\ep$ and $\sigma\otimes \ep$.  The same happens for the cofiber sequence \ref{eqn:cof4}; there are two other symmetric cofiber sequences depending on which two one-dimensional $C_2\times \Sigma_2$-representations are chosen. 


In \cite[Theorem 5.4, Proposition 5.7]{HOV1} it is proved that the Bredon motivic cohomology of $\EG C_2$ is $(2\sigma-2,\sigma-1)$-periodic and the reduced Bredon motivic cohomology of $\wt\EG C_2$ is $(0,\sigma)$ and $(\sigma,0)$ periodic. According to $\cite{HOV1},$ the Betti realisation of $\EG C_2$ over $\R$ is $E_{\Sigma_2}C_2$. This implies that the $RO(C_2 \ti \Si_2)$-graded cohomology  of the $C_2\times\Sigma_2$-space $E_{\Sigma_2}C_2$ has a $-1+\sigma-\ep+\sigma\otimes\ep$ periodicity. By construction, we also have that $\tilde{E}_{\Sigma_2}C_2$ has, for its reduced $RO(C_2 \ti \Si_2)$-graded cohomology, periodicities $\sigma$ and $\sigma\otimes\ep$. The above isotropy cofiber sequence is given by the realisation of the $C_2$-motivic isotropy sequence 
$$\EG {C_2}_+\rightarrow pt_+\rightarrow \wt\EG C_2,$$
obtained from taking the colimits over the cofiber sequence
$$(\mathbb{A}(n\sigma)\setminus \{0\})_+\rightarrow pt_+\rightarrow T^{n\sigma}:=S^\sigma\wedge S^\sigma_t.$$

\subsection{$\Sigma_2$-equivariant classifying spaces}
Let $B_{\Sigma_2}C_2$ be the $\Sigma_2$-equivariant classifying space that can be described as $\mathbb{RP}^\infty$ with a non-trivial $\Sigma_2$-action. It is constructed as 
$$B_{\Sigma_2}C_2=E_{\Sigma_2}C_2/C_2=\colim_n S(n\sigma+n\sigma\otimes\ep)/C_2.$$
It is the realisation of $\BG C_2=\EG C_2/C_2=\colim_n (\mathbb{A}(n\sigma)\setminus{0})/C_2,$ the $C_2$-classifying space over the field of real numbers.
The $RO(\Sigma_2)$-graded cohomology of $B_{\Sigma_2}C_2$ is given by the following theorem.
\begin{theorem} ([\cite{Kr}, Theorem 4.10] [\cite{HK1}, Lemma 6.27]) \label{comp1} We have $$H^{*+*\ep}_{Br,\Si_2}(B_{\Sigma_2}C_2,\Z/2)=\frac{H^{*+*\ep}_{Br,\Si_2}(pt)[c,b]}{(c^2=x_2 c+y_2 b)},$$ where $deg(c)=\ep$, $deg(b)=1+\ep$ and $x_2,y_2\in H^{*+*\ep}_{Br,\Si_2}(pt,\Z/2)$ are the generators of the positive cone in degrees $\ep$ and $\ep-1$ respectively. 
\end{theorem}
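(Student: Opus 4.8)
The plan is to compute $H^{*+*\ep}_{Br,\Si_2}(B_{\Sigma_2}C_2,\Z/2)$ as the colimit of the reduced cohomologies of the finite $\Sigma_2$-equivariant projective spaces $S(n\sigma+n\sigma\otimes\ep)/C_2$, using the Gysin (isotropy) cofiber sequence attached to the line bundle built from the $\Sigma_2\times C_2$-representation $\sigma$ together with the induced periodicities. Concretely, since $B_{\Sigma_2}C_2=\colim_n S(n\sigma+n\sigma\otimes\ep)/C_2$, I would first set up the cell structure: each inclusion $S(n\sigma+n\sigma\otimes\ep)/C_2\hookrightarrow S((n{+}1)\sigma+(n{+}1)\sigma\otimes\ep)/C_2$ has cofiber a representation sphere (one $\ep$-cell and one $(1+\ep)$-cell, matching $|c|=\ep$, $|b|=1+\ep$), giving a long exact sequence in Bredon cohomology. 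The class $b$ should be realized as the Euler/first Chern-type class of the tautological line bundle, and $c$ as a lower class detecting the $\ep$-twisted cells; I would define $b$ first via the Gysin map and then produce $c$ by a direct dimension count at the bottom of the tower, checking it squares into the submodule generated by $b$ and the point classes $x_2,y_2$.

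The key steps in order: (i) recall from the $C_2$-realization picture (Theorem~\ref{comp1} is the $RO(\Sigma_2)$-graded analogue of Voevodsky's $\mathbf B C_2$ computation, as indicated in the introduction) that $H^{*+*\ep}_{Br,\Si_2}(pt)$ is known, and identify its positive-cone generators $x_2\in H^{\ep}$, $y_2\in H^{\ep-1}$; (ii) run the Gysin sequence for $S(n\sigma+n\sigma\otimes\ep)/C_2$, inductively showing the cohomology is free over $H^{*+*\ep}_{Br,\Si_2}(pt)$ on $1,c,b,cb,b^2,\dots$ up to the truncation at level $n$, with the only possible nontrivial attaching datum producing the relation $c^2=x_2c+y_2b$; (iii) pin down the coefficients of that relation — a priori $c^2=\lambda x_2 c+\mu y_2 b+(\text{other point classes})$ — by restricting to fixed points and to the underlying $C_2$-free part, i.e. mapping to $H^{*}(\mathbb{RP}^\infty)$ and to the Borel/geometric pieces, which forces $\lambda=\mu=1$ and kills the other terms; (iv) pass to the colimit over $n$, noting the system is eventually constant in each fixed degree, to drop the truncation and obtain the stated polynomial-modulo-one-relation presentation.

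I expect the main obstacle to be step (iii): identifying $c^2$ exactly. The degree of $c^2$ is $2\ep$, and $H^{2\ep}_{Br,\Si_2}(B_{\Sigma_2}C_2)$ may contain several monomials ($x_2 c$ lives in degree $\ep+\ep=2\ep$, $y_2 b$ in degree $(\ep-1)+(1+\ep)=2\ep$, and there could be contributions from other point classes of degree $2\ep$), so one must rule out the extra terms and fix the two coefficients simultaneously. The cleanest way is a restriction argument: restricting along $\Sigma_2/\Sigma_2\to\Sigma_2/e$ (forgetting equivariance) must recover the nonequivariant relation in $H^*(\mathbb{RP}^\infty;\Z/2)=\Z/2[b]/(c^2-\ldots)$ (where $c$ becomes the degree-one generator), and restricting to the $\Sigma_2$-fixed points detects the $x_2$-coefficient; together with the module-freeness from step (ii) these uniquely determine the relation. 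A secondary, more bookkeeping-heavy obstacle is confirming that the Gysin sequences split as claimed — i.e. that the connecting maps vanish on the relevant generators — which I would handle by a multiplicativity/Leibniz argument using that $b$ is a polynomial generator (its powers $b^k$ are nonzero and free over the coefficients), exactly as in the classical computation of $H^*(\mathbf{B}C_2)$ that this theorem generalizes.
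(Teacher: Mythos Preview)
The paper does not give its own proof of this theorem: it is stated with attribution to \cite{Kr} and \cite{HK1} and used as input, so there is nothing in the present paper to compare your argument against. Your outline is broadly the standard route (a cellular/Gysin filtration of the finite $\Si_2$-equivariant projective spaces, producing free modules on $1,c,b,cb,\dots$ and a single quadratic relation for $c^2$), and that is indeed how the cited references proceed.

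One caution on your step (iii): the restriction to the underlying nonequivariant cohomology does not by itself pin down both coefficients. Under $\Res^{\Si_2}_e$ one has $x_2\mapsto 0$ and $y_2\mapsto 1$, while $c$ and $b$ restrict to the degree-$1$ and degree-$2$ generators of $H^*(\mathbb{RP}^\infty;\Z/2)=\Z/2[t]$, so the relation becomes the tautology $t^2=t^2$ and detects the $y_2b$ term but says nothing about the $x_2c$ term. To fix the $x_2$-coefficient you need the other restriction, to the $\Si_2$-fixed points (equivalently, the geometric fixed-point or $a_\ep$-inverted computation), where $B_{\Si_2}C_2^{\Si_2}$ is two points and the resulting ring forces $c^2=x_2c$ after killing $y_2$. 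Both restrictions together, plus the degree constraint, then determine the relation uniquely. You should make that second restriction explicit rather than leaving it as ``restricting to the $\Si_2$-fixed points detects the $x_2$-coefficient.''
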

\begin{theorem} \label{isomt}
			As $H^\st_{Br,K}(pt)$-algebras without units, we have that
			\begin{align*}
				\hr_{Br,K}^{\st+1}(\et) \simeq \hr_{Br,\Sigma_2}^{\st}(\bc)[x_1^{\pm1},x_3^{\pm1}].
			\end{align*}
			\label{et}
	where $deg(x_1)=\sigma$ and $deg(x_3)=\sigma\otimes\epsilon$. 		
		\end{theorem}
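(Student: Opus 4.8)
The plan is to compute $\hr^{\st+1}_{Br,K}(\et)$ from the restriction map to $\ek$ and then read off the answer using Theorems~\ref{comp1} and~\ref{2nd}. First apply $\hr^\st_{Br,K}(-)$ to the isotropy cofiber sequence \eqref{eqn:cof4}, $\ek_+\to pt_+\to\et$. Its long exact sequence, through the connecting homomorphism $\partial$, presents $\hr^{\st+1}_{Br,K}(\et)$ as an extension of $H^\star_{Br,K}(pt)$-modules
\[
0\longrightarrow \mathrm{coker}\big(res^{\st}\big)\longrightarrow \hr^{\st+1}_{Br,K}(\et)\longrightarrow \ker\big(res^{\st+1}\big)\longrightarrow 0 ,
\]
where $res\colon H^\star_{Br,K}(pt)\to H^\star_{Br,K}(\ek)$ is restriction along $\ek\to pt$; the source is the ring of Theorem~\ref{1st} (described in Section~3) and the target is the ring of Theorem~\ref{2nd}. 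The degree shift in $\partial$ accounts for the ``$+1$'' in the statement.

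The key structural input I would establish next is that $x_1=a_\sigma$ and $x_3=a_{\sigma\ot\ep}$ act \emph{invertibly} on $\hr^\st_{Br,K}(\et)$, while $\ker(res)$ is $(x_1x_3)$-power torsion. For invertibility, smash the cofiber sequence $E\mathcal F[\sigma]_+\to pt_+\to \tl E\mathcal F[\sigma]$ — where $\mathcal F[\sigma]=\{H\le K:\sigma^H\ne 0\}=\{1,\Sigma_2\}$ and $\tl E\mathcal F[\sigma]=\colim_n S^{n\sigma}$ — against $\et$: since $\et=\colim_n S^{n\sigma+n\sigma\ot\ep}$ restricts on $\Sigma_2$ to $\colim_n S^{n}\wedge S^{n\ep}\simeq *$, the smash $\et\wedge E\mathcal F[\sigma]_+$ is contractible, so $\et\simeq\et\wedge\tl E\mathcal F[\sigma]$, which is exactly the statement that $a_\sigma=x_1$ has been inverted; the same argument with the diagonal $\Delta C_2\le K$ in place of $\Sigma_2$ inverts $x_3$. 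That $\ker(res)$ is $(x_1x_3)$-power torsion is visible directly from Theorems~\ref{1st} and~\ref{2nd}: the classes of $H^\star_{Br,K}(pt)$ not surviving restriction lie in the cone regions where the Euler classes act nilpotently. Hence $\ker(res)[x_1^{-1},x_3^{-1}]=0$, and localizing the sequence above — using that $\hr^{\st+1}_{Br,K}(\et)$ is already $(x_1,x_3)$-periodic and that localization commutes with cokernels —
\[
\hr^{\st+1}_{Br,K}(\et)\;\cong\;\mathrm{coker}\Big(H^\star_{Br,K}(pt)[x_1^{-1},x_3^{-1}]\xrightarrow{\ res\ }H^\star_{Br,K}(\ek)[x_1^{-1},x_3^{-1}]\Big).
\]

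I would then identify this cokernel. Because $\ka_2$ is invertible, Theorem~\ref{2nd} gives $H^\star_{Br,K}(\ek)[x_1^{-1},x_3^{-1}]\cong H^{*+*\ep}_{Br}(pt)[c,b^{\pm1},x_1^{\pm1},x_3^{\pm1}]/(c^2=x_2c+y_2b)$ with $b=x_1x_3\ka_2^{-1}$, $c=x_1y_3\ka_2^{-1}$, the relation of Theorem~\ref{comp1} being forced by $\ka_2x_2=x_1y_3+x_3y_1$ and $\ka_2y_2=y_1y_3$; and $H^\star_{Br,K}(pt)[x_1^{-1},x_3^{-1}]$ is the $(x_1,x_3)$-localization of the ring of Theorem~\ref{1st}. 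A degree-by-degree count (the paper's counting method, using the degrees $|x_i|,|y_i|,|\ka_i|$ and the known additive structure) then shows that the quotient of the target by the image of $res$ is the free $H^{*+*\ep}_{Br}(pt)[x_1^{\pm1},x_3^{\pm1}]$-module on $\{b^m:m\ge1\}\cup\{cb^m:m\ge0\}$, which by Theorem~\ref{comp1} is precisely $\hr\sd^\st(\bc)[x_1^{\pm1},x_3^{\pm1}]$; this yields the asserted isomorphism of $H^\star_{Br,K}(pt)$-modules, and it is non-unital because $H^{-\sigma}_{Br,K}(pt)=0$ rules out a unit in $\lim_n \hr^\st_{Br,K}(S^{n\sigma+n\sigma\ot\ep})=\hr^\st_{Br,K}(\et)$.

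To upgrade this to a ring isomorphism I would not transport the product across the cokernel (on which it is not inherited: $\mathrm{im}(res)$ is a subring but not an ideal, and $\partial$ is not multiplicative), but realize the map geometrically: the principal $C_2$-bundle $\ek\to\bc$ carries $\sigma$ and $\sigma\ot\ep$ to $\Sigma_2$-equivariant line bundles on $\bc$, whose Thom/Euler classes together with the collapse maps built from \eqref{eqn:cof4} assemble $x_1^{\pm1},x_3^{\pm1},c,b$ into a ring map $\hr\sd^\st(\bc)[x_1^{\pm1},x_3^{\pm1}]\to\hr^{\st+1}_{Br,K}(\et)$ of $H^\star_{Br,K}(pt)$-algebras; multiplicativity of the $C_2\ti\Sigma_2$-topological isotropy cofiber sequence — the purpose of the Appendix — ensures this is a ring map, and the count above shows it is the isomorphism just found. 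I expect the main obstacle to be exactly this combination: making the image of $res$ (equivalently, the precise restriction behaviour of the $\ka_i$ and of the cone classes) explicit enough to run the count, while simultaneously controlling the ring structure through the connecting map, which is only additive.
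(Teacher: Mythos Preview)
Your argument is circular within the paper's logical structure. You invoke Theorem~\ref{2nd} (the computation of $H^\st_{Br,K}(\ek)$, i.e.\ Theorem~\ref{cEC}) as the key input for identifying the cokernel, but the proof of Theorem~\ref{cEC} explicitly uses Theorem~\ref{et} to show that $\hr^{|\ka_2^n\al|}_{Br,K}(\et)$ vanishes for large $n$; it also relies on Theorem~\ref{vanishing}, whose proof in turn rests on the description of $\hr^\st_{Br,K}(\et)$ in terms of the classes $\Sigma b^k,\Sigma c,\Sigma b^kc$ coming from Theorem~\ref{isomt}. So the ring you are localizing and taking a cokernel of is not available to you at this point.

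Even setting that aside, the paper's route is far shorter and avoids all of the counting you propose. Over the $RO(\Sigma_2)$-graded part (i.e.\ degrees $a+b\ep$), Lemma~\ref{quot} gives directly
\[
\hr_{Br,K}^{a+1+b\ep}(\et)\ \simeq\ \hr_{Br,\Sigma_2}^{a+1+b\ep}(\et/C_2)\ \simeq\ \hr_{Br,\Sigma_2}^{a+1+b\ep}(\Sigma\bc)\ \simeq\ \hr_{Br,\Sigma_2}^{a+b\ep}(\bc),
\]
which already identifies the $\Sigma_2$-slice with $\hr^\st_{\Sigma_2}(\bc)$ as rings. The remaining two $RO(K)$-directions are handled by the $\sigma$- and $\sigma\ot\ep$-periodicities of $\et$ established in Section~2.1 (your own invertibility argument for $x_1,x_3$ is correct and is exactly this step), so one simply adjoins $x_1^{\pm1},x_3^{\pm1}$. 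There is no need to pass through the isotropy long exact sequence, to analyze $\ker(res)$, or to perform any degree count; and the ring structure is automatic because Lemma~\ref{quot} is a change-of-groups isomorphism and the periodicities are given by multiplication by ring elements. Your appeal to the Appendix for multiplicativity is also misplaced: the Appendix computes specific products in the Mixed Cone of Type~II, not a general multiplicative compatibility of the isotropy sequence.
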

		\begin{proof}
		From  Lemma \ref{quot}, we have that
		\begin{align*}
			\hr_{Br,K}^{a+p\sigma+b\epsilon+q\sigma\otimes\epsilon+1}(\et) \simeq \hr_{Br,K}^{a+b\epsilon+1}(\et)\simeq \hr^{a+b\epsilon+1}_{Br,\Sigma_2}(\Si\bc) \simeq \hr^{a+b\epsilon}_{Br,\Sigma_2}(\bc).
		\end{align*}
		
		Multiplication by $x_1$ and $x_3$ on $\hr_{Br,K}^{\st}(\et)$ give the first isomorphism from the corresponding periodicities.
		\end{proof}

From  Theorem \ref{comp1} and from Theorem \ref{isomt} we have that 
		\begin{theorem} As a $H^\st_{Br,K}(pt)$-algebra without units we have that $$\rH^{\st,\st}_{Br,K}(\tilde{E}_{\Sigma_2}C_2)\simeq \frac{\rH^{*-1+*\ep}_{Br,\Sigma_2}(pt)[b,c]}{(c^2=cx_2+by_2)}[x^{\pm 1}_1,x^{\pm 1} _3],$$
with $deg(b)=1+\ep$ and $deg(c)=\ep$. 

We denote $\Sigma b^k$, $\Sigma c$ and $\Sigma b^kc$ the cohomology classes in  $$\rH^{*+*\ep}_{Br,\Sigma_2}(\tilde{E}_{\Sigma_2}C_2)\simeq \rH^{*+*\ep}_{Br,\Sigma_2}(\Sigma B_{\Sigma_2}C_2)$$
of degrees $deg(\Sigma b^k)=k+1+k\ep$, $deg(\Sigma c)=1+\ep$ and $deg(\Sigma b^kc)= k+1+(k+1)\ep$.
\end{theorem}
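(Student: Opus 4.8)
The plan is to read the statement off by substituting Theorem~\ref{comp1} into Theorem~\ref{isomt} and bookkeeping the single $\Z$-degree shift coming from the suspension identification used in its proof. All the genuine content — the ring structure of $H^{*+*\ep}_{Br,\Si_2}(\bc)$ and the reduction of the $RO(K)$-graded cohomology of $\tilde E_{\Sigma_2}C_2$ to an $RO(\Si_2)$-graded computation with $x_1,x_3$ inverted — is already in those two results, so what remains is organizational.

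First I recall Theorem~\ref{isomt}: as an $H^\st_{Br,K}(pt.)$-algebra without units, $\hr_{Br,K}^{\st+1}(\et)\simeq\hr\sd^{\st}(\bc)[x_1^{\pm1},x_3^{\pm1}]$, and (from its proof, via Lemma~\ref{quot}) this group is also $\hr\sd^{\st+1}(\Si\bc)\simeq\hr\sd^{\st}(\bc)$ over $RO(\Si_2)$. Thus, once the invertible elements $x_1,x_3$ (in degrees $\sigma$ and $\sigma\ot\ep$) collapse the $RO(K)$-grading to the $RO(\Si_2)$-grading, $\rH^{\st,\st}_{Br,K}(\tilde E_{\Sigma_2}C_2)$ is nothing but the reduced $\Si_2$-equivariant Bredon cohomology of $\Si B_{\Sigma_2}C_2$ with $x_1^{\pm1},x_3^{\pm1}$ adjoined; this produces both the factor $[x_1^{\pm1},x_3^{\pm1}]$ and the identification $\rH^{*+*\ep}_{Br,\Sigma_2}(\tilde E_{\Sigma_2}C_2)\simeq\rH^{*+*\ep}_{Br,\Sigma_2}(\Si B_{\Sigma_2}C_2)$ appearing in the statement. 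I emphasise that the multiplicative structure of the answer is inherited from $\bc$ (Theorem~\ref{isomt} being an algebra isomorphism); the passage $\bc\rightsquigarrow\Si\bc$ merely records a shift of the $\Z$-grading.

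Next I feed in Theorem~\ref{comp1}: $H^{*+*\ep}_{Br,\Si_2}(\bc)=\frac{H^{*+*\ep}_{Br,\Si_2}(pt)[c,b]}{(c^2=x_2c+y_2b)}$ with $|c|=\ep$, $|b|=1+\ep$. Using the quadratic relation to eliminate powers of $c$, this ring is free over $H^{*+*\ep}_{Br,\Si_2}(pt)$ on $1$ and on the monomials $b^k$ $(k\ge1)$, $b^kc$ $(k\ge0)$; the reduced cohomology $\hr\sd^{\st}(\bc)$ is the augmentation ideal — the kernel of restriction to a $\Si_2$-fixed basepoint — i.e. the non-unital subring generated by $b$ and $c$ subject to the same relation, which is already where the absence of a unit comes from. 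Substituting this into Theorem~\ref{isomt}, adjoining $x_1^{\pm1},x_3^{\pm1}$, and transporting along the suspension identification $\hr\sd^{\st}(\bc)\simeq\hr\sd^{\st+1}(\Si\bc)$ — which shifts the $\Z$-coordinate of every class up by $1$, a shift we record by reindexing the coefficients from $H^{*+*\ep}_{Br,\Si_2}(pt)$ to $\rH^{*-1+*\ep}_{Br,\Sigma_2}(pt)$ while leaving $|b|=1+\ep$, $|c|=\ep$ — yields the displayed isomorphism, the defining relation being carried over from Theorem~\ref{comp1}. Finally, $\Si b^k$, $\Si c$, $\Si b^kc$ denote the suspensions of $b^k$, $c$, $b^kc$ inside $\rH^{*+*\ep}_{Br,\Sigma_2}(\tilde E_{\Sigma_2}C_2)\simeq\rH^{*+*\ep}_{Br,\Sigma_2}(\Si B_{\Sigma_2}C_2)$, sitting one $\Z$-degree above their originals in $\bc$; since $|b^k|=k+k\ep$, $|c|=\ep$, $|b^kc|=k+(k+1)\ep$ in $\bc$, this gives $|\Si b^k|=k+1+k\ep$, $|\Si c|=1+\ep$ and $|\Si b^kc|=k+1+(k+1)\ep$.

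There is essentially no obstacle here; the one thing that needs care is to spend the single suspension shift exactly once — it lands on the $\Z$-coordinate of every class, equivalently it is what turns the coefficient ring into the reindexed $\rH^{*-1+*\ep}_{Br,\Sigma_2}(pt)$ of the ring presentation — and to keep the reduced/unreduced distinction straight, the absence of a unit simply reflecting that we work with reduced cohomology throughout. Since Theorems~\ref{isomt} and~\ref{comp1} are both algebra isomorphisms, the $H^\st_{Br,K}(pt.)$-module structure together with the cup products is transported along automatically, so no independent check of products is needed.
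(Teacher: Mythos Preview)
Your proposal is correct and follows exactly the paper's approach: the paper itself gives no proof beyond the sentence ``From Theorem~\ref{comp1} and from Theorem~\ref{isomt} we have that'', and your write-up is simply a careful unpacking of that substitution together with the suspension bookkeeping. One minor remark: the relation in the displayed statement appears to be a typo (it should be $c^2=x_2c+y_2b$ as in Theorem~\ref{comp1}, which is also the only degree-consistent option), and you implicitly handle this correctly by saying the relation is carried over from Theorem~\ref{comp1}.
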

The following theorem was proved in \cite{DV1} using a different method.
\begin{theorem}[Vanishing]
		\label{vanishing}
			The map
			\begin{align*}
				\tc{a+p\si+b\ep+q\so}_{Br,K}(\et)\to\hc{a+p\si+b\ep+q\so}_{Br,K}(pt)
			\end{align*}
			induced by the inclusion $f: pt_+ \hra \et$ vanishes for all $a,b\in\Z$ and $p,q \geq 0.$
			
			The map
			\begin{align*}
				\tc{a+p\si+q\so}_{Br,K}(\et)\to\hc{a+p\si+q\so}_{Br,K}(pt)
			\end{align*}
			vanishes for all $a\in \Z$ and $\{p,q\}\nsubseteq \Z_{<0}.$
		\end{theorem}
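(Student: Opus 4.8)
\emph{Setup.} The plan is to route both statements through the $C_2\ti\Si_2$-topological isotropy cofiber sequence and then reduce to a divisibility count against the cone decomposition of $\hc{\st}_{Br,K}(pt)$. The inclusion $f\colon pt_+\hra\et$ is the second map of the cofiber sequence $\ea\to pt_+\xr{f}\et$, so its long exact sequence in $RO(K)$-graded Bredon cohomology identifies the vanishing of $f^*$ in a degree $\al$ with the injectivity, in that degree, of the restriction $\hc{\al}_{Br,K}(pt)\to\hc{\al}_{Br,K}(\ek)$. I would work with the following equivalent form: by Theorem~\ref{et} one has $\tc{\st+1}_{Br,K}(\et)\simeq\hr\sd^{\st}(\bc)[x_1^{\pm1},x_3^{\pm1}]$, so the Euler classes $x_1=a_\si$ and $x_3=a_{\so}$ act invertibly on $\tc{\st}_{Br,K}(\et)$ (ultimately because the only non-contractible fixed points of $\et$ lie over the two subgroups of $K$ containing $C_2$, on which $\si$ and $\so$ have no non-zero fixed vectors). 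Since $f^*$ is a map of $\hc{\st}_{Br,K}(pt)$-modules out of a module on which $x_1$ and $x_3$ are invertible, its image in degree $\al$ is contained in $\bigcap_{n\ge0}(x_1x_3)^n\,\hc{\al-nW}_{Br,K}(pt)$, where $W=\si+\so$. It therefore suffices to show that this intersection vanishes for $\al$ in each of the two stated ranges.

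\emph{Degree count and the case $b=0$.} Writing $\al=a+p\si+b\ep+q\so$, so that $\al-nW=a+(p-n)\si+b\ep+(q-n)\so$, for $n\gg0$ both the $\si$- and the $\so$-coefficients are arbitrarily negative. Comparing with the supports of the summands of $\hc{\st}_{Br,K}(pt)$ from Theorem~\ref{1st} and Sections 3 and 5 — the positive cone, the mixed cones of Types I and II, the negative cone, and the finitely many remaining nilpotent classes — one reads off that for $n$ large $\hc{\al-nW}_{Br,K}(pt)$ is zero unless $b\le-1$ (when it can meet the negative cone) or $b\ge1$ (when it can meet the mixed cone of Type II associated with $\io_2$); the remaining nilpotent classes occur in only finitely many degrees and so drop out for $n$ large. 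When $b=0$ the group thus vanishes for $n$ large, so the intersection vanishes; combined with the automorphism of $K$ that interchanges $\si$ and $\so$ while fixing $\ek$ (hence $\et$), this disposes of the second statement of the theorem and of the subcase $b=0$ of the first.

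\emph{The cases $b\ne0$, and the main obstacle.} For these I would use the module structure over the Euler classes. Both the negative cone and the Type II cone for $\io_2$ are built, over the positive cone, by dividing their generators by $x_1,y_1,x_3,y_3$, with the generators sitting in $\si$-degree $\le-1$ and annihilated in the outward $\si$-direction by $x_1$; hence a class of $\si$-degree $p-n$ lying in either of them has been divided in the $\si$-direction at most $n-p-1$ times, so its $x_1$-torsion order is at most $n-p\le n$ (here the hypothesis $p\ge0$ is used). Therefore $x_1^n$, and a fortiori $(x_1x_3)^n$, annihilates all of $\hc{\al-nW}_{Br,K}(pt)$ once $n$ is large, so the intersection again vanishes, completing the argument. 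The step I expect to be the main obstacle is precisely this last one together with the passage through the cone decomposition: it requires the explicit descriptions of the negative cone and the Type II cone and of the action of $x_1,x_3,y_1,y_3$ on them (the material of Sections 3, 5 and the Appendix), and one must pin down the $x_1$- and $x_3$-torsion orders closely enough to beat the exponent $n$ — which is exactly where the hypotheses $p,q\ge0$ (resp.\ $\{p,q\}\nsubseteq\Z_{<0}$) enter — as well as check that the handful of genuinely nilpotent classes not captured by the cone bookkeeping fall outside the two ranges, or are likewise killed by $(x_1x_3)^n$.
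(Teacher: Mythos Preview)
Your strategy --- route the image of $f^*$ into $\bigcap_{n}(x_1x_3)^n\,H^{\al-nW}_{Br,K}(pt)$ and then show this intersection vanishes --- is a reasonable one and genuinely different from the paper's. The paper instead works on the \emph{domain}: by Theorem~\ref{isomt} the classes $\Si b^k$, $\Si c$, $\Si b^kc$ (all sitting in $p=q=0$) generate $\hr^\st_{Br,K}(\et)$ over $H^\st_{Br,K}(pt)$ in the range $p,q\ge0$, and $f^*$ kills each of them for degree reasons; the second statement is then handled by a short ad hoc check. This avoids any cone-by-cone analysis of the codomain.

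Your execution, however, has two concrete gaps. First, the claim that for $b=0$ the group $H^{\al-nW}_{Br,K}(pt)$ vanishes for $n$ large is false: by Proposition~\ref{PC}(f) the classes $\dfrac{\ta_1}{x_1^{n_1}y_1^{m_1}}\dfrac{\ta_3}{x_3^{n_3}y_3^{m_3}}$ populate arbitrarily negative $\si$- and $\so$-degrees with $b=0$. These are not ``finitely many leftover nilpotents''; they form an infinite family. (Your torsion argument from the $b\neq0$ paragraph would handle them, but as written the $b=0$ case is not proved.) Second, the assertion that ``$x_1^n$ annihilates all of $H^{\al-nW}_{Br,K}(pt)$'' is also false when $b\ge1$: the Type~II generators $\dfrac{\ka_1}{x_1^{n_1}}\dfrac{\ta_3}{x_3^{n_3}y_3^{m_3}}x_2^{n_2}y_2^{m_2}$ from Proposition~\ref{propcase1v2} satisfy $x_1^{n_1+1}\cdot(\text{generator})=(x_2y_3+x_3y_2)\dfrac{\ta_3}{x_3^{n_3}y_3^{m_3}}x_2^{n_2}y_2^{m_2}$, which is generally nonzero. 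One really must use $x_1$ and $x_3$ together (and the bound on $n_3$ coming from $q\ge0$) to kill these; the ``a fortiori'' step does not follow from the $x_1$-torsion bound you state. The intersection you want does vanish, but proving it requires a finer case analysis than the sketch provides.
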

		\begin{proof}
		All cohomology elements $\Sigma b^k$, $\Sigma c$ and $\Sigma b^kc$ belong to the range $a,b\in\Z$ and $p,q \geq 0$ and generate the domain as a module over $H^*_{Br,K}(pt)$. In the range of the first vanishing we have cohomology classes $x_1^px_3^q\frac{\theta_2}{x^n_2y^m_2}\Sigma b^kc^t$ and  $x_1^px_3^qx_2^ny_2^m\Sigma b^kc^t$ with $t\in\{0,1\}$, $p,q\geq 0$, $k\in\Z$. 

But $f^*(\Sigma b^k)=f^*(\Sigma c)=f^*(\Sigma b^kc)=0$ for degree reasons. It implies that $f^*=0$ because $f^*$ is a $H^*_{Br,K}(pt)-$module map. This implies also the second case in the case $p,q\geq 0$ from Theorem \ref{isomt}. 

For the second vanishing, in degree $a-n\sigma$ in the domain, there are only the elements of the form  $\frac{\theta_2}{x^i_2y^j_2}\frac{\Sigma b^{i+j+1}c}{x^n_1}$ (for $a\geq 4$) and $\frac{\theta_2}{x^i_2y^j_2}\frac{\Sigma b^{i+j+2}}{x^n_1}$ (for $a\geq 5$). The domain is zero if $a\leq 3$. The image of $f^*$ on these elements belongs to $H^{a-n\sigma}_{Br,C_2}(pt)$, where the elements are nilpotents of the form $\frac{\theta_1}{x_1^ky_1^l}$ or zero. Suppose the case the image is non-zero. The multiplication with $x_2$ on these nilpotents is always non-zero and because of the module structure it implies that $f^*$ sends zero to nonzero which is contradiction. The case $a-n\sigma\otimes\ep$ is symmetric, so it follows too.
				\end{proof}
				Given the above theorem, the Mixed Cone of Type I corresponding to $\ka_2$ along with the positive cone will be of particular interest. We have that the direct sum of the groups in these two cones is $$IP:=\oplus_{p,q\geq 0}\hc{a+p\si+b\ep+q\so}_{Br,K}(pt)\subset \hc{a+p\si+b\ep+q\so}_{Br,K}(E_{\Sigma_2}C_2).$$

\section{$RO(C_2\times \Sigma_2)$-graded cohomology of a point}

\subsection{About $RO(C_2\times \Sigma_2)-$graded cohomology ring of a point}
In this section we describe the already known results about the $RO(C_2\times \Sigma _2)$- graded cohomology ring of a point following \cite{BH} and \cite{KHo}. The $RO(C_2\times \Sigma _2)$-graded cohomology groups of a point with $\Z/2$-coefficients were described in \cite{KHo} and the ring structure of the positive cone of $RO(C_2\times \Sigma _2)$- graded cohomology ring of a point was described in \cite{BH} together with specific cohomology classes (reviewed below) in $RO(C_2\times \Sigma _2)$- graded cohomology ring of a point and their relations.

The $RO(C _2\times \Sigma _2)$- graded cohomology of a point with $\Z/2$-coefficients was computed in \cite{KHo} and the results were given in terms of Poincar\'e series and reproduced in the below form in \cite{GY}. We consider $\alpha,\beta,\gamma\in \{\sigma,\epsilon,\sigma\otimes\epsilon\}$ be any three arbitrary distinct choices of one dimensional irreducible representations of $C_2\times\Sigma_2$. We have the following:
\begin{proposition} (\cite{KHo})\label{PC} Let $l,n\geq 0$ and $i,j\geq 0$. Let $\alpha$ and $\beta$ be two distinct nontrivial irreducible $C_2\times \Sigma_2$-representations. The Poincar\'e series for $\pi_*((S^V\wedge H\Z/2)^K)=H^{-*+V}_{Br,K}(pt,\Z/2)$ is

a) If $V=0$ then $1$.

b) If $V=n\alpha$ then $1+x+x^2+...+x^n$.

c) If $V=-n\alpha$ then $x^{-n}+...+x^{-3}+x^{-2}$.

d) If $V=n\alpha+l\beta$ then $(1+x+...+x^n)(1+x+...+x^l)$.

e) If $V=n\alpha-j\beta$ then $(1+x+...+x^n)(x^{-j}+...+x^{-2})$.

f) If $V=-n\alpha-j\beta$ then $(x^{-n}+...+x^{-2})(x^{-j}+...+x^{-2})$.
\end{proposition}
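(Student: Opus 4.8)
The plan is to deduce all six formulas from the three isotropy cofiber sequences $C_{2+}\to pt_+\to S^{\al}$, that is (\ref{eqn:cof3}) together with its two partners for the remaining nontrivial characters $\al$ of $K$, by induction on $\dim V$, keeping track only of $\Z/2$-dimensions. The sole external input is Stong's computation of the $RO(C_2)$-graded cohomology of a point with $\ul{\Z/2}$-coefficients (as recorded in \cite{dug:kr}). It enters via Frobenius reciprocity: the two-point space ``$C_2$'' appearing in (\ref{eqn:cof3}) is $(K/\ker\al)_+$ with $\ker\al\cong C_2$, so $\rH^{\st}_{Br,K}(C_{2+}\sh Y)\cong \rH^{\Res\st}_{Br,\ker\al}(\Res Y)$ for every $K$-spectrum $Y$. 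Two elementary facts drive the reduction: $\al$ restricts to the trivial one-dimensional $\ker\al$-representation, while a second nontrivial character $\be$ restricts to the sign representation of $\ker\al$ (since $\ker\al\ne\ker\be$ forces $\ker\al\nsubseteq\ker\be$). Consequently, smashing (\ref{eqn:cof3}) with a suitable representation sphere so as to lower the $\al$-multiplicity and applying the Thom isomorphism on the point-cohomology terms, the ``$C_2$''-term of the resulting long exact sequence becomes the reduced $RO(C_2)$-graded cohomology of a sphere $S^{l\si}$ ($\si$ the sign representation of $\ker\al\cong C_2$), which Stong's theorem supplies, while the $S^{\al}$-term is point cohomology in one lower $\al$-dimension. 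Thus every long exact sequence we run has two known terms and one unknown.

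For $V=0$ the answer is (a). For a single character $V=\pm n\al$ I would induct on $n$ using the $\al$-sequence smashed with the appropriate sphere: here $\Res_{\ker\al}(m+k\al)=m+k$ lies in the integer part of $RO(\ker\al)$, so the ``$C_2$''-terms are copies of $\Z/2$ concentrated in a single internal degree, and once the one or two connecting homomorphisms are identified the sequence splits and the Poincar\'e series of $H^{-*+n\al}_{Br}(pt)$ is obtained from that of $H^{-*+(n-1)\al}_{Br}(pt)$ by adding a single monomial $x^{n}$ (resp.\ deleting one), which yields (b) and (c). For two distinct characters ($V=n\al+l\be$, $n\al-j\be$, or $-n\al-j\be$) I would fix the $\be$-multiplicity and induct on the $\al$-multiplicity with the $\al$-sequence smashed with the appropriate sphere; the ``$C_2$''-term is now the $RO(C_2)$-graded cohomology of $S^{\pm m\si}$, whose Poincar\'e series are precisely the building blocks $1+\cd+x^{m}$ and $x^{-m}+\cd+x^{-2}$ appearing in (d)--(f). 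Multiplying these blocks through the split exact sequences produces the products $(1+\cd+x^{n})(1+\cd+x^{l})$, $(1+\cd+x^{n})(x^{-j}+\cd+x^{-2})$, and $(x^{-n}+\cd+x^{-2})(x^{-j}+\cd+x^{-2})$ of (d), (e), (f).

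The substantive step --- the one I expect to be the main obstacle --- is showing that each long exact sequence actually degenerates into short exact pieces; equivalently, determining the connecting homomorphisms, which amounts to identifying the kernel and image of multiplication by the Euler class $x_\al$ (induced by $pt_+\to S^{\al}$) in each bidegree. Two observations make this tractable: (i) the restriction of $x_\al$ to $\ker\al$ is the Euler class of a trivial representation, hence vanishes, which forces $x_\al$-multiplication to be injective outside a controlled range (any kernel can only be hit by $\delta$ from a single Stong term); and (ii) wherever $x_\al$-multiplication fails to be injective, the size of its kernel is already pinned down by the known $C_2$-answer. Carrying out (i)--(ii) uniformly across the six families, and in particular checking in the mixed cases that the bottommost Stong contribution sits in internal degree $2$ rather than $0$ or $1$ (this is exactly why the negative blocks start at $x^{-2}$) and that no further collapse occurs, is where all the work lies; once that is done, the six Poincar\'e series fall out as the geometric-series products above. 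This is, in effect, the bookkeeping carried out in \cite{KHo}, whose conclusion we record here for use in the ring-theoretic analysis of the following subsections.
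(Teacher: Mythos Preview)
The paper does not prove this proposition; it is stated with attribution to \cite{KHo} and used as input for the subsequent ring-theoretic analysis. So there is no proof in the paper to compare your proposal against.

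Your sketch is a reasonable outline of the standard cofiber-sequence approach and correctly identifies the one nontrivial point, namely that the long exact sequences associated to $C_{2+}\to pt_+\to S^\alpha$ (after smashing with the relevant representation sphere) degenerate into short exact pieces. Your observations (i) and (ii) are the right heuristics, but as written they do not constitute an argument: knowing that $\Res_{\ker\alpha}(x_\alpha)=0$ tells you the composite $H^\st_{Br,K}(pt)\to H^\st_{Br,\ker\alpha}(pt)$ kills $x_\alpha$, not that multiplication by $x_\alpha$ on $H^\st_{Br,K}(pt)$ is injective in any particular range. What actually forces the splitting in each case is a degree count---the Stong contribution and the unknown term live in disjoint internal degrees (this is precisely the ``starts at $x^{-2}$'' phenomenon you note), so there is no room for a nonzero differential. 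You gesture at this but do not carry it out. Since the proposition is only being quoted from \cite{KHo} here, that level of detail is appropriate for the paper's purposes; if you intended an independent proof, the degree-disjointness check for each of the six families is what remains.
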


The following is the description of the $RO(C_2\times\Sigma_2)-$graded cohomology groups of a point in the positive cone.
\begin{proposition} (\cite{KHo})\label{pc} Let $p,b,q\geq 0$. The Poincar\'e series for $H^{-*+p\sigma+b\epsilon+q\sigma\otimes\epsilon}_{Br,K}(pt,\Z/2)$ is 
$$(1+x+...+x^p)(1+x+...+x^b)+x(1+x+...+x^{p+b})(1+...+x^{q-1}).$$
\end{proposition}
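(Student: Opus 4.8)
The plan is to single out one of the three characters, say $\ga$, and reduce to the two-character computation of Proposition \ref{PC}(d) via the isotropy cofiber sequence attached to $n\ga$. Since $H^{-*+l\al+m\be+n\ga}_{Br}(pt)=\tilde H^{-*}_{Br,K}(S^{-n\ga-l\al-m\be})$, I would smash the cofiber sequence $S(n\ga)_+\to S^0\to S^{n\ga}$ with $S^{-n\ga-l\al-m\be}$ to obtain
\[
S(n\ga)_+\wedge S^{-n\ga-l\al-m\be}\ \xrightarrow{\ f\ }\ S^{-n\ga-l\al-m\be}\ \xrightarrow{\ g\ }\ S^{-l\al-m\be}
\]
and run the long exact sequence in reduced Bredon cohomology. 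The right-hand term gives the ``main term'': $\tilde H^{-*}(S^{-l\al-m\be})=H^{-*+l\al+m\be}_{Br}(pt)$ has Poincar\'e series $(1+x+\cdots+x^l)(1+x+\cdots+x^m)$ by Proposition \ref{PC}(d).

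The heart of the argument is the cohomology of the left-hand term. The space $S(n\ga)$ is the antipodal $(n-1)$-sphere for the quotient $C_2=K/\ker\ga$, pulled back to $K$, so it has a $K$-CW structure with exactly one free cell of orbit type $K/\ker\ga$ in each dimension $0,1,\dots,n-1$. By the projection formula $(K/\ker\ga)_+\wedge W\simeq\mathrm{Ind}_{\ker\ga}^{K}\Res_{\ker\ga}^{K}W$ together with the induction adjunction for Bredon cohomology, the $j$-th cell contributes $H^{*-j+n+(l+m)\si}_{Br,C_2}(pt)$ to $\tilde H^{*}\big(S(n\ga)_+\wedge S^{-n\ga-l\al-m\be}\big)$; here one uses that $\ker\ga$ acts trivially on $\ga$ but through the sign representation $\si$ on both $\al$ and $\be$, so that $\Res_{\ker\ga}(l\al+m\be)=(l+m)\si$. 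By Stong's computation of $H^{\star}_{C_2}(pt;\underline{\Z/2})$, whose positive cone in $RO$-degree $d\si$ has Poincar\'e series $1+x+\cdots+x^{d}$, this cell has Poincar\'e series $x^{n-j}(1+x+\cdots+x^{l+m})$, and summing over $j=0,\dots,n-1$ gives $x(1+x+\cdots+x^{n-1})(1+x+\cdots+x^{l+m})$ --- exactly the ``correction term'' --- provided the skeletal-filtration spectral sequence collapses. Its $d_1$ alternates between the maps induced by $1\pm g$ for $g$ a generator of $K/\ker\ga$, and since $K$ is abelian the Weyl action of $K/\ker\ga$ on $H^{\star}_{Br,\ker\ga}(pt)$ is trivial, so $1\pm g\equiv 0\pmod 2$ and the sequence degenerates at $E_2$ (any higher differentials are excluded by a secondary induction on $n$ through the skeletal cofiber sequences, or by freeness of $H^{\star}_{C_2}(pt)$-modules in the positive cone).

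Feeding these two facts into the long exact sequence finishes the computation. The map $f$ is the collapse $S(n\ga)_+\to S^0$ smashed with the identity, so $f^{*}$ is a twist of the restriction $H^{\star}_{Br,K}(pt)\to H^{\star}_{Br,K}(S(n\ga))$; I would argue that, in the positive-cone range, the long exact sequence breaks into short exact sequences
\[
0\to\tilde H^{*}(S^{-l\al-m\be})\to\tilde H^{*}(S^{-n\ga-l\al-m\be})\to\tilde H^{*}\big(S(n\ga)_+\wedge S^{-n\ga-l\al-m\be}\big)\to 0,
\]
so Poincar\'e series add and yield $(1+x+\cdots+x^l)(1+x+\cdots+x^m)+x(1+x+\cdots+x^{l+m})(1+x+\cdots+x^{n-1})$. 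Reassuring degenerate cases: $n=0$ recovers Proposition \ref{PC}(d), $l=0$ or $m=0$ recover the two-character subcases, and singling out $\al$ or $\be$ instead of $\ga$ produces the same polynomial.

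The main obstacle is precisely this last splitting: a crude dimension count does not suffice, since the three terms of the sequence overlap in many degrees, so one genuinely needs the rigidity of the positive cone of $H^{\star}_{Br,K}(pt)$ as a module over itself --- the Klein-four analogue of the freeness phenomenon for $C_2$-Bredon cohomology with $\underline{\Z/2}$-coefficients --- and this is also what underlies the spectral-sequence collapse above. Everything else is $RO(K)$-bidegree bookkeeping, carried out carefully enough that nothing cancels in the long exact sequence.
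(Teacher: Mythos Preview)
The paper does not prove this proposition; it is quoted from \cite{KHo} without argument, so there is no ``paper's own proof'' to compare against. I will therefore assess your argument on its merits.

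Your strategy is sound and the cellular computation of $\tilde H^{-*}\big(S(n\ga)_+\wedge S^{-n\ga-l\al-m\be}\big)$ is correct: after restricting to $\ker\ga\cong C_2$ each graded piece of the $C_2$-positive cone is $\Z/2$, so the Weyl group $K/\ker\ga$ can only act trivially, $1+g\equiv 0\pmod 2$, and the induction on $n$ through the skeletal cofibrations $S(n\ga)_+\to S((n{+}1)\ga)_+\to \Si^n(K/\ker\ga)_+$ kills all differentials. This yields exactly the series $x(1+\cd+x^{l+m})(1+\cd+x^{n-1})$.

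The genuine gap is the splitting of the long exact sequence, which you flag but do not resolve. Invoking an unspecified ``rigidity/freeness of the positive cone'' is not an argument. The clean fix is to observe that the map $g^*:\tilde H^{-*}(S^{-l\al-m\be})\to \tilde H^{-*}(S^{-n\ga-l\al-m\be})$ is multiplication by the Euler class $x_\ga^{\,n}$, so it suffices that $x_\ga$ be a non-zerodivisor on the two-character subring $\Z/2[x_\al,y_\al,x_\be,y_\be]\subset H^{\star}_{Br,K}(pt)$. This follows immediately from the ring presentation of the positive cone in Theorem~\ref{pcon}: if $x_\ga\cdot p(x_\al,y_\al,x_\be,y_\be)$ lies in the ideal $(x_\al y_\be y_\ga+y_\al x_\be y_\ga+y_\al y_\be x_\ga)$, comparing $y_\ga$-coefficients forces the multiplier to vanish, hence $p=0$. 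There is no circularity here, since Theorem~\ref{pcon} is established in \cite{BH} independently of \cite{KHo}. Once $g^*$ is injective the long exact sequence breaks into short exact sequences and the two Poincar\'e series add, giving the stated formula. With this one sentence inserted, your proof is complete.
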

The following is the description of the $RO(C_2\times\Sigma_2)-$graded cohomology groups of a point in the Mixed Cone of Type I.

\begin{proposition}(\cite{KHo}) \label{caseMC1} Let $k\geq 1, l,m\geq 0$. If $k\leq l,m$ then the Poincar\'e series for $H^{-*+l\alpha+m\beta-k\gamma}_{Br,K}(pt,\Z/2)$ is
$$(\frac{1}{x^k}+...+\frac{1}{x})(1+x+...+x^{k-2})+x^k(1+...+x^{l-k})(1+...+x^{m-k}). $$ 
In the case $k>l$ the Poincar\'e series for $H^{-*+l\alpha+m\beta-k\gamma}_{Br,K}(pt,\Z/2)$ is
$$\frac{1}{x^{l+1}}(1+...+x^l)(1+...+x^{l-1})+\frac{1}{x^k}(1+...+x^{k-l-2})(1+...+x^{l+m}).$$
Swapping the role of $l$ and $m$ gives the case $k>m$.
\end{proposition}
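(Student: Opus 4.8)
The plan is to reduce the computation to lower-complexity pieces by iterating the isotropy cofiber sequences. For each nontrivial irreducible $W\in\{\alpha,\beta,\gamma\}$ there is a cofiber sequence $K/\ker(W)_+\to S^0\to S^W$ with $\ker(W)\cong C_2$; smashing it with a representation sphere and taking $RO(K)$-graded cohomology yields a long exact sequence relating $H^{\st}_{Br,K}(pt)$ in one degree to the same in the neighbouring degrees $\pm W$ and to $\widetilde{H}^{\st}_{\ker(W)}$ of a restricted sphere. To attack $H^{-*+l\alpha+m\beta-k\gamma}_{Br}(pt)$ I would peel off the $\alpha$'s one at a time using the sequence for $\alpha$ (so $\ker(\alpha)\cong\Sigma_2$). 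Since $\alpha$ restricts trivially to $\ker(\alpha)$ while $\beta$ and $\gamma$ restrict to its sign representation, the restricted sphere is $S^{-l+(k-m)\ep}$, whose cohomology is Stong's $RO(\Sigma_2)$-graded cohomology of a point in degree $l+(m-k)\ep$ — lying in the positive $\ep$-cone if $k\le m$ and in the negative $\ep$-cone if $k>m$. Symmetrically, peeling off the $\beta$'s brings in the $RO(C_2)$-graded cohomology of a point in degree $m+(l-k)\sigma$, and the cofiber sequence attached to $l\alpha+m\beta$ (equivalently the endpoint $l=m=0$ of the induction) contributes the one-variable negative $\gamma$-cone $H^{-*-k\gamma}_{Br}(pt)$. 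Iterating brings everything down to Propositions \ref{PC} and \ref{pc} (handled by the same scheme, peeled to $V=0$) together with Stong's $RO(C_2)$- and $RO(\Sigma_2)$-computations.

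The second step is bookkeeping: in each long exact sequence the Poincar\'e series of the unknown term is determined by those of its two neighbours and the ranks of the connecting maps, so I only need to track these ranks. The dichotomy in the statement is exactly where the series change shape. When $k\le l$ and $k\le m$ all the feeding restrictions land in positive cones, and their contributions assemble into the polynomial part $x^k(1+\cdots+x^{l-k})(1+\cdots+x^{m-k})$ while, together with the negative $\gamma$-cone term, they produce the Laurent part $\big(\frac{1}{x^k}+\cdots+\frac{1}{x}\big)(1+\cdots+x^{k-2})$. When $k>l$ (resp. $k>m$) the $\ker(\beta)$- (resp. $\ker(\alpha)$-) restriction lands in a negative cone, and one gets the alternate answer with the factor $\frac{1}{x^{l+1}}(1+\cdots+x^l)(1+\cdots+x^{l-1})$.

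The real obstacle is pinning down the connecting and restriction maps appearing here — whether a given copy of $\Z/2$ maps isomorphically or by zero. These are $H^{\st}_{Br,K}(pt)$-module maps, hence determined by their effect on the ring generators $x_i,y_i,\ka_i$, which I would read off from the ring structure of \cite{BH} together with the explicit behaviour of $\Res^K_{\ker(W)}$ on generators (restriction to $\ker(\gamma)$ sends the Euler class of $\gamma$ to $0$ and its orientation class to a unit, and the $\alpha,\beta$-classes to those of the sign representation of $C_2$). A convenient repackaging, which I would use to organise the bookkeeping, is the single spectral sequence of the $K$-CW filtration of $S(l\alpha+m\beta)\simeq S(l\alpha)\ast S(m\beta)$, whose cells have orbit types $K/\ker(\alpha)$, $K/\ker(\beta)$ and $K/e$: the $d_1$-differential between free cells of a common orbit type vanishes mod $2$ (the group is abelian, so the relevant Weyl-group twists are trivial), so the only differentials to analyse are the ``join'' differentials, which are precisely the restriction maps above.
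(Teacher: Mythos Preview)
The paper does not prove this proposition at all: it is quoted verbatim from \cite{KHo} and used as input data throughout (for instance, the proof of Theorem~\ref{kconj} works by matching a proposed basis against the Poincar\'e series of Proposition~\ref{caseMC1}). So there is no ``paper's own proof'' to compare against.

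Your sketch is a reasonable outline of a direct computation and is in the spirit of how \cite{KHo} proceeds (cell filtrations of representation spheres and the resulting long exact sequences/spectral sequences). Two cautions. First, watch for circularity: you propose to resolve the connecting maps using the ring structure of \cite{BH} and the behaviour of the $\ka_i$, but \cite{BH} builds on the additive computation of \cite{KHo}, and the present paper in turn uses Proposition~\ref{caseMC1} to pin down the mixed cones. If you want an independent proof, you should determine the differentials purely from the chain-level description of $S^V$ (the $(1+\sigma)$-type boundary maps) rather than from downstream ring relations. Second, your last paragraph underestimates the bookkeeping: the ``join'' differentials between the $K/\ker(\alpha)$-cells and the $K/\ker(\beta)$-cells in $S(l\alpha)\ast S(m\beta)$ do not all reduce to a single restriction map --- they depend on the cell dimensions, and sorting out which of them are zero versus isomorphisms is exactly what produces the two regimes $k\le l,m$ versus $k>l$. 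That case split does not fall out formally from ``restriction sends Euler class to zero''; it requires tracking the ranks degree by degree, which is the actual content of the computation in \cite{KHo}.
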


The following is the description of the $RO(C_2\times\Sigma_2)-$graded cohomology groups of a point in the Mixed Cone of Type II.

\begin{proposition} (\cite{KHo}) \label{caseMC2} Let $j,k\geq 1,l\geq 0$. Then the Poincar\'e series for $H^{-*+l\alpha-j\beta-k\gamma}_{Br,K}(pt,\Z/2)$ is 
$$\frac{1}{x^{j+k-l}}(1+...+x^{j-l-2})(1+...+x^{k-l-2})+\frac{1}{x^{l+1}}(1+...+x^l)(1+...+x^{l-1}),$$
if $j,k\geq l+1$ or 
$$\frac{1}{x^j}(1+...+x^{j-2})(1+...+x^{l-k})+\frac{1}{x^k}(1+...+x^{l-1})(1+...+x^{k-1})$$
if $l\geq k$. Swapping the role of $j$ and $k$ gives the case $l\geq j$.
\end{proposition}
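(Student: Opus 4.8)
The result is originally due to \cite{KHo}; here is how I would recover it. The plan is an induction on the three coefficients, built from the three $C_2\times\Sigma_2$-topological isotropy cofiber sequences of the form (\ref{eqn:cof3}), with the one- and two-representation Poincaré series of Proposition \ref{PC} as base cases. By the evident $\Sigma_3$-symmetry of $K$ permuting its three nontrivial characters $\sigma,\epsilon,\sigma\otimes\epsilon$, it suffices to treat $\alpha=\sigma$, $\beta=\epsilon$, $\gamma=\sigma\otimes\epsilon$, so the task is to compute the Poincaré series of $H^{-*+l\sigma-j\epsilon-k(\sigma\otimes\epsilon)}_{Br}(pt)$ for $l,j,k\geq 1$.

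The inductive step I would use strips the direction $\epsilon$. Smashing the cofiber sequence $(K/C_2)_+\to pt_+\to S^{\epsilon}$ associated to $\epsilon$ (here $C_2=\ker\epsilon$) with the representation sphere $S^{-l\sigma+(j-1)\epsilon+k(\sigma\otimes\epsilon)}$ and taking reduced cohomology produces a long exact sequence whose two ``point'' terms are $H^{-*+l\sigma-j\epsilon-k(\sigma\otimes\epsilon)}_{Br}(pt)$ and $H^{-*+l\sigma-(j-1)\epsilon-k(\sigma\otimes\epsilon)}_{Br}(pt)$, joined by multiplication by the Euler class $a_\epsilon$, while the third term is, by the induction--restriction adjunction, the $RO(C_2)$-graded group $H^{-*+(l-k)\sigma}_{Br,C_2}(pt)$ shifted in cohomological degree by $j-1$: a truncated polynomial piece of length $l-k+1$ when $l\ge k$ (Proposition \ref{PC}(b)) and a negative-cone piece when $k>l$ (Proposition \ref{PC}(c)). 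Decreasing $j$ this way terminates at $j=0$ in the two-representation group $H^{-*+l\sigma-k(\sigma\otimes\epsilon)}_{Br}(pt)$ of Proposition \ref{PC}(e). Running the symmetric procedure for $\gamma=\sigma\otimes\epsilon$ (whose restricted term is a positive or negative $RO(C_2)$-cone according to $\operatorname{sign}(l-j)$) or for $\sigma$ (whose restricted term is the $RO(\Sigma_2)$-graded negative cone $H^{-*-(j+k)\epsilon}_{Br,\Sigma_2}(pt)$, with base case Proposition \ref{PC}(f) at $l=0$) gives alternative routes; equivalently one may smash the single cofiber sequence $S(j\epsilon+k(\sigma\otimes\epsilon))_+\to pt_+\to S^{j\epsilon+k(\sigma\otimes\epsilon)}$ with $S^{l\sigma}$ and compute $\tilde H^{\st}$ of $S(j\epsilon+k(\sigma\otimes\epsilon))$ from its $K$-CW structure, whose cells have the three index-two subgroups as isotropy.

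The hard part is the identification of the connecting homomorphisms. One cannot simply add the Poincaré series of the outer terms of the long exact sequence, because on the mixed and negative parts of $H^{\star}_{Br,K}(pt)$ the Euler-class maps $a_\epsilon$ and $a_{\sigma\otimes\epsilon}$ acquire nontrivial kernel and cokernel, so genuine cancellation occurs at each step; controlling exactly where is precisely the ``tedious'' multiplicative bookkeeping the paper alludes to. This is also what forces the case division of the statement: the restricted $RO(C_2)$-term entering the sequence is a truncated polynomial (``positive cone'') piece when the relevant coefficient difference is nonnegative and a ``negative cone'' piece when it is negative, and the two behave oppositely in the exact sequence, so the three regimes $j,k\ge l+1$, $l\ge k$, and $l\ge j$, detected by the signs of $l-k$ and $l-j$, must be treated separately. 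In the subrange where Theorem \ref{vanishing} applies this bookkeeping can be partly bypassed: there one reads $H^{\star}_{Br,K}(pt)$ off from the isotropy long exact sequence $\tilde H^{\st}(\et)\to H^{\st}(pt)\to H^{\st}(\ek)\to\tilde H^{\st+1}(\et)$ together with the computations of $H^{\st}_{Br,K}(\ek)$ (Theorem \ref{2nd}) and of $\tilde H^{\st}_{Br,K}(\et)$ (Theorem \ref{et}), since there the map $\tilde H^{\st}(\et)\to H^{\st}(pt)$ vanishes for degree reasons and $H^{\star}_{Br,K}(pt)$ is cut out as a kernel.
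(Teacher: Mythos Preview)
The paper does not prove this proposition; it is quoted from \cite{KHo} as input for the rest of the paper, so there is no in-paper argument to compare yours against.

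Your setup is the standard one and almost certainly what \cite{KHo} does: peel off one character at a time via $(K/\ker\chi)_+\to pt_+\to S^\chi$, identify the restricted term with a shifted $RO(C_2)$-graded point computation, and use Proposition~\ref{PC} as base. The identification of the restricted term as $H^{-*+(l-k)\sigma}_{Br,C_2}(pt)$ (up to an integer shift) and your explanation of why the case split is governed by the signs of $l-k$ and $l-j$ are both correct. But what you have written is an outline, not a proof: the entire content of the result lies in determining the rank of the Euler-class map (equivalently, of the connecting homomorphism) at each inductive step, and you have explicitly declined to do this --- you have only named the difficulty. To complete the argument you must track these ranks degree by degree through the induction, which is exactly the computation \cite{KHo} carries out.

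Your closing suggestion, to shortcut part of the bookkeeping via the $E_{\Sigma_2}C_2$ isotropy sequence together with Theorems~\ref{vanishing}, \ref{et}, and~\ref{2nd}, does not help for this proposition. Theorem~\ref{vanishing} (and its two symmetric analogues) gives vanishing only when the two coefficients corresponding to the chosen pair of characters are both nonnegative. The Mixed Cone of Type~II has exactly one positive and two negative coefficients among $\{\sigma,\epsilon,\sigma\otimes\epsilon\}$, so for none of the three symmetric isotropy sequences does the vanishing range meet the degrees in Proposition~\ref{caseMC2}; the ``subrange where Theorem~\ref{vanishing} applies'' is empty here.
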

The following is the description of the $RO(C_2\times\Sigma_2)-$graded cohomology groups of a point in the negative cone:

\begin{proposition}(\cite{KHo})\label{NC} Let $p,b,q\leq -1$. Then the Poincar\'e series for $H^{-*+p\sigma+b\epsilon+q\sigma\otimes\epsilon}_{Br,K}(pt,\Z/2)$ is 
$$\frac{1}{x^{-p-b-q}}[(1+x+...+x^{-b-q-2})(1+...+x^{-p-2})+x^{-p-1}(1+...+x^{-q-1})(1+...+x^{-b-1})].$$
\end{proposition}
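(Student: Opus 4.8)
The plan is an induction on $i+j+k$ built on the isotropy cofiber sequences $S(\chi)_+\to pt_+\to S^\chi$ --- sequence \eqref{eqn:cof3} for $\chi=\si$, together with its two symmetric variants --- where $\chi$ runs over the three nontrivial characters $\al,\be,\ga$ of $K$. Since $\mathrm{Aut}(K)\cong S_3$ permutes $\{\al,\be,\ga\}$ and Bredon cohomology is natural in the group, $\dim_{\Z/2}H^{-a-i\al-j\be-k\ga}_{\Br,K}(pt)$ is unchanged under a simultaneous permutation of $(i,j,k)$ and of the characters; in particular the Poincaré series in the statement is symmetric in $(i,j,k)$, and at each step we may decrease whichever coefficient is convenient.

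Fix $\chi$ and put $H_\chi:=\ker\chi\cong C_2$, so $S(\chi)=K/H_\chi$ and the cofiber sequence yields the long exact sequence
\[
\cdots\to H^{V-\chi}_{\Br,K}(pt)\xrightarrow{\,\cdot a_\chi\,}H^{V}_{\Br,K}(pt)\xrightarrow{\,\mathrm{res}\,}H^{V|_{H_\chi}}_{\Br}(pt)\xrightarrow{\,\delta\,}H^{V+1-\chi}_{\Br,K}(pt)\to\cdots .
\]
On $H_\chi$ the character $\chi$ restricts to a trivial line, while the other two characters restrict to the sign representation $\si$; thus for $V=-m\al+n\be+p\ga+q$ one has $V|_{H_\al}=(q-m)+(n+p)\si$, and that $RO(C_2)$-graded term is governed by Stong's computation of $H^{\st}_{\Br}(pt)$. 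The decisive point is that $\mathrm{res}$ vanishes whenever the $\chi$-coefficient of $V$ is $\le-1$ (and trivially whenever $H^{V}_{\Br,K}(pt)=0$): $a_\chi$ restricts to the Euler class of a trivial line, namely $0$, and in those degrees every class of $H^{V}_{\Br,K}(pt)$ is infinitely $a_\chi$-divisible, hence lies in $\im(\cdot a_\chi)$. The long exact sequence then splits into short exact sequences, which on Poincaré series give
\[
P\!\left(H^{-*+V-\chi}_{\Br,K}(pt)\right)=x^{-1}\,P\!\left(H^{-*+V|_{H_\chi}}_{\Br}(pt)\right)+P\!\left(H^{-*+V}_{\Br,K}(pt)\right).
\]

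The base case is $(i,j,k)=(1,1,1)$: here $H^{-*-\be-\ga}_{\Br,K}(pt)=0$ by Proposition~\ref{PC}(f), so the identity with $\chi=\al$, $V=-\be-\ga$, $V|_{H_\al}=-2\si$ gives $P(1,1,1)=x^{-1}\cdot x^{-2}=x^{-3}$, as stated. For the inductive step with $i+j+k\ge4$ we may assume $i\ge2$ by the symmetry above; applying the identity with $\chi=\al$ and $V=-(i-1)\al-j\be-k\ga$, whose $\al$-coefficient is $\le-1$, and using $V|_{H_\al}=-(i-1)-(j+k)\si$ together with $P(H^{-*-(j+k)\si}_{\Br}(pt))=x^{-(j+k)}(1+x+\cdots+x^{j+k-2})$, we obtain
\[
P(i,j,k)=x^{-(i+j+k)}(1+x+\cdots+x^{j+k-2})+P(i-1,j,k).
\]
A direct check --- the symmetry of the displayed closed form in $(i,j,k)$, plus the elementary identity $(1+x+\cdots+x^{n-1})-x(1+x+\cdots+x^{n-2})=1$ --- shows that the closed form satisfies this recursion, which closes the induction.

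The one non-formal ingredient is the collapsing step: the vanishing of $\mathrm{res}\colon H^{V}_{\Br,K}(pt)\to H^{V|_{H_\chi}}_{\Br}(pt)$ when the $\chi$-coefficient of $V$ is negative, equivalently the $a_\chi$-divisibility of those groups. This is the familiar phenomenon that the cones containing a negative coefficient are $a$-divisible; one may take it as input, or prove it by a companion downward induction tracking $a_\chi$-multiplications and the connecting maps $\delta$ --- the bookkeeping underlying the computations of \cite{KHo} and \cite{BH}. Granting it, the base case, the symmetry reductions, and the polynomial identity are all routine.
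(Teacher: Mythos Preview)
The paper does not prove this proposition at all; it is quoted from \cite{KHo}, whose method (explicit cellular chain complexes for $S^V$) is entirely different from your inductive cofiber-sequence approach.  So there is no ``paper's proof'' to compare against, and your strategy is a genuinely independent route.  The recursion you derive,
\[
P(i,j,k)=x^{-(i+j+k)}(1+\cdots+x^{j+k-2})+P(i-1,j,k),
\]
together with the base case from Proposition~\ref{PC}(f), does pin down the closed form; that part of your argument is correct.

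However, your ``decisive point'' --- that $\mathrm{res}$ vanishes whenever the $\chi$-coefficient of $V$ is $\le -1$, equivalently that such classes are $a_\chi$-divisible --- is \emph{false} as stated.  Take $\chi=\si$ and $\kappa_1\in H^{-1,-1,1,1}_{\Br,K}(pt)$: the $\si$-coefficient is $-1$, yet $\kappa_1 y_1=y_2y_3$ gives $\mathrm{res}^K_{\Sigma_2}(\kappa_1)=y_\ep^2\neq 0$ (here $y_\ep$ is the $RO(\Sigma_2)$-class of degree $-1+\ep$, and $\mathrm{res}(y_1)$ is the unit since $\si|_{\Sigma_2}$ is trivial).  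Equivalently, $H^{-1,-2,1,1}_{\Br,K}(pt)=0$ by Theorem~\ref{kconj}, so $\kappa_1$ is \emph{not} $x_1$-divisible.  The phenomenon you invoke (``cones with a negative coefficient are $a$-divisible'') holds for $C_2$ but does not extend to $K$ in this generality.

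Fortunately your proof only \emph{uses} the claim for $V$ in the negative cone, and there it is true --- but for a different reason than the one you give.  A clean argument: since $\mathrm{res}^K_{\Sigma_2}(y_1)$ is a unit, for $\alpha\in H^{a,p,b,q}_{\Br,K}(pt)$ with $p,b,q\le -1$ one has $\mathrm{res}(\alpha)=\mathrm{res}(y_1^{-p}\alpha)$.  Now $y_1^{-p}\alpha\in H^{a+p,0,b,q}_{\Br,K}(pt)$ is a $\Z/2$-combination of products $\frac{\theta_2}{x_2^{n_2}y_2^{m_2}}\cdot\frac{\theta_3}{x_3^{n_3}y_3^{m_3}}$ (Proposition~\ref{PC}(f)), and since $\ep|_{\Sigma_2}=\so|_{\Sigma_2}$ is the sign representation, each factor restricts to a negative-cone class of $\MMt$; their product is therefore zero because all products in the negative cone of $\MMt$ vanish.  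This replaces your unproved $a_\chi$-divisibility and closes the gap; the vague ``companion downward induction'' is not needed.
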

\newpage
The next part of the section is a description of the positive cone of $RO(C_2\times \Sigma_2)$ cohomology ring of a point and of the list of important cohomology classes and their relations from the negative and mixed cones of $RO(C_2\times \Sigma_2)$ cohomology ring of a point. This part reviews the main results from \cite{BH}.

{\bf{Positive Cone}}

We have for $V$ an actual  $C_2\times \Sigma_2$-representation the following equality of Mackey functors (we call them positive cones):
\begin{align*}
	\ul{\pi}^K_p(S^V \wedge H\ul{\mathbb{Z}/2}) \cong \ul{H}^{Br,K}_p(S^V;\ul{\mathbb{Z}/2}) \cong \ul{H}^{Br,K}_{p-V}(pt;\ul{\mathbb{Z}/2}) \cong \ul{H}^{-p+V}_{Br,K}(pt;\ul{\mathbb{Z}/2}).
\end{align*}
Then we have the generators of the positive cones $\mathbb{Z}/2[x_i,y_i]$ corresponding to the three $C_2$ cases. We denote $\pi^K_*$ the top level of the Mackey functor given by the equivariant stable homotopy group.
\begin{align*}
	x_1 &\in \pi^K_0(S^{0,1,0,0}\wedge H\ul{\mathbb{Z}/2}) \cong {H}_{Br,K}^{\sigma}(pt;\ul{\mathbb{Z}/2}) \cong \mathbb{Z}/2,\\
	y_1 &\in \pi^K_1(S^{0,1,0,0}\wedge H\ul{\mathbb{Z}/2}) \cong {H}_{Br,K}^{-1+\sigma}(pt;\ul{\mathbb{Z}/2}) \cong  \mathbb{Z}/2,\\
	x_2 &\in \pi^K_0(S^{0,0,1,0}\wedge H\ul{\mathbb{Z}/2}) \cong {H}_{Br,K}^{\epsilon}(pt;\ul{\mathbb{Z}/2}) \cong  \mathbb{Z}/2,\\
	y_2 &\in \pi^K_1(S^{0,0,1,0}\wedge H\ul{\mathbb{Z}/2}) \cong {H}_{Br,K}^{-1+\epsilon}(pt;\ul{\mathbb{Z}/2}) \cong  \mathbb{Z}/2,\\
	x_3 &\in \pi^K_0(S^{0,0,0,1}\wedge H\ul{\mathbb{Z}/2}) \cong {H}_{Br,K}^{\sigma\otimes\epsilon}(pt;\ul{\mathbb{Z}/2}) \cong  \mathbb{Z}/2,\\
	y_3 &\in \pi^K_1(S^{0,0,0,1}\wedge H\ul{\mathbb{Z}/2}) \cong {H}_{Br,K}^{-1+\sigma\otimes\epsilon}(pt;\ul{\mathbb{Z}/2}) \cong  \mathbb{Z}/2.
\end{align*}
\begin{theorem} (\cite{BH})\label{pcon}
	The Mackey functor structure of the positive cone in $\ul{\pi}^K_\star H\ul{\mathbb{Z}/2}$ is given by the Mackey functor of $RO(C_2\times \Sigma_2)$-graded rings
	\begin{equation*}
		\begin{tikzcd}
			& \frac{\F_2[x_1,y_1,x_2,y_2,x_3,y_3]}{(x_1y_2y_3+y_1x_2y_3+y_1y_2x_3)}\ar[dl,"Res^K_{\Si_2}"'] \ar[d,"Res^K_{C_2}"] \ar[dr,"Res^K_{\Delta}"] & \\
			\frac{\F_2[y_1,x_2,y_2,x_3,y_3]}{(x_2y_3+y_2x_3)}\ar[dr,"Res^{\Si_2}_{e}"'] & \frac{\F_2[x_1,y_1,y_2,x_3,y_3]}{(x_1y_3+y_1x_3)}\ar[d,"Res^{C_2}_{e}"] & \frac{\F_2[x_1,y_1,x_2,y_2,y_3]}{(x_1y_2+y_1x_2)}\ar[dl,,"Res^{\Delta}_{e}"]\\
			& \F_2[y_1,y_2,y_3] & 
		\end{tikzcd}
	\end{equation*}
	where each restriction map is the identity on a generator of the domain that is also a generator of the codomain and is zero on a generator otherwise. For example, the restriction of the $x_1$ in the top level is zero in $\frac{\mathbb{Z}/2[y_1,x_2,y_2,x_3,y_3]}{(x_2y_3+y_2x_3)}$ and is $x_1$ in $\frac{\mathbb{Z}/2[x_1,y_1,y_2,x_3,y_3]}{(x_1y_3+y_1x_3)}$ and $\frac{\mathbb{Z}/2[x_1,y_1,x_2,y_2,y_3]}{(x_1y_2+y_1x_2)}.$ The transfer maps are always zero.
\end{theorem}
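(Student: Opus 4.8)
The positive cone at the top level, namely $\F_2[x_1,y_1,x_2,y_2,x_3,y_3]/(x_1y_2y_3+y_1x_2y_3+y_1y_2x_3)$, is the ring determined in \cite{BH} (see also \cite{HS}), and one may simply invoke that result; alternatively it is recovered by the paper's counting method, since by Proposition \ref{pc} the degree $-2+\si+\ep+\so$ carrying the monomials $x_1y_2y_3$, $y_1x_2y_3$, $y_1y_2x_3$ is two--dimensional, which forces the unique $S_3$--symmetric relation among these three, after which a term--by--term comparison of the Hilbert series of $\F_2[x_i,y_i]/(\mathrm{rel})$ with Propositions \ref{PC} and \ref{pc} promotes this to a ring isomorphism. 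What is left is to identify the restriction and transfer maps. The proper subgroups of $K$ are the three order--$2$ subgroups $\Si_2=1\times\Si_2$, $C_2=C_2\times 1$, $\Delta$ and the trivial group $e$; since the constant Mackey functor restricts to the constant Mackey functor, the value at such an $H$ is Stong's $RO(C_2)$--graded Bredon cohomology of a point (published in \cite{dug:kr}) for $H$ of order $2$, and $\F_2$ concentrated in degree $0$ for $H=e$, in each case reindexed along the forgetful map $RO(K)\to RO(H)$.

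First I would compute each $\Res^K_H$ on the generators $x_i,y_i$; being a ring map it is then determined. The values are dictated by how $\si,\ep,\so$ restrict. For $H=\Si_2$ one has $\si|_H$ trivial and $\ep|_H\cong\so|_H$ the sign representation, so $x_1\in H^\si_K(pt)$ maps into the vanishing group $H^1_{\Si_2}(pt)=0$, while $y_1\in H^{-1+\si}_K(pt)$ maps into $H^0_{\Si_2}(pt)\cong\F_2$ and $x_2,x_3,y_2,y_3$ into nonzero sign--graded groups; by naturality of Euler and orientation classes all of these hit the evident generators, which are nonzero by the $C_2$--computation. Hence $\Res^K_{\Si_2}$ kills $x_1$ and is the identity on the remaining five generators, as claimed, and the target relation $x_2y_3+y_2x_3=0$ holds because applying $\Res^K_{\Si_2}$ to the top relation kills its $x_1$--term while multiplication by $y_1|_{\Si_2}$ is injective on the positive cone (equivalently, both products restrict further to $x_{\Si_2}y_{\Si_2}$ over $RO(\Si_2)$). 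The subgroups $C_2$ and $\Delta$ are handled identically after permuting $\{\si,\ep,\so\}$, and for $H=e$ every representation becomes trivial, so every $x_i$ dies and every $y_i$ survives, yielding the surjection onto $\F_2[y_1,y_2,y_3]$; the identities $\Res^H_e\circ\Res^K_H=\Res^K_e$ then pin down the three maps $\Res^H_e$ as well.

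Next I would check that each restriction is surjective onto the stated ring with exactly the stated kernel; this is the Hilbert--series bookkeeping. For example, in $RO(K)$--degree $-p+l\si+m\ep+n\so$ the ring $\F_2[y_1,x_2,y_2,x_3,y_3]/(x_2y_3+y_2x_3)$ is one--dimensional precisely when $l\le p\le l+m+n$, matching the reindexed $H^{(-p+l)+(m+n)\ep}_{\Si_2}(pt)$, while the top level is nonzero there (Propositions \ref{PC}, \ref{pc}) and surjects onto it because the normal--form monomial not involving $x_1$ is already nonzero at the top level. With surjectivity of all restrictions in hand, the transfer statement is formal: for the constant Green functor $\ul{\Z/2}$ the transfer in degree $0$ is multiplication by the index, so $\Tr^K_H(1)=[K:H]\cdot 1=0$ and likewise $\Tr^K_e(1)=\Tr^H_e(1)=0$; by Frobenius reciprocity, writing a class $c$ at the $H$--level as $\Res^K_H(\tilde c)$ gives $\Tr^K_H(c)=\Tr^K_H(\Res^K_H(\tilde c)\cdot 1)=\tilde c\cdot\Tr^K_H(1)=0$, and the same computation kills the transfers out of $e$.

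The main obstacle is precisely the Hilbert--series comparison of the third paragraph: confirming, across the distinct regimes of Propositions \ref{PC} and \ref{pc}, that the restriction of the top ring onto the subring generated by the surviving generators reproduces the reindexed Stong computation with no slack, so that the displayed kernels are exactly right. The auxiliary input --- that the restricted generators are nonzero --- is routine naturality, and once surjectivity of the restrictions is established nothing further is needed for the transfers.
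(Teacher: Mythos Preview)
The paper does not prove this theorem; it is quoted from \cite{BH} without argument. Your sketch is essentially the standard proof and is correct. The identification of the middle and lower levels via restriction of representations, together with the Frobenius reciprocity argument $\Tr^K_H(c)=\Tr^K_H(\Res^K_H(\tilde c))=\tilde c\cdot\Tr^K_H(1)=\tilde c\cdot[K:H]=0$, is exactly how one establishes this Mackey functor description. One minor clarification: when you restrict to $\Si_2$, the class $y_1$ lands in $H^{-1+\si}_K(\Si_2)\cong H^0_{\Si_2}(pt)=\F_2$ and is in fact the unit there, so the restricted relation $y_1(x_2y_3+y_2x_3)=0$ immediately gives $x_2y_3+y_2x_3=0$; your phrasing ``multiplication by $y_1|_{\Si_2}$ is injective'' is correct but understates that $y_1$ is actually invertible at this level (compare Theorem~\ref{middle}). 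The Hilbert--series bookkeeping you flag is indeed the only real work, and it goes through.
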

{\bf{The negative and mixed cones}}

We first consider the top level $\pi^K_*(S^V \wedge H\ul{\mathbb{Z}/2}).$ The generators and relations from these section were given in \cite{BH}. 

We again have the generators of the three negative cones from the $C_2$ cases:
\begin{align*}
	\theta_1 & \in \pi^K_{-2}(S^{0,-2,0,0}\wedge H \ul{\mathbb{Z}/2}) \cong {H}_{Br,K}^{2-2\sigma}(pt;\ul{\mathbb{Z}/2}) \cong \mathbb{Z}/2,\\
	\theta_2 & \in \pi^K_{-2}(S^{0,0,-2,0}\wedge H \ul{\mathbb{Z}/2}) \cong {H}_{Br,K}^{2-2\epsilon}(pt;\ul{\mathbb{Z}/2}) \cong \mathbb{Z}/2,\\
	\theta_3 & \in \pi^K_{-2}(S^{0,0,0,-2}\wedge H \ul{\mathbb{Z}/2}) \cong {H}_{Br,K}^{2-2\sigma\otimes\epsilon}(pt;\ul{\mathbb{Z}/2}) \cong \mathbb{Z}/2.
\end{align*}
However, we also have seven new classes:
\begin{align*}
	\Theta &\in \pi^K_{-3} (S^{0,-1,-1,-1}\wedge H \ul{\mathbb{Z}/2}) \cong {H}_{Br,K}^{3-\sigma-\epsilon-\sigma\otimes\epsilon}(pt;\ul{\mathbb{Z}/2}) \cong \mathbb{Z}/2,\\
	\kappa_1 &\in \pi^K_{1} (S^{0,-1,1,1}\wedge H \ul{\mathbb{Z}/2}) \cong {H}_{Br,K}^{-1-\sigma+\epsilon+\sigma\otimes\epsilon}(pt;\ul{\mathbb{Z}/2}) \cong \mathbb{Z}/2,\\
	\kappa_2 &\in \pi^K_{1} (S^{0,1,-1,1}\wedge H \ul{\mathbb{Z}/2}) \cong {H}_{Br,K}^{-1+\sigma-\epsilon+\sigma\otimes\epsilon}(pt;\ul{\mathbb{Z}/2}) \cong \mathbb{Z}/2,\\
	\kappa_3 &\in \pi^K_{1} (S^{0,1,1,-1}\wedge H \ul{\mathbb{Z}/2}) \cong {H}_{Br,K}^{-1+\sigma+\epsilon-\sigma\otimes\epsilon}(pt;\ul{\mathbb{Z}/2}) \cong \mathbb{Z}/2,\\
	\iota_1 &\in \pi^K_{-1} (S^{0,1,-1,-1}\wedge H \ul{\mathbb{Z}/2}) \cong {H}_{Br,K}^{1+\sigma-\epsilon-\sigma\otimes\epsilon}(pt;\ul{\mathbb{Z}/2}) \cong \mathbb{Z}/2,\\
	\iota_2 &\in \pi^K_{-1} (S^{0,-1,1,-1}\wedge H \ul{\mathbb{Z}/2}) \cong {H}_{Br,K}^{1-\sigma+\epsilon-\sigma\otimes\epsilon}(pt;\ul{\mathbb{Z}/2}) \cong \mathbb{Z}/2,\\
	\iota_3 &\in \pi^K_{-1} (S^{0,-1,-1,1}\wedge H \ul{\mathbb{Z}/2}) \cong {H}_{Br,K}^{1-\sigma-\epsilon+\sigma\otimes\epsilon}(pt;\ul{\mathbb{Z}/2}) \cong \mathbb{Z}/2.\\
\end{align*}
These classes satisfy the following relationships. For each $\{i,j,k\} = \{1,2,3\}$ (i.e. they are all distinct), we have
\begin{align*}
	\iota_i \theta_i  &= \Theta \text{ and } \kappa_i\theta_j = \iota_k,\\
	\theta_j \theta_k &\neq 0,\\
	\iota_i \theta_j &= 0,\\
	\iota_i \kappa_i &= 0,\\
	\Theta^2 &= \theta_i \Theta = \kappa_i \Theta = \iota_i \Theta = 0,\\
	\iota_i \iota_j &= 0,\\
\end{align*}
and we can think of $\Theta$ as being divisible by $\theta_1,\theta_2$ and $\theta_3,$ where $\iota_i \theta_i= \Theta$  and $\kappa_i\theta_j\theta_k  = \Theta$. 

We also have that 
\begin{equation}\label{eq4}
\begin{aligned}
\theta_ix_i=0,\\
\theta_iy_i=0,\\
\theta_i^2=0,\\
\end{aligned}
\end{equation}
from the description of $RO(C_2)$-graded cohomology of a point.

Notice that the relations $\kappa_i\theta_j = \iota_k$ imply relations
\begin{equation}\label{eq1}
\begin{aligned}
            \ka_1\theta_3=\ka_3\theta_1,\\
            \ka_2\theta_1=\ka_1\theta_2,\\
            \ka_3\theta_2=\ka_2\theta_3.\\
\end{aligned}
\end{equation}
By degree reasons, we have
\begin{align*}
	\Theta x_i = \Theta y_i = 0
\end{align*}
for all $i\in \{1,2,3\}$ and similarly
\begin{align*}
	\iota_i x_j = \iota_i y_j = 0
\end{align*}
for all $i,j \in \{1,2,3\}$ with $i\neq j.$ However, it is not true that $\kappa_i x_i = \kappa_i y_i = 0$ for all $i \in \{1,2,3\}.$\\
For each $\{i,j,k\}=\{1,2,3\}$ we have $\kappa_i^2\neq 0$ as well as the following relations
\begin{equation}\label{eq2}
\begin{aligned}
	\kappa_i x_i &= x_j y_k + y_j x_k,\\
	\kappa_i y_i &= y_j y_k,\\
	\kappa_i \kappa_j &= y_k^2.\\
\end{aligned}
\end{equation}
We cannot express $\kappa_i^2$ in terms of $x_i,y_i$ and $\theta_i.$\\

If $x_1^{i_1}y_1^{j_1}x_2^{i_2}y_2^{j_2}x_3^{i_3}y_3^{j_3}$ is a monomial then from Proposition 4.27 \cite{BH} we have that 
$$\frac{\theta_1}{x_1^{i_1}y_1^{j_1}}\frac{\theta_2}{x_2^{i_2}y_2^{j_2}}\frac{\theta_3}{x_3^{i_3}y_3^{j_3}}=0.$$
We use below the notation
\begin{align*}
	\frac{\mathbb{Z}/2[x_1,y_1,x_2,y_2,x_3,y_3]}{(x_1^\infty,y_1^\infty)}\{\kappa_1\}
\end{align*}
to denote the $\mathbb{Z}/2$-linear span of elements of the form
\begin{align*}
	\frac{\kappa_1}{x_1^{i_1}y_1^{j_1}}x_2^{i_2}y_2^{j_2}x_3^{i_3}y_3^{j_3}
\end{align*}
where $x_1^{i_1}y_1^{j_1}x_2^{i_2}y_2^{j_2}x_3^{i_3}y_3^{j_3}$ is a monomial in $\mathbb{Z}/2[x_1,y_1,x_2,y_2,x_3,y_3].$ We call elements in this span $\ka_1$-chains, and similarly for $\ka_i,\theta_i, \Theta$ and $\io_i.$ Notice that this is an abstract notation for chains and not necessary the notation for a cohomology class (we can see that the cohomology class $k_1$ is not divisible by the cohomology classes $x_1$ and $y_1$). The notation does not a priori determine any multiplicative structure.

We have the following descriptions of the cohomology in terms of homology of a complex given in \cite{BH}. 
\begin{theorem} ([\cite{BH}, Theorem 4.31])\label{m1cone}
	The homology in the Mixed Cone of Type I corresponding to $\ka_1$ is given by the homology of the chain complex
	\begin{align*}
		0\to \frac{\mathbb{Z}/2[x_1,y_1,x_2,y_2,x_3,y_3]}{(x_1^\infty,y_1^\infty)}\{\kappa_1\} \xrightarrow{f} \frac{\mathbb{Z}/2[x_1,y_1,x_2,y_2,x_3,y_3]}{(x_1^\infty,y_1^\infty)}\{\theta_1\} \to 0
	\end{align*}
	where the map is given by multiplication with the polynomial $f = x_1y_2y_3+y_1x_2y_3+y_1y_2x_3$, and similarly for the other two mixed cones of type I.
\end{theorem}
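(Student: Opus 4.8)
The statement is essentially a repackaging of the chain-level discussion that precedes it, so the plan is to make that repackaging precise and then pass to homology. First I would set up the cellular chain complex: recall the $C_2$-CW structure on $S^{n\si}$ with reduced chain complex $C_*(S^{n\si})$ as displayed, extended to all $n\in\Z$ by Spanier--Whitehead duality; then $S^{i\si+j\ep+k\so}$ carries the $K$-CW structure obtained as a smash product, with reduced chain complex the triple tensor product over $\Z/2[K]$, so that the relevant part of $\ul{\pi}_\star H\ul{\Z/2}$ is computed by taking $K$-fixed points,
$$C=\bigoplus_{i,j,k\in\Z}\bigl(C_*(S^{i\si})\ot C_*(S^{j\ep})\ot C_*(S^{k\so})\bigr)^K.$$
Restricting the indices to $p=i\le -1$ and $b=j,\,q=k\ge 1$ isolates the Mixed Cone of Type I corresponding to $\ka_1$.

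Second, I would perform the homological reduction. The differentials of $C$ split into two kinds: those whose source is a single copy of $\Z/2$, and all the others. A source--target pair of the first kind spans an acyclic subcomplex, so a Gaussian-elimination argument cancelling all such pairs (and their images) produces the subquotient $T_1\op T_2\subseteq C$ together with an isomorphism $H_*(C)\cong H_*(T_1\op T_2)$. Here $T_1$ is the $\Z/2$-span of the $\ka_1$-chains $\fc{\ka_1}{x_1^{n_1}y_1^{m_1}}x_2^{n_2}y_2^{m_2}x_3^{n_3}y_3^{m_3}$ and $T_2$ is the span of the $\ta_1$-chains $\fc{\ta_1}{x_1^{n_1}y_1^{m_1}}x_2^{n_2}y_2^{m_2}x_3^{n_3}y_3^{m_3}$; crucially, $T_2$ is exactly the image of the triple tensor product of $C_2$-fixed-point chains, so the ambient differential vanishes on it.

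Third, I would compute the single surviving differential $T_1\to T_2$. Since a generator of $T_1$ is a genuine $K$-fixed chain and not a product of $C_2$-fixed chains, its boundary is in general nonzero; unwinding the definition of the tensor differential on $K$-fixed points and comparing with the relation $x_1y_2y_3+y_1x_2y_3+y_1y_2x_3$ that defines the positive cone (Theorem \ref{pcon}), one identifies the differential on $T_1$, under the fraction-to-product bookkeeping, with multiplication by $f=x_1y_2y_3+y_1x_2y_3+y_1y_2x_3$. The complex $C$ therefore reduces, up to isomorphism on homology, to $0\to T_1\xrightarrow{\,f\,}T_2\to 0$, and passing to homology yields the claim. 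The argument is symmetric in the three copies of $C_2$, which handles the other two mixed cones of Type I.

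The hard part is the interaction of steps two and three: verifying that, after cancelling every differential whose source is a single copy of $\Z/2$, the residual map on $T_1$ is precisely multiplication by $f$ and not some rescaled or twisted version of it, which requires careful bookkeeping of exactly which fixed-point generators survive the reduction in each of the (doubly infinite) index families. This is exactly the computation carried out in \cite{BH}, which I would cite for the detailed verification while keeping the structural outline above as the exposition.
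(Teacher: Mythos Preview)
Your proposal is correct and follows essentially the same approach as the paper: the paper itself does not prove this theorem but cites it from \cite{BH}, and the discussion immediately preceding the statement sketches exactly the argument you outline --- the triple tensor product chain complex $C$, the reduction to the subquotient $T_1\oplus T_2$ by eliminating differentials whose source is a single copy of $\Z/2$, the vanishing of the differential on $T_2$, and the identification of the residual differential on $T_1$ with multiplication by $f$. Your write-up is a faithful expansion of that sketch, and your deferral to \cite{BH} for the detailed bookkeeping matches what the paper does.
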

The elements of the codomain of the above map are all $\Z/2$ sums of cohomology classes generated by cohomology classes of the form  $\frac{\theta_1}{x_1^{n_1}y_1^{m_1}}x_2^{n_2}y_2^{m_2}x_3^{n_3}y_3^{m_3}$ with $n_i,m_j\geq 0$, but not all these sums are non-zero cohomology classes. Not every element of the domain of the above map represents a cohomology class.
\begin{theorem} ([\cite{BH}, Theorem 4.30])\label{ncon}
	The homology in the negative cone is given by the homology of the chain complex 
	\begin{align*}
		0 \to \frac{\mathbb{Z}/2[x_1,y_1,x_2,y_2,x_3,y_3]}{x_1^{i_1}y_1^{j_1}x_2^{i_2}y_2^{j_2}x_3^{i_3}y_3^{j_3}}\{\Theta\} \twoheadrightarrow  \frac{\mathbb{Z}/2[x_1,y_1,x_2,y_2,x_3,y_3]}{x_1^{i_1}y_1^{j_1}x_2^{i_2}y_2^{j_2}x_3^{i_3}y_3^{j_3}}\{\theta_1\theta_2\theta_3\}\to 0,
	\end{align*}
	where the map is given by multiplication with the polynomial $f = x_1y_2y_3+y_1x_2y_3+y_1y_2x_3$. Moreover  the homology in the negative cone is only given by the above map's kernel as the map in display is surjective.
\end{theorem}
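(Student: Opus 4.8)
The plan is to specialize the general machinery assembled above to the negative cone, exactly as Theorem \ref{m1cone} does for the mixed cone of type I. Recall that for each triple $(i,j,k)$ the homology in question is computed by $C = (C_*(S^{i\si}) \ot C_*(S^{j\ep}) \ot C_*(S^{k\so}))^K$, that $H_*(C) = H_*(T_1 \op T_2)$, and that the differential of $T_1 \op T_2$ vanishes on $T_2$ and is given on $T_1$ by multiplication with $f = x_1y_2y_3 + y_1x_2y_3 + y_1y_2x_3$ after the substitution of fractions-of-classes by products-of-classes. So the whole task is to identify $T_1$ and $T_2$ explicitly in the range $i,j,k \le -1$ cut out by the negative cone.

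First I would pin down $T_2$. For $m \ge 1$ the complex $C_*(S^{-m\si})$ is the Spanier--Whitehead dual of the positive one, concentrated in degrees $[-m,0]$, and $C_*(S^{-m\si})^{C_2}$ has homology the $C_2$ negative cone, with generators the $\theta_1$-chains $\theta_1/(x_1^{a}y_1^{b})$; similarly for $\ep$ and $\so$. Forming the triple tensor product over the three copies of $C_2$ and discarding, as in the passage $C \leadsto T_1 \op T_2$, the differentials whose domain is a single $\Z/2$, I would verify that the surviving $K$-fixed products are precisely the $\theta_1\theta_2\theta_3$-chains, so that $T_2 \cong \frac{\Z/2[x_1,\dots,y_3]}{(x_1^\infty,\dots,y_3^\infty)}\{\theta_1\theta_2\theta_3\}$, the module displayed in the statement (a degree check uses $|\theta_1\theta_2\theta_3| = 6 - 2\si - 2\ep - 2\so$). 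Next I would identify $T_1$: these are the $K$-fixed cells of the triple tensor product that do \emph{not} split as a product of three $C_2$-fixed cells, and they come from the single ``diagonal'' $C_2/C_2$-cells in the common bottom corner of the three dual complexes. I would show there is exactly one such cell per monomial in $x_1,y_1,\dots,x_3,y_3$, giving $T_1 \cong \frac{\Z/2[x_1,\dots,y_3]}{(x_1^\infty,\dots,y_3^\infty)}\{\Theta\}$ (with $|\Theta| = 3 - \si - \ep - \so$), and that the induced differential is the exact analogue of the map displayed before Theorem \ref{m1cone}, with $\ka_1$ replaced by $\Theta$, $\theta_1$ by $\theta_1\theta_2\theta_3$, and all six variables inverted.

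It then remains to pass to homology of $0 \to T_1 \xr{f} T_2 \to 0$. Since $f$ is a nonzero element of the domain $\Z/2[x_1,\dots,y_3]$ it is a nonzerodivisor, and as $T_1$ and $T_2$ are, up to a degree shift, the graded injective hull of $\Z/2$ over this ring, graded Matlis duality turns the injection ``multiplication by $f$ on $\Z/2[x_1,\dots,y_3]$'' into the surjection ``multiplication by $f$ from $T_1$ to $T_2$'', which justifies the $\twoheadrightarrow$ in the statement. As a consistency check one can match the resulting total dimensions against the Poincaré series of Proposition \ref{NC}.

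The main obstacle is the explicit identification of the subquotient $T_1 \op T_2$ when all three tensor factors are inverted simultaneously: one has to track which $K$-fixed cells of the triple tensor product of three dual chain complexes survive, which differentials are removed, and verify that the bookkeeping really produces exactly one $\Theta$-chain and one $\theta_1\theta_2\theta_3$-chain per monomial, with the differential matching $f$ on the nose. This is the ``all negative'' analogue of the counting carried out in \cite{BH} for the mixed cones; the passage to homology is then formal.
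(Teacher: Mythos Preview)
Your proposal is correct and follows essentially the same approach as the paper's sketch (which is itself a summary of the argument from \cite{BH}): specialize the general $T_1\op T_2$ machinery to the range $i,j,k\le -1$, identify $T_2$ with the $\theta_1\theta_2\theta_3$-chains and $T_1$ with the $\Theta$-chains, and observe that the differential is multiplication by $f$.

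The one point where you diverge is the justification of the $\twoheadrightarrow$. The paper (in the sentence immediately following the theorem) deduces surjectivity from the product relation
\[
\frac{\theta_1}{x_1^{i_1}y_1^{j_1}}\frac{\theta_2}{x_2^{i_2}y_2^{j_2}}\frac{\theta_3}{x_3^{i_3}y_3^{j_3}}=0,
\]
cited from \cite{BH}, which says precisely that every $\theta_1\theta_2\theta_3$-chain represents zero in cohomology and hence lies in $\im f$. Your Matlis-duality argument---$f$ is a nonzerodivisor on the polynomial ring, and $T_1\cong T_2$ is (up to a shift) the graded injective hull of $\Z/2$, so the dual of an injection is a surjection---is a clean alternative that avoids invoking that product formula. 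Both routes are short; yours is perhaps more self-contained, while the paper's makes the connection to the multiplicative structure explicit.
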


\begin{theorem} ([\cite{BH}, Theorem 4.32]) \label{m2cone}
	The homology in the Mixed Cone of Type II corresponding to $\io_1$ is given by the homology of the chain complex
	\begin{align*}
		0\to \frac{\mathbb{Z}/2[x_1,y_1,x_2,y_2,x_3,y_3]}{(x_2^\infty,y_2^\infty,x_3^\infty,y_3^\infty)}\{\iota_1\} \xrightarrow{f} \frac{\mathbb{Z}/2[x_1,y_1,x_2,y_2,x_3,y_3]}{(x_2^\infty,y_2^\infty,x_3^\infty,y_3^\infty)}\{\theta_2\theta_3\} \to 0
	\end{align*}
	where the map is given by multiplication with the polynomial $f = x_1y_2y_3+y_1x_2y_3+y_1y_2x_3$, i.e. the map
	\begin{align*}
		x_1^{i_1}y_1^{j_1}\frac{\iota_1}{x_2^{i_2}y_2^{j_2}x_3^{i_3}y_3^{j_3}}\mapsto f \cdot x_1^{i_1}y_1^{j_1}\frac{\theta_2\theta_3}{x_2^{i_2}y_2^{j_2}x_3^{i_3}y_3^{j_3}},
	\end{align*}
	and similarly for the other two Mixed Cones of Type II. 
\end{theorem}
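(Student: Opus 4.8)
The plan is to transcribe into the region $p\geq 1$, $b,q\leq -1$ the argument already used above for Theorems \ref{m1cone}--\ref{ncon}. Fix integers $i\geq 1$ and $j,k\geq 1$, so that the sphere $S^{i\si-j\ep-k\so}$ is the one relevant to the Mixed Cone of Type II corresponding to $\io_1$; its $K$-fixed point homotopy is computed by the summand
$$\bigl(C_*(S^{i\si})\ot C_*(S^{-j\ep})\ot C_*(S^{-k\so})\bigr)^K$$
of $C$, where $C_*(S^{i\si})$ is the positive $C_2$-cellular chain complex concentrated in degrees $[0,i]$ and $C_*(S^{-j\ep})$, $C_*(S^{-k\so})$ are the Spanier--Whitehead duals concentrated in negative degrees. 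Summing over all triples $(i,-j,-k)$ with $i,j,k\geq 1$ gives exactly the part of $C$ lying in this cone.

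Then I would perform the elementary reduction of \cite{BH}: delete from this part of $C$ every differential whose source is a single copy of $\Z/2$, and identify the resulting subquotient with $T_1\op T_2$, so that $H_*(C)=H_*(T_1\op T_2)$. In each of the two negative tensor directions the reduction leaves, after the single-$\Z/2$ differentials are removed, a $\theta$-class together with an inversion of the two generators of that copy of $C_2$, while the one positive direction contributes the honest polynomial generators $x_1,y_1$. This produces
$$T_2=\frac{\Z/2[x_1,y_1,x_2,y_2,x_3,y_3]}{(x_2^\infty,y_2^\infty,x_3^\infty,y_3^\infty)}\{\theta_2\theta_3\},$$
whose generators are genuine triple products of $C_2$-fixed-point chains and hence cycles, and
$$T_1=\frac{\Z/2[x_1,y_1,x_2,y_2,x_3,y_3]}{(x_2^\infty,y_2^\infty,x_3^\infty,y_3^\infty)}\{\io_1\},$$
whose generators are $K$-fixed-point chains that do not split as products, so that the labelling of the latter by $\io_1$-fractions is at this stage purely notational.

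Next I would read off the differential of $C$ restricted to $T_1$. Unwinding the triple-tensor-product differential on the single-$\Z/2$ summands that survive into $T_1$, and substituting each fraction of classes by the corresponding product of classes exactly as in the Type I and negative-cone cases, identifies it with multiplication by $f=x_1y_2y_3+y_1x_2y_3+y_1y_2x_3$, i.e. the map
$$x_1^{i_1}y_1^{j_1}\frac{\io_1}{x_2^{i_2}y_2^{j_2}x_3^{i_3}y_3^{j_3}}\mt f\cp x_1^{i_1}y_1^{j_1}\frac{\theta_2\theta_3}{x_2^{i_2}y_2^{j_2}x_3^{i_3}y_3^{j_3}}$$
of the displayed statement. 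Since the differential vanishes on $T_2$ and $H_*(C)=H_*(T_1\op T_2)$, the cohomology in the Mixed Cone of Type II corresponding to $\io_1$ is the homology of $0\to T_1\xr{f}T_2\to 0$; replacing $\si$ by $\ep$ or $\so$ throughout yields the statements for $\io_2$ and $\io_3$. As a consistency check one can match the resulting Poincaré series against Proposition \ref{caseMC2}.

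The main obstacle is the bookkeeping in the middle two steps: one must pin down precisely which single-$\Z/2$ differentials are discarded when two of the three tensor factors are negative, and then check that the surviving differential on $T_1$ is cleanly multiplication by the single polynomial $f$ after the chosen relabelling. The asymmetry between the one positive and the two negative directions is exactly where spurious correction terms could appear in the differential, and verifying that none do is the heart of the matter; everything else is a direct copy of the computations already carried out for the Type I and negative cones.
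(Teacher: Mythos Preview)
Your proposal is correct and follows essentially the same approach as the paper's sketch of the argument from \cite{BH}: reduce the triple tensor chain complex to the subquotient $T_1\op T_2$, identify $T_2$ with the products of $C_2$-fixed-point chains and $T_1$ with the non-splitting $K$-fixed-point chains carrying the $\io_1$ label, and verify that the surviving differential is multiplication by $f$. Your honest flagging of the bookkeeping in the two-negative-one-positive case as the heart of the matter is accurate; the paper does not spell this out either and simply refers to \cite{BH} for the details.
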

The elements of the codomain of the above map are all $\Z/2$ sums of cohomology classes generated by cohomology classes of the form  $\frac{\theta_2}{x_2^{n_2}y_2^{m_2}}\frac{\theta_3}{x_3^{n_3}y_3^{m_3}}x_1^{n_1}y_1^{m_1}$ with $n_i,m_j\geq 0$, but not all these sums are non-zero cohomology classes. Not every element of the domain of the above map represents a cohomology class.

We briefly sketch the proof of Theorem \ref{m1cone} (Theorems \ref{ncon}, \ref{m2cone} are similar) given in \cite{BH}. Recall that $S^{n\si}$ has a $G$-CW decomposition with a single $C_2$-cell in each dimension $1 \leq i \leq n$ and two $C_2/C_2$-cells in dimension $0.$ Then the reduced chain complex computing $\pi_*((\susp^{n\si}H\underline{\Z/2})^{C_2})$ is given by
\begin{align*}
	C_*(S^{n\si})^{C_2},
\end{align*}
where
\begin{align*}
	C_*(S^{n\si}):= \Z/2[C_2/C_2] \xl{\nabla} \Z/2[C_2/e] \xl{1+\si} \Z/2[C_2/e]\xl{1+\si} \Z/2[C_2/e]\cd \xl{1+\si} \Z/2[C_2/e]
\end{align*}
and concentrated in degrees inside the interval $[0,n].$ Here $1+\si$ is the map taking both basis elements in the domain to the sum of the basis elements in the codomain. By Spanier-Whitehead duality, we can extend this definition to include all $n< 0.$ Turning to the $C_2\ti C_2$ case, we have three such chain complexes - one for each copy of $C_2.$  Then the chain complex computing $\pi^K_\st((\susp^{i\si+j\ep+k\so}H\ul{\Z/2}))$ is given by
\begin{align*}
		C:=\bigoplus_{i,j,k \in \Z} (C_*(S^{i\si}) \ot  C_*(S^{j\ep}) \ot  C_*(S^{k\so}))^K.
\end{align*}
Then there exists a subquotient (see \cite{BH})
\begin{align*}
	 T_1 \op T_2 \subseteq C
\end{align*}
given by removing the (domain and image of) differentials whose domain is a single copy of $\Z/2.$ The generators of $T_1$ can be labeled with the appropriate fractions of classes, and the generators of $T_2$ can be labeled with the appropriate products of classes, for each cone. For example, when we consider the Mixed Cone of Type I corresponding to $\ka_1$ (see Theorem \ref{m1cone}), we have that
\begin{align*}
	T_1 = \frac{\mathbb{Z}/2[x_1,y_1,x_2,y_2,x_3,y_3]}{(x_1^\infty,y_1^\infty)}\{\kappa_1\} \txa T_2 = \frac{\mathbb{Z}/2[x_1,y_1,x_2,y_2,x_3,y_3]}{(x_1^\infty,y_1^\infty)}\{\theta_1\}.
\end{align*}

In particular, each element of $T_2$ is a product of chains representing cohomology classes from the copies of $C_2,$ and this identification is bijective modulo the differentials we removed from $C$; i.e. we have that $T_2$ restricted to Mixed Cone of Type I associated to $k_1$ is
\begin{align*}
		 T_2 = \bigoplus_{i<0,j\geq 0,k\geq 0} C_*(S^{i\si})^K \ot  C_*(S^{j\ep})^K \ot  C_*(S^{k\so})^K -\text{differentials}\subseteq C.
\end{align*}
Notice that in $T_2$ we take fixed points before passing to the tensor product, so the chains in $T_2$ come from products of $C_2$ cohomology classes. But in $T_1$ we take fixed points after passing to the tensor product, so we have chains that do not come from products of $C_2$ cohomology classes. Hence the fractions in $T_1$ are a purely notational labeling, whereas the fractions in $T_2$ are actual products, as stated in the following paragraph.

 Similarly, each generator of $T_1=\frac{\mathbb{Z}/2[x_1,y_1,x_2,y_2,x_3,y_3]}{(x_1^\infty,y_1^\infty)}\{\kappa_1\} $ represents a generator of a copy of $\Z/2$ in $C$ which comes from the triple tensor product of chain complexes. For example, we have that
 \begin{align*}
 	\Z/2 \fc{\ka_1}{x_1y_1} \op \Z/2 \fc{\ta_1}{x_1}y_2y_3 &= (C_{-2}(S^{-3\si})\ot C_1(S^\ep)\ot C_1(S^\so))^K\\
 	&\supseteq C_{-2}(S^{-3\si})^K\ot C_1(S^\ep)^K\ot C_1(S^\so)^K=\Z/2 \fc{\ta_1}{x_1}y_2y_3.
 \end{align*}

 However, each element of $T_1$ is a $K$-fixed point chain which does not decompose into a product of $C_2$-fixed point chains, so the labeling with fractions is purely notational, and does not a priori determine any multiplicative structure. In particular, we will later define division at the cohomology level in a different way. 

 An elementary reduction shows that $H_*(C) = H_*(T_1\op T_2),$ so it suffices to work with $T_1\op T_2.$ 
The differential of $C$ is zero on $T_2,$ since it consists of products, and unwinding the definitions shows that it is given by $f$ on $T_1,$ where $f$ is multiplication with the element $x_1y_2y_3+y_1x_2y_3+y_1y_2x_3$ after a substitution of the fractions of classes with the appropriate products of classes, for each cone. For example, when we restrict to the case of  the Mixed Cone of Type I corresponding to $\ka_1,$ we have that $f$ is given by
\begin{align*}
f: \ub{\frac{\mathbb{Z}/2[x_1,y_1,x_2,y_2,x_3,y_3]}{(x_1^\infty,y_1^\infty)}\{\kappa_1\}}_{T_1} &\to \ub{\frac{\mathbb{Z}/2[x_1,y_1,x_2,y_2,x_3,y_3]}{(x_1^\infty,y_1^\infty)}\{\theta_1\} }_{T_2}\\
\fc{\ka_1}{x_1^{n_1}y_1^{m_1}}x_2^{n_2}y_2^{m_2}x_3^{n_3}y_3^{m_3}&\mt (x_1y_2y_3+y_1x_2y_3+y_1y_2x_3)\cp\fc{\ta_1}{x_1^{n_1}y_1^{m_1}}x_2^{n_2}y_2^{m_2}x_3^{n_3}y_3^{m_3}.
\end{align*} Then it suffices to consider the chain complex
\begin{align*}
0 \to T_1 \xr{f} T_2\to 0,
\end{align*}
from which the result below follows by passing to homology. Thus, we can understand the cohomology in terms of $\ka_1$-chains and $\ta_1$-chains, and similarly for the other cones.

\subsection{The ring structure of the middle level of the Mackey functor $\ul{H}_{Br,K}^\st(pt,\Z/2)$}
		We first provide a simplification of the description of the middle level of $\ul{H}_{Br,K}^\st(pt,\mathbb{Z}/2)$ given in \cite{BH}.
		\begin{lemma}[ \cite{HOV1}, Theorem 5.12]
			\label{st}
			Let $A$ be an abelian group. There is a $H^{*,*}(k,A)$-algebra isomorphism
			\begin{align*}
				H^{\star,\star}_{C_2}(C_2,A)\cong H^{*,*}(k,A)[s^{\pm 1}, t^{\pm 1}],
			\end{align*}
			where $s \in H^{\sigma-1,0}_{C_2}(C_2,A)$ and $t \in H^{\sigma-1,\sigma-1}_{C_2}(C_2,A).$
		\end{lemma}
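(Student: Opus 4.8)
The plan is to compute $H^{\star,\star}_{C_2}(C_2,\ul{A})$ directly from the known structure of $C_2$ as a group scheme over $k$ together with the fact that, on the free orbit $C_2$, the Bredon motivic cohomology reduces to an induced (Borel-style) computation. Concretely, since $C_2 = \spec(k)\sqcup\spec(k)$ is the free $C_2$-orbit, the $RO(C_2)$-graded Bredon motivic cohomology of $C_2$ should be expressible via the change-of-groups/induction adjunction: for any $C_2$-spectrum $E$ and any $\alpha \in RO(C_2)$, $E^{\alpha}(C_{2+}) \cong (\Res^{C_2}_e E)^{|\alpha|}(pt)$, where $|\alpha|$ is the underlying (non-equivariant) dimension and weight. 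Here the underlying motivic Eilenberg--MacLane spectrum has cohomology $H^{*,*}_{\mathcal M}(k,A)$, so the underlying graded group is $H^{*,*}_{\mathcal M}(k,A)$ in each $RO(C_2)$-degree that has the correct underlying bidegree. The first step is therefore to make precise this identification of the underlying abelian groups, degree by degree, and to see that $H^{\star,\star}_{C_2}(C_2,\ul A)$ is a free $H^{*,*}_{\mathcal M}(k,A)$-module with one generator in each $RO(C_2)$-degree of the form $n\sigma + m(\sigma)_t$ modulo integer shifts — equivalently, a module over $H^{*,*}_{\mathcal M}(k,A)[s^{\pm1},t^{\pm1}]$ of rank one.

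Next I would pin down the two periodicity generators and check they are units. The representation sphere $S^{\sigma}$ restricted to the free orbit is equivariantly equivalent to a shift: smashing $C_{2+}$ with $S^{\sigma}$ swaps the two points but is equivariantly homotopy equivalent to $C_{2+}\wedge S^{1}$ (respectively, in the motivic setting, one also gets the Tate twist). This yields a canonical invertible class $s \in H^{\sigma-1,0}_{C_2}(C_2,\ul A)$ realizing the "dimension" periodicity and $t \in H^{\sigma-1,\sigma-1}_{C_2}(C_2,\ul A)$ realizing the "motivic weight" periodicity; the key point is that multiplication by each of $s,t$ is an isomorphism on $H^{\star,\star}_{C_2}(C_2,\ul A)$ because the corresponding representation sphere becomes invertible after restriction to $C_2$. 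Combining this with the rank-one statement from the first step gives the surjection $H^{*,*}_{\mathcal M}(k,A)[s^{\pm1},t^{\pm1}] \twoheadrightarrow H^{\star,\star}_{C_2}(C_2,\ul A)$, and a degree count (each $RO(C_2)$-graded piece is a single copy of the appropriate motivic cohomology group of $k$, matching exactly one Laurent monomial in $s,t$) forces it to be an isomorphism of $H^{*,*}_{\mathcal M}(k,A)$-algebras.

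The main obstacle I anticipate is bookkeeping the bigraded (dimension, weight) structure carefully: one must verify that the motivic sphere $S^{\sigma}$ contributes the expected weight, so that $s$ and $t$ really generate a free Laurent subalgebra and not some quotient, and that there are no hidden extensions or exotic multiplicative relations on the free orbit. This is where the explicit cited input — Lemma \ref{st} from \cite{HOV1} — does the heavy lifting, and in fact for the purposes of this paper one may simply invoke that lemma verbatim; the sketch above is the conceptual reason it holds. A secondary check is that the module generators can be chosen multiplicatively compatibly, i.e. that the ring structure on $H^{\star,\star}_{C_2}(C_2,\ul A)$ is the evident one on $H^{*,*}_{\mathcal M}(k,A)[s^{\pm1},t^{\pm1}]$; this follows because the multiplication is determined on the underlying level (which is $H^{*,*}_{\mathcal M}(k,A)$, a retract) together with the fact that $s,t$ are units whose products are detected underlying.
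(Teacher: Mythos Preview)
The paper does not prove this lemma at all: it is simply quoted from \cite{HOV1}, Lemma 5.12, with no accompanying argument, and then used as input for the subsequent discussion of the middle level of the Mackey functor. So there is no ``paper's own proof'' to compare against.

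Your sketch is a correct outline of the standard argument behind the cited result. The key points---the induction/change-of-groups isomorphism $E^{\alpha}(C_{2+}) \cong (\Res^{C_2}_e E)^{|\alpha|}(pt)$ for the free orbit, and the fact that the representation spheres $S^\sigma$ (in dimension and in weight) become trivial one-spheres after restriction to the free orbit, producing invertible classes $s$ and $t$---are exactly what make the lemma hold. Your own remark that ``for the purposes of this paper one may simply invoke that lemma verbatim'' is precisely what the authors do. The only caution is that your write-up blurs the motivic weight bookkeeping slightly (you refer to ``$S^\sigma$'' for both the dimension and weight shifts); in the motivic setting the two periodicities come from the two distinct $C_2$-equivariant motivic spheres of dimension $(\sigma,0)$ and $(\sigma,\sigma)$, and it is worth keeping them separate when writing this out carefully.
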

		
		The cycle map (\cite{HOV1}) 
		\begin{align*}
			H_{C_2}^{a+p\sigma,b+q\sigma}(C_2,\Z/2) \to H_{Br,K}^{a-b+(p-q)\sigma+b\ep+q\sigma\otimes\ep}(C_2,\Z/2)
		\end{align*}
		 is a ring map, so the images of $s$ and $t$ from Lemma \ref{st} are units $s_1\in H^{-1+\sigma}_{Br,K}(C_2,\Z/2)$ and $t_1\in H^{-\epsilon+\sigma\otimes\epsilon}_{Br,K}(C_2,\Z/2).$ By symmetry, we have units for the other subgroups as well, so we have the units
		 \begin{equation} \label{eq4}
		\begin{aligned}
			s_1\in & H_{Br,K}^{-1+\sigma}(C_2),\\
			t_1\in & H_{Br,K}^{-\epsilon+\sigma\otimes\epsilon}(C_2),\\
			s_2\in & H_{Br,K}^{-1+\epsilon}(\Si_2),\\
			t_2\in & H_{Br,K}^{\sigma-\sigma\otimes\epsilon}(\Si_2),\\
			s_3\in & H_{Br,K}^{-1+\sigma\otimes\epsilon}(K/\Delta),\\
			t_3\in & H_{Br,K}^{-\sigma+\epsilon}(K/\Delta).\\
		\end{aligned}
		\end{equation}
				Because we have an isomorphism $Res^K_{\Sigma_2}: H_{Br,K}^{-1+\sigma}(pt)\simeq H_{Br,K}^{-1+\sigma}(C_2)$ (Theorem \ref{pcon}) we use the same notation for $s_1$ and $y_1$, the only nontrivial elements in these two groups (and similarly for $s_2$ and $y_2$ and for  $s_3$ and $y_3$).
				
				We have proven the following theorem:
		\begin{theorem}  \label{def}
			\label{middle}
			The middle level of $\ul{H}_{Br,K}^\st(pt)$ is given by
			\begin{align*}
				\ul{H}_{Br,K}^\st(pt)(C_2) &= {H}_{Br,K}^\st(C_2) \simeq {H}_{Br,\Sigma_2}^{*,*}(pt) [y_1^{\pm1},t_1^{\pm1}],\\
				\ul{H}_{Br,K}^\st(pt)(\Si_2) &= {H}_{Br,K}^\st(\Si_2) \simeq {H}_{Br, C_2}^{*,*}(pt) [y_2^{\pm1},t_2^{\pm1}],\\
								\ul{H}_{Br,K}^\st(pt)(K/\Delta) &= {H}_{Br,K}^\st(K/\Delta) \simeq {H}_{Br,\Delta}^{*,*}(pt) [y_3^{\pm1},t_3^{\pm1}].
			\end{align*}

		\end{theorem}
The ring structure at the lowest level of $\ul{H}_{Br,K}^\st(pt)$ is given in the Subsection 4.2 as it coincides with the ring structure at the lowest level of $\ul{H}_{Br,K}^\st(E_{\Sigma_2}C_2)$.

\subsection{The Mixed Cone of Type I}
We give a better description of Mixed Cone of Type I corresponding to $\ka_2$ from the analogue of Theorem \ref{m1cone} by giving a full description of it in terms of its cohomology classes. By symmetry the description is valid for Mixed Cone of Type I corresponding to $\ka_1$ and $\ka_3$.

We have the following result:
		\begin{theorem}
		\label{kconj}
		Let
		\begin{align*}
			T := \frac{\Z/2[x_1,y_1,x_2,y_2,x_3,y_3]}{(x_2^\infty,y_2^\infty)}\{\theta_2\}.
		\end{align*}
		The Mixed Cone of Type I corresponding to $\ka_2$ ($p,q\geq 0, b<0$) is given by
		\begin{align*}
			\ka_2\Z/2[x_1,y_1,x_3,y_3,\ka_2] \op  \fc{T}{(x_1y_2y_3+y_1x_2y_3+y_1y_2x_3)T}.
		\end{align*}
	\end{theorem}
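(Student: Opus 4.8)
The plan is to combine the chain-level model of Theorem~\ref{m1cone} with a short calculation in the polynomial ring $R:=\Z/2[x_1,y_1,x_3,y_3]$. Since $g:=x_1y_2y_3+y_1x_2y_3+y_1y_2x_3$ is invariant under permuting $\{1,2,3\}$, the same polynomial occurs for all three mixed cones of type I, so Theorem~\ref{m1cone} in the form for $\ka_2$ identifies the Mixed Cone of Type I corresponding to $\ka_2$ with the homology of the two-term complex $0\to T_1\xr{\,\cp g\,}T_2\to 0$, where
\begin{align*}
T_1=\fc{\Z/2[x_1,y_1,x_2,y_2,x_3,y_3]}{(x_2^\iy,y_2^\iy)}\{\ka_2\},\qquad T_2=\fc{\Z/2[x_1,y_1,x_2,y_2,x_3,y_3]}{(x_2^\iy,y_2^\iy)}\{\theta_2\}=T,
\end{align*}
and $\cp g$ is multiplication by $g$ after the relabeling $\ka_2\mt\theta_2$. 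Being the homology of this two-term complex, the cone is $\ker(\cp g)\op\cok(\cp g)$. The cokernel is immediately $T_2/gT_2=T/(x_1y_2y_3+y_1x_2y_3+y_1y_2x_3)T$, which is the second summand in the statement, so the whole content lies in identifying $\ker(\cp g\colon T_1\to T_2)$.

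The key observation for the kernel is that $g$ is linear in the pair $x_2,y_2$ over $R$: writing $u:=y_1y_3$ and $v:=x_1y_3+y_1x_3$ one has $g=u\,x_2+v\,y_2$, and $u,v$ are coprime in the UFD $R$ (neither irreducible factor $y_1$ nor $y_3$ of $u$ divides $v$). Grade $T_1=\bigoplus_{a,b\ge 0}\tfrac{\ka_2}{x_2^a y_2^b}R$ by the denominator level $N=a+b$; multiplication by $x_2$ (resp.\ $y_2$) lowers $a$ (resp.\ $b$) by one and annihilates the part with $a=0$ (resp.\ $b=0$), so $\cp g$ sends level $N$ into level $N-1$ and the complex splits as a direct sum over levels. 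Writing a level-$N$ chain as $\sum_{j=0}^N\tfrac{\ka_2}{x_2^j y_2^{N-j}}s_j$ with $s_j\in R$ and unwinding $\cp g$, one checks it is a cycle precisely when $u\,s_j=v\,s_{j-1}$ for $j=1,\dots,N$; iterating gives $u^{j}s_j=v^{j}s_0$, hence $u^{N}\mid s_0$ by coprimality, and then $s_j=u^{N-j}v^{j}r$ for a unique $r\in R$. Thus $\ker(\cp g)$ is the free $R$-module on the generators
\begin{align*}
g_N:=\sum_{j=0}^N\fc{\ka_2}{x_2^j y_2^{N-j}}\,u^{N-j}v^{j}\qquad(N\ge 0,\ g_0=\ka_2),
\end{align*}
i.e.\ $\ker(\cp g)\cong\bigoplus_{N\ge 0}R\cp g_N$.

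It remains to recognize this as $\ka_2\,\Z/2[x_1,y_1,x_3,y_3,\ka_2]$, i.e.\ to identify $g_N$ with the honest power $\ka_2^{N+1}$. A degree count shows $g_N$ is homogeneous of degree $\deg\ka_2^{N+1}$, and by Proposition~\ref{caseMC1} the cohomology is one-dimensional there; a short manipulation of the defining sums gives the chain-level identities $x_2\,g_{N+1}=v\,g_N$ and $y_2\,g_{N+1}=u\,g_N$, which together with $v\,g_N\neq 0$ in the free module $\ker(\cp g)$ and the positive-cone relations $\ka_2 x_2=x_1y_3+y_1x_3=v$, $\ka_2 y_2=y_1y_3=u$ force $g_N=\ka_2^{N+1}$ by induction on $N$ (base $g_0=\ka_2$). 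Assembling the two summands gives the statement. I expect this last step — pinning the purely formal fractional notation on $T_1$ to the genuine ring structure, and keeping the $RO(K)$-gradings straight across the relabeling $\ka_2\mt\theta_2$ — to be the main fussy point; the cycle computation itself is routine once the splitting $g=u\,x_2+v\,y_2$ and the coprimality of $u,v$ are noticed.
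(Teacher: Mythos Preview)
Your argument is correct and takes a genuinely different route from the paper. The paper never computes $\ker(\cp g)$ directly: instead it first proves abstractly that $P:=\ka_2\,\Z/2[x_1,y_1,x_3,y_3,\ka_2]$ embeds in cohomology (using the injection $IP\hra H^\st_{Br,K}(\ek)$ and the invertibility of $\ka_2$ there, Theorem~\ref{vanishing}), shows $P\op T/\im f$ sits inside the Mixed Cone (Lemma~\ref{kt}), and then matches dimensions by a lengthy Poincar\'e-series comparison against Proposition~\ref{caseMC1}. Your approach, by contrast, is purely algebraic and self-contained: the factorization $g=u\,x_2+v\,y_2$ with $u=y_1y_3$, $v=x_1y_3+y_1x_3$ coprime in $R=\Z/2[x_1,y_1,x_3,y_3]$ splits the two-term complex by denominator level and pins down the kernel as the free $R$-module on the $g_N$ in one stroke. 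This is cleaner and avoids both the $\ek$ input and the case-by-case series manipulations; the paper's method, on the other hand, sets up machinery (the $\hat\pi_{a'}$ formalism) that it reuses for the Mixed Cone of Type II and the negative cone.

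Your one acknowledged fussy point---identifying $[g_N]$ with the honest power $\ka_2^{N+1}$---is real, because the paper warns that the fractional labels on $T_1$ are purely formal and do not a priori encode multiplication by $x_2,y_2$. Your induction via the chain identities $x_2g_{N+1}=vg_N$, $y_2g_{N+1}=ug_N$ therefore needs a word on why those identities pass to cohomology. A way to sidestep this entirely: you have already shown $H^{|\ka_2^{N+1}|}=\Z/2\cdot[g_N]$, and the listed relation $\ka_1\ka_2=y_3^2$ gives $\ka_1^{N+1}\ka_2^{N+1}=y_3^{2(N+1)}\neq 0$ in the positive cone, so $\ka_2^{N+1}\neq 0$ and hence $\ka_2^{N+1}=[g_N]$. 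For the full $R$-module identification $r\cdot[g_N]=[r\,g_N]$ one still needs that multiplication by $x_1,y_1,x_3,y_3$ on $T_1$ agrees with the cohomological product; this is true (these act only on the $\si$- and $\so$-tensor factors of $C$ and preserve the subquotient $T_1\op T_2$), but it is worth saying explicitly since the paper's caveat might suggest otherwise.
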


	We recall the convention that $a':=-a$ that implies $\pi_{a'}((S^V\wedge H\mathbb{Z}/2)^K)=H^{a+V}_{Br,K}(pt,\Z/2)$.
		The following lemma will be useful in our arguments.
		\begin{lemma}
			\label{coeff}
			The coefficient $c_{a'}$ of $x^{a'}$ in the polynomial $x^s(1+\cd+x^{w-1})(1+\cd+x^{h-1})$ is given by
			\begin{align*}
				c_{a'}  =\begin{cases}
					a'-s+1& \text{if } s\leq a' \leq s+h-1,\\
					h & \text{if } s+h-1\leq a' \leq s+w-1,\\
					s+w+h-1-a' & \text{if } s+w-1\leq a' \leq s+w+h-2,\\
				\end{cases} 
			\end{align*}
			where $w \geq h\geq1$ and $s\in \Z$.
		\end{lemma}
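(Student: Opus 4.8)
The plan is to compute the coefficient $c_{a'}$ of $x^{a'}$ in the polynomial $P(x):=x^s(1+x+\cd+x^{w-1})(1+x+\cd+x^{h-1})$ directly, and then invoke the Poincar\'e series computations of Propositions \ref{pc}, \ref{caseMC1}, \ref{caseMC2}, \ref{NC} to identify $c_{a'}$ with $\dim H^{-a',p,b,q}_K(pt)$ in whatever cone the relevant $(w,h,s)$-triple comes from. First I would factor out $x^s$ and reduce to computing the coefficient of $x^{a'-s}$ in $Q(x):=(1+\cd+x^{w-1})(1+\cd+x^{h-1})$; writing $m:=a'-s$, this coefficient is $\#\{(i,j):0\leq i\leq w-1,\ 0\leq j\leq h-1,\ i+j=m\}$, i.e. the number of lattice points on the antidiagonal $i+j=m$ inside the $w\times h$ box.

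Next I would do the elementary count of that set. For $0\le m\le h-1$ (using $w\ge h$ so this is the shortest range) the only constraint is $i\ge 0,\ j\ge 0$ with $i+j=m$, and since $m\le h-1\le w-1$ both coordinates automatically fit, giving $m+1$ points; translating back, $m=a'-s$ so $c_{a'}=a'-s+1$ for $s\le a'\le s+h-1$. For $h-1\le m\le w-1$ the constraint $j\le h-1$ becomes active at the bottom and $i\le w-1$ is still slack at the top, so $j$ ranges over $0,\dots,h-1$ and the count is exactly $h$; this gives $c_{a'}=h$ for $s+h-1\le a'\le s+w-1$. For $w-1\le m\le w+h-2$ both upper bounds bind: we need $i\ge m-(h-1)$ and $i\le w-1$, so the count is $(w-1)-(m-h+1)+1=w+h-1-m$, giving $c_{a'}=s+w+h-1-a'$ on $s+w-1\le a'\le s+w+h-2$. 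Outside $[s,\,s+w+h-2]$ the coefficient is $0$. The three stated ranges overlap only at their endpoints, where the two adjacent formulas agree (e.g. at $a'=s+h-1$ both give $h$, at $a'=s+w-1$ both give $h$), so the piecewise description is consistent.

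Finally I would tie the combinatorial identity to the cohomological dimension. Each of Propositions \ref{pc}--\ref{NC} expresses the Poincar\'e series of $H^{-*+V}_{Br}(pt)$ as a sum of (at most two) terms of the shape $x^{s}(1+\cd+x^{w-1})(1+\cd+x^{h-1})$, possibly with negative shift $s$; since the two summands in each proposition have disjoint supports in the cases where the lemma is applied, the dimension $\dim H^{-a',p,b,q}_K(pt)$ is read off termwise, and the computation above then gives the closed form. The main (though modest) obstacle is bookkeeping: one must be careful that in some cones $s$ is negative and that $w\ge h$ is arranged by the symmetry ``swap the roles'' already invoked in the propositions, and one must check that whenever Lemma \ref{coeff} is invoked later the relevant term really is the \emph{only} contribution in the degree under consideration — but this is exactly how the propositions are stated, so no genuine difficulty arises.
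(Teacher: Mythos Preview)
Your combinatorial argument is correct and is exactly the standard count; the paper itself states this lemma without proof, treating it as elementary, so you are supplying more detail than the authors do. One remark: your final paragraph is unnecessary. The equality $\dim(H^{-a',p,b,q}_K(pt))=c_{a'}$ in the lemma statement is not an additional claim to be proved but simply a notational reminder of what $c_{a'}$ means in context (namely, the dimension read off from a Poincar\'e series term of this shape); the lemma's content is purely the piecewise formula for the coefficient, and you have established that.
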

		
		Recall that by Theorem \ref{m1cone}, the cohomology of the Mixed Cone of Type I corresponding to $\ka_2$ is given by the homology of the chain complex
		\begin{align*}
			0\to \ub{\frac{\Z/2[x_1,y_1,x_2,y_2,x_3,y_3]}{(x_2^\infty,y_2^\infty)}\{\kappa_2\}}_{D} \xrightarrow{f} \ub{\frac{\Z/2[x_1,y_1,x_2,y_2,x_3,y_3]}{(x_2^\infty,y_2^\infty)}\{\theta_2\}}_T \to 0.
		\end{align*}
		In other words, the cohomology is $\ker f \op T/fT.$
		Write
		\begin{align*}
			f_{a,p,b,q} : D_{a,p,b,q} \to T_{a+1,p,b,q}
		\end{align*}
		for the restriction of $f$ to the graded piece of degree $(a,p,b,q)$, a $\Z/2$ vector space. Then we have that
		\begin{align*}
			\dim H^{a+p\sigma+b\epsilon+q\sigma\otimes\epsilon}_{Br,K}(pt) &= \dim\ker f_{a,p,b,q} + \dim T_{a,p,b,q}-\dim\im f_{a-1,p,b,q}\\
			&= \dim\ker f_{a,p,b,q}+ \dim\ker f_{a-1,p,b,q} + \dim T_{a,p,b,q}-\dim D_{a-1,p,b,q}.
		\end{align*}
		Since $IP \hra H_{Br,K}^\st(\ek),$ (Proposition \ref{vanishing}) we have that multiplication with $\ka_2$ is an injection in $IP$ because $\ka_2$ is invertible in $H_{Br,K}^\st(\ek),$ because it is an invertible motivic class \cite{DV1}. 
		 Then we have that
		\begin{align*}
			P:= \ka_2\Z/2[\ka_2,x_1,x_3,y_1,y_3] \hra H_{Br,K}^\st(pt).
		\end{align*}
		Indeed, from \cite{BH} we have that $\Z/2[x_1,x_3,y_1,y_3] \hra H_{Br,K}^\st(pt)$. Then if $\al=\ka_2^{n}r_n + \ka_2^{n-1}r_{n-1}+\cdots + r_0= 0$ in $H_{Br,K}^\st(pt)$ where $r_i \in \Z/2[x_1,x_3,y_1,y_3], n \geq 1,$ and $r_n \neq 0,$ then by degree reasons we have that $\ka_2^n r_n=0$, contradicting the injectivity of $\ka_2^n.$ Thus, the generators of $\ka_2\Z/2[\ka_2,x_1,x_3,y_1,y_3]$ are linearly independent in $H_{Br,K}^\st(pt).$
		\begin{lemma}
			\label{kt}
			\begin{align*}
				P \op \fc{T}{\im f }\subseteq ker(f)\oplus \frac{T}{\im f}=\tx{Mixed Cone of Type I corresponding to }\kappa_2 
			\end{align*}
		\end{lemma}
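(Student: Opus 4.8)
The plan is to exhibit $P$ and $T/\im f$ as two subspaces of the Mixed Cone of Type I corresponding to $\ka_2$ (the piece with $p,q\geq 0$, $b<0$) whose sum inside that cone is direct. The inclusion of each summand is already essentially in hand: by Theorem \ref{m1cone} the cohomology of this cone is $\ker f \oplus T/fT$, so $T/\im f$ sits inside the cone canonically; and the discussion immediately preceding the lemma shows that $P=\ka_2\,\Z/2[\ka_2,x_1,x_3,y_1,y_3]\hookrightarrow H^\st_{Br,K}(pt)$, with image landing in the Mixed Cone of Type I because each monomial $\ka_2^n x_1^{i}x_3^{j}y_1^{k}y_3^{l}$ has $\ep$-degree $-n<0$ and $\si$- and $\so$-degrees $\geq 0$. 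So the content of the lemma is really the directness of the sum, i.e. that the composite $P\hookrightarrow H^\st_{Br,K}(pt)\to \ker f\oplus T/fT$ lands in the $\ker f$ summand and remains injective there, so that $P\cap (T/\im f)=0$.

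First I would pin down, degree by degree, where the classes $\ka_2^n x_1^{i}x_3^{j}y_1^{k}y_3^{l}$ live under the identification of the cohomology of the cone with $\ker f \oplus T/fT$. A $\ka_2$-chain in $D=\frac{\Z/2[x_1,y_1,x_2,y_2,x_3,y_3]}{(x_2^\infty,y_2^\infty)}\{\ka_2\}$ has the form $\frac{\ka_2}{x_2^{i_2}y_2^{j_2}}x_1^{i_1}y_1^{j_1}x_3^{i_3}y_3^{j_3}$; the honest polynomial classes $\ka_2^n x_1^i x_3^j y_1^k y_3^l$ we want are precisely the elements of $D$ with $i_2=j_2=0$ but $\ka_2$-power possibly bigger than one — these are not literally in the truncated polynomial ring $D$ unless one reads $\ka_2^n$ as a formal symbol, so the cleaner route is: $P$ consists of the classes coming from the positive cone $\Z/2[x_1,x_3,y_1,y_3]$ multiplied by powers of $\ka_2$, and $\ka_2$ acts on the cone by the periodicity/multiplication already discussed; one checks that $f$ vanishes on the image of $P$ (these are not $\ka_2$-chains with any $x_2,y_2$ in the denominator, so the defining multiplication-by-$(x_1y_2y_3+y_1x_2y_3+y_1y_2x_3)$ map sends them into classes already in $T$ that are killed — more precisely $\ka_2^n$ is a permanent cycle since $\ka_2\in H^\st_{Br,K}(\ek)$ is invertible and $IP\hookrightarrow H^\st_{Br,K}(\ek)$). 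Hence $P\subseteq\ker f$. For the directness, suppose an element of $P$ equals an element of $T/\im f$ in the cone; projecting to the $T/fT$ summand kills the $P$-part (since $P\subseteq\ker f$), so the $T/\im f$ element is $0$; and the $P$-part is then $0$ by the linear independence of the $\ka_2^n r_n$ established just before the lemma. Therefore $P\cap (T/\im f)=0$ and $P\oplus T/\im f\subseteq$ Mixed Cone of Type I.

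The step I expect to be the main obstacle is the bookkeeping in the first part: making precise that the polynomial classes $\ka_2^n x_1^{i}x_3^{j}y_1^{k}y_3^{l}$ are represented in the chain complex $0\to D\xrightarrow{f} T\to 0$ by actual cycles, i.e. that $f$ kills them, rather than by $\ka_2$-chains with $x_2,y_2$ in the denominator on which $f$ is the interesting injective map. The subtlety is that the symbol $\ka_2$ plays two roles — as the bottom generator $\ka_2=\frac{\ka_2}{x_2^0 y_2^0}$ of a $\ka_2$-chain, and as an element of the invertible-class picture over $\ek$ — and one must use the identification $IP\hookrightarrow H^\st_{Br,K}(\ek)$ (Proposition \ref{vanishing} together with $\ka_2$ invertible in $H^\st_{Br,K}(\ek)$) to see that multiplication by $\ka_2$ is well-defined and injective on $P$, so that all of $P$ lies in $\ker f$. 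Once this is set up, the directness is immediate from the splitting $\ker f\oplus T/fT$ and the already-proved linear independence of the generators of $P$. Everything else is a routine degree count (using Proposition \ref{caseMC1} / Lemma \ref{coeff} if one wants an explicit dimension bookkeeping, though it is not strictly needed for the set-theoretic containment claimed here).
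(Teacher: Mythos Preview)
Your plan has a genuine gap at exactly the point you flag as ``the main obstacle,'' and the resolution you propose does not work.

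The difficulty is that for $n\geq 2$ the class $\ka_2^n\cdot x_1^{i}x_3^{j}y_1^{k}y_3^{l}$ is \emph{not} an element of the chain module $D=\frac{\Z/2[x_1,y_1,x_2,y_2,x_3,y_3]}{(x_2^\infty,y_2^\infty)}\{\ka_2\}$ at all: every generator of $D$ carries exactly one $\ka_2$ in the numerator. So the phrase ``$f$ vanishes on the image of $P$'' is not meaningful for these classes --- $f$ is a map of chain modules, not a map on cohomology classes. What you really need is that under the identification $H^\st\cong\ker f\oplus T/fT$, each $\ka_2^n r$ has trivial $T/fT$-component. Your justification for that (``$\ka_2$ is invertible in $H^\st_{Br,K}(\ek)$ and $IP\hookrightarrow H^\st_{Br,K}(\ek)$, hence multiplication by $\ka_2$ is injective on $P$, hence $P\subseteq\ker f$'') is a non sequitur: injectivity of multiplication by $\ka_2$ gives linear independence of the generators of $P$ inside $H^\st_{Br,K}(pt)$ (which the paper uses too), but says nothing about which summand of $\ker f\oplus T/fT$ they land in. Concretely, $\ka_2^2\in H^{-2,2,-2,2}$ is represented at the chain level by the cycle $\frac{\ka_2}{x_2}(x_1y_3+y_1x_3)+\frac{\ka_2}{y_2}y_1y_3$, not by any ``$\ka_2$-chain with no $x_2,y_2$ in the denominator''; one learns this only after the fact. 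Since ``$P\subseteq\ker f$'' is precisely the statement $P\cap(T/fT)=0$, your directness argument is circular.

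The paper avoids this entirely by working purely at the cohomology level with the ring structure. Fixing a degree $V=(a,p,b,q)$ with $b<0$, any $\beta\in T/\im f$ in degree $V$ is a combination of $\theta_2$-fractions $\frac{\theta_2}{x_2^{n_2}y_2^{m_2}}(\cdots)$ with $n_2+m_2=-b-2$, hence $m_2\leq -b-2$; and any element of $P$ in degree $V$ has the form $\ka_2^{-b}\alpha$ with $\alpha\in\Z/2[x_1,y_1,x_3,y_3]$. If $\ka_2^{-b}\alpha+\beta=0$ with both summands nonzero, multiply by $y_2^{-b-1}$: this annihilates every $\theta_2$-fraction in $\beta$ (each has at most $-b-2$ copies of $y_2$ available in its denominator), while $y_2^{-b-1}\ka_2^{-b}\alpha=\ka_2(y_1y_3)^{-b-1}\alpha\neq 0$ by the relation $\ka_2 y_2=y_1y_3$ and injectivity of $\ka_2$ on the positive cone. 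That contradiction gives the directness without ever needing to locate $P$ inside $\ker f$ a priori; the containment $P\subseteq\ker f$ is then deduced afterwards from the dimension count.
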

		\begin{proof}
			By definition, the generators are elements of the Mixed Cone of Type I corresponding to $\kappa_2$, so it remains to show that they are linearly independent. Fix a degree $V=(a,p,b,q)$ in the Mixed Cone of Type I corresponding to $\ka_2$. Then the only elements of $P$ in $H^V_{Br,K}(pt)$ are of the form $\ka_2^{-b}\cp \al,$ where $\al\in \Z/2[x_1,y_1,x_3,y_3].$ On the other hand, to belong in the group, any element of $\fc{T}{\im f }$ in $H^V_{Br,K}(pt)$ can only have powers of $x_2$ or $y_2$ of at most $-b-2$ in the denominators of its constituent $\ta_2$-fractions. In particular we can assume $b\leq -2$. Then suppose for contradiction that
			\begin{align*}
				\ka_2^{-b}\cp \al + \be = 0, 
			\end{align*}
			where $\be \in \fc{T}{\im f }\cap H^V(pt)$ and neither summand is zero. Then 
			\begin{align*}
				y_2^{-b-1}(	\ka_2^{-b}\cp \al+ \be) =\ka_2 (y_2\ka_2)^{-b-1}\al+y_2^{-b-1}\be= \ka_2(y_1y_3)^{-b-1} \al+ 0 \neq 0,
			\end{align*}
			since multiplication with $\ka_2$ is injective on the positive cone, and $0\neq (y_1y_3)^{-b-1} \al \in \Z/2[x_1,y_1,x_3,y_3].$ We use above  that $\kappa_2 y_2 = y_1y_3$ and that $y_2^{j+1} \cdot \frac{\theta_2}{x_2^iy_2^j} = 0$ for $j\geq 0$.
		\end{proof}
	
		By Lemma \ref{kt} we have that $\dim P_{a,p,b,q}\leq \dim \ker f_{a,p,b,q}$ as finite-dimensional $\Z/2-$subspaces. Take $B=-b\geq 1$. Then to prove Theorem \ref{kconj}, it suffices to show that
		\begin{align*}
			\dim\pi_{a'}((\Si^{p\si-B\ep+q\so}H\ul{\Z/2})^{{K}}) := \dim H_{Br,K}^{a+p\sigma+b\epsilon+q\sigma\otimes\epsilon}(pt) =\hat \pi_{a'}((\Si^{p\si-B\ep+q\so}H\ul{\Z/2})^{{K}}),
		\end{align*}
		where
		\begin{align*}
			\hat \pi_{a'}((\Si^{p\si-B\ep+q\so}H\ul{\Z/2})^{{K}}) := \dim P_{a,p,b,q}+ \dim P_{a-1,p,b,q} + \dim T_{a,p,b,q}-\dim D_{a-1,p,b,q}.
		\end{align*}
		
		Since $T_{a,p,b,q} =\Z/2\left\{\fc{\ta_2}{x_2^{n_2}y_2^{m_2}}x_1^{n_1}y_1^{m_1}x_3^{n_3}y_3^{m_3}\right\},$ we require that
		\begin{align*}
			a &= 2+m_2-m_1-m_3,\\
			p &= n_1+m_1,\\
			b &= -2-n_2-m_2,\txa\\
			q &= n_3+m_3,
		\end{align*}
		where $n_1,m_1,n_2,m_2,n_3,m_3 \geq 0.$ Simplifying these restrictions, we have that
		\begin{align*}
			\dim T_{a,p,b,q} = \left|\{(n_2,n_3) \in \Z_{\geq0}^2: n_3\leq q \text{ and } n_2 \leq -b-2 \text{ and } a+b+q\leq n_3-n_2=a+b+q+m_1 \leq a + p +b + q\}\right|.
		\end{align*}
		By similar arguments, for $D_{a,p,b,q}=\{\frac{\ka_2}{x_2^{n_2}y_2^{m_2}}x_1^{n_1}y_1^{m_1}x_3^{n_3}y_3^{m_3}\}, n_1,n_2,m_1,m_2,n_3,m_3\geq 0$, we have that
		\begin{align*}
			\dim D_{a,p,b,q} =  |\{(n_2,n_3) \in \Z_{\geq0}^2: n_3\leq q-1 &\text{ and } n_2 \leq -b-1 \\
			&\text{ and } a+b+q+1\leq n_3-n_2= a+b+q+1+m_1\leq a + p +b + q\}|,
		\end{align*}
		and for $P_{a,p,b,q}=\{\ka_2^rx_1^{n_1}x_3^{n_3}y_1^{m_1}y_3^{m_3}\}, r\geq 1,n_1,m_1,n_3,m_3\geq 0$ we have
		\begin{align*}
			\dim P_{a,p,b,q} = \left|\{n_1 \in \Z_{\geq0} : a+p\leq n_1 \leq b+p \text{ and } n_1 \leq {a+b+p+q}\}\right|.
		\end{align*}
		We use the above formulas to evaluate $\hat\pi_{a'}.$ Below is the proof of Theorem \ref{kconj}:
		\begin{proof}
		If $B,p,$ or $q$ are zero, then we are done by degree reasons. If $B=1,$ then by degree reasons there are no classes involving $\ta_2,$ so we can directly evaluate the Poincar\'e series for $\dim P_{*,p,-B,q}$, which is the polynomial
		\begin{align*}
			x^B(1+\cd+x^{p-B})(1+\cd+x^{q-B}).
		\end{align*}
		Taking $B=1,$ we see that it does indeed agree with the formula in Theorem \ref{caseMC1}. Therefore in this case $P_{*,p,-1,q}=Ker(f_{*,p,-1,q})$ for $p,q\geq 0$.
		
		Then we assume that $p,q \geq 1$ and $B \geq 2.$ Our formula for $\hat\pi_{*} ((\Si^{p\si-B\ep+q\so}H\ul{\Z/2})^{{K}})$ is given by
		\begin{align*}
			\hat\pi_{a'}&= \ob{\sum_{j=0}^{q}\sum_{i=0}^{B-2}\ic_{[-a'-B+q,-a'-B+p+q]}(j-i)-\sum_{j=0}^{q-1}\sum_{i=0}^{B-1}\ic_{[-a'-B+q,-a'-B+p+q-1]}(j-i)}^{M:= \dim T_{-a',p,-B,q}-\dim{D}_{-a'-1,p,-B,q}}\\
			&+\ub{\left|[0,-a'-B+p+q]\cap[p-a',p-B]\right|}_{N_1:= \dim P_{-a',p,-B,q}}\\
			&+\ub{\left|[0,-a'-B+p+q-1]\cap[p-a'-1,p-B]\right|}_{N_2:=\dim P_{-a'-1,p,-B,q}},
		\end{align*}
		where intervals of negative length are regarded as empty, and $\ic_{[\ell,k]}$ is the characteristic function of the interval $[\ell,k].$ We now write
			$$M = L + M_1 + M_2,$$
		where
		\begin{align*}
			L &= \sum_{j=0}^{q-1}\sum_{i=0}^{B-2}\ic_{[-a'-B+q,-a'-B+p+q]}(j-i)-\ic_{[-a'-B+q,-a'-B+p+q-1]}(j-i) \\
			&= \left| \{(i,j) : j-i =-a'-B+p+q\} \cap \{ (i,j): i \in [0,B-2], j\in[0,q-1] \}\right|,\\
			M_1 &= -\sum_{j=0}^{q-1}\ic_{[-a'-B+q,-a'-B+p+q-1]}(j-B+1)\\
			&=-\left|[-B+1,q-B]\cap[-a'-B+q,-a'-B+p+q-1]\right|, \txa\\
			M_2 &= \sum_{i=0}^{B-2}\ic_{[-a'-B+q,-a'-B+p+q]}(q-i)\\
			&=\left|[q-B+2,q]\cap[-a'-B+q,-a'-B+p+q]\right|.
		\end{align*}
	 Then we have that
		\begin{align*}
			L &=
			\begin{cases}
				\begin{cases}
					B-p+a' & \text{if } p-B+1\leq a' \leq p+q-B,\\
					q & \text{if } p+q-B\leq a' \leq p-1,\\
					p+q-a'-1 & \text{if } p-1\leq a' \leq p+q-2,\\
				\end{cases}
				& \text {if } q \leq B-1,\\
				\begin{cases}
					B-p+a' & \text{if } p-B+1\leq a' \leq p-1,\\
					B-1 & \text{if } p-1\leq a' \leq p+q-B,\\
					p+q-a'-1 & \text{if } p+q-B\leq a' \leq p+q-2,\\
				\end{cases} & \text {if } q \geq B-1.\\
			\end{cases}\\
		\end{align*}
		Note that this can also be seen geometrically, as shown in Figure \ref{counting}.
		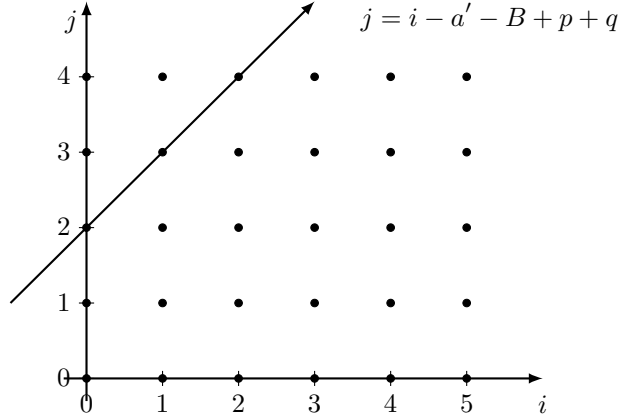
\begin{figure}[H]
			
			\begin{tikzpicture}
				[
				dot/.style={circle,draw=black, fill,inner sep=1pt},
				]
				
				\foreach \y in {0, ..., 4}
				\foreach \x in {0, ..., 5} {
					\node[dot] at (\x,\y){};
				}
				
				\foreach \x in {0,...,5}
				\draw (\x,-0.1) -- node[below,yshift=-1mm] {\x} (\x,0.1);

				\foreach \y in {0,...,4}
				\draw (0.1,\y) -- node[below,yshift=2.5mm,xshift=-3mm] {\y} (-0.1,\y);

				\draw[->,thick,-latex] (0,-0.3) -- (0,5);
				\draw[->,thick,-latex] (-0.3,0) -- (6,0);
				
				\draw[->,thick,-latex] (-1,1) -- (3,5);

				\node[below,xshift=3mm,yshift=1mm] at (5,5) {$j= i-a'-B+p+q$};

				\node[below,yshift=-1mm] at (6,0) {$i$};
				\node[below,xshift=-2mm] at (0,5) {$j$};  
				
				
			\end{tikzpicture}
			\caption{Since $L=\left| \{(i,j) : j-i =-a'-B+p+q\} \cap \{ (i,j): i \in [0,B-2], j\in[0,q-1] \}\right|$, we can compute $L$ by counting the number of points in the intersection of the lattice $[0,B-2]\ti[0,q-1]$ with the line $j= i-a'-B+p+q.$ Here, we show the case where $a'=1,p=q=5,$ and $B=7.$ We see that the intersection consists of $3$ points, which is precisely equal to $B-p+a',$ corresponding to the case in the top line of the previous display. As $a'$ varies, the line $j= i-a'-B+p+q$ shifts up and down, since all the other variables are fixed. Then we see that the cardinality of the intersection coincides precisely with the corresponding cases in the previous display.}
			\label{counting}
		\end{figure}
		Then by Lemma \ref{coeff} the Poincar\'e series for $L$ is given by
		\begin{align*}
			\fc{x^{p-B+1}}{(x-1)^2}(x^q-1)(x^{B-1}-1).
		\end{align*}
		Next, we have that
		\begin{align*}
			M_1 &= \max(1,q-a')-\min(p-a'-1,0)-q-1 \tx{if } 0 \leq a' \leq p+q-2,\\
			M_2 &= \min(p-a'-B,0)-\max(2,-a')+B+1 \tx{if } -B \leq a' \leq p-2,\\
			N_1 &= \min(q-a',0)-\max(p-a',0)+p-B+1 \tx{if } B \leq a' \leq p+q-B \text{ and } B \leq p,q,\\
			N_2 &= \min(q-a'-1,0)-\max(p-a'-1,0)+p-B+1\tx{if } B-1 \leq a' \leq p+q-B-1\text{ and } B \leq p,q.
		\end{align*}
		Then it can be shown that these terms are also given by Poincar\'e series of the form appearing in Lemma \ref{coeff}, and in particular, the Poincar\'e series for $M_1,M_2$, and $N_1+N_2$ are given by
		\begin{align*}
			-&\fc{1}{(x-1)^2}(x^p-1)(x^q-1),\\
			&\fc{x^{-B}}{(x-1)^2}(x^{p+1}-1)(x^{B-1}-1) ,\txa\\
			&\fc{x^{B-1}(1+x)}{(x-1)^2}(x^{p-B+1}-1)(x^{q-B+1}-1) \tx{(only when $B\leq p,q$)}
		\end{align*}
		respectively. Then if $B > p$ or $B> q,$ we have that $N_1= N_2 = 0,$ so the total Poincar\'e series is given by
		\begin{align*}
			M &= L + M_1 + M_2\\
			&= 	\fc{x^{p-B+1}}{(x-1)^2}(x^q-1)(x^{B-1}-1) -\fc{1}{(x-1)^2}(x^p-1)(x^q-1)+\fc{x^{-B}}{(x-1)^2}(x^{p+1}-1)(x^{B-1}-1)\\
			&= \fc{1}{(x-1)^2}\left(-x^{-B + p + q + 1} + x^{-B} + x^p + x^q - \fc{1}{x} - 1\right),
		\end{align*}
		which agrees with the Theorem \ref{caseMC1} (2). Similarly, if $B \leq p,q$ we have that the total Poincar\'e series is given by
		\begin{align*}
			 &L + M_1 + M_2+N_1+N_2=\\
			&= 	\fc{1}{(x-1)^2}\left(-x^{-B + p + q + 1} + x^{-B} + x^p + x^q - \fc{1}{x} - 1\right)+\fc{x^{B-1}(1+x)}{(x-1)^2}(x^{p-B+1}-1)(x^{q-B+1}-1)\\
			&=\fc{1}{(x-1)^2}\left(x^{-B + p + q + 2} + x^{B - 1} + x^{-B} + x^B - x^{p + 1} - x^{q + 1} - \fc{1}{x} - 1\right),
		\end{align*}
		which also agrees with the formula in Theorem \ref{caseMC1} (1).
		
		We conclude that $dim(P_{a,p,b,q})=dim(Ker f_{a,p,b,q})$ for $p,q\geq 0$ and $b<0$. Therefore we conclude the statement of the proposition using Lemma \ref{kt} and Theorem \ref{m1cone} for the case $\kappa_2$.
		\end{proof}
		
		By symmetry, we also have analogous statements for the Mixed Cones of Type I corresponding to $\ka_1$ and $\ka_3.$
		
\subsection{The Mixed Cone of Type II}
Throughout this section, we will only be considering the Mixed Cone of Type II corresponding to $\io_2$ i.e. $p, q\leq -1, b\geq 0.$ We will determine below the generators of $\Z/2$ vector spaces $H^{a+p\sigma+b\epsilon+q\sigma\otimes\epsilon}_{Br,K}(pt,\Z/2)$ that belong to the Mixed Cone of Type II corresponding to $\io_2$.
By symmetry, the results are also valid for the Mixed Cone of Type II corresponding to $\io_1$ and $\io_3$. 

We will use the relations \ref{eq1} for the following lemma.
\begin{lemma}\label{iononzero}
	The products 
	\begin{equation}\label{eq3}
	\begin{aligned}
		\ka_3\ta_1 x_2^{n_2}y_2^{m_2}=\ka_1\ta_3x_2^{n_2}y_2^{m_2}
	\end{aligned}
	\end{equation}
	are nonzero for all $n_2,m_2 \geq 0.$
\end{lemma}
\begin{proof}
	We have
	\begin{align*}
		\fc{\ta_2}{x_2^{n_2}y_2^{m_2}} \cp \ka_1\ta_3x_2^{n_2}y_2^{m_2} = \ka_1 \ta_2 \ta_3 = \Theta \neq 0 \text{ and } \ka_3\theta_1=i_2=\ka_1\theta_3.
	\end{align*}
\end{proof}

\begin{lemma}
	\label{x1case1}
	The map $H_{Br,K}^{a+(p-1)\sigma+b\epsilon+q\sigma\otimes\epsilon}(pt)\to H_{Br,K}^{a+p\sigma+b\epsilon+q\sigma\otimes\epsilon}(pt)$ given by $\al \mt x_1 \cp \al$ is surjective when $p \leq 0 < -q \leq b.$ By symmetry, we also have that the map $H_{Br,K}^{a+p\sigma+b\epsilon+(q-1)\sigma\otimes\epsilon}(pt)\to H_{Br,K}^{a+p\sigma+b\epsilon+q\sigma\otimes\epsilon}(pt)$ given by $\al \mt x_3 \cp \al$ is surjective when $q \leq 0 < -p \leq b.$
\end{lemma}
\begin{proof}
	The cofiber sequence
	\begin{align*}
		\ct_+\to pt_+ \to S^\si
	\end{align*}
	induces the long exact sequence
	\begin{align*}
		\cd \to H_{Br,K}^{a+(p-1)\sigma+b\epsilon+q\sigma\otimes\epsilon}(pt)\xr{\cp x_1} H_{Br,K}^{a+p\sigma+b\epsilon+q\sigma\otimes\epsilon}(pt) \to H_{Br,K}^{a+p\sigma+b\epsilon+q\sigma\otimes\epsilon}(\ct)\xr{\dl} H_{Br,K}^{a+1+(p-1)\sigma+b\epsilon+q\sigma\otimes\epsilon}(pt) \to \cd
	\end{align*}
	in cohomology. Then it suffices to show that $\dl$ is injective. We know from Theorem \ref{middle} that 
	\begin{align*}
		H_{Br,K}^{a+p\sigma+b\epsilon+q\sigma\otimes\epsilon}(\ct) \xl[\sim]{y_1^{p}(t_1^{-1})^{-q}} H_{Br,K}^{a+p+(b+q)\epsilon}(\ct),
	\end{align*}
	where $t_1^{-1}$ is the invertible element in $H_{Br,K}^{\epsilon-\sigma\otimes\epsilon}(\ct)$ and $y_1\in H_{Br,K}^{-1+\sigma}(\ct)$ the invertible element. By \cite{BH} Theorem 4.36, we know that $Res^K_{C_2}(\ka_3)=y_1t_1^{-1},$ so 
	\begin{align*}
		y_1^{p}(t_1^{-1})^{-q} = y_1^{p+q}Res^K_{C_2}(\ka_3)^{-q},
	\end{align*}
	where we identify the last term with $\ka_3^{-q} y_1^{p+q},$ since $Res^K_{C_2}$ is also (by definition) the map giving the $H_{Br,K}^\st(pt)$-module structure. We also know that the only nonzero terms in $$H^{a+p+(b+q)\epsilon}_{Br,K}(\ct)\simeq H^{a+p+(b+q)\epsilon}_{Br,\Sigma_2}(pt)$$ are of the form $x_2^{n_2}y_2^{m_2},$ since $b+q \geq 0.$ Also $Res^K_{C_2}(x_2)=x_2$ and $Res^K_{C_2}(y_2)=y_2$.
	Then it suffices to show that 
	\begin{align*}
		\dl(\ka_3^{-q} y_1^{p+q}x_2^{n_2}y_2^{m_2}) = \ka_3^{-q}x_2^{n_2}y_2^{m_2}\dl( y_1^{p+q}) \neq 0.
	\end{align*}
	We have that
	\begin{align*}
		\ub{H_{Br,K}^{n-n\sigma}(C_2)}_{=\Z/2y_1^{-n}}\xr[\sim]{\dl} \ub{H^{n+1-(n+1)\sigma}_{Br,K}(pt)}_{=\Z/2\fc{\ta_1}{y_1^{n-1}}} \xr{\cp x_1} H^{n+1-n\sigma}_{Br,K}(pt) = 0
	\end{align*}
	when $n \geq 1,$ so 
	\begin{align*}
		\ka_3^{-q}x_2^{n_2}y_2^{m_2}\dl( y_1^{p+q})  = 	\ka_3^{-q}x_2^{n_2}y_2^{m_2}\fc{\ta_1}{y_1^{-(p+q+1)}}.
	\end{align*}
	Suppose for contradiction that the last expression is zero. Then we have
	\begin{align*}
		y_3^{-q-1}\ka_3^{-q}x_2^{n_2}y_2^{m_2}\fc{\ta_1}{y_1^{-(p+q+1)}} &= y_2^{-q-1}y_1^{-q-1} \ka_3 x_2^{n_2}y_2^{m_2}\fc{\ta_1}{y_1^{-(p+q+1)}} = \ka_3 x_2^{n_2}y_2^{m_2-q-1}\fc{\ta_1}{y_1^{-p}} =0,\\
		\implies \ka_3\fc{\ta_1}{y_1^{-p}}x_2^{n_2}y_2^{m_2-q-1} = 0,
	\end{align*}
	contradicting Lemma \ref{iononzero} (after multiplication by $y_1^{-p}$).
\end{proof}

By Lemma \ref{x1case1}, we can define elements 
\begin{align*}
	\fc{\ka_1}{x_1^{n_1}}\fc{\ta_3}{x_3^{n_3}y_3^{m_3}}x_2^{n_2}y_2^{m_2} \in H_{Br,K}^{a-(1+n_1)\sigma+b\epsilon+q\sigma\otimes\epsilon}(pt)
\end{align*}
in the preimages of the elements
\begin{align*}
	\ka_1 \fc{\ta_3}{x_3^{n_3}y_3^{m_3}}x_2^{n_2}y_2^{m_2} \in H_{Br,K}^{a-\sigma+b\epsilon+q\sigma\otimes\epsilon}(pt)
\end{align*}
under multiplication by $x_1^{n_1},$ provided we make a well-defined choice, as in the following proof.
\begin{proposition}
	\label{propcase1v2}
	There is an isomorphism of $\Z/2$-spaces
	\begin{align*}
		\bigoplus_{q\leq-1,p\leq-1,b\geq1,b+q\geq0}\pi_{*}((\Si^{p\si+b\ep+q\so}H\ul{\Z/2})^{\mathcal{K}})&\cong \Z/2\left\{\ka_3^{m_3+1}\cdot \fc{\ta_1}{x_1^{n_1}y_1^{m_3+m_1}}\cdot x_2^{n_2}y_2^{m_2-m_3}\right\}_{m_3\leq m_2,m_1\geq 1}\\
		&\op\Z/2\left\{\fc{\ka_1}{x_1^{n_1}}\fc{\ta_3}{x_3^{n_3}y_3^{m_3}}x_2^{n_2}y_2^{m_2}\right\}_{n_3+m_3\leq n_2+m_2}.
	\end{align*}
\end{proposition}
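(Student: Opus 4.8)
The plan is to run the counting argument from the proof of Theorem~\ref{kconj}, now for the Mixed Cone of Type II corresponding to $\io_2$ restricted to the wall $b+q\ge 0$. The first point is that, after the substitution $l=b$, $j=-p$, $k=-q$, this region is precisely the case $l\ge k$ of Proposition~\ref{caseMC2} (indeed $b+q\ge 0\iff l\ge k$), so the dimension of every graded piece $\pi_{a'}((\Si^{p\si+b\ep+q\so}H\ul{\F_2})^{K})=H^{-a',p,b,q}_{Br,K}(pt)$ occurring here is already known. It then remains to (i) exhibit the two displayed families as honest cohomology classes, (ii) prove that their union is $\Z/2$-linearly independent, and (iii) check that in each such multidegree the number of listed classes equals the dimension from Proposition~\ref{caseMC2}.

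For step (i), the first family is immediate: $\ka_3^{m_3+1}\fc{\ta_1}{x_1^{n_1}y_1^{m_3+m_1}}x_2^{n_2}y_2^{m_2-m_3}$ is literally the product of the class $\ka_3^{m_3+1}$, the $\ta_1$-fraction $\fc{\ta_1}{x_1^{n_1}y_1^{m_3+m_1}}$ (a genuine class living in the negative cone, resp.\ the Mixed Cone of Type I for $\ka_1$), and a positive monomial in $x_2,y_2$; note $\ka_3\ta_1=\io_2$, so these are classes on the $\io_2$ side of the cone. For the second family I would fix the meaning of $\fc{\ka_1}{x_1^{n_1}}$ by induction on $n_1$: by Lemma~\ref{x1case1} the map $\cp x_1$ is surjective on $H^{a,p,b,q}_{Br,K}(pt)$ throughout this region (its hypothesis $p\le 0<-q\le b$ holds because $q\le -1$ and $b+q\ge 0$), so starting from the honest class $\ka_1\fc{\ta_3}{x_3^{n_3}y_3^{m_3}}x_2^{n_2}y_2^{m_2}$ at $n_1=0$ (where $\ka_1\ta_3=\io_2$) one chooses at each successive $n_1$ a preimage under $\cp x_1$; the indeterminacy of such a choice lies in $\ker(\cp x_1)=\im\dl$ for the connecting map $\dl$ of $\ct_+\to pt_+\to S^\si$, which is a span of $\ta_1$-fractions and can therefore be absorbed into the first family without changing the total span.

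Step (ii) is the heart of the matter. For the first family, fix a multidegree $V$ (in which $m_3=-1-q$ and $n_2+m_2=b-1$ are already determined) and multiply a putative relation by a suitable monomial that is injective on the relevant subquotient; using the relation $\ka_3y_3=y_1y_2$ together with the hypothesis $m_1\ge 1$ to cancel $y_1$'s against the $\ta_1$-denominators, the relation becomes one among distinct monomial $\ta_1$-fractions in a degree range ($q=0$, $b\ge 1$) where such fractions are a basis by Proposition~\ref{PC}(e), so all coefficients must vanish; the nonvanishing of the reduced terms is where Lemma~\ref{iononzero} enters. The second family is the obstacle: since $\io_2x_1=0$ one cannot clear the $x_1$-denominators simply by multiplication, and instead one must run the $n_1$-induction backwards inside the isotropy long exact sequences of $\ct_+\to pt_+\to S^\si$ and its $\ep$- and $\so$-analogues, at each stage using Theorem~\ref{middle} and the explicit restriction and multiplication formulas of \cite{BH} to see that a relation at a given $n_1$ forces one at smaller $n_1$, bottoming out at the honest classes at $n_1=0$ where Theorem~\ref{middle} and Lemma~\ref{iononzero} apply; the two families are finally kept apart because the first carries $y_1$-denominators attached to $\ta_1$ while the second carries only $x_1$-denominators attached to $\ta_3$, a difference visible after a single application of $\cp x_1$.

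For step (iii) I would use Theorem~\ref{m2cone}: the cohomology of the $\io_2$-cone is the homology of $0\to T_1\xr{f}T_2\to 0$, with $f$ multiplication by $x_1y_2y_3+y_1x_2y_3+y_1y_2x_3$. A short computation, parallel to the observation after Theorem~\ref{ncon} that $f$ is surjective in the negative cone, shows that $f$ is onto $T_2$ throughout the region $b\ge 1$, $b+q\ge 0$, so here the cohomology reduces to $\ker f$, whose dimension in each degree is expressed through the dimensions of the graded pieces of $T_1$ and $T_2$ exactly as in the proof of Theorem~\ref{kconj}. Each such dimension is an elementary lattice-point count over the index inequalities recorded in the proposition ($m_3\le m_2$, $m_1\ge 1$ for the first family, $n_3+m_3\le n_2+m_2$ for the second), which Lemma~\ref{coeff} turns into a Poincaré series; summing and simplifying should reproduce exactly the $l\ge k$ formula $\fc{1}{x^{j}}(1+\cd+x^{j-2})(1+\cd+x^{l-k})+\fc{1}{x^{k}}(1+\cd+x^{l-1})(1+\cd+x^{k-1})$ of Proposition~\ref{caseMC2}. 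Combined with step (ii) this forces the displayed set to be a basis, proving the proposition; the linear independence of the second family, with its non-canonical division $\fc{\ka_1}{x_1^{n_1}}$ and the failure of the naive argument of multiplying through by $x_1$, is the part I expect to cost the most work.
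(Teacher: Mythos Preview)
Your overall architecture matches the paper's: restrict to the $l\ge k$ case of Proposition~\ref{caseMC2}, match the two summands of its Poincar\'e series to the two displayed families (this is Lemmas~\ref{k1series} and~\ref{k3series} in the appendix), and build the basis by induction on $p$ using the surjectivity of $\cp x_1$ from Lemma~\ref{x1case1}. Where you diverge from the paper is exactly at the point you flag as hardest, and there the paper's device is sharper than what you propose.

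The paper does \emph{not} treat the indeterminacy in the preimage choice as something to ``absorb into the first family without changing the total span.'' Instead it makes a canonical choice. At each inductive step the kernel of $\varphi_{p-1}=\cp x_1$ in a given degree is $\Z/2 z$ with
\[
z=\ka_3^{-q}\cdot\fc{\ta_1}{y_1^{-p-q-1}}\cdot x_2^{N_2}y_2^{M_2-M_3}
\]
the unique member of the first family with $n_1=0$; one checks directly that $y_1^{-p}y_3^{-q-1}\cdot z=\ka_3\ta_1 x_2^{N_2}y_2^{M_2}\neq 0$ (Lemma~\ref{iononzero}), while $y_1^{-p}y_3^{-q-1}\cdot\be=0$ for every other first-family element $\be$ in that degree (those have $n_1>0$, forcing $m_1<-p$, so the $y_1$-power fails to clear the denominator). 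The symbol $\fc{\ka_1}{x_1^{-p}}\fc{\ta_3}{x_3^{n_3}y_3^{m_3}}x_2^{n_2}y_2^{m_2}$ is then \emph{defined} to be the unique preimage whose product with $y_1^{-p}y_3^{-q-1}$ vanishes; since the fibre is $\{\al,\al+z\}$ and exactly one of these is killed, the definition is unambiguous. This simultaneously pins down the second family (so the statement of the proposition has definite content, which matters for its downstream use in Propositions~\ref{propcase2v2}, \ref{y1case}, \ref{y3case}, \ref{x3case2}) and makes the linear independence step a one-liner: $B\setminus\{z\}$ maps bijectively under $\varphi_{p-1}$ to the inductive basis, and $z$ is separated from $B\setminus\{z\}$ by the $y_1^{-p}y_3^{-q-1}$-test.

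Your proposed alternative for step~(ii)---running the isotropy long exact sequence backwards and separating the two families ``after a single application of $\cp x_1$''---does not work as stated: after one $\cp x_1$ the second family maps to itself with $n_1$ lowered by one, so both families still coexist and nothing has been gained. The annihilation-by-$y_1^{M_1}y_3^{M_3}$ criterion is the missing idea. Your step~(iii), showing $f$ surjective on $T_2$ in this range, is correct (it is Corollary~\ref{excl}(2) in the paper) but is not actually needed for this proof, which works purely with the known dimensions from Proposition~\ref{caseMC2}.
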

\begin{proof}
	When $b+q \geq 0,p,q\leq-1$ we have from case (2) of Proposition \ref{caseMC2} that the Poincar\'e series for $\pi_{a'}((\Si^{p\si+b\ep+q\so}H\ul{\Z/2})^{\mathcal{K}})$ is given by
	\begin{align*}
		x^p(1+\cd+x^{-p-2})(1+\cd+x^{b+q})+x^q(1+\cd+x^{b-1})(1+\cd+x^{-q-1}).
	\end{align*}
	When $p=-1$, we have from the same arguments (see Lemma \ref{k1series} below) as in the proof of Theorem $\ref{kconj}$ that $x^q(1+\cd+x^{b-1})(1+\cd+x^{-q-1})$ is the Poincar\'e series for elements of the form
	\begin{align*}
		\ka_1\fc{\ta_3}{x_3^{n_3}y_3^{m_3}}x_2^{n_2}y_2^{m_2}
	\end{align*}
	in $\pi_{a'}((\Si^{-\si+b\ep+q\so}H\ul{\Z/2})^{\mathcal{K}}).$ Note that this identification makes sense because all such elements are linearly independent. Indeed, such elements are nonzero by Lemma \ref{iononzero}, and each element of this form in a fixed degree is uniquely determined by $m_3$. The 4-degree of the above cohomology class is $(1-m_2+m_3,-1,n_2+m_2+1,-1-n_3-m_3)$ so this is in our range if $b\geq 1$ and $b+q\geq 0$ ($n_2+m_2\geq n_3+m_3$).
	
	Similarly (see Lemma \ref{k3series} below), when $p \leq -2,$ we have that $x^p(1+\cd+x^{-p-2})(1+\cd+x^{b+q})$ is the Poincar\'e series for elements of the form
	\begin{align*}
		\ka_3^{m_3+1}\cdot \fc{\ta_1}{x_1^{n_1}y_1^{m_3+m_1}}	\cdot x_2^{n_2}y_2^{m_2-m_3},
	\end{align*}
	in $\pi_{a'}((\Si^{p\si+b\ep+q\so}H\ul{\Z/2})^{\mathcal{K}})$, where $m_3\leq m_2$ and $m_1 \geq 1.$ Note again that such elements are linearly independent. Indeed, an element of this form is nonzero by Lemma \ref{iononzero}, since we can always multiply it by $y_3^{m_3}$, and each element of this form in a fixed degree is uniquely determined by $m_1$.  The above cohomology class has $4-$degree $$(m_1-m_2+m_3+1,-1-n_1-m_1,1+n_2+m_2,-m_3-1)$$ so because $m_3\leq m_2$ and $m_1 \geq 1$ we have $p\leq -2, q<0,b+q\geq 0, b\geq 1$. 
	
	By Lemma \ref{x1case1}, we have that the map
	\begin{align*}
		\varphi_{p-1}:=\cp x_1: \pi_{a'}((\Si^{(p-1)\si+b\ep+q\so}H\ul{\Z/2})^{\mathcal{K}}) \twoheadrightarrow  \pi_{a'}((\Si^{p\si+b\ep+q\so}H\ul{\Z/2})^{\mathcal{K}}) 
	\end{align*}
	is surjective, so by Proposition \ref{caseMC2}, the Poincar\'e series for $\ker\varphi_{p-1}$ is given by (see Lemma \ref{kerphi} below)
	\begin{align*}
		x^{p-1}(1+\cd+x^{b+q}).
	\end{align*}
	It follows that this corresponds to the $b+q+1$ elements of the form
	\begin{align*}
		\ka_3^{m_3+1}\cdot \fc{\ta_1}{y_1^{m_3+m_1}}\cdot x_2^{n_2}y_2^{m_2-m_3}
	\end{align*}
	in $\pi_{*}((\Si^{(p-1)\si+b\ep+q\so}H\ul{\Z/2})^{\mathcal{K}})$. We will now construct the basis in the theorem via induction on $p$. When $p = -1,$ we only have products of the form
	\begin{align*}
		\ka_1\fc{\ta_3}{x_3^{n_3}y_3^{m_3}}x_2^{n_2}y_2^{m_2},
	\end{align*}
	so we are done.
	
	Before giving the inductive step for $p \leq -2,$ we explicitly state our basis elements for $p=-2$ to illustrate the idea.
	When $p=-2,$ we first have elements of the form
	\begin{align*}
			\fc{\ka_1}{x_1}\fc{\ta_3}{x_3^{n_3}y_3^{m_3}}x_2^{n_2}y_2^{m_2}
	\end{align*}
	corresponding to elements which map to
	\begin{align*}
				\ka_1\fc{\ta_3}{x_3^{n_3}y_3^{m_3}}x_2^{n_2}y_2^{m_2}\in H^{a-\sigma+b\epsilon+q\sigma\otimes\epsilon}_{Br,K}(pt)
	\end{align*}
	under multiplication with $x_1.$ However, $x_1$ is in general not injective, so we also have as generators the kernel elements which are of the form
	\begin{align*}
		\ka_3^{-q}\cdot \fc{\ta_1}{y_1^{-q}}\cdot x_2^{n_2'}y_2^{m_2'}\in H^{2-m_2'-2\sigma+(n_2'+m_2'-q)\epsilon+q\sigma\otimes\epsilon}_{Br,K}(pt),
	\end{align*}
	where $n_2'+m_2' = b+q$, $2-m_2'=a=-a'$. 
	
	These generators are associated to the Poincar\'e series $x^{-2}(1+..+x^{b+q})$. 
	
	Then if $a'\in[-2,n_2'+m_2'-2]$ the preimage is necessary  $\mathbb{Z}/2$ so the preimage above requires a choice (otherwise the preimage is one element, so the choice is unique). In the case we need a choice, we choose the element in the preimage above to be that unique element in the fiber that has zero multiplication with $y_1y_3^{-q-1}$. 
	
	Denote one cohomology class in the preimage with $\alpha$. Notice then that the preimage of the above cohomology class is $$\{\alpha,\alpha+\ka_3^{-q}\cdot \fc{\ta_1}{y_1^{-q}}\cdot x_2^{n_2'}y_2^{m_2'}\}$$ and $$y_1y_3^{-q-1}\ka_3^{-q}\cdot \fc{\ta_1}{y_1^{-q}}\cdot x_2^{n_2'}y_2^{m_2'}=k_3\theta_1x_2^{n_2'}y_2^{m_2'-q-1}\in H^{2-m_2'+q-\sigma+b\epsilon-\sigma\otimes\epsilon}_{Br,K}(pt)\simeq \Z/2$$ is the only nonzero cohomology class from  the group. 
	
	But then exactly one out of $\alpha$ and $\alpha+\ka_3^{-q}\cdot \fc{\ta_1}{y_1^{-q}}\cdot x_2^{n_2'}y_2^{m_2'}$ has zero multiplication by $y_1y_3^{-q-1}$. This is chosen as a preimage.

	For the inductive step, assume that $\pi_{a'}((\Si^{p\si+b\ep+q\so}H\ul{\Z/2})^{\mathcal{K}})$ has a basis consisting of previously chosen inverses and elements of the form $	\ka_3^{m_3+1}\cdot \fc{\ta_1}{x_1^{n_1}y_1^{m_3+m_1}}	\cdot x_2^{n_2}y_2^{m_2-m_3}$ for all $a'.$ When ${a' \notin [p-1,b+q+p-1],}$ then $\varphi_{p-1}$ is an isomorphism so we are done. Otherwise, $\ker\varphi_{p-1}$ is generated by an element (the only nontrivial element)
	\begin{align*}
		z = \ka_3^{M_3+1}\cdot \fc{\ta_1}{y_1^{M_3+M_1}}\cdot x_2^{N_2}y_2^{M_2-M_3}\in H^{-a'+(p-1)\sigma+b\epsilon+q\sigma\otimes\epsilon}_{Br,K}(pt),
	\end{align*}
	where
	\begin{align*}
		M_1 &= -p, M_3 = -q-1 \txa  M_2 =a'+M_1+M_3+1, N_2= b-M_2-1.
	\end{align*}
	
	For example, note that
	\begin{align*}
	y_1^{-p}y_3^{-q-1}z=y_1^{M_1}y_3^{M_3} \cp z &= 	y_1^{M_1}y_3^{M_3} \cp \ka_3^{M_3+1}\cdot \fc{\ta_1}{y_1^{M_3+M_1}}\cdot x_2^{N_2}y_2^{M_2-M_3} \\
	&= (y_3^{M_3} \ka_3^{M_3}) \ka_3 \cp \fc{\ta_1}{y_1^{M_3}} \cdot x_2^{N_2}y_2^{M_2-M_3}\\
	&=(y_1^{M_3}y_2^{M_3})\ka_3\cdot  \fc{\ta_1}{y_1^{M_3}} \cdot x_2^{N_2}y_2^{M_2-M_3} = \ka_3\ta_1 x_2^{N_2}y_2^{M_2}\neq 0, 
	\end{align*}
	but for other products $\be\in H^{*+(p-1)\sigma+b\epsilon+q\sigma\otimes\epsilon}_{Br,K}(pt)$ we have that 
	\begin{align*}
			&y_1^{-p}y_3^{-q-1}\cp  \be=y_1^{M_1}y_3^{M_3} \cp  \be = y_1^{M_1}y_3^{M_3} \cp \ka_3^{m_3+1}\cdot \fc{\ta_1}{x_1^{n_1}y_1^{m_3+m_1}}\cdot x_2^{n_2}y_2^{m_2-m_3} \\
			&= 	y_1^{M_1}y_3^{M_3} \cp \ka_3^{M_3+1}\cdot \fc{\ta_1}{x_1^{n_1}y_1^{M_3+m_1}}\cdot x_2^{n_2}y_2^{m_2-M_3} \tx{(since $q=-M_3-1=-m_3-1$ is fixed)}\\
			&= y_1^{M_1-m_1}\cp(y_3^{M_3} \ka_3^{M_3}) \ka_3 \cp \fc{\ta_1}{x_1^{n_1}y_1^{M_3}} \cdot x_2^{n_2}y_2^{m_2-M_3}\\
			&= y_1^{M_1-m_1}\cp(y_1^{M_3}y_2^{M_3}) \ka_3 \cp \fc{\ta_1}{x_1^{n_1}y_1^{M_3}} \cdot x_2^{n_2}y_2^{m_2-M_3}\\
			&= y_1^{M_1-m_1}\cp  \ka_3 \cp \fc{\ta_1}{x_1^{n_1}} \cdot x_2^{n_2}y_2^{m_2} = 0,
	\end{align*}
	since $m_1 < M_1$ if $\be \neq z$ because $n_1>0$.
	
	Then our basis $B$ for $\pi_{a'}((\Si^{(p-1)\si+b\ep+q\so}H\ul{\Z/2})^{\mathcal{K}})$ is given by taking the union of $\{z\}$ with all of the products  ($n_1\geq 1$)
	\begin{align*}
		\ka_3^{m_3+1}\cdot \fc{\ta_1}{x_1^{n_1}y_1^{m_3+m_1}}\cdot x_2^{n_2}y_2^{m_2-m_3}\in  \varphi_{p-1}^{-1}\left(	\ka_3^{m_3+1}\cdot \fc{\ta_1}{x_1^{n_1-1}y_1^{m_3+m_1}}\cdot x_2^{n_2}y_2^{m_2-m_3}\right),
	\end{align*}
	and the generators
	\begin{align*}
		&\fc{\ka_1}{x_1^{-p}}\fc{\ta_3}{x_3^{n_3}y_3^{m_3}}x_2^{n_2}y_2^{m_2} \in \varphi_{p-1}^{-1}\left(\fc{\ka_1}{x_1^{-p-1}}\fc{\ta_3}{x_3^{n_3}y_3^{m_3}}x_2^{n_2}y_2^{m_2}\right)
	\end{align*}
	whose product with $y_1^{M_1}y_3^{M_3}$ is zero. Note that this is well-defined because exactly one element of the fiber with two elements has this property. Indeed, we know that $\ker\varphi_{p-1}=\Z/2z$ in this degree, so 
	\begin{align*}
		 \varphi_{p-1}^{-1}\left(\fc{\ka_1}{x_1^{-p-1}}\fc{\ta_3}{x_3^{n_3}y_3^{m_3}}x_2^{n_2}y_2^{m_2}\right) = \{\al,\al+z\},
	\end{align*}
	and we know that $\ka_1\ta_3x_2^{N_2}y_2^{M_2}$ is the only nonzero element in degree $(1-M_2,-1,1+N_2+M_2,-1)$ by  Theorem \ref{caseMC2}(2), so if $\al$ is an arbitrary element of $\ker\varphi_{p-1}$ we must have that if
	\begin{align*}
		&y_1^{M_1}y_3^{M_3}\cp \al \neq  0 \implies y_1^{M_1}y_3^{M_3}\cp \al = \ka_1\ta_3x_2^{N_2}y_2^{M_2}\\
		\implies & y_1^{M_1}y_3^{M_3}\cp(\al+z) = \ka_1\ta_3x_2^{N_2}y_2^{M_2}+\ka_1\ta_3x_2^{N_2}y_2^{M_2}=0.
	\end{align*}
	
	Note that $B$ is linearly independent, since $B\setminus \{z\}$ is linearly independent by induction, and $z$ is the only element of $B$ that does not vanish under multiplication with $y_1^{M_1}y_3^{M_3}$.
	We have previously established that our chosen products involving $\ka_3$ have the Poincar\'e series
	\begin{align*}
		x^{p-1}(1+\cd+x^{-p-1})(1+\cd+x^{b+q}),
	\end{align*}
	so by dimensionality, $B$ is indeed a basis.
\end{proof}

	Recall that the cohomology of the Mixed Cone of Type II corresponding to $\io_2$  ($b\geq 1$, $p,q\leq -1$) is given by the homology of the chain complex
	\begin{align*}
		0\to \ub{\frac{\F_2[x_1,y_1,x_2,y_2,x_3,y_3]}{(x_1^\infty,y_1^\infty,x_3^\infty,y_3^\infty)}\{\io_2\}}_{I} \xrightarrow{f} \ub{\frac{\F_2[x_1,y_1,x_2,y_2,x_3,y_3]}{(x_1^\infty,y_1^\infty,x_3^\infty,y_3^\infty)}\{\ta_1\ta_3\}}_T \to 0.
	\end{align*}
	Write
	\begin{align*}
		f_{a,p,b,q} : I_{a,p,b,q} \to T_{a+1,p,b,q}
	\end{align*}
	for the restriction of $f$ to the graded piece of degree $(a,p,b,q).$ We have now the following theorem:
\begin{theorem} \label{ioseries}
	The Poincar\'e series for $\dim\ker f_{a,p,b,q}$ in the Mixed Cone of Type II corresponding to $\io_2$ is given by:\\
	(1) \textbf{Case} $-p,-q\geq b+1$:
	\begin{align*}
		(x-1)^{-2}x^{-b - 1} (x^b - 1) (x^{b + 1} - 1).
	\end{align*}
	This is the second term of the  case (1) in Proposition \ref{caseMC2} ($a\leq b+1$).
	
	(2) \textbf{Case} $-q\leq b$:
	\begin{align*}
		(x-1)^{-2}\left( x^p(x^{-p-1} - 1) (x^{b +q+ 1} - 1) + x^q(x^b-1)(x^{-q}-1)\right).
	\end{align*}
	This is the Poincar\'e series of the case (2) in Proposition \ref{caseMC2}.
\end{theorem}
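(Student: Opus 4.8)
The plan is to compute the homology of the two–term complex $0\to I\xrightarrow{f}T\to 0$ of Theorem \ref{m2cone} degree by degree, where $I=\frac{\F_2[x_1,\dots,y_3]}{(x_1^\infty,y_1^\infty,x_3^\infty,y_3^\infty)}\{\io_2\}$, $T=\frac{\F_2[x_1,\dots,y_3]}{(x_1^\infty,y_1^\infty,x_3^\infty,y_3^\infty)}\{\ta_1\ta_3\}$ and $f$ is multiplication by $g=x_1y_2y_3+y_1x_2y_3+y_1y_2x_3$. With the convention $\pi_{a'}=H^{-a',p,b,q}$ and $f_{a,p,b,q}\colon I_{a,p,b,q}\to T_{a+1,p,b,q}$, one has
\begin{align*}
\dim H^{-a',p,b,q}_{Br,K}(pt)&=\dim\ker f_{-a',p,b,q}+\dim\ker f_{-(a'+1),p,b,q}\\
&\quad+\dim T_{-a',p,b,q}-\dim I_{-(a'+1),p,b,q}.
\end{align*}
Hence, fixing $(p,b,q)$ and letting $\mathcal K,\mathcal T,\mathcal I$ be the Poincar\'e series (in $x$, tracking $a'$) of $\ker f$, $T$, $I$, and $\mathcal H$ the series of Proposition \ref{caseMC2}, we get
\begin{align*}
\mathcal H=(1+x^{-1})\mathcal K+\mathcal T-x^{-1}\mathcal I,\quad\text{equivalently}\quad \mathcal K=\frac{x\mathcal H-x\mathcal T+\mathcal I}{x+1}.
\end{align*}
So it suffices to compute the elementary series $\mathcal T$ and $\mathcal I$ and simplify; that the right–hand side is a (Laurent) polynomial is a built–in consistency check.

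For part (2) ($-q\le b$, i.e. $b+q\ge 0$; the symmetric range $b+p\ge0$ follows from the $\si\leftrightarrow\so$ symmetry of the $\io_2$-cone) I would shortcut this using Proposition \ref{propcase1v2}, which already produces an explicit $\Z/2$-basis of $H^\star$ in this range made of the two families $\ka_3^{m_3+1}\tfrac{\ta_1}{x_1^{n_1}y_1^{m_3+m_1}}x_2^{n_2}y_2^{m_2-m_3}$ and $\tfrac{\ka_1}{x_1^{n_1}}\tfrac{\ta_3}{x_3^{n_3}y_3^{m_3}}x_2^{n_2}y_2^{m_2}$. Using $\ka_3\ta_1=\ka_1\ta_3=\io_2$ (and the module action of $\ka_3$ on $I$ for the first family), both families are $\io_2$-fractions, hence lie in the summand $I$ of the complex; being nonzero cohomology classes they are cycles, so they all lie in $\ker f$. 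Since $\ker f$ is a direct summand of $H$, this forces $\ker f=H$ (equivalently $f$ is onto $T$ in this range, parallel to the surjectivity observed after Theorem \ref{ncon}), whence $\mathcal K=\mathcal H$; summing the generating functions of the two families gives the stated closed form $(x-1)^{-2}\bigl(x^p(x^{-p-1}-1)(x^{b+q+1}-1)+x^q(x^b-1)(x^{-q}-1)\bigr)$, which is exactly case 2 of Proposition \ref{caseMC2}.

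For part (1) ($-p,-q\ge b+1$, i.e. $b+p\le-1$ and $b+q\le-1$) the cokernel $T/\mathrm{im}\,f$ no longer vanishes, and I would run the counting argument above in earnest, paralleling the proof of Theorem \ref{kconj}. Concretely, $I_{a,p,b,q}$ is spanned by the monomials $\tfrac{\io_2}{x_1^{n_1}y_1^{m_1}x_3^{n_3}y_3^{m_3}}x_2^{n_2}y_2^{m_2}$ subject to the four linear degree conditions, and likewise $T_{a,p,b,q}$ by $\tfrac{\ta_1\ta_3}{x_1^{n_1}y_1^{m_1}x_3^{n_3}y_3^{m_3}}x_2^{n_2}y_2^{m_2}$; in each case $\dim$ is a count of lattice points in an explicit slice of $\Z_{\ge0}^6$, evaluated case-by-case in $a'$ by the same "points on a line in a rectangle" geometry as in Figure \ref{counting}. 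Plugging these polynomials and the case-1 series of Proposition \ref{caseMC2} for $\mathcal H$ into the formula for $\mathcal K$ and simplifying yields $\mathcal K=x^{-b-1}(1+\cdots+x^{b-1})(1+\cdots+x^{b})=(x-1)^{-2}x^{-b-1}(x^b-1)(x^{b+1}-1)$, the second term of case 1 of Proposition \ref{caseMC2}; subtracting it from $\mathcal H$ recovers the first term as the series of $T/\mathrm{im}\,f$, a further check. (Alternatively one can try to pin down $\ker f$ on the nose as a single explicit family of $\io_2$-fractions cut out by the divisibility imposed by $g$, in analogy with $\ker f=\ka_2\Z/2[\ka_2,x_1,y_1,x_3,y_3]$ in Theorem \ref{kconj}.)

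The main obstacle is the bookkeeping in part (1): since $I$ and $T$ have four inverted variables ($x_1,y_1,x_3,y_3$) against only two polynomial ones ($x_2,y_2$), the polytopes computing $\dim I_{a,p,b,q}$ and $\dim T_{a,p,b,q}$ are of higher dimension than in the proof of Theorem \ref{kconj}, and the decomposition of the relevant range of $a'$ into subintervals (with the shape of the answer changing at the endpoints $a'\sim p,\,q,\,p+q,\,b$) is correspondingly more delicate; getting all these subcases right and verifying the $x+1$ divisibility is where the real work lies. A smaller subtlety in part (2) is making the assertion "both families of Proposition \ref{propcase1v2} lie in $I$" precise — the fraction notation is only formal — which is handled by the identities $\ka_3\ta_1=\ka_1\ta_3=\io_2$, $\ka_i\ka_j=y_k^2$ and $\io_i x_j=\io_i y_j=0$ for $i\ne j$.
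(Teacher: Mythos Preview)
Your overall framework — the identity $\mathcal H = (1+x^{-1})\mathcal K + \mathcal T - x^{-1}\mathcal I$ and the plan to extract $\mathcal K$ from the known $\mathcal H$ together with the elementary count of $\dim T$ and $\dim I$ — is exactly what the paper does (Theorem \ref{ioproof}). Your outline for part (1) matches the paper's argument: compute the Poincar\'e series of $\dim T_{-a',p,b,q} - \dim I_{-(a'+1),p,b,q}$ by the same lattice-point geometry as in the proof of Theorem \ref{kconj} (the paper organizes it as $L+M_1+M_2$), subtract from the full series, and factor out $(1+x^{-1})$.

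The gap is in your shortcut for part (2). You want to conclude $\ker f = H$ by showing that the basis elements of Proposition \ref{propcase1v2} lie in $\ker f\subset I$, but this is precisely the point at issue. As the paper stresses in the discussion preceding Theorem \ref{m1cone}, the $\io_2$-labels on generators of $I=T_1$ are purely notational: these chains are $K$-fixed points of $C$ that do \emph{not} decompose as products of $C_2$-fixed chains, and the fraction notation ``does not a priori determine any multiplicative structure.'' The cohomology-level identity $\ka_1\ta_3=\io_2$ therefore does not imply that the cohomology class $\ka_1\cdot\tfrac{\ta_3}{x_3^{n_3}y_3^{m_3}}x_2^{n_2}y_2^{m_2}$ is represented by the chain $\tfrac{\io_2}{x_3^{n_3}y_3^{m_3}}x_2^{n_2}y_2^{m_2}\in I$ rather than by something with a nonzero $T/\mathrm{im}\,f$ component. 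The situation is worse for $n_1\ge 1$: the classes $\tfrac{\ka_1}{x_1^{n_1}}\tfrac{\ta_3}{\cdots}$ are \emph{defined} in Proposition \ref{propcase1v2} as specific choices of preimage under multiplication by $x_1$, and nothing in that construction controls their chain-level representative. Likewise, there is no ``module action of $\ka_3$ on $I$'' at the chain level to invoke for the first family, since multiplication by $\ka_3$ crosses cone boundaries in $C$. The identities you cite in your last paragraph all live in cohomology and do not by themselves locate a class within the decomposition $H=\ker f\oplus T/\mathrm{im}\,f$.

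The paper avoids this entirely by running the \emph{same} counting argument for case (2) as for case (1): the series (T1) for $\dim T-x^{-1}\dim I$ is uniform in the two cases, and subtracting it from the case-(2) series of Proposition \ref{caseMC2} and dividing by $(1+x^{-1})$ yields $\mathcal K=\mathcal H$. Thus the surjectivity of $f$ in the range $b+q\ge 0$ (equivalently the vanishing of $\ta_1\ta_3$-products there, which is Corollary \ref{excl}(2) and later Proposition \ref{po3}) is an \emph{output} of the computation, not an input. Your shortcut would reverse that logical order; to make it work you would need a genuine chain-level argument identifying the basis elements with cycles in $I$, which you have not supplied.
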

As the ideas are similar to the Mixed Cone of Type I case, we defer the reader to Theorem \ref{ioproof}  for the proof.

\begin{corollary}\label{excl}
	(1) When $b+p <0, b+q<0$ the cohomology classes represented by $\ta_1\ta_3$-chains ($a\geq b+4$) and $\io_2$-chains ($a\leq b+1$) live in mutually exclusive degrees. The range $b+2\leq a\leq b+3$ is zero.
	
	(2) When $b+q\geq 0,$ the Poincar\'e series from case (2) Proposition \ref{caseMC2} coincides with case (2) of Theorem \ref{ioseries}. This means that all $\theta_1\theta_3$-classes live in the image of $f$, therefore the cohomology in these degrees is only given by the kernel elements i.e. $\io_2$-chains. It means that the cohomology classes 
	$$\frac{\theta_1}{x_1^{n_1}y_1^{m_1}}\frac{\theta_3}{x_3^{n_3}y_3^{m_3}}x_2^{n_2}y_2^{m_2}$$
	in the $b+q\geq 0$, $b\geq 1$ and $p,q\leq -1$ are zero in Mixed Cone of Type II corresponding to $\io_2$. The same is for $b+p\geq 0$, $b\geq 1$ and $p,q\leq -1$. It also means that Proposition \ref{propcase1v2} gives a description of  the cohomology classes in $Ker f_{-a',p,b,q}$ for $b+q\geq 0, p,q\leq -1$. 
\end{corollary}
\begin{proof}
 We see that the Poincar\'e series in Theorem \ref{ioseries}(1) coincides with the second summand of the Poincar\'e series in case (1) in Proposition \ref{caseMC2}. The result follows from the fact that the two summands live in mutually exclusive cohomological degrees.
\end{proof}

\begin{lemma}
	\label{x1case2v2}
	Let $-q > b \geq 1.$ The map $H_{Br,K}^{a+(p-1)\sigma+b\epsilon+q\sigma\otimes\epsilon}(pt)\to H_{Br,K}^{a+p\sigma+b\epsilon+q\sigma\otimes\epsilon}(pt)$ given by $\al \mt x_1 \cp \al$ is
	\begin{itemize}
		\item  surjective when $b+p < 0$ and $a > b+3,$ 
		\item  an isomorphism when $b+ p \leq 0$ and $a \leq b+1,$
		\item  an isomorphism when $-q = b+1$.
	\end{itemize}
	By symmetry, when $-p > b \geq 1$, we also have that the map $H_{Br,K}^{a+p\sigma+b\epsilon+(q-1)\sigma\otimes\epsilon}(pt)\to H_{Br,K}^{a+p\sigma+b\epsilon+q\sigma\otimes\epsilon}(pt)$ given by $\al \mt x_3 \cp \al$ is
	\begin{itemize}
		\item  surjective when $b+q < 0$ and $a > b+3,$ 
		\item  an isomorphism when $b+ q \leq 0$ and $a \leq b+1,$
		\item  an isomorphism when $-p = b+1$.
	\end{itemize}
\end{lemma}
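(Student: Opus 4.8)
The plan is to imitate the proof of Lemma \ref{x1case1}. I would start from the cofiber sequence $\ct_+ \to pt_+ \to S^{\si}$, which induces the long exact sequence
\begin{align*}
\cd \to H_{Br,K}^{a,p-1,b,q}(pt) \xr{\cp x_1} H_{Br,K}^{a,p,b,q}(pt) \xr{\mathrm{Res}} H_{Br,K}^{a,p,b,q}(\ct) \xr{\dl} H_{Br,K}^{a+1,p-1,b,q}(pt) \to \cd .
\end{align*}
By exactness, $\cp x_1 \colon H_{Br,K}^{a,p-1,b,q}(pt) \to H_{Br,K}^{a,p,b,q}(pt)$ is surjective exactly when $\dl\colon H_{Br,K}^{a,p,b,q}(\ct) \to H_{Br,K}^{a+1,p-1,b,q}(pt)$ is injective, and is an isomorphism exactly when, in addition, the preceding connecting map $\dl\colon H_{Br,K}^{a-1,p,b,q}(\ct) \to H_{Br,K}^{a,p-1,b,q}(pt)$ vanishes. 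The $x_3$-statement follows from the $x_1$-statement by the symmetry interchanging $\si$ with $\so$ (and $C_2$ with $\Delta$), so it suffices to treat $x_1$.

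Next I would dispose of the two ``isomorphism'' bullets by a degree count at the middle level. By Theorem \ref{middle}, multiplication by the units $y_1$ and $t_a$ gives $H^{a,p,b,q}_{Br,K}(\ct) \cong H^{a+p,0,b+q,0}_{Br,K}(\ct)$, and the latter is the $RO(\Si_2)$-graded cohomology of a point in a representation degree of $\ep$-weight $b+q$. Since $-q > b$ we have $b+q\le -1$; the $RO(\Si_2)$-graded cohomology of a point vanishes in $\ep$-weight $-1$, and in $\ep$-weight $\le -2$ it is the negative cone spanned by the classes $\frac{\ta_2}{x_2^{n_2}y_2^{m_2}}$, all of which lie in internal degree $\geq 2$. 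Hence: if $-q=b+1$ then $b+q=-1$ and $H^{a,p,b,q}_{Br,K}(\ct)=0$ in every internal degree, so $\cp x_1$ is an isomorphism; and if $b+p\leq 0$ and $a\leq b+1$ then $a+p\leq 1<2$, so both $H^{a,p,b,q}_{Br,K}(\ct)$ and $H^{a-1,p,b,q}_{Br,K}(\ct)$ vanish, and again $\cp x_1$ is an isomorphism.

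For the ``surjective'' bullet ($-q>b\ge 1$, $b+p<0$, $a>b+3$) I may assume $H^{a,p,b,q}_{Br,K}(\ct)\neq 0$, so $b+q\le -2$, $2\le a+p$ and $a+p+b+q\le 0$ (in particular $-(p+q)\ge 4$). Exactly as in Lemma \ref{x1case1}, $H^{a,p,b,q}_{Br,K}(\ct)$ is one-dimensional and, using $\mathrm{Res}^K_{\ct}(\ka_3)=y_1t_a^{-1}$, $\mathrm{Res}^K_{\ct}(x_2)=x_2$, $\mathrm{Res}^K_{\ct}(y_2)=y_2$ and $\mathrm{Res}^K_{\ct}(\ta_2)=\ta_2$, its generator is $\mathrm{Res}^K_{\ct}\!\big(\ka_3^{-q}\frac{\ta_2}{x_2^{n_2}y_2^{m_2}}\big)\cp y_1^{p+q}$ with $m_2=a+p-2$ and $n_2=-(a+p+b+q)$. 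Since $\dl$ is $H^{\st}_{Br,K}(pt)$-linear and $\dl(y_1^{-n})=\frac{\ta_1}{y_1^{n-1}}$ for $n\ge 1$ (computed in Lemma \ref{x1case1}),
\begin{align*}
\dl(\text{generator}) = \ka_3^{-q}\,\frac{\ta_2}{x_2^{n_2}y_2^{m_2}}\cp\frac{\ta_1}{y_1^{-(p+q)-1}} \in H^{a+1,p-1,b,q}_{Br,K}(pt),
\end{align*}
and the problem reduces to showing this class is nonzero. By Corollary \ref{excl}(1) this group is the $\ta_1\ta_3$-chain part of the Mixed Cone of Type II for $\io_2$, which by Theorem \ref{m2cone} is the cokernel of multiplication by $f=x_1y_2y_3+y_1x_2y_3+y_1y_2x_3$; so I would rewrite $\dl(\text{generator})$ as an explicit $\ta_1\ta_3$-chain (using $\ka_3\ta_2=\io_1$, $\ka_3\io_1=\ka_3(\ka_2\ta_3)=y_1^2\ta_3$ and the $\ka_3$-action on $\ta_3$-fractions) and show it is not in $\im f$, ultimately by multiplying by suitable powers of $y_1,y_3,x_2$ and invoking the nonvanishing of $\ka_3\ta_1x_2^{n_2}y_2^{m_2}=\ka_1\ta_3x_2^{n_2}y_2^{m_2}$ from Lemma \ref{iononzero}.

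The main obstacle is exactly this last nonvanishing verification. Unlike in Lemma \ref{x1case1}, here $\dl(\text{generator})$ involves both $\ta_1$ and $\ta_2$, and the naive ``clear all denominators'' move collapses it to $\ka_3^{-q}\ta_1\ta_2=\ka_3^{-q-1}\Theta=0$ and to $\Theta x_i=\Theta y_i=0$; so one must clear denominators in the correct order and exploit that the formal fraction $\frac{\ta_2}{x_2^{n_2}y_2^{m_2}}$ does not factor through $\ta_2$. This is the same kind of bookkeeping with the multiplicative structure of the isotropy cofiber sequence that is carried out in the Appendix; everything else (the long exact sequence, the reduction to $\dl$, the middle-level vanishing governing the isomorphism cases, and the value of $\dl(y_1^{-n})$) is routine and parallels Lemma \ref{x1case1}.
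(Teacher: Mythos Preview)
Your treatment of the two isomorphism bullets is correct and essentially identical to the paper's: both reduce, via the long exact sequence and Theorem~\ref{middle}, to the vanishing of $H^{a+p,0,b+q,0}_{Br,K}(\ct)$, using that either $b+q=-1$ or that $a+p\le b+p+1\le 1$ forces the $\Si_2$-negative-cone group to vanish.

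For the surjectivity bullet, however, you are working much harder than necessary, and you do not actually close the argument: the ``main obstacle'' you flag --- the nonvanishing of $\ka_3^{-q}\,\frac{\ta_2}{x_2^{n_2}y_2^{m_2}}\cp\frac{\ta_1}{y_1^{-(p+q)-1}}$ in the cokernel of $f$ --- is a genuine computation in the multiplicative structure between different cones, and the moves you sketch (clearing denominators in a careful order, invoking Lemma~\ref{iononzero}) are not obviously sufficient as stated. The paper sidesteps this entirely with a one-line direct argument. Under the hypotheses $-q>b\ge 1$, $b+p<0$ and $a>b+3$ we also have $b+q<0$, so by Corollary~\ref{excl}(1) the group $H^{a,p,b,q}_{Br,K}(pt)$ consists only of classes represented by $\ta_1\ta_3$-chains, i.e.\ $\Z/2$-combinations of $\frac{\ta_1}{x_1^{n_1}y_1^{m_1}}\frac{\ta_3}{x_3^{n_3}y_3^{m_3}}x_2^{n_2}y_2^{m_2}$ modulo $\im f$. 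Any such generator is $x_1$ times $\frac{\ta_1}{x_1^{n_1+1}y_1^{m_1}}\frac{\ta_3}{x_3^{n_3}y_3^{m_3}}x_2^{n_2}y_2^{m_2}$, which lies in $H^{a,p-1,b,q}_{Br,K}(pt)$ (still in the $\ta_1\ta_3$-range, since $b+(p-1)<0$ and $a>b+3$). Surjectivity follows immediately, with no need to analyze the connecting map $\dl$ or to verify any delicate nonvanishing. Your long-exact-sequence setup is correct in principle, but for this bullet it is the wrong tool: the explicit chain-level description of the cone already hands you preimages for free.
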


\begin{proof}
	We prove the result for the map $\al \mt x_1 \cp \al.$ When $b+p <0, b+q<0$ and $a>b+3$ we have from Corollary \ref{excl} that the surjectivity result only applies to cohomology classes arising from $\ta_1\ta_3$-chains. In this case, the result follows immediately since we can always divide each term by $x_1.$ 
	
	The next statement only applies to cohomology classes arising from $\io_2$-chains because $a\leq b+1$. This statement can be reduced, by dimensionality, to showing that the groups $H_{Br,K}^{a+p\sigma+b\epsilon+q\sigma\otimes\epsilon}(\ct)$ in the long exact sequence
	\begin{align*}
		\cd \to H_{Br,K}^{a+(p-1)\sigma+b\epsilon+q\sigma\otimes\epsilon}(pt)\xr{\cp x_1} H_{Br,K}^{a+p\sigma+b\epsilon+q\sigma\otimes\epsilon}(pt) \to H_{Br,K}^{a+p\sigma+b\epsilon+q\sigma\otimes\epsilon}(\ct)\to \cd
	\end{align*}
	are zero.
	We know that (Theorem \ref{middle})
	\begin{align*}
		H_{Br,K}^{a+p\sigma+b\epsilon+q\sigma\otimes\epsilon}(\ct) \xl[\sim]{y_1^{p}(t_1^{-1})^{-q}} H_{Br,K}^{a+p+(b+q)\epsilon}(\ct),
	\end{align*}
	and by assumption we have that $a+p \leq b+p+1\leq 1$ and $b+q<0,$ so
	\begin{align*}
		H_{Br,K}^{a+p+(b+q)\epsilon}(\ct) =H_{Br,\Sigma_2}^{a+p+(b+q)\epsilon}(pt)= 0
	\end{align*}
	as desired. Similarly, the last statement follows from the observation that
	\begin{align*}
		H_{Br,K}^{a+p+(b-(b+1))\epsilon}(\ct) = H_{Br,K}^{a+p-\epsilon}(\ct)=H_{Br,\Sigma_2}^{a+p-\epsilon}(pt) =0.
	\end{align*}
\end{proof}
	In the range of Lemma \ref{x1case2v2}, we can define elements
	\begin{align*}
		\left(\fc{\ka_1}{x_1^{n_2+m_2+n_1+1}}\fc{\ta_3}{x_3^{n_3}y_3^{m_3}}x_2^{n_2}y_2^{m_2}\right)/x_3^{n+1} \in H_{Br,K}^{a+p\sigma+b\epsilon+(-b-n-1)\sigma\otimes\epsilon}(pt)
	\end{align*}
	in the preimages of the elements
	\begin{align*}
\fc{\ka_1}{x_1^{n_2+m_2+n_1+1}}\fc{\ta_3}{x_3^{n_3}y_3^{m_3}}x_2^{n_2}y_2^{m_2} \in H_{Br,K}^{a+p\sigma+b\epsilon-b\sigma\otimes\epsilon}(pt)
	\end{align*}
	under multiplication by $x_3^{n+1},$ as in the proposition below.
	\begin{proposition}
	\label{propcase2v2}
	There is an isomorphism of $\Z/2$-spaces
	\begin{align*}
		\scalebox{0.9}{$
			\bigoplus_{b\geq1,b+p<0,b+q<0}\fc{\pi_{*}((\Si^{p\si+b\ep+q\so}H\ul{\Z/2})^{\mathcal{K}})}{(\ta_1\ta_3\text{-chains})}\cong\Z/2\ub{\left\{\fc{\ka_3^{m_3+1}}{x_3^{n_3+1}}\cdot \fc{\ta_1}{x_1^{n_1}y_1^{m_3+1+n}}\right\}}_{0 \leq n \leq m_3\leq n+n_1}
			\op\Z/2\ub{\left\{\left(\fc{\ka_1}{x_1^{n_2+m_2+n_1+1}}\fc{\ta_3}{x_3^{n_3}y_3^{m_3}}x_2^{n_2}y_2^{m_2}\right)/x_3^{n+1}\right\}}_{n_3+m_3=n_2+m_2}.$}
	\end{align*}
\end{proposition}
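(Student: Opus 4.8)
The plan is to mimic the proof of Proposition \ref{propcase1v2}: reduce the region $b\ge 1$, $b+p<0$, $b+q<0$ to the boundary slice $b+q=0$ already analysed there, and transport along multiplication by $x_3$.

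First I would invoke Corollary \ref{excl}(1): throughout this region the $\theta_1\theta_3$-chains (living in degrees $a\ge b+4$) and the $\iota_2$-chains (living in degrees $a\le b+1$) occupy mutually exclusive $4$-degrees, with the band $b+2\le a\le b+3$ contributing nothing. So, modulo the $\theta_1\theta_3$-chains, the cohomology is exactly $\ker f$, concentrated in degrees $a\le b+1$, with Poincar\'e series the one in Theorem \ref{ioseries}(1). Since $b+p<0$ is in force, Lemma \ref{x1case2v2} (with the roles of $x_1$ and $x_3$ interchanged) shows that in every such degree multiplication by $x_3$ is an isomorphism $H^{a,p,b,q-1}_{Br,K}(pt)\xr{\sim}H^{a,p,b,q}_{Br,K}(pt)$; iterating, division by $x_3^{n+1}$ is a well-defined bijection carrying the classes in degrees with $b+q=0$, $b+p<0$, $a\le b+1$ onto those with $b+q=-(n+1)$, $b+p<0$, $a\le b+1$.

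Next I would apply Proposition \ref{propcase1v2} to the slice $b+q=0$ — which by Corollary \ref{excl}(2) consists entirely of $\iota_2$-chains — and take $x_3^{-(n+1)}$-preimages of exactly the two families of basis elements it produces there, restricted to $b+p<0$ and $a\le b+1$. Computing $4$-degrees, these restricted elements are $\kappa_3^{m_3+1}\frac{\theta_1}{x_1^{n_1}y_1^{m_3+m_1}}$ with $m_1\ge 1$, and $\frac{\kappa_1}{x_1^{n_2+m_2+n_1+1}}\frac{\theta_3}{x_3^{n_3}y_3^{m_3}}x_2^{n_2}y_2^{m_2}$ with $n_3+m_3=n_2+m_2$; dividing them by the appropriate power of $x_3$ produces precisely the two displayed families, the substitution $n=m_1-1$ matching the exponent of $y_1$ and the inequality $n\le m_3$ being exactly the constraint $a\le b+1$. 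Linear independence of the two displayed families is inherited from Proposition \ref{propcase1v2}, each $x_3^{n+1}$ being injective on these degrees; equivalently one detects the two families on disjoint sets of test monomials, by multiplying back up by $x_3^{n+1}$ and then by a suitable product of $y_1$ and $y_3$ to land on nonzero multiples of $\kappa_3\theta_1 x_2^{n_2}y_2^{m_2}$ resp.\ $\kappa_1\theta_3 x_2^{n_2}y_2^{m_2}$ via Lemma \ref{iononzero}. A lattice-point count of the flavour of the proof of Theorem \ref{kconj} (using Lemma \ref{coeff}) then confirms that the number of listed generators in each $4$-degree equals the corresponding coefficient of the Poincar\'e series in Theorem \ref{ioseries}(1), so by dimension the displayed elements form a basis.

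The step I expect to be the main obstacle is the index bookkeeping: checking that the $x_3^{-(n+1)}$-preimages of the two families of Proposition \ref{propcase1v2}, restricted to $b+q=0$, $b+p<0$, $a\le b+1$, are literally the two families in the statement — i.e.\ that the constraints $0\le n\le m_3\le n+n_1$ and $n_3+m_3=n_2+m_2$ come out correctly — together with the accompanying lattice-point count against $(x-1)^{-2}x^{-b-1}(x^b-1)(x^{b+1}-1)$ and its case-$2$ counterpart in Theorem \ref{ioseries}. As in the Type I computation, this is routine but tedious.
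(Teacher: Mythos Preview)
Your proposal is correct and follows essentially the same approach as the paper: use Lemma \ref{x1case2v2} to see that multiplication by $x_3$ is an isomorphism on the $\iota_2$-part when $b+q\le 0>b+p$ and $a\le b+1$, identify the boundary slice $b+q=0$ via Proposition \ref{propcase1v2}, and then transport (equivalently, induct on $q$). The only difference is that you propose an additional lattice-point count against the Poincar\'e series of Theorem \ref{ioseries}(1); this is unnecessary, since once $\cdot\, x_3$ is known to be an isomorphism on these degrees the dimension match is automatic and the basis is simply the $x_3$-preimage of the $b+q=0$ basis.
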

\begin{proof}
	By Lemma \ref{x1case2v2}, we have that
	\begin{align*}
		\psi_{q-1}	:= \cp x_3: H_{Br,K}^{a+p\sigma+b\epsilon+(q-1)\sigma\otimes\epsilon}(pt)\to H_{Br,K}^{a+p\sigma+b\epsilon+q\sigma\otimes\epsilon}(pt)
	\end{align*}
	is an isomorphism on cohomology classes arising from $\io_2$-chains ($a\leq b+1$) when $ b+q \leq 0 > b+p$. When $b+q = 0,$ we have from Proposition \ref{propcase1v2} that
	
	\begin{align*}
		\im \psi_{-b-1} = H_{Br,K}^{*+p\sigma+b\epsilon-b\sigma\otimes\epsilon}(pt) \cong \Z/2\ub{\left\{\ka_3^{m_3+1}\cdot \fc{\ta_1}{x_1^{n_1}y_1^{m_3+1+n}}\right\}}_{0 \leq n \leq m_3\leq n+n_1}
		\op\Z/2\ub{\left\{\fc{\ka_1}{x_1^{n_2+m_2+n_1+1}}\fc{\ta_3}{x_3^{n_3}y_3^{m_3}}x_2^{n_2}y_2^{m_2}\right\}}_{n_3+m_3=n_2+m_2}.
	\end{align*}
	Then since $\psi_{-b-1}$ is an isomorphism, we have that the domain of $\psi_{-b-1}$ is generated by the desired elements, and the result follows by induction on $q.$
\end{proof}
In the above range all elements are uniquely divisible by both $x_1$ and $x_3$.
\subsection{The Negative Cone}
In this section, we assume that the index $(a,p,b,q)$ is in the range of the Negative Cone; in particular we have that $p,b,q\leq -1$ and $a\in\Z$. We will determine below the generators of the $\Z/2$-vector spaces $H^{a+p\sigma+b\epsilon+q\sigma\otimes\epsilon}_{Br,K}(pt,\Z/2)$ that belong to the Negative Cone. 

\begin{lemma}
	\label{gen}
	All products of the form
	\begin{align*}
		\ka_1^{-p}\fc{\ta_2}{x_2^{n_2}y_2^{m_2}}\fc{\ta_3}{x_3^{n_3}y_3^{m_3}} \in H_{Br,K}^{a+p\sigma+b\epsilon+q\sigma\otimes\epsilon}(pt)
	\end{align*}
	that belong to the negative cone are nonzero whenever $m_2 \geq -p-1$ or $m_3 \geq -p-1$.
\end{lemma}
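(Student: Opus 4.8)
My plan is to prove that $z:=\ka_1^{-p}\tfrac{\ta_2}{x_2^{n_2}y_2^{m_2}}\tfrac{\ta_3}{x_3^{n_3}y_3^{m_3}}\neq 0$ by producing an explicit \emph{positive}-cone monomial $\mu$ with $\mu z=\Theta$; since $\Theta\neq 0$ and a nonzero multiple of a class is nonzero, this gives the lemma. First I would reduce to the case $m_2\geq -p-1$. The automorphism of $K\cong(\Z/2)^2$ that interchanges the subgroups $\Si_2$ and $\Delta$ fixes $C_2$, hence fixes $\ka_1$ and $\ta_1$, while interchanging $x_2\!\leftrightarrow\! x_3$, $y_2\!\leftrightarrow\! y_3$ and $\ta_2\!\leftrightarrow\!\ta_3$; it carries an element satisfying $m_3\geq -p-1$ to one satisfying $m_2\geq -p-1$, so it suffices to treat the latter.

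So assume $m_2\geq -p-1$. Since $z$ lies in the negative cone we must have $n_3+m_3\geq -p-1$, so I can choose $0\leq c\leq n_3$ and $0\leq d\leq m_3$ with $c+d=n_3+m_3+p+1$; set $n_3':=n_3-c$ and $m_3':=m_3-d$, so $n_3'+m_3'=-p-1$ and in particular $n_3'\leq -p-1$. I would then take
\[
\mu:=x_1^{\,n_3'}\,y_1^{\,m_3'}\,x_2^{\,n_2}\,y_2^{\,m_2+p+1}\,x_3^{\,c}\,y_3^{\,d},
\]
a genuine positive-cone monomial since $m_2+p+1\geq 0$. Multiplying $z$ by the last four factors clears the $\ta_2$- and $\ta_3$-fractions down to $\tfrac{\ta_2}{y_2^{-p-1}}$ and $\tfrac{\ta_3}{x_3^{n_3'}y_3^{m_3'}}$, and the relations $x_1\ka_1=x_2y_3+y_2x_3$, $y_1\ka_1=y_2y_3$ (together with $n_3'+m_3'=-p-1$) rewrite $x_1^{n_3'}y_1^{m_3'}\ka_1^{-p}$ as $(x_2y_3+y_2x_3)^{n_3'}(y_2y_3)^{m_3'}\ka_1$. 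The decisive observation is that, in the binomial expansion of $(x_2y_3+y_2x_3)^{n_3'}$, every monomial carrying a positive power of $x_2$ kills $\tfrac{\ta_2}{y_2^{-p-1}}$: this fraction has no $x_2$ in its denominator, so $x_2^{j}\cdot\tfrac{\ta_2}{y_2^{-p-1}}$ (for $1\leq j\leq n_3'\leq -p-1$) lies in a group that vanishes by the $C_2$-computation, Proposition~\ref{PC}(c). Hence only the term $y_2^{n_3'}x_3^{n_3'}$ contributes, and using $y_2^{\,n_3'+m_3'}\cdot\tfrac{\ta_2}{y_2^{-p-1}}=\ta_2$, $x_3^{n_3'}y_3^{m_3'}\cdot\tfrac{\ta_3}{x_3^{n_3'}y_3^{m_3'}}=\ta_3$ and $\ka_1\ta_2\ta_3=\Theta$, one concludes $\mu z=\ka_1\ta_2\ta_3=\Theta\neq 0$.

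I expect the only genuine difficulty to be pinning down $\mu$. The obvious attempts---clearing all of the $\ta_2,\ta_3$ denominators, or multiplying through by $y_1^{-p-1}$---all collapse $z$ to $\ka_1^{-p-1}\Theta$, which is $0$ as soon as $p\leq -2$ because $\ka_1\Theta=0$; one must therefore multiply by \emph{exactly} enough to land on $\Theta$, and the auxiliary integers $c,d$ are there precisely to absorb the possibility that $m_3<-p-1$. Checking the vanishing of the spurious $x_2$-terms and the final cancellation is then routine using the multiplicative relations collected in Section~3; a convenient consistency check is that the $RO(K)$-degree of $\mu z$ comes out to $(3,-1,-1,-1)$, the degree of $\Theta$, which in fact dictates the exponents appearing in $\mu$.
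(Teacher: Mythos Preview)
Your proof is correct and follows the same strategy as the paper: exhibit a positive-cone monomial whose product with the given class is $\Theta=\ka_1\ta_2\ta_3\neq 0$. Your reduction via the $\Sigma_2\!\leftrightarrow\!\Delta$ symmetry and your explicit treatment of the binomial expansion (using $x_2\cdot\tfrac{\ta_2}{y_2^{-p-1}}=0$, which indeed holds since the target group vanishes by Proposition~\ref{PC}(c)) make the argument slightly more transparent than the paper's, which reaches $\ka_1\ta_2\ta_3$ by a case split on $m_2$ but leaves the analogous cancellation implicit.
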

\begin{proof}
	
	Denote $n:=-p-1$. Without loss of generality, assume that $m_3 \geq n.$
	Let 
	\begin{align*}
		i_3 &= n_3\\
		j_3 &= m_3-n.
	\end{align*}
	Since the product is in the negative cone, we must have that $n_2+m_2 \geq n.$ 
	
	Without loss of generality, assume that $m_2 \geq n_2$. If $m_2 \geq n,$ set
	\begin{align*}
		j_1 = n, j_2 = m_2-n, i_2=n_2,i_1= 0,
	\end{align*}
	otherwise we have the case $m_2,n_2< n$ and $m_2+n_2\geq n$ so we set 
	\begin{align*}
		j_1 = n-n_2, j_2 = m_2+n_2-n, i_2 = 0,i_1 = n_2.
	\end{align*}
	
	Then
	\begin{align*}
		\ka_1^{-p}\fc{\ta_2}{x_2^{n_2}y_2^{m_2}}\fc{\ta_3}{x_3^{n_3}y_3^{m_3}} = 
		\kappa_1^{i_1+j_1+1} \frac{\theta_2}{x_2^{i_1+i_2} y_2^{j_1+j_2}} \frac{\theta_3}{x_3^{i_3} y_3^{i_1+j_1+j_3}},
	\end{align*}
	and multiplication by $x_1^{i_1} y_1^{j_1} x_2^{i_2} y_2^{j_2} x_3^{i_3} y_3^{j_3}$ results in the expression $\ka_1\theta_2\ta_3=\Theta\neq 0$ using the relations \ref{eq2}.
	
\end{proof}
The following theorem generalizes Proposition 4.25 of \cite{BH} which is the case $p=-1$.

\begin{theorem}
	\label{negcone}
	The negative cone is given by
	\begin{align*}
		\Z/2{\left\{\ka_1^{-p}\fc{\ta_2}{x_2^{n_2}y_2^{m_2}}\fc{\ta_3}{x_3^{n_3}y_3^{m_3}}\right\}_{m_3 \geq -p-1 \text{ or } m_2\geq -p}}.
	\end{align*}
	Note that since we are working in the negative cone, we must implicitly have that $n_2+m_2 \geq -p-1$ and $n_3+m_3 \geq -p -1.$ 
	By symmetry, we can also write the above generators in terms of $\ka_2$ and $\ka_3.$
\end{theorem}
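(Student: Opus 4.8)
The plan is to prove this by a dimension count, in the same spirit as the proof of Theorem \ref{kconj}. By Theorem \ref{ncon} the Negative Cone in a degree $(a,p,b,q)$ with $p,b,q\le -1$ is $\ker f_{a,p,b,q}$, where $f$ is multiplication by $x_1y_2y_3+y_1x_2y_3+y_1y_2x_3$ on the $\Theta$-chains; since this map is surjective onto the $\theta_1\theta_2\theta_3$-chains, its kernel has dimension equal to the coefficient of $x^{-a}$ in the Poincar\'e series of Proposition \ref{NC} (with $i=-p$, $j=-b$, $k=-q$). It therefore suffices to show that the proposed elements lie in the Negative Cone, are $\Z/2$-linearly independent, and number exactly this many in each $4$-degree.

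First I would compute the degree of a generator: $\kappa_1^{-p}\frac{\theta_2}{x_2^{n_2}y_2^{m_2}}\frac{\theta_3}{x_3^{n_3}y_3^{m_3}}$ lies in degree $(p+4+m_2+m_3,\ p,\ -p-2-n_2-m_2,\ -p-2-n_3-m_3)$, so $-p\ge 1$ gives $p\le -1$, and $n_2+m_2,n_3+m_3\ge -p-1$ (which, given $n_i,m_i\ge 0$, is equivalent to $b,q\le -1$) is exactly what is needed to land in the Negative Cone; nonvanishing under the hypothesis $m_3\ge -p-1$ or $m_2\ge -p$ is the content of Lemma \ref{gen}. For linear independence I would fix a $4$-degree, in which $n_2,n_3$ are determined by $(b,q)$ and the family is indexed by a single integer $m_2$ (with $m_3$ then pinned by $a$), and use the monomial multipliers constructed in the proof of Lemma \ref{gen}: each generator $g$ comes with an explicit monomial $\nu(g)$ in the $x_i,y_i$ with $\nu(g)\cdot g=\Theta\ne 0$. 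Using repeatedly the degree-zero vanishings $x_i\theta_i=0$, $y_i\cdot(\theta_i/x_i^{\,r})=0$, $x_i\cdot(\theta_i/y_i^{\,r})=0$ (all from Proposition \ref{PC}(c)), together with the relations $\kappa_1x_1=x_2y_3+y_2x_3$ and $\kappa_1y_1=y_2y_3$, I would check that the pairing matrix $\big(\nu(g_{m_2})\cdot g_{m_2'}\big)$ is triangular with respect to the ordering by $m_2$; applying the $\nu$'s in turn to a hypothetical vanishing combination then forces all coefficients to vanish. Finally I would count the admissible $m_2$ in a given degree — they run over an explicit integer interval further cut by the "$m_3\ge -p-1$ or $m_2\ge -p$" alternative — and match the resulting count, summed over $a$, termwise with $\frac{1}{x^{i+j+k}}\big[(1+\cdots+x^{j+k-2})(1+\cdots+x^{i-2})+x^{i-1}(1+\cdots+x^{k-1})(1+\cdots+x^{j-1})\big]$, the generators with $m_3\ge -p-1$ accounting for the first summand and those with $m_2\ge -p$ for the second. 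The statements for $\kappa_2,\kappa_3$ then follow by permuting the three copies of $C_2$.

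The main obstacle I anticipate is the linear-independence step. The multiplier $\nu(g)$ of Lemma \ref{gen} is defined through a case split (according to whether $m_2\ge -p-1$, and to $\max(m_2,n_2)$), so verifying that its product with each of the other generators in the same degree really does vanish — once the relations $\kappa_1x_1=x_2y_3+y_2x_3$ have been expanded into their monomial summands — requires careful case bookkeeping. A secondary, smaller point worth a remark is why the boundary elements with $m_2=-p-1$ and $m_3\le -p-2$, which are nonzero by Lemma \ref{gen} yet excluded from the generating set, nonetheless lie in its span: this is forced once the dimension count closes, but it can also be exhibited directly using $\kappa_1y_1=y_2y_3$.
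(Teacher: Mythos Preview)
Your plan is essentially the paper's own strategy: both arguments establish that the proposed elements lie in the Negative Cone (via the degree computation and Lemma \ref{gen}), prove linear independence, and then close by matching the count of generators against the Poincar\'e series of Proposition \ref{NC}.

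The one place where the paper's execution differs from yours in a way worth noting is the linear-independence step. You propose to use the full monomial multipliers $\nu(g)$ of Lemma \ref{gen} and show the resulting pairing matrix is triangular; you correctly anticipate that expanding $\kappa_1x_1=x_2y_3+y_2x_3$ through these multipliers will require case bookkeeping. The paper sidesteps this entirely: in a fixed degree the generators are indexed by $m_2$ (equivalently by $n_3=k$), and the paper simply multiplies a hypothetical relation by a suitable power of $x_2$ or $x_3$. Since $x_2,x_3$ commute with $\kappa_1$ without invoking any relation, such a multiplication just lowers the corresponding $\theta$-denominator (or kills the term once the denominator is exhausted), which immediately isolates the term with the largest $n_3$ (respectively the largest $n_2$) and yields a contradiction with Lemma \ref{gen}. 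Concretely, the paper splits the generators into those with $m_3\ge -p-1$ and those with $m_2\ge -p$ and $m_3\le -p-2$; if only the first type appears, multiply by $x_3^{k_M}$; if only the second, multiply by $x_2^{i_1'}$; if both, first multiply by $x_2^{i_1'+1}$ (using that the first type has strictly larger $n_2$ in a fixed degree) to reduce to the first case. This avoids the $\kappa_1x_1$ relation altogether and is a cleaner route than the triangular-matrix argument you sketched.

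For the dimension count the paper also takes the complementary route --- it computes the Poincar\'e series of \emph{all} symbols $\kappa_1^{-p}\frac{\theta_2}{x_2^{n_2}y_2^{m_2}}\frac{\theta_3}{x_3^{n_3}y_3^{m_3}}$, subtracts Proposition \ref{NC}, and identifies the difference with the excluded symbols --- rather than matching the two summands of Proposition \ref{NC} directly to the two conditions. If you pursue your direct matching, note that the conditions ``$m_3\ge -p-1$'' and ``$m_2\ge -p$'' overlap, so the second summand should be matched with ``$m_2\ge -p$ and $m_3\le -p-2$'' to avoid double counting.
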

\begin{proof}
	Observe that the result is equivalent to the statement that the negative cone is given by
	\begin{align*}
		\Z/2\left\{{\left\{\ka_1^{-p}\fc{\ta_2}{x_2^{n_2}y_2^{m_2}}\fc{\ta_3}{x_3^{n_3}y_3^{m_3}}\right\}_{m_3 \geq -p-1 \text{ or } m_2\geq -p-1 }}\setminus{\left\{\ka_1^{-p}\fc{\ta_2}{x_2^{n_2}y_2^{-p-1}}\fc{\ta_3}{x_3^{n_3}y_3^{m_3}}\right\}_{m_3 \leq -p-2}}\right\}.
	\end{align*}
	We calculate the Poincar\'e series given by these generators. By Proposition \ref{k1all}, we have that the Poincar\'e series for all elements of the form
	\begin{align*}
		\ka_1^{-p}\fc{\ta_2}{x_2^{n_2}y_2^{m_2}}\fc{\ta_3}{x_3^{n_3}y_3^{m_3}}
	\end{align*}
	is given by
	\begin{align*}
		x^{p+b+q}(1+\cd+x^{-p-b-2})(1+\cd+x^{-p-q-2}).
	\end{align*}
	Subtracting the Poincar\'e series from Proposition \ref{NC}, we have
	
	\begin{align*}
		&x^{p+b+q}(1+\cd+x^{-p-b-2})(1+\cd+x^{-p-q-2})\\
		-&x^{p+b+q}\left[(1+\cd+x^{-p-b-2})(1+\cd+x^{-q-2})+x^{-q-1}(1+\cd+x^{-b-1})(1+\cd+x^{-p-1})\right]\\
		=& x^{p-1}(1+\cd+x^{-p-1})(1+\cd+x^{-p-2}) =: P.
	\end{align*}
	By similar counting arguments (see Proposition \ref{k1less}), we see that the Poincar\'e series for elements of the form
	\begin{align*}
		\Z/2\left\{\ka_1^{-p}\fc{\ta_2}{x_2^{n_2}y_2^{m_2}}\fc{\ta_3}{x_3^{n_3}y_3^{m_3}}\right\}_{m_3 < -p-1 \text{ and } m_2< -p-1 }
	\end{align*}
	is given by 
	\begin{align*}
		P- x^{p-1}(1+\cd+x^{-p-2})=x^{p}(1+...+x^{-p-2})(1+...+x^{-p-2}).
	\end{align*}
	
	But by similar counting arguments again , we have that 
	\begin{align*}
		x^{p-1}(1+\cd+x^{-p-2})
	\end{align*}
	is precisely the Poincar\'e series for products of the form
	\begin{align*}
		{{\ka_1^{-p}\fc{\ta_2}{x_2^{n_2}y_2^{-p-1}}\fc{\ta_3}{x_3^{n_3}y_3^{m_3}}}},
	\end{align*}
	where $m_3 \leq -p-2$.
	Thus, our generators correspond precisely to the Poincar\'e series of Proposition \ref{NC}, so it remains to show that they are linearly independent. Suppose for contradiction that
	\begin{align*}
		\ka_1^{-p}\fc{\ta_2}{x_2^{i_1}y_2^{j_1}}\fc{\ta_3}{x_3^{\ka_1}y_3^{\ell_1}} + \cd +	\ka_1^{-p}\fc{\ta_2}{x_2^{i_M}y_2^{j_M}}\fc{\ta_3}{x_3^{k_M}y_3^{\ell_M}} +  \ka_1^{-p}\fc{\ta_2}{x_2^{i_1'}y_2^{j_1'}}\fc{\ta_3}{x_3^{k_1'}y_3^{\ell_1'}} +\cd + \ka_1^{-p}\fc{\ta_2}{x_2^{i_N'}y_2^{j_N'}}\fc{\ta_3}{x_3^{k_N'}y_3^{\ell_N'}} = 0,
	\end{align*}
	where without loss of generality
	\begin{align*}
		k_n< k_m \txa k_n' &< k_m' \tx{if $n < m$},\\
		\ell_n &\geq -p-1,\\
		\txa \ell_n' & \leq -p-2\tx{ and } j_n'\geq -p
	\end{align*}
	for all $n,m \geq 1.$ Notice that because the elements are in the same degree if $k_n=k_m$ then the elements of the above form are identical. Recall that $\ka_1^{-p}\fc{\ta_2}{x_2^{i_1}y_2^{j_1}}\fc{\ta_3}{x_3^{k_1}y_3^{\ell_1}}$ is in 4-degree $$(p+4+l_1+j_1,p,-p-2-i_1-j_1,-p-2-k_1-l_1),$$ and this is the degree of the above sum.
	
	
	If $N= 0,$ then $M \geq 2.$ But then
	\begin{align*}
		&\ka_1^{-p}\fc{\ta_2}{x_2^{i_1}y_2^{j_1}}\fc{\ta_3}{x_3^{k_1}y_3^{\ell_1}} + \cd +	\ka_1^{-p}\fc{\ta_2}{x_2^{i_M}y_2^{j_M}}\fc{\ta_3}{x_3^{k_M}y_3^{\ell_M}}  = 0\\
		\implies & \ka_1^{-p}\fc{\ta_2}{x_2^{i_1}y_2^{j_1}}\fc{\ta_3}{x_3^{k_1}y_3^{\ell_1}} + \cd +	\ka_1^{-p}\fc{\ta_2}{x_2^{i_{M-1}}y_2^{j_{M-1}}}\fc{\ta_3}{x_3^{k_{M-1}}y_3^{\ell_{M-1}}} = \ka_1^{-p}\fc{\ta_2}{x_2^{i_M}y_2^{j_M}}\fc{\ta_3}{x_3^{k_M}y_3^{\ell_M}}\\
		\implies& x_3^{k_{M}}\left(\ka_1^{-p}\fc{\ta_2}{x_2^{i_1}y_2^{j_1}}\fc{\ta_3}{x_3^{k_1}y_3^{\ell_1}} + \cd +	\ka_1^{-p}\fc{\ta_2}{x_2^{i_{M-1}}y_2^{j_{M-1}}}\fc{\ta_3}{x_3^{k_{M-1}}y_3^{\ell_{M-1}}} \right)=  x_3^{k_{M}}\ka_1^{-p}\fc{\ta_2}{x_2^{i_M}y_2^{j_M}}\fc{\ta_3}{x_3^{k_M}y_3^{\ell_M}}\\
		\implies &\ka_1^{-p}\fc{\ta_2}{x_2^{i_M}y_2^{j_M}}\fc{\ta_3}{y_3^{\ell_M}} = 0,
	\end{align*}
	which contradicts Lemma \ref{gen} because $l_M\geq -p-1$. Similarly, if $M = 0,$ then $N \geq 2.$ Note that the condition
	\begin{align*}
		k_n' < k_m' \tx{ if } n < m
	\end{align*}
	implies that 
	\begin{align*}
		i_n' > i_m' \tx{ if } n < m
	\end{align*}
	so we have that
	\begin{align*}
		&x_2^{i_1'}\left(\ka_1^{-p}\fc{\ta_2}{x_2^{i_1'}y_2^{j_1'}}\fc{\ta_3}{x_3^{k_1'}y_3^{\ell_1'}} +\cd + \ka_1^{-p}\fc{\ta_2}{x_2^{i_N'}y_2^{j_N'}}\fc{\ta_3}{x_3^{k_N'}y_3^{\ell_N'}} \right)=  0\\
		\implies & \ka_1^{-p}\fc{\ta_2}{y_2^{j_1'}}\fc{\ta_3}{x_3^{k_1'}y_3^{\ell_1'}} = 0,
	\end{align*}
	which contradicts Lemma \ref{gen} because $j_1'\geq -p$. 
	
	Now suppose that $M,N\geq 1.$ Since $\ell_n' \leq -p-2,$ we have that $j_n' \geq -p$ for all $n.$ Then we have that
	
	\begin{equation*}
		\resizebox{\textwidth}{!}{%
			$\begin{aligned}
				&x_2^{i_1'+1}\left(\ka_1^{-p}\fc{\ta_2}{x_2^{i_1}y_2^{j_1}}\fc{\ta_3}{x_3^{k_1}y_3^{\ell_1}} + \cd +	\ka_1^{-p}\fc{\ta_2}{x_2^{i_M}y_2^{j_M}}\fc{\ta_3}{x_3^{k_M}y_3^{\ell_M}} +  \ka_1^{-p}\fc{\ta_2}{x_2^{i_1'}y_2^{j_1'}}\fc{\ta_3}{x_3^{k_1'}y_3^{\ell_1'}} +\cd + \ka_1^{-p}\fc{\ta_2}{x_2^{i_N'}y_2^{j_N'}}\fc{\ta_3}{x_3^{k_N'}y_3^{\ell_N'}}\right) = 0,\\
				&\implies x_2^{i_1'+1}\left(\ka_1^{-p}\fc{\ta_2}{x_2^{i_1}y_2^{j_1}}\fc{\ta_3}{x_3^{k_1}y_3^{\ell_1}} + \cd +	\ka_1^{-p}\fc{\ta_2}{x_2^{i_M}y_2^{j_M}}\fc{\ta_3}{x_3^{k_M}y_3^{\ell_M}} \right) = 0,\\
				&\implies \ka_1^{-p}\fc{\ta_2}{x_2^{i_1-i_1'-1}y_2^{j_1}}\fc{\ta_3}{x_3^{k_1}y_3^{\ell_1}} + \cd +	\ka_1^{-p}\fc{\ta_2}{x_2^{i_M-i_1'-1}y_2^{j_M}}\fc{\ta_3}{x_3^{k_M}y_3^{\ell_M}} = 0,
			\end{aligned}$
		}
	\end{equation*}
	since $i_1> i_2> ... > i_M> i_1'>...>i_N'.$ The inequality $i_M>i_1'$ follows from $l_1'< l_M$. But this reduces to the $N=0$ case, so we are done.
\end{proof}

\begin{corollary} \label{ncm1}
	The Negative Cone has zero multiplication with the Mixed Cone of Type II. 
\end{corollary}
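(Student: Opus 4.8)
The plan is to cut the statement down, via the $S_3$-symmetry of $K = C_2 \times \Sigma_2$ permuting the three nontrivial irreducibles $\sigma, \epsilon, \sigma\otimes\epsilon$ — which permutes the three Mixed Cones of Type II while preserving both the Negative Cone and the ring structure — to the single case $\eta\mu = 0$ with $\eta$ in the Negative Cone and $\mu$ in the Mixed Cone of Type II corresponding to $\iota_2$. Theorem \ref{negcone} gives an explicit $\Z/2$-basis of the Negative Cone, and Propositions \ref{propcase1v2}, \ref{propcase2v2} together with the $\theta_1\theta_3$-chains of the $\iota_2$-analogue of Theorem \ref{m2cone} do the same for the Mixed Cone of Type II corresponding to $\iota_2$; so by bilinearity it suffices to treat a product of two basis elements.

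The engine of the argument is the relation $\frac{\theta_1}{x_1^{\bullet}y_1^{\bullet}}\frac{\theta_2}{x_2^{\bullet}y_2^{\bullet}}\frac{\theta_3}{x_3^{\bullet}y_3^{\bullet}} = 0$ of Proposition 4.27 of \cite{BH}. By the symmetry clause in Theorem \ref{negcone}, any Negative Cone basis element can be presented as $\eta = \kappa_i^{N}\cdot\frac{\theta_j}{x_j^{\bullet}y_j^{\bullet}}\cdot\frac{\theta_k}{x_k^{\bullet}y_k^{\bullet}}$ for whichever index $i \in \{1,2,3\}$ we wish, with $\{i,j,k\} = \{1,2,3\}$ and $N \geq 1$ (the exponent is forced, since the grading coordinate dual to $\kappa_i$ vanishes on each $\theta$-fraction). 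Reading off the generators in Propositions \ref{propcase1v2}, \ref{propcase2v2} and the $\theta_1\theta_3$-chains, every basis element $\mu$ of the Mixed Cone of Type II corresponding to $\iota_2$ is a product of a power (possibly trivial) of $\kappa_1$ or $\kappa_3$, a monomial in $x_2, y_2$, and at least one factor $\frac{\theta_s}{x_s^{\bullet}y_s^{\bullet}}$ with $s \in \{1,3\}$ — a $\theta_1\theta_3$-chain carrying such a factor for each of $s = 1$ and $s = 3$. Presenting $\eta$ with the index $i = s$ and regrouping by commutativity and associativity, the product $\eta\mu$ acquires $\frac{\theta_1}{x_1^{\bullet}y_1^{\bullet}}\frac{\theta_2}{x_2^{\bullet}y_2^{\bullet}}\frac{\theta_3}{x_3^{\bullet}y_3^{\bullet}}$ as a factor, hence vanishes.

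The delicate point — and what I expect to be the real obstacle — is that two of the generating families in Propositions \ref{propcase1v2}, \ref{propcase2v2} are defined not as honest products but only as chosen preimages under multiplication by a monomial $x_1^{a}x_3^{c}$ (from Lemmas \ref{x1case1}, \ref{x1case2v2}), for which the regrouping above is not literally licensed. For such a $\mu$ the computation still gives immediately $x_1^{a}x_3^{c}\cdot(\eta\mu) = \eta\cdot(x_1^{a}x_3^{c}\mu) = 0$, since $x_1^{a}x_3^{c}\mu$ is an honest product; descending from this to $\eta\mu = 0$ requires re-expressing $\mu$ in the $\iota_2$-chain / $\theta_1\theta_3$-chain basis of Theorem \ref{m2cone} and tracking the kernels of $\cdot x_1$ and $\cdot x_3$ in the degree of $\eta\mu$, which — having both its $\sigma$- and $\sigma\otimes\epsilon$-coordinates at most $-2$ — lies in the Negative Cone, in the Mixed Cone of Type II corresponding to $\iota_2$, or in a region already described by Proposition \ref{PC}. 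Modulo this bookkeeping with the divided classes, the whole statement reduces to the single relation of Proposition 4.27 of \cite{BH}, and the cases of $\iota_1$ and $\iota_3$ then follow by the symmetry of the first paragraph.
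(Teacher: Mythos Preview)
Your overall strategy --- symmetry reduction to the $\io_2$ case, bilinearity over the bases of Theorem~\ref{negcone} and Propositions~\ref{propcase1v2}, \ref{propcase2v2}, and appeal to the vanishing of triple $\theta$-products --- is the same as the paper's, and for the $\ta_1\ta_3$-chains and the honest-product generators $\ka_3^{m_3+1}\frac{\ta_1}{x_1^{n_1}y_1^{m_3+m_1}}x_2^{n_2}y_2^{m_2-m_3}$ your argument is complete as written.

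The gap is exactly where you locate it, but your proposed resolution (tracking $\ker(\cdot\, x_1^a x_3^c)$ in the degree of $\eta\mu$) is neither carried out nor the efficient route: those kernels are generically nonzero in the Negative Cone and in the two-index region, so injectivity fails and the bookkeeping you allude to does not close. The paper's trick sidesteps the descent entirely by moving the needed $x_1$- and $x_3$-powers to the \emph{other} factor. Present the Negative Cone element with $i=2$ rather than $i=s$, so that
\[
\eta \;=\; \ka_2^{N}\,\fc{\ta_1}{x_1^{n_1}y_1^{m_1}}\,\fc{\ta_3}{x_3^{n_3}y_3^{m_3}},
\]
and use the genuine ring identity $\fc{\ta_1}{x_1^{n_1}y_1^{m_1}} = x_1^{a}\cdot\fc{\ta_1}{x_1^{n_1+a}y_1^{m_1}}$ (and its $\ta_3$-analogue for $x_3$). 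The borrowed monomial $x_1^{a}x_3^{c}$ then un-divides $\mu$ by its defining property, and the resulting honest product contains a factor $\fc{\ta_j}{\cdots}\cdot\fc{\ta_j}{\cdots}$ with $j\in\{1,3\}$, which already vanishes in $\MMt$. The choice $i=2$ is what makes this uniform for the $\io_2$-Mixed Cone II: the divisions there are by $x_1$ and $x_3$ only, so you need both a $\ta_1$- and a $\ta_3$-fraction in $\eta$ available for borrowing; your choice $i=s$ supplies at most one of them, and in particular fails for the generators of Proposition~\ref{propcase2v2} that are divided by both.
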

\begin{proof}
	Without reducing the generality, consider the case of Mixed cone of Type II corresponding to $\io_2$. The case of a $\theta_1\theta_3$-chain is trivially satisfied. Then we can always multiply by $x_1$ or by $x_3$ to obtain zero. For example
	$$\ka_2^{-p}\frac{\theta_1}{x_1^{n_1}y_1^{m_1}}\frac{\theta_3}{x_3^{n_3}y_3^{m_3}}\frac{k_1}{x_1^{n_1}}\frac{\theta_3}{x_3^{n_3}y_3^{m_3}}x_2^{n_2}y_2^{m_2}=$$
	$$=\ka_2^{-p}\frac{\theta_1}{x_1^{n_1+n_1'}y_1^{m_1}}x_1^{n_1'}\frac{\theta_3}{x_3^{n_3}y_3^{m_3}}\frac{k_1}{x_1^{n_1'}}\frac{\theta_3}{x_3^{n_3'}y_3^{m_3'}}x_2^{n_2'}y_2^{m_2'}=0,$$and similarly with products with the generators from Proposition \ref{propcase2v2}.
	
\end{proof}
Our results in this section also establish the following corollaries.
\begin{corollary} All cohomology classes from the Negative Cone and from the Mixed Cone of Type II are nilpotent. 
\end{corollary}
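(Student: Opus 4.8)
The plan is to prove the slightly stronger statement that every class in the Negative Cone or in a Mixed Cone of Type II squares to zero. Working over $\Z/2$, if $\al=\sum_i c_i g_i$ is the expansion of such a class in the explicit $\Z/2$-bases of Theorem \ref{negcone} and Propositions \ref{propcase1v2} and \ref{propcase2v2} (together with their symmetric analogues for the other mixed cones of type II, and the $\ta_j\ta_k$-chains from Theorem \ref{m2cone}), then $\al^2=\sum_i c_i g_i^2$ since every cross term carries a factor $2$; so it suffices to show $g^2=0$ for each basis generator $g$. Everything then reduces to the claim that \emph{for every $i$ and every monomial $D$ in $x_i,y_i$, the $\ta_i$-chain $\ta_i/D$ satisfies $(\ta_i/D)^2=0$ in $H^{\st}_{Br,K}(pt)$.}

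First I would prove this claim. The square $(\ta_i/D)^2$ is supported in a degree involving only the trivial representation and the $i$-th nontrivial one, hence lies in the embedded copy of the $RO(C_2)$-graded cohomology of a point generated by $x_i,y_i,\ta_i$; there the $\ta_i$-chains span the negative cone, whose internal multiplication is trivial by Stong's computation \cite{dug:kr}, and the claim follows. A self-contained alternative uses Proposition \ref{PC}(c) only: it gives $y_i^{m+1}\cp(\ta_i/D)=0$, where $m$ is the $y_i$-exponent of $D$ (the target group vanishes), and it shows the group containing $(\ta_i/D)^2$ is one-dimensional, spanned by $\ta_i/(x_i^{2n}y_i^{2m+2})$ with $n$ the $x_i$-exponent of $D$; were $(\ta_i/D)^2$ equal to that chain, multiplying by $y_i^{2m+2}$ would give the nonzero chain $\ta_i/x_i^{2n}$ on one side and $(y_i^{m+1}\cp(\ta_i/D))^2=0$ on the other, a contradiction.

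Granting the claim, the generators split into two families. In the first, the generator is literally a product of named classes one factor of which is a $\ta_i$-chain with $D$ a monomial in $x_i,y_i$: the Negative Cone generators $\ka_i^{-p}(\ta_j/D_j)(\ta_k/D_k)$; the $\ta_j\ta_k$-chains, which are products $(\ta_j/D_j)(\ta_k/D_k)x_i^ay_i^b$; and the generators of Proposition \ref{propcase1v2} and the first family in Proposition \ref{propcase2v2}, all of which have a strict $\ta$-chain factor multiplied by (possibly divided) powers of some $\ka$ and a positive monomial. Squaring any of these and applying the claim to the $\ta$-chain factor gives $0$. The remaining family is the doubly-divided generators $w=\bigl((\ka_i/x_i^{\bullet})(\ta_k/D)x_j^ay_j^b\bigr)/x_k^{n+1}$ of Proposition \ref{propcase2v2}; here $w$ is only a chosen preimage of $v:=(\ka_i/x_i^{\bullet})(\ta_k/D)x_j^ay_j^b$ under multiplication by $x_k^{n+1}$, so $x_k^{2n+2}w^2=v^2=0$ by the claim, but one must still get $w^2=0$. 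I would deduce it from Corollary \ref{excl}: $w$ lies in a Mixed Cone of Type II in the range $b+p<0,\,b+q<0$, and a short degree computation gives $a\le b$ for $\deg(w^2)$ and for every degree obtained from it by multiplying by $x_k$ up to $\deg(v^2)$; so by Corollary \ref{excl}(1) the cohomology in those degrees consists only of $\io$-chains, on which multiplication by $x_k$ is an \emph{isomorphism} by Lemma \ref{x1case2v2}. Hence multiplication by $x_k^{2n+2}$ is injective there and $w^2=0$; equivalently this identifies $w$ with the honest product $(\ka_i/x_i^{\bullet})x_j^ay_j^b\cdot(\ta_k/x_k^{\bullet+n+1}y_k^{\bullet})$, returning it to the first family. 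Thus every generator squares to zero, so does every $\Z/2$-combination, and the corollary follows.

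The step I expect to be the main obstacle is this last one. Several generators of the Mixed Cones of Type II are not products of named classes but are \emph{defined} as preimages under multiplication by powers of some $x_j$ (Lemmas \ref{x1case1} and \ref{x1case2v2}), so a priori the square of such a class could be a nonzero $x_j$-torsion element even though its ``numerator'' squares to zero; ruling this out is exactly the purpose of the injectivity clauses of Lemma \ref{x1case2v2} together with the separation of cohomological degrees in Corollary \ref{excl}. Once those are in place the rest is formal, the claim about $\ta_i$-chains being essentially inherited from the $RO(C_2)$-graded computation.
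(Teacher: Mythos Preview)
Your strategy of proving $g^2=0$ for each basis generator over $\Z/2$ is clean and works for most families, but there is a genuine gap in how you handle the second family of Proposition \ref{propcase1v2}, the elements $g=\frac{\ka_1}{x_1^{n_1}}\frac{\ta_3}{x_3^{n_3}y_3^{m_3}}x_2^{n_2}y_2^{m_2}$ with $n_3+m_3\le n_2+m_2$. You lump these into your ``first family'' as things with a strict $\ta$-chain factor, but by definition they are \emph{chosen preimages} under multiplication by $x_1^{n_1}$, not products; indeed one checks directly from Theorem \ref{kconj} that there is no class ``$\ka_1/x_1^{n_1}$'' in $H^{-1,-1-n_1,1,1}_{Br,K}(pt)$ for $n_1\ge1$. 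Your claim therefore only yields $x_1^{2n_1}g^2=\ka_1^2(\ta_3/D_3)^2x_2^{2n_2}y_2^{2m_2}=0$. But $g^2$ lives in the range $p\le-2$, $q\le-2$, $b\ge2$, $b+q\ge0$, where Lemma \ref{x1case1} gives only surjectivity of $\cdot\,x_1$, and Lemma \ref{kerphi} shows the kernel is generically nontrivial; so you cannot infer $g^2=0$ this way. The same issue feeds into your treatment of the doubly-divided generators of Proposition \ref{propcase2v2}, since the $v$ you invoke there is precisely one of these $g$'s and you assume $v^2=0$ ``by the claim.''

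The paper closes exactly this gap via Proposition \ref{po} (and the auxiliary Proposition \ref{pz}), which proves directly that products of two such $\frac{\ka_1}{x_1^{\bullet}}\frac{\ta_3}{D_3}x_2^{\bullet}y_2^{\bullet}$ classes vanish; its argument is not a simple square-to-zero but uses associativity together with the explicit description of $\ker(\cdot\,x_1^N)$ from Proposition \ref{propcase1v2}. Once that is in hand, the paper's proof of the corollary is immediate: Negative Cone classes are products of the nilpotent $\ta_i$-chains, and for the Mixed Cone of Type II one appeals to Proposition \ref{po} together with the fact that multiplication by $x_3$ is an isomorphism in the range of Proposition \ref{propcase2v2}. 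Your approach can be repaired by inserting the content of Proposition \ref{po} for this one family; the rest of your argument is fine.
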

\begin{proof} The classes in the Negative Cone are obviously nilpotent as they are generated by nilpotent elements. The classes from the Mixed Cone of Type II are nilpotent because of the products in Proposition \ref{po} and the proof of Proposition \ref{propcase2v2} (because multiplication by $x_3$ is an isomorphism in the range of Proposition \ref{propcase2v2} and the output is nilpotent.)
\end{proof}
We notice that the Negative Cone is generated by multiplications of cohomology classes from the Mixed Cone of Type I. Therefore we have the following corollary.
\begin{corollary} \label{ncm3}
	In general, the Negative Cone has nonzero multiplication with the non-nilpotents from Mixed Cone of Type I. 
	
	It has zero multiplication with the nilpotents from the Mixed Cone of Type I. 
\end{corollary}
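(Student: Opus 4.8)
The plan is to first identify which elements of the Mixed Cone of Type I are nilpotent, then prove the vanishing statement against exactly those, and finally exhibit one nonzero product for the first statement. \emph{Identifying the nilpotents.} For $\{i,j,k\}=\{1,2,3\}$, Theorem \ref{kconj} together with its symmetric versions presents the Mixed Cone of Type I corresponding to $\ka_i$ as $P_i\oplus(T_i/fT_i)$, where $P_i=\ka_i\Z/2[\ka_i,x_j,y_j,x_k,y_k]$ is the polynomial part and $T_i/fT_i$ is $\Z/2$-spanned by the $\ta_i$-chains. I would argue that $P_i$ is a reduced subring of $H^{\st}_{Br,K}(pt)$ — its monomial generators are linearly independent there (cf.\ Lemma \ref{kt}), so abstractly $P_i$ is the reduced ring $\ka_i\Z/2[\ka_i,x_j,y_j,x_k,y_k]$ — hence $P_i$ contains no nonzero nilpotent, while, granting the square-zero fact below, every $\ta_i$-chain squares to zero, so $T_i/fT_i$ consists of nilpotents. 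Using that in characteristic $2$ one has $(\ka_i r+t)^{2^m}=\ka_i^{2^m}r^{2^m}$ for $r\in\Z/2[x_j,y_j,x_k,y_k]$ and $t\in T_i/fT_i$, no element with $r\neq0$ is nilpotent. Thus the nilpotent elements of the Mixed Cone of Type I corresponding to $\ka_i$ are exactly the $\ta_i$-chains.

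\emph{The square-zero input.} The computation I really need is that a product of two $\ta_i$-fractions vanishes: $\frac{\ta_i}{x_i^{c}y_i^{d}}\cdot\frac{\ta_i}{x_i^{e}y_i^{f}}=0$ for all $c,d,e,f\geq0$. I would derive it as follows. First $y_i\ta_i=0$, and more generally $y_i\cdot\frac{\ta_i}{x_i^{c}}=0$ for all $c\geq0$, by degree reasons: these products lie in a group $H^{1-n\al}_{Br}(pt)$ with $\al$ a nontrivial irreducible and $n\geq1$, whose dimension by Proposition \ref{PC}(c) is the coefficient of $x^{-1}$ in the series $x^{-n}+\cdots+x^{-2}$, and this series has no $x^{-1}$ term, so the group vanishes. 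Hence $y_i^{d+1}\cdot\frac{\ta_i}{x_i^{c}y_i^{d}}=0$, so $y_i^{d+f+2}$ kills the product. But the product lies in a group that is at most one-dimensional (Proposition \ref{PC}) and on which multiplication by $y_i^{d+f+2}$ is injective — it carries the generating $\ta_i$-fraction $\frac{\ta_i}{x_i^{c+e}y_i^{d+f+2}}$ to $\frac{\ta_i}{x_i^{c+e}}\neq0$ — whence the product is $0$. The second input I use is the relation $\frac{\ta_1}{x_1^{i_1}y_1^{j_1}}\cdot\frac{\ta_2}{x_2^{i_2}y_2^{j_2}}\cdot\frac{\ta_3}{x_3^{i_3}y_3^{j_3}}=0$ recorded in Section 3 (Proposition 4.27 of \cite{BH}).

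\emph{Zero multiplication with the nilpotents.} By Theorem \ref{negcone} the Negative Cone is $\Z/2$-spanned by elements $N=\ka_1^{e}\frac{\ta_2}{u}\frac{\ta_3}{v}$ with $u,v$ monomials in $x_2,y_2$ and in $x_3,y_3$; by the first paragraph the nilpotents of the Mixed Cone of Type I are spanned by $\ta_i$-chains $c=\frac{\ta_i}{w}\,m$ ($i\in\{1,2,3\}$, $w$ a monomial in $x_i,y_i$, $m$ a monomial in the remaining four variables). For $i=2$ the product $Nc$ contains the factor $\frac{\ta_2}{u}\cdot\frac{\ta_2}{w}$, for $i=3$ the factor $\frac{\ta_3}{v}\cdot\frac{\ta_3}{w}$, each zero by the square-zero input; for $i=1$, $Nc=\ka_1^{e}m\cdot\frac{\ta_1}{w}\frac{\ta_2}{u}\frac{\ta_3}{v}=0$ by the second input. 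So $Nc=0$ in every case, and by bilinearity the Negative Cone has zero multiplication with the nilpotents of the Mixed Cone of Type I.

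\emph{A nonzero product.} For the first statement it suffices to display a single nonzero product: let $N_0=\ka_1^{2}\frac{\ta_2}{x_2}\frac{\ta_3}{y_3^{2}}$, a nonzero basis vector of the Negative Cone (Theorem \ref{negcone}), and $\ka_2$, which lies in the polynomial part of the Mixed Cone of Type I corresponding to $\ka_2$. Then the relations $\ka_1\ka_2=y_3^{2}$ and $y_3^{2}\cdot\frac{\ta_3}{y_3^{2}}=\ta_3$ give $\ka_2\cdot N_0=\ka_1\frac{\ta_2}{x_2}\ta_3$, again a nonzero basis vector of the Negative Cone by Theorem \ref{negcone}; a check of the four indices confirms $N_0$ and $\ka_2$ lie in the asserted cones, and symmetric nonzero products exist for $\ka_1$ and $\ka_3$. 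I expect the square-zero input to be the only real obstacle: the $\ta_i$-towers carry no a priori multiplicative law, so this property has to be extracted from the dimension counts of Proposition \ref{PC} together with the degree-forced relation $y_i\ta_i=0$; the remainder is bookkeeping with the explicit bases of Theorems \ref{kconj} and \ref{negcone}.
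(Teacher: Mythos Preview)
Your argument is correct and aligns with the paper's reasoning, though the paper offers only a one-line remark in lieu of a proof (that products with $\ka_i,x_i,y_i$ are in general nonzero while products with $\theta_i$ vanish). You have spelled out the content of that remark: the identification of the nilpotent locus of each Mixed Cone of Type I with $T_i/fT_i$, the vanishing of products of Negative Cone generators with $\theta_i$-chains via the $C_2$-level square-zero fact and the triple-$\theta$ relation from \cite{BH}, and an explicit nonzero example. Two minor points: in your Frobenius argument you write $\ka_i r$ where a general element of $P_i$ in a fixed degree is $\ka_i^{n}r$ for the unique $n$ determined by the $\ep$-grading, so the displayed identity should read $(\ka_i^{n}r+t)^{2^m}=\ka_i^{n\cdot 2^m}r^{2^m}$; and the square-zero input $\frac{\theta_i}{x_i^{c}y_i^{d}}\cdot\frac{\theta_i}{x_i^{e}y_i^{f}}=0$ is already part of the standard $RO(C_2)$-graded description of $\MMt$, so your Poincar\'e-series derivation, while correct, is reproving a known $C_2$ fact rather than something specific to the $K$-graded ring.
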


For example, 
\begin{align*}
	\ka_1^{-p}\fc{\ta_2}{x_2^{n_2}y_2^{m_2}}\fc{\ta_3}{x_3^{n_3}y_3^{m_3}}
\end{align*}
with $\ka_i, x_i,$ or $y_i$ is in general nonzero, but the product with $\theta_i$ is zero. 
\begin{corollary} \label{ncm2} Multiplication in the Negative Cone is always zero. 

There are elements in the Mixed Cone of Type II whose multiplication is non-zero and there are elements in the Mixed Cone of Type II whose multiplication with nilpotents from the Mixed Cone of Type I is non-zero.

There are also nilpotents in the Mixed Cone of Type I that have their multiplication nonzero e.g. $\theta_1\theta_3\neq 0$.
\end{corollary}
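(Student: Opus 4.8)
Below is the plan I would follow; the corollary has three separate assertions, which I would handle one at a time.

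\medskip
\noindent\textbf{Multiplication in the Negative Cone.} The plan is to reduce this to the already established vanishing of products between the Negative Cone and the Mixed Cone of Type II. By Theorem~\ref{negcone} every element of the Negative Cone is a $\Z/2$-linear combination of the generators $g=\ka_i^{-p}\fc{\ta_j}{x_j^{n_j}y_j^{m_j}}\fc{\ta_k}{x_k^{n_k}y_k^{m_k}}$ with $\{i,j,k\}=\{1,2,3\}$ and $p\le-1$, so by bilinearity it suffices to show $g\ga=0$ for every such $g$ and every Negative Cone element $\ga$. I would factor $g=\fc{\ta_k}{x_k^{n_k}y_k^{m_k}}\cp\mu$ with $\mu:=\ka_i^{-p}\fc{\ta_j}{x_j^{n_j}y_j^{m_j}}$. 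A direct degree count shows that the $RO(K)$-degree of $\mu$ has $\si$-coordinate $p\le-1$ and $\si\ot\ep$-coordinate $-p\ge1$, while its $\ep$-coordinate equals $-p-2-n_j-m_j$, which is $\le-1$ precisely because $g$ lies in the Negative Cone; hence $\mu$ lies in the Mixed Cone of Type II corresponding to $\io_k$. Then $\mu\ga$ lies in (Mixed Cone of Type II)$\cdot$(Negative Cone), which is zero by the corollary proved above, so $g\ga=\fc{\ta_k}{x_k^{n_k}y_k^{m_k}}(\mu\ga)=0$, and bilinearity finishes the assertion.

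\medskip
\noindent\textbf{Nonzero products in the Mixed Cone of Type II and with Type I nilpotents.} Here I would exhibit explicit witnesses and compute the products from the multiplicative relations listed above (and, where needed, from the Appendix). For the Mixed Cone of Type II: $\io_2$ lies in this cone, and $\io_2\cp\fc{\ta_2}{x_2}\ne0$ because $x_2\cp\bigl(\io_2\fc{\ta_2}{x_2}\bigr)=\io_2\ta_2=\Theta\ne0$; since $\fc{\ta_2}{x_2}$ is one of the nilpotent ($\ta_2$-chain) classes attached to the Mixed Cone of Type I of $\ka_2$ (Theorem~\ref{kconj}), this is a nonzero product of a Type II class with a Type I nilpotent, and together with Lemma~\ref{iononzero} (which gives $\io_i x_i^{n}y_i^{m}\ne0$) it shows that Type II classes do not all annihilate the ring. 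For the Type I nilpotents: take the $\ta_1$-chain $u=\fc{\ta_1}{y_1^3}x_2x_3$, which lies in the Mixed Cone of Type I of $\ka_1$, and the $\ta_2$-chain $v=\fc{\ta_2}{x_2^3}y_1x_3$, which lies in the Mixed Cone of Type I of $\ka_2$; both are nilpotent since $\ta_1^2=\ta_2^2=0$. Their product is the $\ta_1\ta_2$-chain $uv=\fc{\ta_1\ta_2}{y_1^2x_2^2}x_3^2$, a class of $RO(K)$-degree $(a,p,b,q)=(6,-4,-4,2)$ in the Mixed Cone of Type II of $\io_3$; comparing the Poincar\'e series of Proposition~\ref{caseMC2} with the description of the isotropy differential $f$ coming from Corollary~\ref{excl} shows that this $\ta_1\ta_2$-chain is not a boundary for $f$, hence is nonzero, so $uv\ne0$.

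\medskip
\noindent The degree bookkeeping in each case is routine; the genuine obstacle is that the cohomology ring carries so many vanishing relations --- $\ta_i^2=0$, $\ta_1\ta_2\ta_3=0$, $\Theta x_i=\Theta y_i=0$, $\io_i\ta_j=0$ and $\io_i\io_j=0$ for $i\ne j$, $\io_i\ka_i=0$, and so on --- that almost every naive product of two nilpotent classes is forced to vanish. So the real work is choosing the witnesses so that their product does not collapse: one arranges the product to be a genuine $\ta_i\ta_j$-chain with $i\ne j$ (so it is not killed by a $\ta^2$, by $\Theta x_\bullet$, or by degenerating into $\Theta$ or into $\io_i\io_j$), and then tunes the exponents so that this chain lands in a degree where, by the analysis of the $C_2\ti\Si_2$-isotropy cofiber sequence behind Corollary~\ref{excl}, it survives in cohomology instead of being a boundary for $f$.
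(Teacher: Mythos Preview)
Your argument for the first assertion is correct and is a nice reduction to the already-established vanishing of (Negative Cone)$\times$(Type II). A more direct route is to note that by Theorem~\ref{negcone} every generator of the Negative Cone factors through a product $\fc{\ta_j}{x_j^{n_j}y_j^{m_j}}\cdot\fc{\ta_j}{x_j^{n_j'}y_j^{m_j'}}$ for some fixed $j$, and such products are zero by the $RO(C_2)$-case; but your approach works too.

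There is a genuine gap in your treatment of the second assertion. The claim ``there are elements in the Mixed Cone of Type II whose multiplication is non-zero'' is the internal statement that Type~II~$\times$~Type~II can be nonzero (parallel to ``multiplication in the Negative Cone is always zero''). Your witnesses do not show this: $\io_2\cdot\fc{\ta_2}{x_2}$ is a (Type~II)$\times$(Type~I nilpotent) product and Lemma~\ref{iononzero} is a (Type~II)$\times$(positive cone) product. The paper's witness is of the required form: $\ka_1\fc{\ta_3}{y_3^2}$ and $\ka_3\fc{\ta_1}{y_1^2}$ both lie in the Type~II cone for $\io_2$, and their product $y_2^2\fc{\ta_1}{y_1^2}\fc{\ta_3}{y_3^2}$ is nonzero because multiplying by $\ka_2^2$ gives $(y_1y_3)^2\fc{\ta_1}{y_1^2}\fc{\ta_3}{y_3^2}=\ta_1\ta_3\ne0$.

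For the third assertion your example $u=\fc{\ta_1}{y_1^3}x_2x_3$, $v=\fc{\ta_2}{x_2^3}y_1x_3$ is fine, but the justification is incomplete. Corollary~\ref{excl} only separates the degrees of $\io$-chains and $\ta_i\ta_j$-chains; it does not by itself tell you that a particular $\ta_1\ta_2$-chain is not a boundary, and you have not checked that the ring product $uv$ is represented by that chain (the paper warns that the fractional notation on $T_1$ is purely notational and does not a priori determine multiplication). A clean finish is to multiply by $\ka_3^2$: since $\ka_3^2x_3^2=(x_1y_2+y_1x_2)^2=x_1^2y_2^2+y_1^2x_2^2$ and $x_1\cdot\fc{\ta_1}{y_1^2}=0$, one gets
\[
uv\cdot\ka_3^2=\fc{\ta_1}{y_1^2}\fc{\ta_2}{x_2^2}(x_1^2y_2^2+y_1^2x_2^2)=\ta_1\ta_2\ne0,
\]
so $uv\ne0$.
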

We recall that in the case of $RO(C_2)-$graded cohomology of a point the multiplication among the nilpotents is always zero because the set of nilpotents in this ring coincides with the Negative Cone of $RO(C_2)-$graded cohomology of a point.

For example, a product between  a nilpotent in Mixed Cone of Type II and a nilpotent in the Mixed Cone of Type I is 
$$(\ka_1\frac{\theta_3}{y_3^2})\theta_2$$ 
and this multiplication is non-zero because the multiplication with $y_3^2$ gives $\ka_1\theta_2\theta_3=\Theta\neq 0$. None of the above cohomology classes are $C_2$-motivic (see Section 5 for a description of motivic classes).

For example in the Mixed Cone of Type II $$(\ka_1\frac{\theta_3}{y_3^2})(\ka_3\frac{\theta_1}{y_1^2})=y_2^2\frac{\theta_3}{y_3^2}\frac{\theta_1}{y_1^2}\neq 0,$$
because its multiplication with $\ka_2^2$ equals $\theta_1\theta_3\neq 0$. We will prove in Section 5 that if the cohomology classes from the Mixed Cone of Type II are also motivic then their multiplication is necessary zero. In this example $\ka_1\frac{\theta_3}{y_3^2}$ is non-zero and it is not motivic because the corresponding motivic group is zero (Notice also that $\ka_1$ is motivic, but  $\frac{\theta_3}{y_3^2}$ is not motivic as we will see in Section 5).

Combining the results of \cite{BH} and \cite{KHo} with the results of this section, we obtain the following theorem which describes the additive generators of the Poincar\'e series of $H_{Br,K}^\st(pt).$
\begin{theorem} \label{tc}
	The additive structure of the Poincar\'e series appearing in Theorems \ref{PC}-\ref{NC} in terms of the generators of $H_{Br,K}^\st(pt)$ is given as follows:

\sss*{The Two-Index Case}
Let $l,n\geq 0$ and $i,j\geq 0$. Let $\alpha$ and $\beta$ be two distinct nontrivial irreducible $C_2\times \Sigma_2$-representations. The Poincar\'e series for $H^{-*+V}_{Br}(pt,\Z/2)$ is

a) If $V=0$ then $1$.

b) If $V=n\alpha$ then $1+x+x^2+...+x^n$.

c) If $V=-n\alpha$ then $x^{-n}+...+x^{-3}+x^{-2}$.

d) If $V=n\alpha+l\beta$ then $(1+x+...+x^n)(1+x+...+x^l)$.

e) If $V=n\alpha-j\beta$ then $(1+x+...+x^n)(x^{-j}+...+x^{-2})$.

f) If $V=-n\alpha-j\beta$ then $(x^{-n}+...+x^{-2})(x^{-j}+...+x^{-2})$.

Cases a), b), and c) correspond to the case where $H_{Br, G}^\st(pt) \hra H^{\st}_{Br,K}(pt,\Z/2),$
where $G\hra K$ is a cyclic subgroup of order $2$. For cases d), e), and f), we can assume without loss of generality that $\al = \si$ and $\be = \so.$ Then the series in d)  (i.e. any for $n,l\geq 0$) corresponds to the ring 
\begin{align*}
	\Z/2[x_1,y_1,x_3,y_3].
\end{align*}
The series in e) corresponds to elements of the form
\begin{align*}
	x_1^{n_1}{y_1}^{m_1} \fc{\ta_3}{x_3^{n_3}y_3^{m_3}}
\end{align*}
and the series in f) corresponds to elements of the form
\begin{align*}
	\fc{\ta_1}{x_1^{n_1}{y_1}^{m_1}}\fc{\ta_3}{x_3^{n_3}y_3^{m_3}}
\end{align*}
\sss*{The Positive Cone}
Let $p,b,q\geq 0$. The Poincar\'e series for $H^{-*+p\si+b\ep+q\so}_{Br,K}(pt,\Z/2)$ is given by
	$$(1+x+...+x^p)(1+x+...+x^b)+x(1+x+...+x^{p+b})(1+...+x^{q-1})$$
	and the entire positive cone corresponds to the subring
	\begin{align*}
			\frac{\F_2[x_1,y_1,x_2,y_2,x_3,y_3]}{(x_1y_2y_3+y_1x_2y_3+y_1y_2x_3)}.
	\end{align*}
\sss*{The Mixed Cone of Type I}
Without loss of generality, we consider the Mixed Cone of Type I corresponding to $\ka_2.$

Let $p,q\geq 0,b\leq -1$. If $-b\leq p,q$ then the Poincar\'e series for $H^{-*+p\si+b\ep+q\so}_{Br,K}(pt,\Z/2)$ is
	$$(\frac{1}{x^{-b}}+...+\frac{1}{x})(1+x+...+x^{-b-2})+x^{-b}(1+...+x^{p+b})(1+...+x^{q+b}). $$ 
	In the case $-b>p$ the Poincar\'e series for $H^{-*+p\si+b\ep+q\so}_{Br,K}(pt,\Z/2)$ is
	$$\frac{1}{x^{p+1}}(1+...+x^p)(1+...+x^{p-1})+\frac{1}{x^{-b}}(1+...+x^{-b-p-2})(1+...+x^{p+q}).$$
	Swapping the role of $p$ and $q$ gives the case $-b>q$. Then
		\begin{align*}
		\oplus_{p,q\geq 0,b\leq -1,a\in\Z}H^{a+p\sigma+b\epsilon+q\sigma\otimes\epsilon}_{Br,K}(pt)=\ka_2\Z/2[x_1,y_1,x_3,y_3,\ka_2] \op  \fc{T}{(x_1y_2y_3+y_1x_2y_3+y_1y_2x_3)T},
	\end{align*}
	where
		\begin{align*}
		T := \frac{\Z/2[x_1,y_1,x_2,y_2,x_3,y_3]}{(x_2^\infty,y_2^\infty)}\{\theta_2\}.
	\end{align*}
\sss*{The Mixed Cone of Type II}
Without loss of generality, we consider the Mixed Cone of Type II corresponding to $\io_2.$ Let $p,q\leq -1, b\geq 0$. Then the Poincar\'e series for $H^{-*+p\si+b\ep+q\so}_{Br}(pt,\Z/2)$ is 
	$$\frac{1}{x^{-p-q-b}}(1+...+x^{-p-b-2})(1+...+x^{-q-b-2})+\frac{1}{x^{b+1}}(1+...+x^b)(1+...+x^{b-1}),$$
	corresponding to the elements of
	\begin{align*}
		\scalebox{0.9}{$
			\bigoplus_{b\geq1,b+p<0,b+q<0}\fc{\pi_{*}((\Si^{p\si+b\ep+q\so}H\ul{\Z/2})^{\mathcal{K}})}{(\ta_1\ta_3\text{-chains})}\cong\Z/2\ub{\left\{\fc{\ka_3^{m_3+1}}{x_3^{n_3+1}}\cdot \fc{\ta_1}{x_1^{n_1}y_1^{m_3+1+n}}\right\}}_{0 \leq n \leq m_3\leq n+n_1}
			\op\Z/2\ub{\left\{\left(\fc{\ka_1}{x_1^{n_2+m_2+n_1+1}}\fc{\ta_3}{x_3^{n_3}y_3^{m_3}}x_2^{n_2}y_2^{m_2}\right)/x_3^{n+1}\right\}}_{n_3+m_3=n_2+m_2}$}
	\end{align*}
	if $-p,-q\geq b+1$ or 
	$$\frac{1}{x^{-p}}(1+...+x^{-p-2})(1+...+x^{b+q})+\frac{1}{x^{-q}}(1+...+x^{b-1})(1+...+x^{-q-1}),$$
	corresponding to the elements of
	\begin{align*}
		\bigoplus_{q\leq-1,p\leq-1,b\geq1,b+q\geq0}\pi_{*}((\Si^{p\si+b\ep+q\so}H\ul{\Z/2})^{\mathcal{K}})&\cong \Z/2\left\{\ka_3^{m_3+1}\cdot \fc{\ta_1}{x_1^{n_1}y_1^{m_3+m_1}}\cdot x_2^{n_2}y_2^{m_2-m_3}\right\}_{m_3\leq m_2,m_1\geq 1}\\
		&\op\Z/2\left\{\fc{\ka_1}{x_1^{n_1}}\fc{\ta_3}{x_3^{n_3}y_3^{m_3}}x_2^{n_2}y_2^{m_2}\right\}_{n_3+m_3\leq n_2+m_2}.
	\end{align*}
	if $b\geq -q$. Swapping the role of $p$ and $q$ gives the case $b\geq -p$.
	The $\theta_1\theta_3$-chains can be nonzero only in the first case and are generated by elements of the form $$\frac{\theta_1}{x_1^{n_1}{y_1^{m_1}}}\frac{\theta_3}{x_3^{n_3}{y_3^{m_3}}}x_2^{n_2}y_2^{m_2}$$ with $n_2+m_2\leq n_3+m_3+2$ and $n_2+m_2\leq n_1+m_1+2$ modulo the multiplication by $f=x_1y_2y_3+y_1x_2y_3+y_1y_2x_3$. In the particular case $b=0$ and $p,q\leq -1$ the only cohomology classes generators are $$\frac{\theta_1}{x_1^{n_1}{y_1^{m_1}}}\frac{\theta_3}{x_3^{n_3}{y_3^{m_3}}}$$ for any $n_1,m_1,n_2,m_2\geq 0.$
\sss*{The Negative Cone}
Let $p,b,q\leq -1$. The Poincar\'e series for $H^{-*+p\si+b\ep+q\so}_{Br,K}(pt,\Z/2)$ is given by
	$$\frac{1}{x^{-p-b-q}}[(1+x+...+x^{-b-q-2})(1+...+x^{-p-2})+x^{-p-1}(1+...+x^{-q-1})(1+...+x^{-b-1})]$$
	and the entire negative cone corresponds to the elements of
	\begin{align*}
	\Z/2{\left\{\ka_1^{-p}\fc{\ta_2}{x_2^{n_2}y_2^{m_2}}\fc{\ta_3}{x_3^{n_3}y_3^{m_3}}\right\}_{m_3 \geq -p-1 \text{ or } m_2\geq -p}}.
\end{align*}
that belong to the negative cone i.e. $n_2+m_2\geq -p-1$ and $n_3+m_3\geq -p-1$. In particular, we have $\Theta=\kappa_1\theta_2\theta_3=\iota_3\theta_3\in H^{3-\sigma-\epsilon-\sigma\otimes\epsilon}_{Br,K}(pt,\Z/2)$. By symmetry, we have two more equivalent sets of generators.
\sss*{Multiplicative structure} The multiplication of the above generators fulfill the following relations (\cite{BH}):
\begin{equation} \label{rl}
\begin{aligned}
          \ka_i\theta_j=\ka_j\theta_i,\\
          \kappa_i x_i= x_j y_k + y_j x_k,\\
	  \kappa_i y_i= y_j y_k,\\
	  \kappa_i\kappa_j=y_k^2\\
        \kappa_i\theta_j=\kappa_j\theta_i,\\
         \theta_ix_i=\theta_iy_i=0,\\
         \theta_i^2=0.\\
\end{aligned}
\end{equation}
\end{theorem}
\begin{remark}
All elements of the $RO(C_2\times\Sigma_2)-$graded cohomology ring of a point can be expressed as sums of multiplications of the given generators modulo the relations \ref{rl}.  Notice that 
\begin{equation}\label{eq9}
\begin{aligned}
    & x_1 y_2 y_3 + y_1 x_2 y_3 + y_1 y_2 x _3 = 0 \\
    & \iff  x_1 y_2 y_3 + y_1 x_2 y_3 = y_1 y_2 x _3\\
    & \iff y_3 (x_1 y_2+y_1 x_2) = (y_1 y_2) x_3\\
    & \iff y_3 \kappa_3 x_3 = \kappa_3 y_3 x_3.
\end{aligned}
\end{equation}
therefore the relations \ref{rl} imply that $x_1 y_2 y_3 + y_1 x_2 y_3 + y_1 y_2 x _3 = 0$.

One can ask a more specific question: how can we express a certain multiplication in terms of the given additive generators from the corresponding cohomology group? Some multiplications are easy to determine like all from the Mixed Cone of Type I corresponding to $\ka_2$. For example
$$\ka_2\frac{\theta_2}{x_2y_2}x_1y_1x_3y_3=\ka_2y_2\frac{\theta_2}{x_2y^2_2}x_1y_1x_3y_3=\frac{\theta_2}{x_2y^2_2}x_1y^2_1x_3y^2_3$$
using the relation $\ka_2y_2=y_1y_3$.

Others are harder. Some nontrivial multiplications between generators of the Positive Cone (e.g. $y_1,y_3,x_3$) and generators of Mixed Cone of Type II are given in the Appendix in terms of generators (of Mixed Cone of Type II) because they are used in the previous sections, but the process is tedious and we don't attempt in this paper to answer completely this question.
\end{remark}

\section{$RO(C_2\times \Sigma_2)$-graded cohomology of $E_{\Sigma_2}C_2$}
             In this section, we apply the previous results to compute and discuss the Mackey functor $\ul{H}_{Br,K}^\st(E_{\Sigma_2}C_2,\Z/2)$ and in particular the $RO(C_2\times \Sigma_2)$-graded cohomology of $E_{\Sigma_2}C_2$ with $\Z/2$ coefficients.
             \subsection{The middle and lower levels of the Mackey functor $\ul{H}_{Br,K}^\st(\ek,\Z/2)$}

		\begin{lemma}
			Let $M$ be a constant Mackey functor. For any based $K$-space $X$, we have
			\begin{align*}
				\hr_{Br,K}^{a+p\si+b\ep+q\si\ot\ep}({C_2}_+\wedge X;M) \simeq \hr_{Br,\Si_2}^{a+p+(b+q)\epsilon}(X;M).
			\end{align*}
			\label{orb}
			By symmetry, we have analogous statements for the other two orbits.
					\end{lemma}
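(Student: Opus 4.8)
The plan is to recognize that $\ct\ps=(K/\sg)\ps$ for $\sg=\Sigma_2$, and to reduce the statement to the standard change-of-groups (induction) isomorphism along the inclusion $\sg\hra K$, since $K/\sg\cong\ct$ as a $K$-set.

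First I would use the shearing homeomorphism of based $K$-spaces
$$\ct\ps\sh X\;=\;(K/\sg)\ps\sh X\;\xr{\cong}\;K\ps\sh_{\sg}\,\Res^K_{\sg}X,\qquad [g\sh x]\mt(gH,gx),$$
with inverse $(gH,y)\mt[g\sh g^{-1}y]$, natural in the based $K$-space $X$; here $\Res^K_{\sg}X$ is $X$ equipped with its underlying $\sg$-action, and in the right-hand side of the Lemma the symbol $X$ is to be read as $\Res^K_{\sg}X$. I would check that this map is $K$-equivariant and basepoint-preserving, so that it induces an isomorphism on $\hr^{\st}_{Br,K}(-;M)$.

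Next I would invoke the adjunction between induction $K\ps\sh_{\sg}(-)$ and restriction $\Res^K_{\sg}$ on genuine equivariant spectra, together with the identification $\Res^K_{\sg}\bigl(\Si^{V}HM\bigr)\simeq\Si^{\Res^K_{\sg}V}H\bigl(\Res^K_{\sg}M\bigr)$, to obtain, for every actual $K$-representation $V$ and every based $\sg$-space $Y$,
$$\hr^{V}_{Br,K}\bigl(K\ps\sh_{\sg}Y;M\bigr)\;\cong\;\hr^{\Res^K_{\sg}V}_{Br,\sg}\bigl(Y;\Res^K_{\sg}M\bigr).$$
Since $M$ is a constant Mackey functor, $\Res^K_{\sg}M=M$ is again the constant Mackey functor for $\sg$, so the coefficients are unchanged; combining this with the shearing isomorphism (for $Y=\Res^K_{\sg}X$) yields $\hr^{V}_{Br,K}(\ct\ps\sh X;M)\cong\hr^{\Res^K_{\sg}V}_{Br,\sg}(X;M)$.

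It then remains to evaluate the restricted grading: for $V=a+p\si+b\ep+q\so$ one has $\Res^K_{\sg}(\si)=1$ (the trivial $\sg$-representation), $\Res^K_{\sg}(\ep)=\ep$ and $\Res^K_{\sg}(\so)=\Res^K_{\sg}(\si\ot\ep)=\ep$, hence $\Res^K_{\sg}V=(a+p)+(b+q)\ep$, and the right-hand side becomes $\hr^{a+p,\,b+q}_{Br,\sg}(X;M)$ in the notation of the statement. The argument is essentially formal; the points I expect to demand the most care are that the shearing map genuinely is $K$-equivariant and based (so that passing to the reduced theory is legitimate), and that $\si\ot\ep$ restricts to the \emph{sign} (and not the trivial) representation of $\sg$ — which is exactly what forces the index $q$ to be added to $b$ rather than to $a$. (Alternatively one could argue directly by transporting a $K$-CW structure on $X$ through $\ct\ps\sh(-)$, under which a $\sg$-cell $(\sg/L)\ps$ becomes a $K$-cell $(K/L)\ps$, but the adjunction route is shorter.)
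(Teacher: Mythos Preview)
Your argument is correct and is precisely the standard change-of-groups argument one would give for this kind of statement. The paper itself does not include a proof of this lemma; it is stated and then used, with the remark ``By symmetry, we have analogous statements for the other two orbits,'' so there is nothing to compare against beyond noting that your shearing/induction--restriction argument is the expected one and your computation of $\Res^K_{\Sigma_2}(a+p\sigma+b\epsilon+q\sigma\otimes\epsilon)=(a+p)+(b+q)\epsilon$ is exactly right.
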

		We now compute the middle level of the Mackey functor $\ul{H}^\st_{Br,K}(\ek,\Z/2).$
		From Lemma \ref{orb}, we have
		\begin{align*}
			\hr^{a+p\sigma+b\epsilon+q\sigma\otimes\epsilon}_{Br,K}({C_2}_+ \wedge \ec) &= \hr^{a+p\sigma+b\epsilon+q\sigma\otimes\epsilon}_{Br,K}({K/\Si_2}_+ \wedge \ec) = \hr^{a+p+(b+q)\epsilon}_{Br,\Si_2} (S(\infty\rho_{\Sigma_2})_+),\\
			\hr^{a+p\sigma+b\epsilon+q\sigma\otimes\epsilon}_{Br,K}({\Si_2}_+ \wedge \ec) &= \hr^{a+p\sigma+b\epsilon+q\sigma\otimes\epsilon}_{Br,K}({K/C_2}_+ \wedge \ec) = \hr^{a+b+(p+q)\sigma}_{Br,C_2} (S(\infty\sigma)_+),\\
			\hr^{a+p\sigma+b\epsilon+q\sigma\otimes\epsilon}_{Br,K}({(C_2 \ti \Si_2)/\Delta}_+ \wedge \ec) &= \hr^{a+p\sigma+b\epsilon+q\sigma\otimes\epsilon}_{Br,K}({K/\Delta}_+ \wedge \ec) = \hr^{a+q+(p+b)\sigma\otimes\epsilon}_{Br,\Delta} (S(\infty\rho_{\Delta})_+).
		\end{align*}
		Note that in $S(\si+\si\ot\ep)$, the subgroup $\Si_2$ acts trivially on $\si,$ so identifying $\si \ot \ep$ with the sign representation of $\Si_2$ we obtain the regular representation $\rho_{\Sigma_2}=1+\epsilon$ in the first line above. Similarly, the subgroup $\Delta$ acts trivially on $\si \ot \ep,$ so identifying $\si$ with the sign representation of $\Delta,$ we obtain the regular representation $\rho_{\Delta}=1+\sigma\otimes\epsilon$ in the third line above.
		
		Now we can generalise the classical argument to show that $S(\infty \rho_{\Sigma_2})$ is equivariantly contractible. We regard $S(n\rho_{\Sigma_2})$ as a subspace of $\R^{n\rho_{\Sigma_2}}=\R\oplus\R^\ep\oplus \R \oplus \R^\ep \oplus \cdots \oplus \R \oplus \R^\ep$ (the case $\sigma\otimes\epsilon$ is identical). Let  
		\begin{align*}
			T: S(\infty \rho_{\Sigma_2}) &\to S(\infty \rho_{\Sigma_2})\\
			(x_1,x_2,\ld) &\mt (0,0,x_1,x_2,\ld)
		\end{align*}
		be the map that shifts all coordinates down twice. Note that $T$ is $\Sigma_2-$equivariant as an endomorphism of $\R^{\infty\rho_{\Sigma_2}}.$ Indeed, since the sign and trivial representations are alternating in $\R^{\infty\rho_{\Sigma_2}}$ under our identification, any shift by an even number will be equivariant since the coordinates corresponding to the trivial representations will still be trivial in the image, and similarly for the sign representations. Then for any $x\in S(\infty \rho_{\Sigma_2})$, we have that
		\begin{align*}
			T(x) \neq \lambda x,\lambda\in \mathbb{R},
		\end{align*}
		so 
		\begin{align*}
			\{(1-t)x+tT(x):t \in [0,1]\} \cap \{(0,0,\ld)\} = \emptyset.
		\end{align*}
		We can then define a homotopy from the identity on $S(\infty \rho_{\Sigma_2})$ to $T$:
		\begin{align*}
			H(x,t) = \frac{(1-t)x+tT(x)}{\norm{(1-t)x+tT(x)}}.
		\end{align*}
		The numerator is equivariant, since the group action is a linear map given by a diagonal matrix, which commutes with $T.$ Then we also have that the denominator is invariant under the group action, since the action is orthogonal. Hence $H$ is equivariant.
		
		Then we can equivariantly contract the image of $T$, which is a codimension 2 sphere, to the point $p:=(1,0,0,0,0,...)$ via the homotopy
		\begin{align*}
					H(x,t) = \frac{(1-t)T(x)+tp}{\norm{(1-t)T(x)+tp}}.
		\end{align*}
		 Note that this is possible since the first coordinate is the trivial representation; if, instead, we used only the sign representation (e.g. for $EC_2 = S(\infty \si)$), then this path would not be equivariant.

		Then we have
		\begin{align*}
			\hr^{a+p\si+b\ep+q\si\ot\ep}_{Br,K}({C_2}_+ \wedge \ec) =  \hr^{a+p+(b+q)\epsilon}_{Br,\Si_2} (pt_+),
		\end{align*}
		and
		\begin{align*}
			\hr^{a+p\si+b\ep+q\si\ot\ep}_{Br,K}({(C_2 \ti \Si_2)/\Delta}_+\wedge \ec) =  \hr^{a+q+(b+p)\sigma\otimes\epsilon}_{Br,\Delta} (pt_+).
		\end{align*}
		Since $S(\infty \sigma) \simeq EC_2,$ we have that
		\begin{align*}
			\hr^{a+p\si+b\ep+q\si\ot\ep}_{Br,K}({\Si_2}_+ \wedge \ec) = H^{a+b+(p+q)\sigma}_{Br,C_2} (EC_2).
		\end{align*}
		\subsection{The ring structure of the middle level of the Mackey functor $\ul{H}_{Br,K}^\st(E_{\Sigma_2}C_2,\Z/2)$}
		The ring map induced by the projection ${\Si_2}_+ \wedge {E_{\Si_2}C_2}_+\to {\Si_2}_+$ preserves units, so we have that the images of $y_2$ and $t_2$ are units in $\hr^\star_{Br,K}({\Si_2}_+ \wedge {E_{\Si_2}C_2}_+)$ denoted by the same symbols. For the notations $y_i, t_i$ with $i\in\{1,2,3\}$ see \ref{eq4}.
		
		Applying Theorem \ref{middle} in the other two cases, we have the following isomorphisms of rings 
		\begin{theorem} \label{ml}
		\begin{align*}
			\hr^{\st}_{Br,K}({C_2}_+ \wedge \ec) &\cong  \hr^{\st}_{Br,\Si_2} (pt_+)[y_1^{\pm 1},t_1^{\pm 1}],\\
			\hr^{\st}_{Br,K}({\Si_2}_+ \wedge \ec) &\cong H^{\st}_{Br,C_2} (EC_2)[y_2^{\pm 1},t_2^{\pm 1}],\\
			\hr^{\st}_{Br,K}({(C_2 \ti \Si_2)/\Delta}_+\wedge \ec) &\cong  \hr^{\st}_{Br,\Delta} (pt_+)[y_3^{\pm 1},t_3^{\pm 1}].
		\end{align*}
		\end{theorem}

		We see that the statement from the Theorem \ref{ml}  is given by (after taking direct sums over all $a,p,q,a',p',q'$)
		\begin{align*}
			\hr^{a+*\sigma+*\epsilon+q\sigma\otimes\epsilon}_{Br,K}({C_2}_+ \wedge \ec) &\cong  \hr^{a'+q'\epsilon}_{Br,\Si_2} (pt_+)[y_1^{\pm 1},t_1^{\pm 1}],\\
			\hr^{a+p\sigma+*\epsilon+*\sigma\otimes\epsilon}_{Br,K}({\Si_2}_+ \wedge \ec) &\cong H^{a'+p'\sigma}_{Br,C_2} (EC_2)[y_2^{\pm 1},t_2^{\pm 1}],\\
			\hr^{a+p\sigma+*\epsilon+*\sigma\otimes\epsilon}_{Br,K}({(C_2 \ti \Si_2)/\Delta}_+\wedge \ec) &\cong  \hr^{a'+p'\sigma\otimes\epsilon}_{Br,\Delta} (pt_+)[y_3^{\pm 1},t_3^{\pm 1}].
		\end{align*}
		That is to say, an element in degree $(a,*,*,q)$ can be reduced to an element in degree $(a',0,0,q')$ via multiplication with $y_1$ and $t_1$.  The analogous statements apply for the other two cases. Note that we made a choice in the index to reduce, since $|t_1| = (0,0,-1,1),$ but this does not affect the form of the expression on the right.\\
		Furthermore, in the second case, we have that $$H^{\st}_{Br,C_2}(EC_2) \cong H^*_{sing}(\rp^\infty)[y_1^{\pm 1}]=\Z/2[\tau,y_1^{\pm 1}]$$ where $|y_1|=(-1,1,0,0),$ and $|\tau| = (1,0,0,0),$ so we have the isomorphism of rings
		\begin{align*}
			\hr^{\st}_{Br,K}({\Si_2}_+ \wedge \ec) &\cong \Z/2[\tau,y_1^{\pm 1},y_2^{\pm 1},t_2^{\pm 1}].
		\end{align*}
		For the lower level, we first observe that
		\begin{align*}
			K_+ \sh \ek_+ \se K_+
		\end{align*}
		as $K$-spaces, so
		\begin{align*}
			\hr_{Br,K}^\st(K_+\sh \ek_+) \simeq \hr_{Br,K}^\st(K_+).
		\end{align*}
		We also know \cite{Car:op} that 
		\begin{align*}
			\hr_{Br,K}^p(K_+) \simeq \hr_{sing}^{p}(pt_+)
		\end{align*}
		for all $p\in \Z,$ so we have that
		\begin{align*}
			\hr_{Br,K}^\st(K_+) = \Z/2[y_1^{\pm1},y_2^{\pm1},y_3^{\pm1}],
		\end{align*}
		since the restriction maps in a Mackey functor are ring maps. 
		\begin{proposition} We have
		$$ \hr_{Br,K}^\st(K_+\sh \ek_+)\simeq \Z/2[y_1^{\pm1},y_2^{\pm1},y_3^{\pm1}],$$
          \end{proposition}

		\subsection{The top level of the Mackey functor $\ul{H}_{Br,K}^\st(\ek,\Z/2)$}
		    \begin{lemma}\label{quot}
				Let $M$ be a constant Mackey functor. For any $K$-space $Y$, we have 
				\begin{align*}
					H^{a+b\ep}_{Br,K}(Y;M) &\simeq H^{a+b\epsilon}_{Br,\Si_2} (Y/{C_2};M),\\
					H^{a+p\si}_{Br,K}(Y;M) &\simeq H^{a+p\sigma}_{Br,C_2} (Y/{\Si_2};M),\text{ and }\\
					H^{a+q\si\ot\ep}_{Br,K}(Y;M) &\simeq H^{a+q\sigma\otimes\epsilon}_{Br,K/\Delta} (Y/{\Delta};M).
				\end{align*}
			\end{lemma}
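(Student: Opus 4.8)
By the evident threefold symmetry it suffices to treat the first isomorphism; write $N=C_2\trianglelefteq K$, $\bar G=K/N\cong\Si_2$ and $\pi\colon K\to\bar G$ for the quotient. The structural point I would isolate first is that $\ep=\pi^{\ast}\bar\ep$, where $\bar\ep$ is the sign representation of $\bar G$ (this is exactly what it means for $\ep$ to be the one of the three characters that factors through $K/C_2$); hence every sphere $S^{a+b\ep}$ is the inflation $\pi^{\ast}S^{a+b\bar\ep}$ of a $\bar G$-sphere, and in particular $N$ acts trivially on it. Likewise the Eilenberg--MacLane spectrum $H\ul M$ of the constant $K$-Mackey functor has the same underlying coefficient system as $\pi^{\ast}$ of $H\ul M$ over $\bar G$, and — this is the one computation I would carry out carefully — its categorical $N$-fixed point spectrum is again constant: $(H\ul M)^{N}\simeq H\ul M$ over $\bar G$. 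On levels and restriction maps this is immediate, and the index $[H':H]$ defining the transfers of the constant $K$-functor becomes, after passing to $N$-fixed points, precisely the index $[\,\overline{H'}:\overline H\,]$ defining the transfers of the constant $\bar G$-functor.

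Next I would observe that the $N$-orbit functor $X\mapsto X/N$ sends a based $K$-CW complex to a based $\bar G$-CW complex, carries a $K$-cell $K/H_{+}\wedge D^{n}$ to the $\bar G$-cell $\bar G/\overline H_{+}\wedge D^{n}$ (with $\overline H=HN/N$), is left adjoint to inflation, and hence preserves cofiber sequences and wedges and, since it sends $S^{1}$ and $S^{\ep}=\pi^{\ast}S^{\bar\ep}$ to $S^{1}$ and $S^{\bar\ep}$, the relevant (trivial- and $\ep$-directed) suspension isomorphisms; moreover $Y_{+}/N=(Y/N)_{+}$. Therefore $Y\mapsto\tilde H^{a+b\bar\ep}_{Br,\bar G}(Y_{+}/N;\ul M)$ is an $RO$-graded cohomology theory on $K$-spaces in the same sense as $Y\mapsto\tilde H^{a+b\ep}_{Br,K}(Y_{+};\ul M)$. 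I would then build the comparison as the composite: pull a $\bar G$-cohomology class on $X/N$ back along $\pi$ and along the quotient $q\colon X\to X/N$ to a $K$-class on $X$ with coefficients in $\pi^{\ast}\ul M$, and reinterpret those coefficients as the constant $K$-functor using the coincidence of underlying coefficient systems together with the $N$-fixed-point identification above. This yields a natural transformation $\Phi_{a,b}\colon\tilde H^{a+b\bar\ep}_{Br,\bar G}((-)/N;\ul M)\Rightarrow\tilde H^{a+b\ep}_{Br,K}(-;\ul M)$ of cohomology theories.

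Finally I would check that $\Phi_{a,b}$ is an isomorphism on a point and on every orbit $(K/H)_{+}$, whence, by the comparison principle for cohomology theories on $K$-CW complexes, it is an isomorphism for all $Y$; applying it to $X=Y_{+}$ gives the statement. By the change-of-groups isomorphism the two sides at $(K/H)_{+}$ are $H^{(a+b\ep)|_{H}}_{Br,H}(pt;\ul M)$ and $H^{(a+b\bar\ep)|_{\overline H}}_{Br,\overline H}(pt;\ul M)$; since $\ep|_{H}$ is trivial precisely when $H\subseteq N$, equivalently when $\overline H=e$, and is the inflated sign representation otherwise — matching the behaviour of $\bar\ep|_{\overline H}$ — both sides collapse to the same Stong $RO(C_2)$-graded cohomology of a point with the same twist, on which $\Phi$ is visibly the identity. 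The analogues for $\si$ (taking $N=\Si_2$) and $\si\ot\ep$ (taking $N=\Delta$) follow by symmetry.

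\textbf{Expected main obstacle.} The only genuinely delicate step is justifying the coefficient-change in the part of the grading with $b<0$: there integer-graded Bredon cohomology no longer depends on the coefficient system alone, so one must really feed in the Mackey (transfer) structure, and it is exactly the identification $(H\ul M^{K})^{N}\simeq H\ul M^{\bar G}$ that makes the comparison legitimate. A concrete fallback I would keep in reserve is to reduce the $b<0$ case to the $b\ge0$ case by suspending with trivial representations and peeling off the free top cells of the inflated sphere $S^{b\bar\ep}$ through the cofiber sequences $S^{(b-1)\bar\ep}\to S^{b\bar\ep}\to \bar G/e_{+}\wedge S^{b}$ and their $K$-inflations, all of which pass compatibly through the $N$-orbit functor, so that in the end both sides are computed by cochain complexes that literally agree.
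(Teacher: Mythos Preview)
The paper states this lemma without proof (it is treated as a standard change-of-groups fact), so there is no argument in the paper to compare against. Your approach is the standard one and is correct: the three cases are symmetric, each nontrivial character of $K$ is inflated from the corresponding quotient $\bar G\cong C_2$, so the representation sphere $S^{a+b\ep}$ is $N$-fixed; combining the $((-)/N,\pi^{\ast})$ adjunction with the identification $(H\ul M)^{N}\simeq H\ul M$ over $\bar G$ for constant Mackey functors gives a natural transformation of $RO(\bar G)$-graded cohomology theories on $K$-spaces, and your orbit-by-orbit check (reducing both sides to the $RO(C_2)$-graded cohomology of a point with matching twist) finishes the comparison. The one place I would tighten is the clause ``on which $\Phi$ is visibly the identity'': at $H=K$ the left side is genuinely $H^{a+b\ep}_{Br,K}(pt)$, not literally a $C_2$-group, so what you are really using there is exactly your fixed-point identification $(H\ul M)^{N}\simeq H\ul M^{\bar G}$ once more rather than an ``identity''; it would be cleaner to phrase that case accordingly. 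Your anticipated obstacle for $b<0$ is the right one, and your resolution via the Mackey-level fixed-point computation (or the alternative cofiber-sequence reduction) is sound.
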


%
%
The next theorem is the main theorem of this section.
\begin{theorem} \label{cEC} As rings,
$$H^{\st}_{Br,K}(\ek,\Z/2)\simeq\frac{H^{*+*\ep}_{Br,\Sigma_2}(pt,\Z/2)[x_1,y_1,x_3,y_3,\ka_2^{\pm 1}]}{(\ka_2x_2=x_1y_3+x_3y_1,\ka_2y_2=y_1y_3)}.$$
\end{theorem}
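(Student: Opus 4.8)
The plan is to run the $C_2\times\Sigma_2$-topological isotropy cofiber sequence $\ea\to pt_+\to\et$ against the vanishing of Theorem \ref{vanishing} and then localize at $\ka_2$. Applying $\hc{\st}_{Br,K}(-)$ gives the long exact sequence
$$\cd\to\tc{V}_{Br,K}(\et)\xr{f^*}\hc{V}_{Br,K}(pt)\xr{g^*}\hc{V}_{Br,K}(\ek)\xr{\dl}\tc{V+1}_{Br,K}(\et)\xr{f^*}\hc{V+1}_{Br,K}(pt)\to\cd,$$
with $f^*$ the map of Theorem \ref{vanishing} and $g^*$ induced by $\ea\to pt_+$. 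For $V=a+p\si+b\ep+q\so$ with $p,q\geq 0$ that theorem kills both displayed copies of $f^*$, so in this range we obtain a natural short exact sequence of $\hc{\st}_{Br,K}(pt)$-modules
$$0\to\hc{V}_{Br,K}(pt)\xr{g^*}\hc{V}_{Br,K}(\ek)\xr{\dl}\tc{V+1}_{Br,K}(\et)\to 0.$$
Since $\ka_2\in\hc{-1+\si-\ep+\so}_{Br,K}(\ek)$ is invertible (it is an invertible motivic class realizing the unique periodicity of $E_{\Sigma_2}C_2$), multiplication by $\ka_2^n$ identifies $\hc{V}_{Br,K}(\ek)$ with $\hc{V+n(-1+\si-\ep+\so)}_{Br,K}(\ek)$, and choosing $n$ with $p+n,q+n\geq 0$ we reduce everything to the range $p,q\geq 0$. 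So the short exact sequence together with $\ka_2$-periodicity pins down $\hc{\st}_{Br,K}(\ek)$ additively once the two outer groups are known.

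Both outer groups are already in hand. For $p,q\geq 0$, $\hc{V}_{Br,K}(pt)$ is the positive cone $\F_2[x_i,y_i]/(x_1y_2y_3+y_1x_2y_3+y_1y_2x_3)$ when $b\geq 0$ and the Mixed Cone of Type I for $\ka_2$, namely $\ka_2\Z/2[x_1,y_1,x_3,y_3,\ka_2]\op T/(fT)$ with $T=\frac{\Z/2[x_1,y_1,x_2,y_2,x_3,y_3]}{(x_2^\infty,y_2^\infty)}\{\ta_2\}$, when $b<0$ (Theorem \ref{pcon} and Theorem \ref{kconj}); on these degrees $g^*$ is injective with image exactly the subgroup $IP$. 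The group $\tc{V+1}_{Br,K}(\et)$ is given by Theorem \ref{et} combined with Theorem \ref{comp1}, i.e.\ $\tc{\st}_{Br,K}(\et)\se\frac{\hr^{*-1+*\ep}_{Br,\Sigma_2}(pt)[b,c]}{(b^2=cx_2+by_2)}[x_1^{\pm1},x_3^{\pm1}]$ with $|b|=1+\ep$, $|c|=\ep$, whose additive generators over $\hr^{*-1+*\ep}_{Br,\Sigma_2}(pt)[x_1^{\pm1},x_3^{\pm1}]$ are the classes $\Sigma b^k$, $\Sigma c$, $\Sigma b^kc$.

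For the ring structure I would first build a ring map $\Phi$ from $R:=\fc{H^{*+*\ep}_{Br}(pt,\Z/2)[x_1,y_1,x_3,y_3,\ka_2^{\pm1}]}{(\ka_2x_2=x_1y_3+x_3y_1,\,\ka_2y_2=y_1y_3)}$ to $\hc{\st}_{Br,K}(\ek)$: the classes $x_1,y_1,x_3,y_3$, the generators $x_2,y_2,\ta_2$ and the $\ta_2$-fractions of $H^{*+*\ep}_{Br}(pt,\Z/2)$, and the periodicity unit $\ka_2^{\pm1}$ all live in $\hc{\st}_{Br,K}(\ek)$, and the two defining relations hold there because they already hold in $\hc{\st}_{Br,K}(pt)$ by the identities $\ka_ix_i=x_jy_k+y_jx_k$, $\ka_iy_i=y_jy_k$ (Theorem \ref{pcon} and the relations recorded after it) and $g^*$ is a ring map; note that in $R$ the positive-cone relation $f=0$ and the identities $\ka_2\cp\fc{\ta_2}{x_2^{i-1}y_2^j}=(x_1y_3+x_3y_1)\cp\fc{\ta_2}{x_2^iy_2^j}$ then follow automatically, so $R$ is a genuine ring. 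It remains to prove $\Phi$ is an isomorphism. By $\ka_2$-periodicity on both sides it suffices to check this in degrees with $p,q\geq 0$; there $\Phi$ should carry the $\Z/2$-span of the monomials of $R$ with non-negative $\ka_2$-exponent onto $g^*\!\bigl(\hc{V}_{Br,K}(pt)\bigr)=IP^V$ and the span of those with negative $\ka_2$-exponent onto $\tc{V+1}_{Br,K}(\et)$, so that the short exact sequence forces $\Phi$ bijective in each degree; since $\Phi$ is a ring map, $\ka_2$-periodicity then propagates the isomorphism to all of $RO(C_2\times\Sigma_2)$.

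The delicate part, and where I expect most of the work, is exactly this last matching, i.e.\ the multiplicative extension problem in the short exact sequence. Concretely one must lift the generators $\Sigma b^k$, $\Sigma c$, $\Sigma b^kc$ of $\tc{\st}_{Br,K}(\et)$ (together with the units $x_1^{\pm1},x_3^{\pm1}$ and the $\ta_2$-fraction coefficients) across $\dl$ to explicit elements of $\hc{\st}_{Br,K}(\ek)$ and identify them with the negative-$\ka_2$-exponent monomials of $R$ in the correct four-degrees, which forces one to evaluate a handful of nontrivial products of $\ka_2^{-1}$ with $\ta_2$-fractions and with $x_2,y_2$ — precisely the kind of computation deferred to the Appendix — and one must separately rule out any collapse between these two families by a Poincar\'e-series comparison against Proposition \ref{caseMC1} and the series for $\tc{\st}_{Br,K}(\et)$, in the counting style already used for Theorems \ref{kconj} and \ref{ioseries}.
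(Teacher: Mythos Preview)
Your framework is the paper's: isotropy cofiber sequence, Theorem \ref{vanishing} to get the short exact sequence in the range $p,q\geq 0$, and $\ka_2$-periodicity. Where you diverge is in the endgame, and there you make it much harder than it needs to be.

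The paper never solves the extension problem across $\dl$ and never lifts $\Sigma b^k,\Sigma c,\Sigma b^kc$. Instead it observes that for \emph{any} $\al\in H^\st_{Br,K}(\ek)$ and $n\gg 0$, Theorem \ref{isomt} gives
\[
\tc{|\ka_2^n\al|}_{Br,K}(\et)\ \cong\ \hr_{\Sigma_2}^{-n-n\ep+|\al|_{\Sigma_2}}(\bc)=0,
\]
so the short exact sequence (S1) collapses to an isomorphism $g^*:H^{|\ka_2^n\al|}_{Br,K}(pt)\xr{\sim}H^{|\ka_2^n\al|}_{Br,K}(\ek)$. Thus $\ka_2^n\al\in IP$, giving $H^\st_{Br,K}(\ek)\subseteq IP[\ka_2^{-1}]$; the reverse inclusion is automatic since $\ka_2$ is a unit. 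Then $IP[\ka_2^{-1}]$ is identified with the stated ring because the positive-cone relation $f=x_1y_2y_3+y_1x_2y_3+y_1y_2x_3$ becomes \emph{redundant} once the two $\ka_2$-relations are present: $f=y_2(x_1y_3+y_1x_3)+y_1x_2y_3=\ka_2x_2y_2+\ka_2y_2x_2=0$. No lifting, no Poincar\'e matching, no Appendix products are needed here.

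Your proposed route---splitting degree-$V$ monomials of $R$ by the sign of the $\ka_2$-exponent and mapping the two pieces separately onto $g^*(IP^V)$ and $\tc{V+1}(\et)$---has a genuine problem: the sign of the $\ka_2$-exponent is \emph{not} an invariant of an element of $R$, because the defining relations mix exponents (e.g.\ $\ka_2^{-1}y_1y_3=y_2$). So that bifurcation is ill-posed as stated, and even after repair you would still owe the dimension count you defer. The paper's ``$\ka_2$-shift until $\et$ vanishes'' trick is the missing idea that eliminates all of that work.
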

\begin{proof}
		From Theorem \ref{vanishing}, it follows that the short exact sequence of $\Z/2$-vector spaces
		\begin{align*}
			0\to\hc{a+p\si+b\ep+q\so}_{Br,K}(pt) \xrightarrow{g^*}\hc{a+p\si+b\ep+q\so}_{Br,K}(\ek)\to \tc{a+1+b\ep}_{Br,K}(\et)\to  0
			\tag{S1}
		\end{align*}
		splits as $\Z/2$-vector spaces for $p,q\geq 0.$ We also have that the Mixed Cone of Type I corresponding to $\ka_2$,  along with the positive cone of $\hs_{Br,K}(pt)$, injects into $\hs_{Br,K}(\ek)$ as a subring, since $g^*$ is a ring map and by Theorem \ref{vanishing}. We note that $g^*$ is not always injective, since, for example, we have that
		\begin{align*}
			g^*(\io_2) = 0,
		\end{align*}
		because $\io_2$ is the image of the element $\fc{\susp c}{x_1x_3}$ under the previous map in the $C_2\times\Sigma_2-$topological isotropy sequence; see Lemma \ref{xid}.
		
		We recall that we denote the direct sum of these cones as $$IP=\oplus_{p,q\geq 0}\hc{a+p\si+b\ep+q\so}_{Br,K}(pt).$$ Notice that this is a subring in $H^\st_{Br,K}(pt)$ which is a direct sum of two subrings $IP_1=\oplus_{p,q\geq 0,b\geq 0}\hc{a+p\si+b\ep+q\so}_{Br,K}(pt)$ and $IP_2=\oplus_{p,q\geq 0,b<0}\hc{a+p\si+b\ep+q\so}_{Br,K}(pt)$. The subring $IP_1$ is the positive cone
		$$IP_1=\frac{\Z/2[x_1,y_1,x_2,y_2,x_3,y_3]}{(x_1y_2y_3+y_1x_2y_3+y_1y_2x_3)}.$$
	Denote $f=x_1y_2y_3+y_1x_2y_3+y_1y_2x_3$. By Theorem \ref{kconj}, we have
		\begin{align*}
			IP_2 = \ka_2\Z/2[x_1,y_1,x_2,y_2,x_3,y_3,\ka_2]\bigoplus \frac{T}{fT}\mathlarger{\mathlarger{\mathlarger{/\sim}}}\subset H^{\st,\st}_{Br,K}(E_{\Sigma_2}C_2),
		\end{align*}
		where the relations are 
		$$\ka_2x_2=x_1y_3+x_3y_1,$$
		$$\ka_2y_2=y_1y_3.$$
and $T=\Z/2\left\{\fc{\ta_2}{x_2^{n_2}y_2^{m_2}}\right\}_{n_2,m_2\geq 0}[x_1,y_1,x_2,y_2,x_3,y_3]$ as in Theorem \ref{kconj}. Notice that in $IP$ we have that $f$ is zero. This is because in the view of the relations \ref{eq2} we have $$f=x_1y_2y_3+y_1x_2y_3+y_1y_2x_3=y_2(x_1y_3+y_1x_3)+y_1x_2y_3=$$
$$=\ka_2x_2y_2+y_1x_2y_3=\ka_2x_2y_2+\ka_2y_2x_2=0.$$
It implies that $$IP=\frac{H^{*+*\ep}_{Br,\Sigma_2}(pt,\Z/2)[x_1,y_1,x_3,y_3,\ka_2]}{(\ka_2x_2=x_1y_3+x_3y_1,\ka_2y_2=y_1y_3)}.$$
		
		 It is obvious that $$IP[\ka_2^{-1}]\subset H^{\st,\st}_{Br,K}(E_{\Sigma_2}C_2),$$ because $\ka_2$ is an invertible element in $H^{\st,\st}_{Br,K}(E_{\Sigma_2}C_2)$ being the image of the invertible motivic element $u\in H^{2\sigma-2,\sigma-1}_{C_2}(\textbf{E}C_2)$ \cite{HOV1}. Now suppose that $\al \in H_{Br,K}^\st(\ek)$.
		Let
		\begin{align*}
			|\ka_2^n \al | = (-n,n,-n,n)+|\al|.
		\end{align*}
		Then by Theorem \ref{et}, we know that 
		\begin{align*}
			\hr^{|\ka_2^n \al |}_{Br,K}(\et) \cong \hr_{Br,\Sigma_2}^{-n-1-n\ep+|\al|\sd}(\bc) = 0
		\end{align*}
		 where $|\al|\sd$ is the $RO(\Si_2)$-degree of $\al.$ For sufficiently large $n$, we have that 
		\begin{align*}
			|\ka_2^n \al | = a+p\si+b\ep+q\so,
		\end{align*}
		where $p,q \geq 0.$ Thus, for sufficiently large $n$, the sequence (S1) reduces to 
		\begin{align*}
						0\to\hc{|\ka_2^n \al |}_{Br,K}(pt) \xrightarrow[\sim]{g^*}\hc{|\ka_2^n \al |}_{Br,K}(\ek)\to  0.
		\end{align*}
		Then we have that if $\alpha\in H_{Br,K}^\st(\ek)$  then $\ka_2^n\al\in IP$ for some $n>>0$ and therefore we conclude that  $$H^{*,*}_{Br,K}(E_{\Sigma_2}C_2)\subset IP[\ka_2^{-1}].$$
		
	In conclusion we have 
				\begin{align*}
				\hs_{Br,K}(\ek){\se} IP[\ka_2^{-1}]
			\end{align*}
	which concludes the theorem.
\end{proof}
		We have now determined all the levels of the Mackey functor $\ul{H}^{*,*}_{Br,K}(E_{\Sigma_2}C_2)$.
		\newline		
		
		The following remark describes the Borel motivic cohomology cycles in the above ring. 
		
		\begin{remark} \label{Breal} 
			In \cite{DV1}, we showed that over the real numbers the realization map
			$$H^{\st,\st}_{C_2}(\EG C _2,\Z/2)\simeq H^{*+*\sigma} _{Br,C_2}(pt,\Z/2)[x _3,y _3,\kappa_2^{\pm 1}] \hra \hs_{Br,K}(\ek)$$
			is injective. The generators $x_i,y_i,$ and $\ka_2^{\pm 1}$ all map naturally into themselves in $\hs_{Br,K}(\ek)$ via the realization. However, the generators $\fc{\ta_1}{x_1^{n_1}y_1^{m_1}}$ are in general identified with a non-trivial linear combination of nilpotents in the codomain. For example, from the isotropy sequence we see that the element $\theta_1$ is identified with the nilpotent element $\ta_2 y_3^2 \ka_2^{-2}.$
		\end{remark}
		
		
	\section{Bredon motivic cohomology ring of real numbers}
	In this section, we apply our previous results to describe  the Bredon motivic cohomology ring of the real numbers in terms of cohomology classes of $RO(C_2\times\Sigma_2)-$graded cohomology of a point.
	
	Let $$R:=\oplus_{b\geq 0,b+q\geq 0}H^{a+p\sigma,b+q\sigma}_{C_2}(\R,\Z/2)\hookrightarrow H^{\st,\st}_{C_2}(\R,\Z/2),$$
which is a cohomology subring on which the realization maps give an isomorphism. 

We know from \cite{DV1} that
$R$ is ring isomorphic to a subring in the $RO(C_2\times \Sigma_2)$-graded cohomology of a point. We have that the subring
$$R\simeq \oplus_{a,p\in \mathbb{Z},b\geq 0,b+q\geq 0} H^{a+p\sigma+b\ep+q\sigma\otimes\ep}_{Br,K}(pt,\Z/2)$$
is given by a direct sum of four distinct pieces depending on the signs of $p$ and $q$. 

One piece that is contained above is simply the positive cone (see Theorem \ref{pcon}), which corresponds to the case where $p,q\geq 0$ (topological; the motivic relation is $p\geq q\geq 0$) i.e.
$$\frac{\Z/2[x_1,y_1,x_2,y_2,x_3,y_3]}{(x_1y_2y_3+y_1x_2y_3+y_1y_2x_3)}.$$
However, because the other pieces of $R$ contain $\ka_1$ and $\ka_3$ (one of these is actually enough) the above relation is trivial and this piece becomes  $\Z/2[x_1,y_1,x_2,y_2,x_3,y_3]$.
For example, if we have the cohomology class $\ka_3$ then the relations \ref{eq2} and \ref{eq9} imply that the above relation is trivial.

The second piece is given by the Mixed Cone of Type I corresponding to $\ka_1$. This is described in Theorem \ref{kconj} as
		\begin{align*}
			\ka_1\Z/2[x_2,y_2,x_3,y_3,\ka_1] \op  \fc{T}{(x_1y_2y_3+y_1x_2y_3+y_1y_2x_3)T},
		\end{align*}
where
\begin{align*}
			T := \frac{\Z/2[x_1,y_1,x_2,y_2,x_3,y_3]}{(x_1^\infty,y_1^\infty)}\{\theta_1\}.
		\end{align*}
The third piece is the part of the Mixed Cone of Type I corresponding to $\ka_3$ subject to the condition $b+q\geq 0$. Applying Theorem \ref{kconj} again, this is given by
\begin{align*}
			\ka_3\Z/2[x_2,y_2,x_1,y_1,\ka_3] \op  \fc{T}{(x_1y_2y_3+y_1x_2y_3+y_1y_2x_3)T}.
		\end{align*}
where
\begin{align*}
			T := \frac{\Z/2[x_1,y_1,x_2,y_2,x_3,y_3]}{(x_3^\infty,y_3^\infty)}\{\theta_3\},
		\end{align*}

with the nonzero monomial elements
\begin{equation}\label{theta3}
\begin{aligned}
\frac{\theta_3}{x^{n_3}_3y^{m_3}_3}x^{n_1}_1y^{m_1}_1x^{n_2}_2y^{m_2}_2,\\
 n_2+m_2\geq n_3+m_3+2.\\
\end{aligned}
\end{equation}
 In particular, notice that $\theta_3\notin R$.

The fourth piece is the part of the Mixed Cone of Type II  corresponding to $\io_2,$ ($p,q<0,b\geq 0$) subject to the condition $b+q \geq 0$. According to the description of Proposition \ref{propcase1v2} we have that

\begin{align*}
		\bigoplus_{q\leq-1,p\leq-1,b\geq1,b+q\geq0}\pi_{*}((\Si^{p\si+b\ep+q\so}H\ul{\Z/2})^{\mathcal{K}})&\cong \Z/2\left\{\ka_3^{m_3+1}\cdot \fc{\ta_1}{x_1^{n_1}y_1^{m_3+m_1}}\cdot x_2^{n_2}y_2^{m_2-m_3}\right\}_{m_3\leq m_2,m_1\geq 1}\\
		&\op\Z/2\left\{\fc{\ka_1}{x_1^{n_1}}\fc{\ta_3}{x_3^{n_3}y_3^{m_3}}x_2^{n_2}y_2^{m_2}\right\}_{n_3+m_3\leq n_2+m_2}.
	\end{align*}
From Proposition \ref{propcase1v2}, we also have the following proposition. This also follows from Corollary \ref{excl} in a different way.
\begin{proposition} \label{po3}
	The products
	\begin{align*}
		\fc{\ta_1}{x_1^{n_1}y_1^{m_1}}\fc{\ta_3}{x_3^{n_3}y_3^{m_3}}x_2^{n_2}y_2^{m_2} \in H_{Br,K}^{a+p\sigma+b\epsilon+q\sigma\otimes\epsilon}(pt)
	\end{align*}
	are zero when
	\begin{align*}
		p,q\leq -1\text{ and } b+q \geq 0 \tx{ i.e. }  n_2+m_2\geq n_3+m_3+2.
	\end{align*}
	By symmetry, we have analogous results for the other Mixed Cones of Type II.
\end{proposition}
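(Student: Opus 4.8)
The plan is to realise the element in the statement as a $\ta_1\ta_3$-chain inside the chain model for the Mixed Cone of Type II corresponding to $\io_2$ from Theorem \ref{m2cone}, and then quote the Poincar\'e series comparison already established. First I would compute the $RO(K)$-degree of $\fc{\ta_1}{x_1^{n_1}y_1^{m_1}}\fc{\ta_3}{x_3^{n_3}y_3^{m_3}}x_2^{n_2}y_2^{m_2}$, using $|\ta_1|=(2,-2,0,0)$, $|\ta_3|=(2,0,0,-2)$ and the degrees of the $x_i,y_i$; this gives $(a,p,b,q)=(4+m_1+m_3-m_2,\ -2-n_1-m_1,\ n_2+m_2,\ -2-n_3-m_3)$. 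In particular $p,q\leq -2$ and $b+q=n_2+m_2-n_3-m_3-2$, so the hypothesis $n_2+m_2\geq n_3+m_3+2$ is exactly $b+q\geq 0$ (whence also $b\geq 2$), placing us in the range $p,q\leq -1$, $b\geq 1$, $b+q\geq 0$ of Proposition \ref{propcase1v2} and Corollary \ref{excl}(2).

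Next I would identify the product with the $\ta_1\ta_3$-chain $\fc{\ta_1\ta_3}{x_1^{n_1}y_1^{m_1}x_3^{n_3}y_3^{m_3}}x_2^{n_2}y_2^{m_2}\in T$, where $T=\frac{\F_2[x_1,y_1,x_2,y_2,x_3,y_3]}{(x_1^\infty,y_1^\infty,x_3^\infty,y_3^\infty)}\{\ta_1\ta_3\}$ as in Theorem \ref{m2cone}: the factor $\fc{\ta_1}{x_1^{n_1}y_1^{m_1}}$ is a class in the $\si$-direction, $x_2^{n_2}y_2^{m_2}$ a class in the $\ep$-direction, and $\fc{\ta_3}{x_3^{n_3}y_3^{m_3}}$ a class in the $\so$-direction, so by the chain-level description of the ring structure via the tensor product of $C_*(S^{i\si})$, $C_*(S^{j\ep})$, $C_*(S^{k\so})$ recalled in Section 3 (following \cite{BH}), their product is precisely the decomposable generator of $T_2=T$ carrying that label. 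Since the homology contributed by $T$ is $\cok f$, where $f$ denotes multiplication by $x_1y_2y_3+y_1x_2y_3+y_1y_2x_3$, the product vanishes in $H^{a,p,b,q}_{Br,K}(pt)$ exactly when this chain lies in $\im f$.

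Finally I would invoke Corollary \ref{excl}(2): in the range $b+q\geq 0$ the Poincar\'e series for $\dim\ker f$ from Theorem \ref{ioseries}(2) already equals the full Poincar\'e series for $\dim H^{a,p,b,q}_{Br,K}(pt)$ given by case 2 of Proposition \ref{caseMC2}, so $\cok f=0$ in these degrees and every $\ta_1\ta_3$-chain is a boundary. Equivalently, Proposition \ref{propcase1v2} exhibits a basis of $H^{a,p,b,q}_{Br,K}(pt)$ in this range consisting solely of $\ka_3$-chains and of the chains $\fc{\ka_1}{x_1^{n_1}}\fc{\ta_3}{x_3^{n_3}y_3^{m_3}}x_2^{n_2}y_2^{m_2}$; rewriting $\ka_3\ta_1=\ka_1\ta_3=\io_2$ shows these are all kernel ($\io_2$-type) classes, leaving no room for a nonzero $\ta_1\ta_3$-class. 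Either way the product is zero, and the analogous statements for $\io_1$ and $\io_3$ follow from the $\mathrm{Aut}(K)\cong\Si_3$-symmetry permuting $\si,\ep,\so$.

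The step I expect to be the main obstacle is the identification in the second paragraph: one must verify that, once the ring multiplication is expressed in the $T_1\oplus T_2$ decomposition of \cite{BH}, the triple product of the two negative-cone fractions with the positive-cone monomial lands exactly on the named generator of $T_2$ and not on a combination also involving $T_1$ — a bookkeeping point complicated by the fact that $(-)^{K}$ does not commute with $\otimes$ and that certain differentials were removed in passing from $C$ to $T_1\oplus T_2$. Everything after that identification is immediate from the Poincar\'e series already computed.
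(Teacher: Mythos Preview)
Your proposal is correct and is in fact the alternative route the paper itself flags just before the proposition (``This also follows from Corollary \ref{excl} in a different way''), but it is not the proof the paper actually writes out. The paper argues directly from the explicit basis of Proposition \ref{propcase1v2}: it assumes the product is a nonzero combination of the $\ka_1$-type and $\ka_3$-type basis elements, then multiplies by $x_1^{n_1+m_1+1}$ to kill the left-hand side and the $\ka_3$-terms while leaving a nonzero $\ka_1$-sum (contradiction if $M>0$), and when $M=0$ multiplies by $x_1^{k_1}y_1^{k_1'}$ to isolate a single nonzero $\ka_3$-term. Your approach bypasses this case analysis by appealing to the global Poincar\'e series count in Theorem \ref{ioseries}(2) / Corollary \ref{excl}(2), which says $\cok f=0$ throughout the range $b+q\geq 0$; this is cleaner but relies on the heavier counting done in the appendix. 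The paper's proof is more self-contained and also illustrates the multiplication techniques used repeatedly elsewhere (Propositions \ref{pz}--\ref{po1}).

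On the identification you flag as the main obstacle: the paper treats this as established by \cite{BH} and uses it freely --- the discussion before Theorem \ref{m1cone} states that the generators of $T_2$ are precisely the products of $C_2$-fixed-point chains, and throughout Section 3 the $\ta_1\ta_3$-chains are written as literal products $\fc{\ta_1}{x_1^{n_1}y_1^{m_1}}\fc{\ta_3}{x_3^{n_3}y_3^{m_3}}x_2^{n_2}y_2^{m_2}$. So within the paper's framework this step is not a gap; it is part of the input from \cite{BH} that the chain-level tensor product computes the ring structure on the decomposable classes.
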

\begin{proof}
	Suppose for contradiction that
	\begin{align*}
		\fc{\ta_1}{x_1^{n_1}y_1^{m_1}}\fc{\ta_3}{x_3^{n_3}y_3^{m_3}}x_2^{n_2}y_2^{m_2}  = \sum_{i=1}^{M} \fc{\ka_1}{x_1^{n_1+m_1+1}}\fc{\ta_3}{x_3^{j_i}y_3^{j_i'}}x_2^{j_i''}y_2^{j_i'''} + \sum_{i=1}^N \ka_3^{n_3+m_3+1}\fc{\ta_1}{x_1^{k_i}y_1^{k_i'+n_3+m_3}}x_2^{k_i''}y_2^{k_i'''},
	\end{align*}
	where $M+N > 0$ and $k_i' \geq 1.$ Since $k_i+k_i' = n_1+m_1+1,$ we must have that $k_i \leq n_1+m_1.$ If $M > 0,$ then 
	\begin{align*}
		&	0 = x_1^{n_1+m_1+1} \cp\fc{\ta_1}{x_1^{n_1}y_1^{m_1}}\fc{\ta_3}{x_3^{n_3}y_3^{m_3}}x_2^{n_2}y_2^{m_2}   = \\
		&x_1^{n_1+m_1+1} \cp \left(\sum_{i=1}^{M} \fc{\ka_1}{x_1^{n_1+m_1+1}}\fc{\ta_3}{x_3^{j_i}y_3^{j_i'}}x_2^{j_i''}y_2^{j_i'''} + \sum_{i=1}^N \ka_3^{n_3+m_3+1}\fc{\ta_1}{x_1^{k_i}y_1^{k_i'+n_3+m_3}}x_2^{k_i''}y_2^{k_i'''}\right)\\
		&= \sum_{i=1}^{M} \ka_1\fc{\ta_3}{x_3^{j_i}y_3^{j_i'}}x_2^{j_i''}y_2^{j_i'''} \neq 0,
	\end{align*}
	a contradiction. Then we must have that $M= 0$ and $N >0.$ Without loss of generality, assume that $k_1 > k_i$ for all $i>1$. Notice that there is one $k_i$ strictly greater than all because if $k_i=k_j$ then the cohomology classes $\ka_3^{n_3+m_3+1}\fc{\ta_1}{x_1^{k_i}y_1^{k_i'+n_3+m_3}}x_2^{k_i''}y_2^{k_i'''}$ are identical for $i$ and $j$. This is because $k_i=k_j$ implies $k_i'=k_j'$ which, by looking to the cohomological degree, means $k_i'''=k_j'''$. Because both cohomological classes have to live in the same degree it implies now that also $k_i''=k_j''$.
	
	But then we have
	\begin{align*}
		&0 =  x_1^{k_1}y_1^{k_1'} \cp\fc{\ta_1}{x_1^{n_1}y_1^{m_1}}\fc{\ta_3}{x_3^{n_3}y_3^{m_3}}x_2^{n_2}y_2^{m_2}  = x_1^{k_1}y_1^{k_1'}\cp \sum_{i=1}^N \ka_3^{n_3+m_3+1}\fc{\ta_1}{x_1^{k_i}y_1^{k_i'+n_3+m_3}}x_2^{k_i''}y_2^{k_i'''}\\
		& =  \ka_3^{n_3+m_3+1}\fc{\ta_1}{y_1^{n_3+m_3}}x_2^{k_i''}y_2^{k_i'''}\neq 0,
	\end{align*}
	a contradiction. 
\end{proof}
We have from \cite{DV1}, Corollary 5.9 that $$\oplus_{b<0}H^{a+p\sigma,b+q\sigma}_{C_2}(\EG C_2,\Z/2)=\ka_2 H^{*+*\sigma}_{Br,C_2}(pt)[x_3,y_3]$$
and therefore $$\oplus_{b<0}H^{a+p\sigma,b+q\sigma}_{C_2}(\R,\Z/2)=\ka_2 H^{*+*\sigma}_{Br,C_2}(pt)[x_3,y_3]\hookrightarrow H^*_{Br,K}(pt,\Z/2),$$
according to \cite{DV1}.

Here $H^{*+*\sigma}_{Br,C_2}(pt)=\Z/2[x_1,y_1]\oplus \Z/2\{\frac{\theta_1}{x^{n_1}_1y^{m_1}_1}\}$.
It implies that $\Z/2[x_i,y_i,\ka_i]$ modulo the relations \ref{eq2} is a subring in  $H^{\st,\st}_{C_2}(\R,\Z/2)$.

Moreover
\begin{theorem}\label{decom} $$(R,\ka_2)\simeq \frac{\Z/2[x_i,y_i,\ka_i]}{(\ka_i\ka_j=y_k^2,\ka_ix_i=x_jy_k+x_ky_j,\ka_iy_i=y_ky_j)}+ nilpotents.$$ The component $nilpotents$ has only nilpotent elements and multiplications between these elements give zero.
\end{theorem}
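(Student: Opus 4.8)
The plan is to assemble $R$ from the four explicitly-identified pieces (positive cone $\simeq \mathbb{Z}/2[x_i,y_i]$ once $\kappa_1,\kappa_3$ trivialize the relation $f$; the two truncated Mixed Cone of Type I pieces for $\kappa_1$ and $\kappa_3$; and the truncated Mixed Cone of Type II piece for $\iota_2$), and then identify the ``nilpotent-free'' part with the stated quotient ring $\frac{\Z/2[x_i,y_i,\ka_i]}{(\ka_i\ka_j=y_k^2,\ka_ix_i=x_jy_k+x_ky_j,\ka_iy_i=y_ky_j)}$ and the leftover with the ``nilpotents'' summand.

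\medskip
\noindent\textbf{Step 1: Assemble the ambient ring.} I would first record that $R\simeq \bigoplus_{a,p\in\Z,\,b\geq 0,\,b+q\geq 0} H^{a+p\si+b\ep+q\so}_{Br}(pt)$, split into the four pieces by the signs of $p,q$, using the descriptions already in this section (Theorem \ref{pcon}, Theorem \ref{kconj} twice, Proposition \ref{propcase1v2}) and the observation that $\kappa_2 H^{*+*\si}_{Br}(pt)[x_3,y_3]$ is the $b<0$ contribution coming from $H^{\st,\st}_{C_2}(\R)$. Since $R$ is a subring of $H^\st_{Br,K}(pt)$ containing $\kappa_1,\kappa_2,\kappa_3$ and all $x_i,y_i$, the relations $\ka_i\ka_j=y_k^2$, $\ka_ix_i=x_jy_k+x_ky_j$, $\ka_iy_i=y_ky_j$ (listed among the point-computations above) already hold in $(R,\kappa_2)$. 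Hence there is a natural surjective ring map $\Z/2[x_i,y_i,\ka_i]/(\text{those relations})\twoheadrightarrow$ (the subring of $R$ generated by $x_i,y_i,\ka_i$); I would check this map is injective by a Poincaré-series count, using Lemma \ref{coeff} and the series in the Positive Cone and Mixed Cone of Type I cases, exactly as in the proof of Theorem \ref{cEC} where the analogous ring $IP$ is identified. This pins down the first summand.

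\medskip
\noindent\textbf{Step 2: Identify the nilpotents and show they split off.} The complement consists precisely of the $\theta$-fraction classes that survive the truncations: the $\frac{\theta_1}{x_1^{n_1}y_1^{m_1}}$-chains modulo $fT$ from the $\kappa_1$-cone, the analogous $\frac{\theta_3}{x_3^{n_3}y_3^{m_3}}$-chains modulo $fT$ from the $\kappa_3$-cone (with $n_2+m_2\geq n_3+m_3+2$), together with the two families from Proposition \ref{propcase1v2} (the $\ka_3^{m_3+1}\frac{\ta_1}{x_1^{n_1}y_1^{m_3+m_1}}x_2^{n_2}y_2^{m_2-m_3}$ family and the $\frac{\ka_1}{x_1^{n_1}}\frac{\ta_3}{x_3^{n_3}y_3^{m_3}}x_2^{n_2}y_2^{m_2}$ family with $n_3+m_3\leq n_2+m_2$). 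Call this $\Z/2$-subspace $\mathit{nilpotents}$. That every element here is nilpotent follows because each is divisible (after multiplying by a suitable $x_i$ or $y_i$ unit on the positive cone) by a $\theta_i$, which is nilpotent, together with the degree-reason vanishings $\Theta x_i=\Theta y_i=0$ and $\iota_ix_j=\iota_iy_j=0$; concretely every such class squares into the negative cone where (by the Corollary following Theorem \ref{negcone}) multiplication is zero.

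\medskip
\noindent\textbf{Step 3: Zero products among the nilpotents.} This is the main obstacle. I must show any product of two basis elements of $\mathit{nilpotents}$ vanishes in $R$. The key case is a $\frac{\ta_1}{x_1^{\cdot}y_1^{\cdot}}$-type class times a $\frac{\ta_3}{x_3^{\cdot}y_3^{\cdot}}$-type class: this is exactly Proposition \ref{po3}, which shows $\frac{\ta_1}{x_1^{n_1}y_1^{m_1}}\frac{\ta_3}{x_3^{n_3}y_3^{m_3}}x_2^{n_2}y_2^{m_2}=0$ whenever $n_2+m_2\geq n_3+m_3+2$ (the range forced by $b+q\geq 0$), so those products fall outside $R$. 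A $\theta_1$-class times a $\theta_1$-class, or a $\theta_3$-class times a $\theta_3$-class, lands (after clearing denominators on the positive cone) on a product of the form $\theta_i^2\cdot(\text{polynomial})=0$ since $\theta_i^2$ already has a $C_2$-theoretic reason to vanish — or more simply it lands in the negative cone, where products are zero. Products of a $\kappa_i$-divided $\theta_j$-class with another such class reduce, after multiplying by the relevant monomial making the $\kappa_i$-denominator an honest class, to $\Theta$ times a polynomial in the $x,y$'s, which is zero by $\Theta x_i=\Theta y_i=0$. The bookkeeping is tedious but each case is an instance of: multiply by an appropriate positive-cone unit to land in the negative cone or hit a $\Theta$-annihilation, then use that the result vanishes. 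Assembling Steps 1–3 gives the claimed ring isomorphism $(R,\kappa_2)\simeq \frac{\Z/2[x_i,y_i,\ka_i]}{(\ka_i\ka_j=y_k^2,\ka_ix_i=x_jy_k+x_ky_j,\ka_iy_i=y_ky_j)}+\mathit{nilpotents}$ with the stated properties of the nilpotent summand.
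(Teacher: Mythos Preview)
Your Steps 1 and 2 are essentially what the paper does: the ring is assembled from the four pieces (positive cone plus the two truncated Mixed Cones of Type I plus the truncated Mixed Cone of Type II), the relations among $x_i,y_i,\ka_i$ are imported from the point computation, and the leftover $\theta$-fraction classes form the nilpotent summand. This matches the paper's ``follows from the discussion above.''

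There is a genuine gap in your Step 3. You correctly invoke Proposition \ref{po3} for the $\theta_1\times\theta_3$ case, but your treatment of products involving the Mixed Cone II generators $\fc{\ka_1}{x_1^{n_1}}\fc{\ta_3}{x_3^{n_3}y_3^{m_3}}x_2^{n_2}y_2^{m_2}$ is not correct. Your claim that such products ``reduce to $\Theta$ times a polynomial'' is wrong: clearing the $x_1$-denominators gives $\ka_1^2\fc{\ta_3}{\cdots}\fc{\ta_3}{\cdots}\cdot(\text{monomial})$, not $\Theta$ times anything. More seriously, even once you know that multiplying by $x_1^N$ (or $x_1^Nx_3^{n_3'+1}$) kills the product, you cannot conclude the product itself vanishes, because multiplication by $x_1$ is \emph{not} injective in this range --- the basis of Proposition \ref{propcase1v2} was built precisely by making choices of preimages modulo nontrivial kernels. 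The paper handles this via Propositions \ref{pz}, \ref{po}, and \ref{po1}, whose proofs track exactly those kernel contributions: one shows that the cross-terms coming from the kernel generators $\ka_3^{-q}\fc{\ta_1}{x_1^{n_1'}y_1^{m_1'}}x_2^{n_2''}y_2^{m_2''}$ also annihilate the second factor (this is Proposition \ref{pz}), and only then can the ``clear denominators and observe zero'' argument be completed. Proposition \ref{po1} in particular requires the appendix results (Corollary \ref{ist}, Proposition \ref{y3case}) in the $m_2=0$ case, so the bookkeeping is not merely tedious but has real content that your sketch does not supply.
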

The first part of the Theorem \ref{decom} follows from the discussion above. The second part of  Theorem \ref{decom} follows from the Propositions \ref{pz} and \ref{po}, Proposition \ref{po1} and Proposition \ref{po3}.
\begin{proposition} \label{pz}
The products
	\begin{align*}
		\fc{\ta_1}{x_1^{n_1}y_1^{m_1}}\left(\frac{\ka_1}{x_1^N}\fc{\ta_3}{x_3^{n_3}y_3^{m_3}}x_2^{n_2}y_2^{m_2}\right) \in H_{Br,K}^{a+p\sigma+b\epsilon+q\sigma\otimes\epsilon}(pt)
	\end{align*}
	are zero when $n_3+m_3\leq n_2+m_2$.
\end{proposition}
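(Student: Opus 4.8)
The plan is to multiply the product by a monomial in $x_1,x_3,y_3$, collapse it via the relation $\kappa_1\theta_3=\iota_2$ to an expression in $\iota_2$, and then annihilate it using the second expression $\iota_2=\kappa_3\theta_1$ together with the fact (recalled in the text) that in the $RO(C_2)$-graded cohomology $H^{*+*\sigma}_{Br}(pt)$ all products of negative-cone classes vanish. Write $Q:=\frac{\kappa_1}{x_1^N}\frac{\theta_3}{x_3^{n_3}y_3^{m_3}}x_2^{n_2}y_2^{m_2}$ for the given generator of the Mixed Cone of Type~II corresponding to $\iota_2$ (so $n_3+m_3\leq n_2+m_2$, which places it in the sub-range $b+q\geq0$ of Proposition~\ref{propcase1v2}), and set $P:=\frac{\theta_1}{x_1^{n_1}y_1^{m_1}}\cdot Q$. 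First I would record the degree $|P|=(3+m_1+m_3-m_2,\;-3-n_1-m_1-N,\;1+n_2+m_2,\;-1-n_3-m_3)$ and observe that $p<0$, $q<0$, $b>0$ and $b+q=(n_2+m_2)-(n_3+m_3)\geq0$; thus $P$ lies in the sub-range of the Mixed Cone of Type~II for $\iota_2$ described by Proposition~\ref{propcase1v2}, where (by Corollary~\ref{excl}(2)) the cohomology is spanned by the two families of $\iota_2$-chains listed there.

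The key step is the identity $x_1^N x_3^{n_3}y_3^{m_3}\cdot P=0$. By the defining property of $\frac{\kappa_1}{x_1^N}$ (Lemma~\ref{x1case1}), $x_1^N\cdot Q=\kappa_1\frac{\theta_3}{x_3^{n_3}y_3^{m_3}}x_2^{n_2}y_2^{m_2}$, so $x_1^N x_3^{n_3}y_3^{m_3}\cdot Q=\kappa_1\theta_3\,x_2^{n_2}y_2^{m_2}=\iota_2\,x_2^{n_2}y_2^{m_2}$. Hence $x_1^N x_3^{n_3}y_3^{m_3}\cdot P=\frac{\theta_1}{x_1^{n_1}y_1^{m_1}}\cdot\iota_2\,x_2^{n_2}y_2^{m_2}$. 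Writing $\iota_2=\kappa_3\theta_1$ gives $\frac{\theta_1}{x_1^{n_1}y_1^{m_1}}\cdot\iota_2=\kappa_3\cdot\Bigl(\frac{\theta_1}{x_1^{n_1}y_1^{m_1}}\cdot\theta_1\Bigr)=0$, since $\frac{\theta_1}{x_1^{n_1}y_1^{m_1}}$ and $\theta_1$ both lie in the negative cone of $H^{*+*\sigma}_{Br}(pt)$, where products of negative-cone classes are zero. Therefore $x_1^N x_3^{n_3}y_3^{m_3}\cdot P=0$. I would also record the auxiliary vanishing that $P$ is annihilated by the same monomial in $y_1,y_3$ by which $Q$ was arranged to be annihilated in the construction of Proposition~\ref{propcase1v2}.

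It remains to deduce $P=0$. Assuming $P\neq 0$, expand it in the Proposition~\ref{propcase1v2}-basis of $\iota_2$-chains in degree $|P|$ as a nonzero sum of $\theta_1$-type generators $\kappa_3^{\,l_3+1}\frac{\theta_1}{x_1^{k_1}y_1^{\,l_3+l_1}}x_2^{k_2}y_2^{\,k_2'-l_3}$ (with $l_1\geq1$) and $\theta_3$-type generators $\frac{\kappa_1}{x_1^{k_1}}\frac{\theta_3}{x_3^{k_3}y_3^{l_3}}x_2^{k_2}y_2^{k_2'}$. Multiplying by a power $x_1^L$ with $L$ larger than every $x_1$-exponent $k_1$ that occurs annihilates each $\theta_1$-type generator (because $x_1\theta_1=0$), while each $\theta_3$-type generator is pushed, via $\kappa_1 x_1=x_2y_3+y_2x_3$, into the region $p\geq0$ where it becomes an explicit positive-cone element times a $\theta_3$-fraction; comparing this image with the vanishing of the previous paragraph forces all $\theta_3$-type coefficients to vanish, and then a symmetric argument (applying $x_3^{n_3}y_3^{m_3}$, or the $y_1y_3$-monomial from the previous paragraph, to reduce the $\theta_1$-type generators into the region $q\geq0$) forces the $\theta_1$-type coefficients to vanish too — a contradiction. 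I expect this last bookkeeping to be the main obstacle: none of $\cdot x_1$, $\cdot x_3$, $\cdot y_3$ is individually injective on the sub-range $b+q\geq0$ of the Mixed Cone of Type~II for $\iota_2$ (their kernels are spanned precisely by the $\theta_1$-type, respectively $\theta_3$-type, generators with minimal denominators, as the proof of Proposition~\ref{propcase1v2} shows), so one has to track carefully which basis elements survive each multiplication; the cleanest finish is to reduce, after multiplying by $x_1^L$, into the region $p\geq0$ and there invoke the injectivity of multiplication by $y_2$, which simply increments the $y_2$-exponent of each relevant basis element.
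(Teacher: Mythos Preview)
Your proof has a genuine gap in the final deduction, which you yourself flag as ``the main obstacle''. You correctly show $x_1^N x_3^{n_3}y_3^{m_3}\cdot P=0$ (indeed, even more simply, $x_1^{n_1+1}\cdot P=0$, since $x_1^{n_1+1}\cdot\frac{\theta_1}{x_1^{n_1}y_1^{m_1}}=0$), but multiplication by any such monomial has a nontrivial kernel in this range, and your proposed bookkeeping to rule out $P$ lying in that kernel is not carried out and is not routine: the images of the $\theta_3$-type basis elements under large powers of $x_1$ are \emph{sums} (via $\kappa_1 x_1 = x_2y_3+y_2x_3$), so their linear independence in the $p\geq 0$ region has to be checked, and the ``symmetric argument'' for the $\theta_1$-type basis elements is not even sketched. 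The appeal to ``injectivity of multiplication by $y_2$'' at the very end does not resolve this, since you have not yet established that the $\theta_3$-type contribution actually vanishes.

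The paper's proof bypasses all of this with a two-line trick that you overlooked: instead of multiplying $P$ by $x_1^{n_1+1}$ and then struggling to invert, one uses the infinite $x_1$-divisibility of the \emph{second} factor (Lemma~\ref{x1case1} and the inductive construction in Proposition~\ref{propcase1v2}) to write
\[
\frac{\kappa_1}{x_1^N}\frac{\theta_3}{x_3^{n_3}y_3^{m_3}}x_2^{n_2}y_2^{m_2}
\;=\;
x_1^{n_1+1}\cdot\frac{\kappa_1}{x_1^{N+n_1+1}}\frac{\theta_3}{x_3^{n_3}y_3^{m_3}}x_2^{n_2}y_2^{m_2}.
\]
Then by associativity and commutativity,
\[
P \;=\; \left(x_1^{n_1+1}\cdot\frac{\theta_1}{x_1^{n_1}y_1^{m_1}}\right)\cdot\frac{\kappa_1}{x_1^{N+n_1+1}}\frac{\theta_3}{x_3^{n_3}y_3^{m_3}}x_2^{n_2}y_2^{m_2} \;=\; 0,
\]
with no kernel analysis needed. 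The point is that $x_1^{n_1+1}$ should be absorbed into $Q$ \emph{before} multiplying, not applied to $P$ \emph{after}.
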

\begin{proof} Consider \begin{align*}
		\alpha=\fc{\ta_1}{x_1^{n_1}y_1^{m_1}}\left(\frac{\ka_1}{x_1^{N+n_1+1}}\fc{\ta_3}{x_3^{n_3}y_3^{m_3}}x_2^{n_2}y_2^{m_2}\right) \in H_{Br,K}^{a+p\sigma+b\epsilon+q\sigma\otimes\epsilon}(pt)
	\end{align*}
Then $$x_1^{n_1+1}\alpha=0=\fc{\ta_1}{x_1^{n_1}y_1^{m_1}}\left(\frac{\ka_1}{x_1^{N}}\fc{\ta_3}{x_3^{n_3}y_3^{m_3}}x_2^{n_2}y_2^{m_2}\right)$$ by using the associativity and the commutativity of the multiplication.
	\end{proof}
\begin{proposition}\label{po} The products $$\left(\frac{\ka_1}{x_1^M}\fc{\ta_3}{x_3^{n_3}y_3^{m_3}}x_2^{n_2}y_2^{m_2}\right)\left(\frac{\ka_1}{x_1^N}\fc{\ta_3}{x_3^{n_3'}y_3^{m_3'}}x_2^{n_2'}y_2^{m_2'}\right)$$
are zero when $n_3+m_3\leq n_2+m_2$ and  $n_3'+m_3'\leq n_2'+m_2'$. In particular, the fourth part of $R$ contains only nilpotents with zero multiplication. 
\end{proposition}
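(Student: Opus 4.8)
The plan is to clear the formal denominators by multiplying the product by a power of $x_1$, to recognize the resulting class as containing the square of a negative-cone class of an $RO(C_2)$-subring of $H^{\st}_{Br,K}(pt)$ (which forces it to vanish), and then to descend this vanishing back to the product itself using the explicit generators of the Mixed Cone of Type II.

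Concretely, write $\alpha:=\fc{\ka_1}{x_1^M}\fc{\ta_3}{x_3^{n_3}y_3^{m_3}}x_2^{n_2}y_2^{m_2}$ and $\beta:=\fc{\ka_1}{x_1^N}\fc{\ta_3}{x_3^{n_3'}y_3^{m_3'}}x_2^{n_2'}y_2^{m_2'}$. By Lemma \ref{x1case1} and the construction preceding Proposition \ref{propcase1v2}, $\alpha$ is by definition a chosen preimage, so that $x_1^M\alpha=\ka_1\fc{\ta_3}{x_3^{n_3}y_3^{m_3}}x_2^{n_2}y_2^{m_2}$ is now an honest product of classes, and likewise $x_1^N\beta=\ka_1\fc{\ta_3}{x_3^{n_3'}y_3^{m_3'}}x_2^{n_2'}y_2^{m_2'}$. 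Using commutativity of $H^{\st}_{Br,K}(pt)$,
\begin{align*}
	x_1^{M+N}(\alpha\beta)=(x_1^M\alpha)(x_1^N\beta)=\ka_1^2\cdot\left(\fc{\ta_3}{x_3^{n_3}y_3^{m_3}}\cdot\fc{\ta_3}{x_3^{n_3'}y_3^{m_3'}}\right)\cdot x_2^{n_2+n_2'}y_2^{m_2+m_2'}.
\end{align*}
Both $\ta_3$-fractions lie in the graded subring $\bigoplus_{a,q}H^{a,0,0,q}_{Br,K}(pt)$, a copy of the $RO(C_2)$-graded cohomology of a point generated by $x_3,y_3,\ta_3$, in which they are negative-cone (hence nilpotent) classes; since products of negative-cone classes vanish there, the bracketed factor is $0$, so $x_1^{M+N}(\alpha\beta)=0$.

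To conclude $\alpha\beta=0$ I would analyze the graded piece $H^V_{Br,K}(pt)$ containing $\alpha\beta$; a degree count places $V$ inside the Mixed Cone of Type II corresponding to $\io_2$ with $b+q\geq 0$, so its $\F_2$-basis is given by Proposition \ref{propcase1v2}. On the span of the $\ka_1$-type basis elements there (which carry $M+N+1$ negative powers of $x_1$) multiplication by $x_1^{M+N}$ is injective, so $\alpha\beta$ could only be nonzero by involving a $\ka_3$-type generator $\ka_3^{m_3+1}\fc{\ta_1}{x_1^{n_1}y_1^{m_3+m_1}}x_2^{n_2}y_2^{m_2-m_3}$ lying in the kernel of $x_1^{M+N}$; this I would rule out by invoking that $\alpha$ and $\beta$ are the specific preimages singled out in Proposition \ref{propcase1v2} — the ones annihilated by the appropriate monomials in $y_1,y_3$ — and multiplying $\alpha\beta$ by such a monomial, as in the proof of that proposition, to force the hypothetical $\ka_3$-type part to vanish. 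The final sentence is then immediate: each generator of the fourth part of $R$ is square-zero, the remaining products among generators follow by the same argument after interchanging the first and third copies of $C_2$ (and the roles of $x_1,x_3$), and hence every element of the fourth part — being an $\F_2$-linear combination of generators — squares to zero and multiplies trivially against the others.

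The main obstacle is precisely this last descent: because multiplication by $x_1$ is far from injective on $H^{\st}_{Br,K}(pt)$ (the $\ka_3$-type generators with enough negative powers of $x_1$ lie in its kernel), the clean identity $x_1^{M+N}(\alpha\beta)=0$ does not by itself yield $\alpha\beta=0$, and one genuinely has to exploit the particular choice of preimages defining $\alpha$ and $\beta$. A secondary technical point is the identification of $\fc{\ta_3}{x_3^{n_3}y_3^{m_3}}$ with an honest negative-cone class of the $RO(C_2)$-subring generated by $x_3,y_3,\ta_3$, which is what makes the classical vanishing of products of nilpotents applicable.
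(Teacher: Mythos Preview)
Your opening move is correct and coincides with the paper's base case: multiplying by $x_1^{M+N}$ turns the product into $\ka_1^{2}\cdot(\text{product of two }\theta_3\text{-fractions})\cdot(\ldots)$, which vanishes because any two negative-cone classes in the $RO(C_2)$-subring generated by $x_3,y_3,\theta_3$ multiply to zero.

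The descent, however, does not go through as you sketch it. You propose to rule out a residual $\ka_3$-type contribution by multiplying $\alpha\beta$ by the single annihilating monomial $y_1^{M}y_3^{n_3+m_3}$ that pins down $\alpha$ in Proposition~\ref{propcase1v2}. But that monomial can annihilate $\ka_3$-type basis elements as well. Concretely, take $M=N=4$, $n_3=m_3=n_3'=m_3'=0$, $n_2=1$, $m_2=n_2'=m_2'=0$; then $\alpha\beta$ lives in degree $(2,-10,3,-2)$, and the $\ka_3$-type basis element
\[
z=\ka_3^{2}\,\fc{\ta_1}{x_1^{7}y_1^{3}}\,y_2
\]
lies in $\ker(x_1^{8})$ \emph{and} is killed by $y_1^{4}$ (since $y_1\cdot\theta_1/x_1^{7}=0$). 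Thus the constraints $x_1^{M+N}(\alpha\beta)=0$ and $y_1^{M}y_3^{n_3+m_3}(\alpha\beta)=0=y_1^{N}y_3^{n_3'+m_3'}(\alpha\beta)$ are jointly satisfied by a nonzero class and cannot by themselves force $\alpha\beta=0$. One could try to feed in the whole inductive family of annihilators $x_1^{k}y_1^{M-k}y_3^{n_3+m_3}$ for $0\le k<M$ (and indeed $x_1y_1^{3}z\neq 0$ in this example), but that is substantially more than what you outlined and would require its own argument.

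The paper sidesteps the descent entirely. Instead of clearing all powers of $x_1$ at once, it multiplies a suitably re-indexed product by the \emph{mixed} monomial $x_1^{N}x_3^{\,n_3'+1}$. Distributed to the second factor this gives zero (because $x_3\cdot\theta_3/y_3^{m_3'}=0$); distributed to the first factor it produces, modulo a $\ka_3$-type correction lying in $\ker(x_1^{M+N})$, a product of exactly the same shape with strictly smaller $x_3$-exponent. The correction terms are then killed not by any $y_1,y_3$-annihilation trick, but by Proposition~\ref{pz}, which says that a $\theta_1$-fraction times any $\fc{\ka_1}{x_1^{N}}\fc{\theta_3}{\cdots}x_2^{\cdot}y_2^{\cdot}$ is zero. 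Since the exponents were arbitrary, this already realises the general product as zero. The essential ingredient missing from your plan is this use of $x_3$ together with Proposition~\ref{pz}: it is what absorbs the $\ka_3$-type ambiguity without ever having to descend through the non-injective map $x_1^{M+N}$.
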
	

\begin{proof} This is obvious for $N=M=0$. Let $$\alpha=\left(\frac{\ka_1}{x_1^{M+N}}\fc{\ta_3}{x_3^{n_3}y_3^{m_3}}x_2^{n_2}y_2^{m_2}\right)\left(\frac{\ka_1}{x_1^{N}}\fc{\ta_3}{x_3^{n_3'}y_3^{m_3'}}x_2^{n_2'}y_2^{m_2'}\right).$$ Let $n_3>n_3'$.
Then $$x_1^{N}x_3^{n_3'+1}\alpha=0=\left(\frac{\ka_1}{x_1^{M}}\fc{\ta_3}{x_3^{n_3-n_3'-1}y_3^{m_3}}x_2^{n_2}y_2^{m_2}+x_1^N\sum a_i\ka_3^{-q}\fc{\ta_1}{x_1^{n_1'}y_1^{m_1'}}x_2^{n_2''}y_2^{m_2''}\right)\left(\frac{\ka_1}{x_1^{N}}\fc{\ta_3}{x_3^{n_3'}y_3^{m_3'}}x_2^{n_2'}y_2^{m_2'}\right)=$$
$$=\left(\frac{\ka_1}{x_1^{M}}\fc{\ta_3}{x_3^{n_3-n_3'-1}y_3^{m_3}}x_2^{n_2}y_2^{m_2}\right)\left(\frac{\ka_1}{x_1^{N}}\fc{\ta_3}{x_3^{n_3'}y_3^{m_3'}}x_2^{n_2'}y_2^{m_2'}\right)$$ by using the associativity and the commutativity of the multiplication and Proposition \ref{pz}.  The first bracket in the above product follows from the fact that 
$$x_1^{N+M}\left( x_3^{n_3'+1}\frac{\ka_1}{x_1^{N+M}} \frac{\theta_3}{x_3^{n_3}y_3^{m_3}} x_2^{n_2}y_2^{m_2}+\frac{\ka_1}{x_1^{M+N}}\frac{\theta_3}{x_3^{n_3-n_3'-1}y_3^{m_3}} x_2^{n_2}y_2^{m_2}\right)=2\ka_1 \frac{\theta_3}{x_3^{n_3-n_3'-1}y_3^{m_3}} x_2^{n_2}y_2^{m_2}=0$$
and the kernel of the multiplication with $x_1^{N+M}$  (in the Mixed Cone of Type II) is generated by the elements of the form $ a_i\ka_3^{-q}\fc{\ta_1}{x_1^{n_1'}y_1^{m_1'}}x_2^{n_2''}y_2^{m_2''}$ with $a_i\in\{0,1\}$ and
\begin{align*}
	\deg\left(\frac{\ka_1}{x_1^{M+N}}\fc{\ta_3}{x_3^{n_3-n_3'-1}y_3^{m_3}}x_2^{n_2}y_2^{m_2}\right)=(a,p,b,q) = \deg\left(\ka_3^{-q}\fc{\ta_1}{x_1^{n_1'}y_1^{m_1'}}x_2^{n_2''}y_2^{m_2''}\right)
\end{align*}
because of Proposition \ref{propcase1v2} (the other type of generators don't vanish at the multiplication by $x_1^{N+M}$). Because the powers in $\alpha$ are arbitrary we obtain the result.
\end{proof}
\begin{proposition} \label{po1} The products 
	$$\frac{\theta_3}{x^{n_3}_3y^{m_3}_3}x^{n_1}_1y^{m_1}_1x^{n_2}_2y^{m_2}_2\frac{\ka_1}{x^{p_1}_1}\frac{\theta_3}{x_3^{p_3}y_3^{q_3}}x_2^{p_2}y_2^{q_2}=0$$
	where $n_2+m_2\geq n_3+m_3+2$ and $p_2+q_2\geq p_3+q_3$.
	
\end{proposition}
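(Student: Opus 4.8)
The plan is to clear the $x_1$-denominator of the $\ka_1$-factor, recognize what remains as $\ka_1$ times a product of two negative cone classes in a copy of the $RO(C_2)$-graded cohomology of a point, and then divide back out.

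Write $\alpha$ for the product in the statement. A degree count puts $\alpha\in H^{a,p,b,q}_{Br,K}(pt)$ with
\begin{align*}
	q=-3-n_3-m_3-p_3-q_3\leq-3,\qquad b=1+n_2+m_2+p_2+q_2\geq1,
\end{align*}
and, by the two hypotheses, $b+q=(n_2+m_2-n_3-m_3-2)+(p_2+q_2-p_3-q_3)\geq0$. So the degree of $\alpha$ lies in the ``$b+q\geq0$'' subrange of the Mixed Cone of Type I for $\ka_3$ (if $p\geq0$) or of the Mixed Cone of Type II for $\io_2$ (if $p\leq-1$), as described in Theorem \ref{kconj} and Proposition \ref{propcase1v2}.

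First I would multiply $\alpha$ by $x_1^{p_1}$. Since $\fc{\ka_1}{x_1^{p_1}}\fc{\ta_3}{x_3^{p_3}y_3^{q_3}}x_2^{p_2}y_2^{q_2}$ was defined, after Lemma \ref{x1case1}, as a chosen preimage of $\ka_1\fc{\ta_3}{x_3^{p_3}y_3^{q_3}}x_2^{p_2}y_2^{q_2}$ under multiplication by $x_1^{p_1}$, associativity and commutativity give
\begin{align*}
	x_1^{p_1}\alpha=\ka_1\left(\fc{\ta_3}{x_3^{n_3}y_3^{m_3}}\cdot\fc{\ta_3}{x_3^{p_3}y_3^{q_3}}\right)x_1^{n_1}y_1^{m_1}x_2^{n_2+p_2}y_2^{m_2+q_2}.
\end{align*}
Both $\ta_3$-fractions live in the subring $H^{*+*\so}_{Br,K}(pt)$, which is the copy of $\MMt=H^{*+*\si}_{Br}(pt)$ sitting inside $H^\st_{Br,K}(pt)$ along the order-$2$ subgroup fixing $\so$ (cases b), c) of Proposition \ref{PC}), with $x_3,y_3,\ta_3$ corresponding to $x,y,\theta$; in $\MMt$ the product of any two negative cone classes vanishes (Stong; see \cite{dug:kr}), so the parenthesis is zero and $x_1^{p_1}\alpha=0$.

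It remains to deduce $\alpha=0$. When $p\geq0$ this is immediate, because multiplication by $x_1$ is injective on the Mixed Cone of Type I for $\ka_3$: by the $\ka_3$-analogue of Theorem \ref{kconj} this cone is $\ka_3\Z/2[x_1,y_1,x_2,y_2,\ka_3]\op T/(fT)$ with $T=\fc{\Z/2[x_1,y_1,x_2,y_2,x_3,y_3]}{(x_3^\infty,y_3^\infty)}\{\ta_3\}$ free over $\Z/2[x_1,y_1,x_2,y_2]$ on $\{\ta_3/(x_3^iy_3^j)\}_{i,j\geq0}$ and $f=x_1y_2y_3+y_1x_2y_3+y_1y_2x_3$; $x_1$ is a non-zero-divisor on the polynomial summand, and on $T/(fT)$ one reduces a relation $x_1v=fw$ modulo $x_1$ and uses that the $x_1$-free part $y_1(x_2y_3+y_2x_3)$ of $f$ is a non-zero-divisor on $T$, forcing $x_1\mid w$ and hence $v\in fT$. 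The hard case is $p\leq-1$, where $\cdot x_1$ is \emph{not} injective: by the proof of Proposition \ref{propcase1v2} its kernel on each relevant graded piece is one-dimensional, spanned by the unique basis element that is a $\ka_3$-chain with no $x_1$ in its denominator, and this kernel element is detected by multiplication with an appropriate monomial in $y_1$ and $y_3$ while the $x_1$-divisible basis elements are annihilated by it. I would finish as in the proof of Proposition \ref{po}: write $\alpha=x_1^{p_1}\ga$ for a product $\ga$ of the same shape (lowering the $x_1$-exponent of the first factor, or raising it in the $\ka_1$-factor), note that for each intermediate power of $x_1$ the computation above again presents the relevant product as $\ka_1$ times a vanishing product of $\ta_3$-fractions, and keep track of the chosen-preimage corrections --- themselves $\ka_3$-chains --- that appear when powers of $x_1$ are shuffled between the two factors; the conditions $n_2+m_2\geq n_3+m_3+2$ and $p_2+q_2\geq p_3+q_3$ are exactly what forces the products of these corrections with the second factor to vanish as well, via $\ka_1\ta_3=\io_2$, Proposition \ref{pz} and Proposition \ref{po3}. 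This bookkeeping, not any conceptual difficulty, is the main obstacle.
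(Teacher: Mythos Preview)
Your first step is clean and correct: multiplying by $x_1^{p_1}$ yields $\ka_1\cdot\bigl(\frac{\theta_3}{x_3^{n_3}y_3^{m_3}}\cdot\frac{\theta_3}{x_3^{p_3}y_3^{q_3}}\bigr)\cdot(\text{polynomial})=0$, since the two $\theta_3$-fractions lie in the copy of $\MMt$ along $\so$ and products of negative-cone classes vanish there. The paper does not take this route.

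The gap is entirely in the second step. Your argument that $\cdot x_1$ is injective on $T/(fT)$ contains an explicit error: the $x_1$-free part $y_1(x_2y_3+y_2x_3)$ of $f$ is \emph{not} a non-zero-divisor on $T$, because $(x_2y_3+y_2x_3)\theta_3=0$ (both $x_3\theta_3$ and $y_3\theta_3$ vanish in $T$). So the reduction ``$x_1v=fw\Rightarrow x_1\mid w$'' fails as written. More seriously, the $p\leq-1$ case --- which is the generic one, since $p=n_1+m_1-1-p_1$ --- is only a sketch: the phrase ``write $\alpha=x_1^{p_1}\ga$'' does not parse (you have shown $x_1^{p_1}\alpha=0$, not that $\alpha$ is $x_1^{p_1}$-divisible), and tracking the chosen-preimage correction terms through iterated $x_1$-division is exactly the delicate bookkeeping that Propositions \ref{propcase1v2}, \ref{y1case}, \ref{y3case} exist to control; it cannot be absorbed into a citation of Propositions \ref{pz}--\ref{po3}.

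The paper sidesteps the injectivity question by rewriting the \emph{first} factor rather than clearing the second. When $m_2\geq1$, the relation $\ka_1y_1=y_2y_3$ gives
\[
\frac{\theta_3}{x_3^{n_3}y_3^{m_3}}x_2^{n_2}y_2^{m_2}=x_1y_1\cdot\frac{\ka_1}{x_1}\frac{\theta_3}{x_3^{n_3}y_3^{m_3+1}}x_2^{n_2}y_2^{m_2-1},
\]
so the full product is $x_1y_1$ times a product of two $\frac{\ka_1}{x_1^{\bullet}}\frac{\ta_3}{\cdots}$-classes, which vanishes by Proposition \ref{po}. When $m_2=0$, the paper multiplies by a high power of $y_3$ and uses Corollary \ref{ist} and Proposition \ref{y3case} to rewrite the second factor as a sum of $\ta_1$-classes, reducing to Proposition \ref{po3}. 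In both cases the point is to land in a previously proven vanishing result rather than to establish a new injectivity statement for $\cdot x_1$.
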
 
\begin{proof} We will prove first the result for $m_2 \geq 1.$ It is enough to prove that 
	$$\frac{\theta_3}{x^{n_3}_3y^{m_3}_3}x^{n_2}_2y^{m_2}_2\frac{\ka_1}{x^{p_1}_1}\frac{\theta_3}{x_3^{p_3}y_3^{q_3}}x_2^{p_2}y_2^{q_2}=0$$ with $n_2+m_2\geq n_3+m_3+2$ and $p_2+q_2\geq p_3+q_3$. Because $\ka_1y_1=y_2y_3$ we have that 
	$$\frac{\theta_3}{x^{n_3}_3y^{m_3}_3}x^{n_2}_2y^{m_2}_2=x_1y_1\frac{\ka_1}{x_1}\frac{\theta_3}{x_3^{n_3}y_3^{m_3+1}}x_2^{n_2}y_2^{m_2-1}$$ with $n_2+m_2-1\geq n_3+m_3+1$. It implies the above product is given by 
	$$x_1y_1\left(\frac{\ka_1}{x_1}\frac{\theta_3}{x_3^{n_3}y_3^{m_3+1}}x_2^{n_2}y_2^{m_2-1}\right)\left(\frac{\ka_1}{x^{p_1}_1}\frac{\theta_3}{x_3^{p_3}y_3^{q_3}}x_2^{p_2}y_2^{q_2}\right)$$ 
	which is zero according to Proposition \ref{po}. 
 
 When $m_2 = 0,$ the proof is more complicated. By Corollary \ref{ist} and Proposition \ref{y3case} from the appendix, we have that
 \begin{align*}
 	y_3^M \frac{\ka_1}{x^{p_1}_1}\frac{\theta_3}{x_3^{p_3}y_3^{q_3}}x_2^{p_2}y_2^{q_2} = \sum a_i \fc{\ta_1}{x_1^{n_1'}y_1^{m_1'}}x_2^{n_2'}y_2^{m_2'}x_3^{n_3'}y_3^{m_3'}
 \end{align*}
 where $a_i \in \{0,1\}$ for sufficiently large $M.$
 
Then we have that
	\begin{align*}
		\alpha := &\frac{\theta_3}{x^{n_3}_3y^{m_3}_3}x^{n_2}_2y^{m_2}_2 \frac{\ka_1}{x^{p_1}_1}\frac{\theta_3}{x_3^{p_3}y_3^{q_3}}x_2^{p_2}y_2^{q_2} = \frac{\theta_3}{x^{n_3}_3y^{m_3+M}_3}x^{n_2}_2y^{m_2}_2 \left(y_3^M \frac{\ka_1}{x^{p_1}_1}\frac{\theta_3}{x_3^{p_3}y_3^{q_3}}x_2^{p_2}y_2^{q_2}\right) \\
		&= \frac{\theta_3}{x^{n_3}_3y^{m_3+M}_3}x^{n_2}_2y^{m_2}_2 \left(\sum a_i \fc{\ta_1}{x_1^{n_1'}y_1^{m_1'}}x_2^{n_2'}y_2^{m_2'}x_3^{n_3'}y_3^{m_3'}\right)  \\&=\sum a_i \fc{\ta_1}{x_1^{n_1'}y_1^{m_1'}}\fc{\ta_3}{x_3^{n_3''}y_3^{m_3''}}x_2^{n_2''}y_2^{m_2''} =: \beta,
	\end{align*}
	
	where $a_i \in \{0,1\}$ and  $n_2''+m_2''\geq n_3''+m_3''+2,$ since $\deg(\al)=\deg(\be),$ and by assumption we have that $n_2+m_2\geq n_3+m_3+2$ and $p_2+q_2\geq p_3+q_3.$
	But by Proposition \ref{po3}, each summand is zero.	
\end{proof}
Let $$NC:=\oplus _{b\geq 0, b+q<0}H^{\star,b+q\sigma} _{C_2}(\R,\Z/2),$$ which is a $\MMt$-submodule of the Bredon motivic cohomology of $\R$. From \cite{DV1} we have
$$NC\subset\oplus _{2\leq a\leq 2b+1}\rH^{a-b+b\ep} _{Br,\Sigma_2}(\Sigma B_{\Sigma_2}C_2,\Z/2)[x_1^{\pm 1},x^{\pm 1}_3]\simeq \oplus _{1-b\leq a\leq b}\rH^{a+b\ep} _{Br,\Sigma_2}(B_{\Sigma_2}C_2,\Z/2)[x_1^{\pm 1},x^{\pm 1}_3].$$
We recall that $H^{a+p\sigma,b+q\sigma} _{C_2}(\R,\Z/2)=0$ if $a>2b+1$, $b\geq 0$ and $b+q<0$. Also $H^{a+p\sigma,b+q\sigma} _{C_2}(\R,\Z/2)=0$ if $a\leq 1$, $b\geq 0$ and $b+q<0$.

This implies, using the Bredon cohomology of $B_{\Sigma_2}C_2$, that $$NC\subset \{x^n_2y^m_2\Sigma (b^pc),x^n_2y^m_2\Sigma (b^p)\}[x^{\pm 1}_1,x^{-1}_3],$$
the subset with the degree of $x^{-1}_3$ given by $q\leq -b\leq 0,$ where $b$ is the degree of $\ep$ i.e.
 $$NC=\oplus _{a\leq b+1} x^{-b-1}_3\rH^{a+b\ep}_{Br,\Sigma_2}(\Sigma B_{\Sigma_2}C_2,\Z/2)[x^{\pm 1}_1,x^{-1}_3]=$$
 $$=\left\{x^{-n-m-p-2}_3x^n_2y^m_2\Sigma (b^pc),x^{-n-m-p-1}_3x^n_2y^m_2\Sigma b^p\right\}[x^{\pm 1}_1,x^{-1}_3].$$
 
 In [\cite{DV1}, Theorem 3.5, Theorem 6.2], it was shown that $NC$ is a  subset in $H_{Br,K}^\st(pt,\Z/2)$ and that we have an isomorphism of $\MMt$-algebras 
 $$H^{\st,\st}_{C_2}(\R,\Z/2)\simeq (R,\ka_2)\oplus NC.$$  In particular, $NC$ is isomorphic with its image under the realisation map and all the elements of $NC$ are nilpotents and infinitely divisible by $x_1$ and $x_3$ giving a subset of infinitely divisible elements by $x_1$ and $x_3$ in the $RO(C_2\times \Sigma_2)$ graded cohomology of a point. 
 
 Analyzing the $C_2\times\Sigma_2$ topology isotropy sequence,  we can deduce the following simpler description of $NC$ in terms of the nilpotents from the $RO(C_2\times \Sigma_2)$ graded cohomology of a point.
  \begin{theorem}
 	\label{ncg}
 	The image of $NC$ under the realisation map is generated by the elements
 	\begin{align*}
 		\fc{\ka_3^{n+1}}{x_3^{n_3+1}}\fc{\ta_1}{x_1^{n_1}y_1^{2n+1}}	\txa  \fc{\ka_3^{n+1}}{x_3^{n_3+1}}\fc{\ta_1}{x_1^{n_1+1}y_1^{2n}}
 	\end{align*}
 	under multiplication with $x_1,x_2,$ and $y_2,$ where $n,n_1,n_3 \geq 0$ are subject to the obvious degree restrictions imposed by $NC$.
 \end{theorem}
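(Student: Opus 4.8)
The plan is to push the known description of $NC$ through the $C_2\times\Si_2$-topological isotropy cofiber sequence and to identify the outcome inside $H^\st_{Br,K}(pt)$. By \cite{DV1} (Theorems 3.5 and 6.2) together with the discussion preceding the statement, the Betti realization embeds $NC$ into $H^\st_{Br,K}(pt)$, and this image equals the image, under the map $\pi^*\colon\rH^\st_{Br,K}(\et)\to H^\st_{Br,K}(pt)$ induced by the collapse $pt_+\to\et$ in the cofiber sequence $\ea\to pt_+\to\et$, of the part of $\rH^\st_{Br,K}(\et)$ supported in the cone $b\ge 0,\ b+q<0$. By the theorem following Theorem \ref{isomt}, $\rH^\st_{Br,K}(\et)\simeq\tfrac{\rH^{*-1+*\ep}_{Br,\Si_2}(pt)[b,c]}{(b^2=cx_2+by_2)}[x_1^{\pm1},x_3^{\pm1}]$, so this part is spanned, over that cone, by the suspension classes $x_2^ny_2^m\susp b^p$ and $x_2^ny_2^m\susp(b^pc)$ further divided by powers of $x_1$ and of $x_3$, exactly as in the formula for $NC$ recalled above.

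The heart of the proof is the evaluation of $\pi^*$ on this spanning set. Since $\pi^*$ is a morphism of $H^\st_{Br,K}(pt)$-modules compatible with multiplication, it is determined by its effect on the generators $\susp b,\ \susp c,\ x_i,\ y_i,\ x_1^{-1},\ x_3^{-1}$. One has $\pi^*(x_i)=x_i$, $\pi^*(y_i)=y_i$, while the seed value $\pi^*\!\big(\tfrac{\susp c}{x_1x_3}\big)=\io_2$ — used already in the proof of Theorem \ref{cEC}, see Lemma \ref{xid} — together with $\io_2=\ka_3\ta_1=\ka_1\ta_3$ (Lemma \ref{iononzero}) and the relations $\ka_iy_i=y_jy_k$, $\ka_i\ka_j=y_k^2$ determines the rest by degree reasons: in every $4$-degree lying in the cone $b\ge0,\ b+q<0$ cut out by $NC$ the group $H^\st_{Br,K}(pt)$ contains only the $\ka_3$-tower classes of Proposition \ref{propcase2v2} (the $\ta_1\ta_3$-chains there force $b\le 1$ and then fall outside the range $a\le 2b+1$ imposed by $NC$; the $\ka_1$-chains of Proposition \ref{propcase2v2} get rewritten through $\io_2=\ka_1\ta_3=\ka_3\ta_1$). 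Hence $\pi^*$ sends the spanning classes to the two families $\tfrac{\ka_3^{n+1}}{x_3^{n_3+1}}\tfrac{\ta_1}{x_1^{n_1}y_1^{2n+1}}$ and $\tfrac{\ka_3^{n+1}}{x_3^{n_3+1}}\tfrac{\ta_1}{x_1^{n_1+1}y_1^{2n}}$, up to multiplication by $x_1$, $x_2$ and $y_2$.

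It then remains to verify that the $x_1,x_2,y_2$-multiples of these two families are linearly independent and span the realization of $NC$. Linear independence follows by the nonvanishing technique of Lemma \ref{iononzero} and Proposition \ref{propcase2v2}: multiplying such a class by a suitable monomial in $y_1,y_3,x_2,x_3$ yields $\io_2\,x_2^{\bullet}y_2^{\bullet}\ne0$, and inside a fixed $4$-degree such a class is pinned down by one of its exponents. For spanning I would compare Poincar\'e series: by \cite{DV1} the dimension of $H^{a+p\si,b+q\si}_{C_2}(\R,\Z/2)$ in the $NC$-range is the relevant summand of the series in Proposition \ref{caseMC2}, which matches the count of $x_1,x_2,y_2$-multiples of the two families in that degree; since the realization is injective on $NC$, equality of dimensions forces the spanning statement.

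The main obstacle is the middle step: computing $\pi^*$ explicitly on $\susp b^p$, $\susp(b^pc)$ and on their $x_1^{-i}x_3^{-j}x_2^ny_2^m$-multiples requires controlling how the suspension classes multiply along the isotropy cofiber sequence and how $\pi^*$ interacts with divisibility by $x_1$ and $x_3$, and then reconciling the outputs with the precise exponent constraints $0\le n\le m_3\le n+n_1$ (and $n_3+m_3=n_2+m_2$) of Proposition \ref{propcase2v2}, so as to tell the genuinely new generators apart from the $x_1,x_2,y_2$-multiples of smaller ones.
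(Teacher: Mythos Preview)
Your outline has the right architecture --- push $NC$ through the isotropy cofiber sequence and identify the image of $p^*$ inside $H^\st_{Br,K}(pt)$ --- and this is exactly what the paper does. But the middle step, which you yourself flag as ``the main obstacle,'' is where your argument actually breaks. You claim that ``in every $4$-degree lying in the cone $b\ge 0,\ b+q<0$ cut out by $NC$ the group $H^\st_{Br,K}(pt)$ contains only the $\ka_3$-tower classes of Proposition~\ref{propcase2v2},'' and that the $\ka_1$-chains ``get rewritten through $\io_2=\ka_1\ta_3=\ka_3\ta_1$.'' This is not correct: Proposition~\ref{propcase2v2} gives a \emph{direct sum} of two independent families in those degrees, and the identity $\ka_1\ta_3=\ka_3\ta_1$ holds only at the base level, not for the divided elements $\fc{\ka_1}{x_1^{n_1}}\fc{\ta_3}{x_3^{n_3}y_3^{m_3}}\cdots$ and $\fc{\ka_3^{m_3+1}}{x_3^{n_3+1}}\fc{\ta_1}{\cdots}$, which were \emph{chosen} (in the proof of Proposition~\ref{propcase1v2}) precisely so as to be linearly independent. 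So degree reasons alone do not pin down the image of $p^*$; you must actually compute it.

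The paper fills this gap as follows. First it reduces, via the $x_3$-periodicity of $\rH^\st_{Br,K}(\et)$, to showing
\[
\fc{\susp b^n}{x_1^{n+1}x_3^{n+1}}\stackrel{p^*}{\longmapsto} \fc{\ka_3^n}{x_3}\fc{\ta_1}{y_1^{2n-1}}
\quad\text{and}\quad
\fc{\susp cb^{n-1}}{x_1^{n+1}x_3^{n+1}}\stackrel{p^*}{\longmapsto} \fc{\ka_3^n}{x_3}\fc{\ta_1}{x_1y_1^{2n-2}}.
\]
Lemma~\ref{xid} gives the values of $p^*$ on $\fc{\susp b^n}{x_3^{n+1}}$ and $\fc{\susp cb^{n-1}}{x_3^{n+1}}$, so by Lemma~\ref{x1case2v2} (multiplication by $x_1$ is an isomorphism in the relevant range) it suffices to check that $x_1^{n+1}$ times each proposed target lands where it should. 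This last step is a genuine multiplicative computation in $H^\st_{Br,K}(pt)$, using the kernel analysis of $\cdot\, x_3$ and the explicit product formula of Proposition~\ref{x3case2}; it does not follow from degree reasons. Your proposal lacks any substitute for this computation, so as written it is a plan rather than a proof.
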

 \begin{proof}
Since $x_3$ is an isomorphism on $ \hr_{Br,K}^\st(\et),$ it suffices to prove that
 	\begin{align*}
 		\fc{\susp b^n}{x_1^{n+1}x_3^{n+1}}\stackrel{p^*}{\mt} \fc{\ka_3^n}{x_3}\fc{\ta_1}{y_1^{2n-1}}	\txa \fc{\susp cb^{n-1}}{x_1^{n+1}x_3^{n+1}} \stackrel{p^*}{\mt}  \fc{\ka_3^n}{x_3}\fc{\ta_1}{x_1y_1^{2n-2}}
 	\end{align*}
 	under the map
 	\begin{align*}
 		p^* :  \hr_{Br,K}^\st(\et) \to H_{Br,K}^\st(pt)
 	\end{align*}
 	for all $n\geq 1.$  According to \cite{DV1}, the realization map and the isotropy map $p^*$ coincide in the motivic range $a\leq 2b+1$ and $b+q<0$.

 	By studying the isotropy sequence, from Lemma \ref{xid}, we have
 	\begin{align*}
 		\fc{\susp b^n}{x_3^{n+1}}\stackrel{p^*}{\mt} \fc{\ta_3}{y_3^{n-1}}x_2^n	\txa  \fc{\susp cb^{n-1}}{x_3^{n+1}} \stackrel{p^*}{\mt} \fc{\ta_3}{y_3^{n-1}}y_2x_2^{n-1}
 	\end{align*}
 	under $p^*.$
 	
 	Then by Lemma \ref{x1case2v2} it suffices to show that 
 	\begin{align*}
 		x_1^{n+1}\cp\fc{\ka_3^n}{x_3}\fc{\ta_1}{y_1^{2n-1}} = \fc{\ta_3}{y_3^{n-1}}x_2^n	\txa 	x_1^{n+1}\cp\fc{\ka_3^n}{x_3}\fc{\ta_1}{x_1y_1^{2n-2}} = \fc{\ta_3}{y_3^{n-1}}y_2x_2^{n-1}.
 	\end{align*}
 	 We have that
 	\begin{align*}
 		x_1 \cp \fc{\ka_3^n}{x_3}\fc{\ta_1}{y_1^{2n-1}}  = \ka_1^n \fc{\ta_3}{y_3^{2n-1}},
 	\end{align*}
 	so
 	\begin{align*}
 		x_1^{n+1}\cp\fc{\ka_3^n}{x_3}\fc{\ta_1}{y_1^{2n-1}} = \fc{\ta_3}{y_3^{n-1}}x_2^n
 	\end{align*}
 	as desired. The first equality follows because the multiplication by $$x_1: H^{n+1-(n+1)\sigma+n\epsilon-(n+1)\sigma\otimes\epsilon}_{Br,K}(pt)=\Z/2\simeq H^{n+1-n\sigma+n\epsilon-(n+1)\sigma\otimes\epsilon}_{Br,K}(pt)=\Z/2$$ is an isomorphism from Lemma \ref{x1case2v2}.
	
	 Next, we consider the second case. If $n=1,$ the result follows from the isotropy sequence (see Lemma \ref{xid}) and that $$\frac{\ka_1}{x_1}\frac{\theta_3}{x_3}=\frac{\ka_3}{x_3}\frac{\theta_1}{x_1}.$$
	 
	 Let now $n\geq2.$ We have that
 	\begin{align*}
 		x_1\cp\fc{\ka_3^n}{x_3}\fc{\ta_1}{x_1y_1^{2n-2}} = \fc{\ka_3}{x_3^n}\fc{\ta_1}{y_1^{n-1}}x_2^{n-1}+\ka_1^n \fc{\ta_3}{x_3y_3^{2n-2}}.
 	\end{align*}
	To prove this equality notice that $x_3x_1\frac{k_3^n}{x_3}\frac{\theta_1}{x_1y_1^{2n-2}}=\ka_3^n\frac{\theta_1}{y_1^{2n-2}}$ and multiplication by $x_3$ is an isomorphism in these indexes by Lemma \ref{x1case2v2}. On the other side $$x_3(\fc{\ka_3}{x_3^n}\fc{\ta_1}{y_1^{n-1}}x_2^{n-1}+\ka_1^n \fc{\ta_3}{x_3y_3^{2n-2}})=\fc{\ka_3}{x_3^{n-1}}\fc{\ta_1}{y_1^{n-1}}x_2^{n-1}+\ka_1^n \fc{\ta_3}{y_3^{2n-2}}=\ka_3^n\frac{\theta_1}{y_1^{2n-2}}.$$
	The last equality follows from the fact that $\alpha=\fc{\ka_3}{x_3^{n-1}}\fc{\ta_1}{y_1^{n-1}}x_2^{n-1}+\ka_3^n\frac{\theta_1}{y_1^{2n-2}}$ is in the kernel of multiplication by $x_3^{n-1}$. By induction on $n\geq 2$, it is in the kernel of multiplication by $x_3$ and is a nontrivial class as $y_1^{n-1}y_3^{n-1}\alpha\neq 0$. The equality follows now.
	
 	Then
 	\begin{align*}
 		&x_1 \cp \left(\fc{\ka_3}{x_3^n}\fc{\ta_1}{y_1^{n-1}}x_2^{n-1}+\ka_1^n \fc{\ta_3}{x_3y_3^{2n-2}}\right) =x_1\cp \fc{\ka_3}{x_3^n}\fc{\ta_1}{y_1^{n-1}}x_2^{n-1}+\ka_1^{n-1} \fc{\ta_3}{y_3^{2n-2}}y_2 +\ka_1^{n-1} \fc{\ta_3}{x_3y_3^{2n-3}}x_2\\
 		&=\ka_1^{n-1} \fc{\ta_3}{x_3y_3^{2n-3}}x_2+\ka_1^{n-1} \fc{\ta_3}{y_3^{2n-2}}y_2 +\ka_1^{n-1} \fc{\ta_3}{x_3y_3^{2n-3}}x_2 \tx{(Proposition \ref{x3case2}, applied to $x_1$)}\\
 		&= \ka_1^{n-1} \fc{\ta_3}{y_3^{2n-2}}y_2,\\
 		&\implies x_1^{n+1} \cp \fc{\ka_3^n}{x_3}\fc{\ta_1}{x_1y_1^{2n-2}} = \fc{\ta_3}{y_3^{n-1}}y_2x_2^{n-1}.
 	\end{align*}
 \end{proof}
 \begin{remark}
 We remark that $NC$ is generated (by multiplication) by cohomology classes given in Proposition \ref{propcase2v2}. All cohomology classes of Proposition \ref{propcase2v2}  are  infinitely uniquely divisible by both $x_1$ and $x_3$.
 \end{remark}
  We can conclude that
 \begin{theorem} \label{d1}
 We have a $\MMt$-algebra isomorphism
 $$H^{\star,\star}_{C_2}(\R,\Z/2)\simeq \frac{\Z/2[x_i,y_i,\ka_i]}{(\ka_i\ka_j=y_k^2,\ka_ix_i=x_jy_k+x_ky_j,\ka_iy_i=y_ky_j)}+ nilpotents_1.$$ The component $nilpotent_1$ is formed  by sums of nilpotent generators given in Theorem \ref{decom} and from NC.
 
 The component $nilpotents_1$ has zero products and strictly contains $NC$. All motivic classes are sums of multiplications of the above generators modulo relations \ref{eq2} displayed in the quotient above together with $\theta_1^2=\theta_1x_1=\theta_1y_1=0$ and with $\theta_3-$classes from \ref{theta3} being $2$-nilpotent. 
\end{theorem}
\begin{proof} The only thing left to prove is that the nilpotents from $NC$ multiplied by nilpotents from Theorem \ref{decom} give zero. The nilpotents from Theorem \ref{decom} have four possible forms:
\begin{enumerate}
\item $\ka_3^{m_3+1}\frac{\theta_1}{x_1^{n_1}y_1^{m_3+m_1}}x_2^{n_2}y_2^{m_2-m_3}$ with $m_3\leq m_2$ and $m_1\geq 1$.
\item $\frac{\ka_1}{x_1^{n_1}}\frac{\theta_3}{x_3^{n_3}y_3^{n_3}}x_2^{n_2}y_2^{m_2}$  with $n_3+m_3\leq n_2+m_2$.
\item $\frac{\theta_3}{x_3^{n_3}y_3^{m_3}}x_1^{n_1}y_1^{m_1}x_2^{n_2}y_2^{m_2}$ with $n_2+m_2\geq n_3+m_3+2$.
\item $\frac{\theta_1}{x_1^{n_1}y_1^{m_1}}x_2^{n_2}y_2^{m_2}x_3^{n_3}y_3^{m_3}$.
\end{enumerate}
The nilpotents $\alpha$ that are contained in $NC$ are all divisible by $x_1$ and $x_3$.   This is enough to give zero for all the needed multiplications. For example 
$$\alpha \frac{\theta_1}{x_1^{n_1}y_1^{m_1}}= (\frac{\alpha}{x_1^N})x_1^{N}\frac{\theta_1}{x_1^{n_1}y_1^{2n+1}}=0$$ 
for $N>>0,$ therefore the products of $\alpha$ with form 1. and form 4. above are zero. Similarly, using division by $x_3$ we find the products of $\alpha$ with form 3. is zero. In the end we find 
$$\alpha\frac{\ka_1}{x_1^{n_1}}\frac{\theta_3}{x_3^{n_3}y_3^{n_3}}x_2^{n_2}y_2^{m_2}=(\frac{\alpha}{x_1^{n_1}x_3^{n_3+1}})x_1^{n_1}x_3^{n_3+1}\frac{\ka_1}{x_1^{n_1}}\frac{\theta_3}{x_3^{n_3}y_3^{n_3}}x_2^{n_2}y_2^{m_2}=0$$
so multiplication of $\alpha$ with form 2. is zero.
\end{proof}
From \cite{DV1} we have that $$H^{\star,\star}_{C_2}(\R,\Z/2)\hookrightarrow H^\star_{Br,K}(pt,\Z/2).$$ We showed above that one difference between these two rings is that $H^\star_{Br,K}(pt,\Z/2)$ has a bigger set of nilpotents. Another difference is that the nilpotents in $H^\star_{Br,K}(pt,\Z/2)$ do not necessarily have zero products as we can easily see from $\theta_1\theta_3\neq 0$ (\cite{BH}).

We can write the $RO(C_2\times \Sigma_2)$-graded cohomology ring of a point as follows:
 \begin{theorem} \label{Bmt}
 We have a $\MMt$-algebra isomorphism
 $$H^{\star}_{Br,K}(pt,\Z/2)\simeq \frac{\Z/2[x_i,y_i,\ka_i]}{(\ka_i\ka_j=y_k^2,\ka_ix_i=x_jy_k+x_ky_j,\ka_iy_i=y_ky_j)}+ nilpotents_2.$$ The component $nilpotents_2$ (completely described in Section 3 and summarized in  Theorem \ref{tc}) contains strictly the component $nilpotents_1$ of Theorem \ref{d1} and its products are not necessary zero. The relations that are satisfied in this ring are the relations \ref{rl}.
\end{theorem}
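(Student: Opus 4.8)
The plan is to assemble this final theorem as a near-immediate corollary of Theorem \ref{d1} together with the structural results of Section 3. First I would recall from \cite{DV1} the injection $H^{\star,\star}_{C_2}(\R,\Z/2)\hookrightarrow H^\star_{Br,K}(pt,\Z/2)$ and identify, via Theorem \ref{d1}, the image with the subring $\frac{\Z/2[x_i,y_i,\ka_i]}{(\ka_i\ka_j=y_k^2,\ka_ix_i=x_jy_k+x_ky_j,\ka_iy_i=y_ky_j)}\ \oplus\ nilpotents_1$. The point is then that the non-nilpotent part, being generated as a $\MMt$-algebra by the classes $x_i,y_i,\ka_i$ subject only to the listed relations, is literally the same quotient ring on both sides; indeed the positive cone $\Z/2[x_1,y_1,x_2,y_2,x_3,y_3]$ together with the $\ka_i$'s and their relations from Theorem \ref{pcon} and from the relations $\ka_i\ka_j=y_k^2$, $\ka_ix_i=x_jy_k+x_ky_j$, $\ka_iy_i=y_ky_j$ displayed in Section 3 generates a subring of $H^\star_{Br,K}(pt)$ isomorphic to that presented quotient. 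So the only work is to pin down the complement.

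Next I would define $nilpotents_2$ to be a $\MMt$-vector-space complement to this subring inside $H^\star_{Br,K}(pt,\Z/2)$; concretely, using the cone decomposition of Section 3 (Positive Cone, the three Mixed Cones of Type I, the three Mixed Cones of Type II, the Negative Cone, and the two-index pieces), $nilpotents_2$ is the span of all generators listed there which are \emph{not} among the polynomial generators of the displayed quotient: that is, the $\theta_i$-fractions and $\Theta$-fractions, the $\ka_i$-chains with denominators, the $\io_i$-chains, plus the finitely-many products like $\theta_j\theta_k$ and the $x_i^{a}y_i^b\theta_j$-type classes from the two-index case. I would then verify (1) every such generator is nilpotent — this is exactly the content of the corollaries in Section 3 (``All cohomology classes from the Negative Cone and from the Mixed Cone of Type II are nilpotent,'' the nilpotence of the $\ka_i$-chains in the Mixed Cones of Type I coming from the structure in Theorem \ref{kconj}, and nilpotence of $\theta_i$-fractions since $\theta_i^2=0$ up to units) — and (2) $nilpotents_1 \subsetneq nilpotents_2$, the strictness witnessed by, e.g., $\theta_3\notin R$ (noted in Section 4) so $\frac{\theta_3}{x_3^{n_3}y_3^{m_3}}x_1^{n_1}y_1^{m_1}x_2^{n_2}y_2^{m_2}$ with $n_2+m_2 < n_3+m_3+2$, or the Negative Cone classes with $\ka_1$ powers, land in $nilpotents_2$ but not in $nilpotents_1$.

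Finally I would record that the products in $nilpotents_2$ need \emph{not} be zero, exhibiting the explicit witness already present in Section 3: $\kappa_1\frac{\theta_3}{y_3^2}\,\kappa_3\frac{\theta_1}{y_1^2}=y_2^2\frac{\theta_3}{y_3^2}\frac{\theta_1}{y_1^2}\neq 0$ (nonzero because multiplying by $\kappa_2^2$ gives $\theta_1\theta_3\neq 0$), and similarly $\theta_1\theta_3\neq 0$ shows even products of $\theta$-fractions can be nonzero. This contrasts with $nilpotents_1$, whose vanishing products were established in Theorem \ref{d1} and Propositions \ref{po}, \ref{po1}, \ref{po3}, \ref{pz}. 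The main obstacle — really the only nontrivial point — is making the $\MMt$-vector-space splitting $H^\star_{Br,K}(pt,\Z/2)\cong \bigl(\text{displayed quotient}\bigr)\oplus nilpotents_2$ genuinely well-defined rather than merely plausible: one must check that the polynomial generators $x_i,y_i,\ka_i$ are $\MMt$-algebraically independent in $H^\star_{Br,K}(pt)$ modulo exactly the stated relations (so that the quotient injects as written), which follows from the injectivity statements already used for $P=\ka_2\Z/2[\ka_2,x_1,x_3,y_1,y_3]$ and the positive-cone computation of \cite{BH} combined across the three symmetric copies; once that is in hand the rest is bookkeeping over the explicit generator lists of Section 3.
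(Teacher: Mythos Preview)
Your proposal is correct and takes essentially the same approach as the paper: the paper provides no dedicated proof for this theorem, treating it as a summary statement that follows directly from the cone-by-cone analysis of Section~3 together with Theorem~\ref{d1}, and your reconstruction follows exactly this logic. Your roadmap is in fact more explicit than what the paper offers (the paper simply states the theorem and moves on), and you correctly identify the witness $\ka_1\frac{\ta_3}{y_3^2}\ka_3\frac{\ta_1}{y_1^2}\neq 0$ for non-vanishing products and the strictness witness $\ta_3\notin R$ (the latter is noted in Section~5, not Section~4).
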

In Theorem \ref{d1}  the products in each of the direct sum components are already given above in the text (the products between non-nilpotents follow from \cite{BH}; the products from nilpotents are described in this section). The products between nilpotents and the subring component are also nilpotent, but their description in terms of the chosen nilpotent generators is not in general trivial and can be very complicated. We  give in the appendix few of these products that we used in the main text. 
\section{Appendix}

\subsection{Motivic products of {$y_1$}, {$y_3$} and $x_3$ with motivic generators from the Mixed Cone of Type II}
In this Appendix we will give examples of products (used in the main text) in the Mixed Cone of Type II between motivic cohomology classes that are expressed in terms of motivic generators. More types of results like these can be found in the appendix of \cite{BD}.

We start with a lemma about the $C_2\times \Sigma_2$ isotropy long exact sequence.

\begin{lemma}
	\label{xid}
	We have that
	\begin{align*}
	       \fc{\susp c}{x_1x_3}\stackrel{p^*}{\mt} \io_2,
         \end{align*}
         \begin{align*}
		 \fc{\susp c}{x_1^{n_1+1}x_3^{n_3+1}}x_2^{n_2}y_2^{m_2} \stackrel{p^*}{\mt} \fc{\ka_1}{x_1^{n_1}}\fc{\ta_3}{x_3^{n_3}}x_2^{n_2}y_2^{m_2}
	\end{align*}
	
	\begin{align*}
	       \fc{\susp b^{n}}{x_3^{n+1}}\stackrel{p^*}{\mt} \frac{\theta_3}{y_3^{n-1}}x_2^n,
         \end{align*}
         \begin{align*}
	       \fc{\susp cb^{n-1}}{x_3^{n+1}}\stackrel{p^*}{\mt} \frac{\theta_3}{y_3^{n-1}}y_2x_2^{n-1},
         \end{align*}
         in the $C_2\times\Sigma_2$ isotropy long exact sequence, where $p^*: \tilde{H}^*_{Br,K}(\tilde{E}_{\Sigma_2}C_2)\rightarrow H^*_{Br,K}(pt)$.
\end{lemma}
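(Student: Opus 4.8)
The plan is to realize $p^*$ as a colimit of collapse maps and reduce every formula to a finite-stage Euler-class computation. Recall that $\ek=\colim_n S(n\si+n\so)$ and $\et=\colim_n S^{n\si+n\so}$, and that the isotropy cofiber sequence $\ek_+\to pt_+\to\et$ is the colimit of the cofiber sequences $S(n\si+n\so)_+\to pt_+\to S^{n\si+n\so}$. Hence in the long exact sequence
$$\cdots\to H^{\st-1}_{Br,K}(\ek)\xrightarrow{\dl}\hr^{\st}_{Br,K}(\et)\xrightarrow{p^*}H^{\st}_{Br,K}(pt)\xrightarrow{i^*}H^{\st}_{Br,K}(\ek)\to\cdots,$$
the map $p^*$ is induced by the unit $pt_+\to\et$; in particular it is a ring homomorphism, hence an $H^{\st}_{Br,K}(pt)$-module map. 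Under the Thom isomorphism $\hr^{\st}_{Br,K}(S^{n\si+n\so})\cong H^{\st-n\si-n\so}_{Br,K}(pt)$, the collapse $pt_+\to S^{n\si+n\so}$ induces multiplication by the Euler class $x_1^nx_3^n$ (the Euler classes of $\si$ and $\so$ being the positive-cone generators $x_1$ and $x_3$). Thus a class of $\hr^{\st}_{Br,K}(\et)$ represented for large $n$ by $\al_n\in H^{\st-n\si-n\so}_{Br,K}(pt)$ is sent by $p^*$ to $x_1^nx_3^n\al_n$.

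For the base cases $\frac{\susp c}{x_1x_3}\mt\io_2$, $\frac{\susp b^{n}}{x_3^{n+1}}\mt\frac{\ta_3}{y_3^{n-1}}x_2^n$ and $\frac{\susp cb^{n-1}}{x_3^{n+1}}\mt\frac{\ta_3}{y_3^{n-1}}y_2x_2^{n-1}$, I would combine Theorem \ref{et} and Lemma \ref{quot}, which give $\hr^{\st+1}_{Br,K}(\et)\cong\hr^{\st}_{Br,\Si_2}(\bc)[x_1^{\pm1},x_3^{\pm1}]$, with the explicit ring structure of $\hr^{\st}_{Br,\Si_2}(\bc)$ from Theorem \ref{comp1}; this lets one represent $\susp c$, $\susp b^n$ and $\susp b^{n-1}c$ at a finite sphere stage and then multiply by the appropriate power of $x_3$. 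For $\frac{\susp c}{x_1x_3}$ the target degree $1-\si+\ep-\so$ contains only $\io_2$ up to scalars by Proposition \ref{caseMC2}, and $\io_2\in\ker(i^*)$ since $\io_2\ka_2=0$ while $\ka_2$ is a unit in $H^{\st}_{Br,K}(\ek)$ by Theorem \ref{cEC}; a short degree count in $H^{\st}_{Br,K}(\ek)=IP[\ka_2^{-1}]$ shows the source of $\dl$ vanishes in this degree, so $p^*$ is injective there and therefore $p^*(\frac{\susp c}{x_1x_3})=\io_2$. The two $\susp b^n$ formulas land in a Mixed Cone of Type I (for $\ka_3$); multiplying by a suitable power of $y_3$, which becomes a unit on $\et$ by Theorem \ref{middle}, reduces the identification to a computation in the positive cone, where it is forced by the multiplicative relations of Section 3 and the description in Theorem \ref{kconj}.

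The remaining formula $\frac{\susp c}{x_1^{n_1+1}x_3^{n_3+1}}x_2^{n_2}y_2^{m_2}\mt\frac{\ka_1}{x_1^{n_1}}\frac{\ta_3}{x_3^{n_3}}x_2^{n_2}y_2^{m_2}$ I would deduce from the first by multiplicativity. Since $x_1$ and $x_3$ are units in $\hr^{\st}_{Br,K}(\et)$, we may write $\frac{\susp c}{x_1^{n_1+1}x_3^{n_3+1}}x_2^{n_2}y_2^{m_2}=x_1^{-n_1}x_3^{-n_3}\bigl(\frac{\susp c}{x_1x_3}x_2^{n_2}y_2^{m_2}\bigr)$, apply $p^*$, and multiply by $x_1^{n_1}x_3^{n_3}$; using the base case and the relation $\ka_1\ta_3=\io_2$ we get that $\be:=p^*\bigl(\frac{\susp c}{x_1^{n_1+1}x_3^{n_3+1}}x_2^{n_2}y_2^{m_2}\bigr)$ is a preimage of $\ka_1\ta_3x_2^{n_2}y_2^{m_2}$ under multiplication by $x_1^{n_1}x_3^{n_3}$. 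It then remains to identify $\be$ with the normalized preimage singled out in Propositions \ref{propcase1v2} and \ref{propcase2v2}: that preimage is characterized by the vanishing of its product with a fixed power of $y_1$ or $y_3$, and since those classes are units on $\et$, the corresponding product of $\be$ is the $p^*$-image of a class of $\hr^{\st}_{Br,K}(\et)$ in a degree where the required vanishing already follows from the arguments in the proofs of those propositions and from Theorem \ref{vanishing}.

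I expect the main obstacle to be precisely this last matching — showing that $p^*$ hits the particular element of the (in general non-trivial) fiber over $\ka_1\ta_3x_2^{n_2}y_2^{m_2}$ that was fixed in Section 3 — because the notation $\frac{\ka_1}{x_1^{n_1}}\frac{\ta_3}{x_3^{n_3}}$ is purely symbolic and rests on an ad hoc normalization. The clean reformulation is that $p^*$ intertwines honest division by $x_1$ and $x_3$ on $\hr^{\st}_{Br,K}(\et)$ with the normalized division on $H^{\st}_{Br,K}(pt)$, and checking this compatibility is where the non-vanishing lemmas (Lemmas \ref{iononzero} and \ref{gen}) and the vanishing theorem (Theorem \ref{vanishing}) are needed; the rest is bookkeeping with the relations of Section 3.
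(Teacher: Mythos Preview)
Your overall strategy for the first two formulas is close to the paper's: the paper also shows $\io_2\in\ker(i^*)$ via $\ka_2\io_2=\ka_2\ka_3\ta_1=y_1^2\ta_1=0$ and the invertibility of $\ka_2$ on $\ek$, and it proves the second formula by induction on $n_1$, the key input being the concrete relation $y_1\cdot\susp c=0$ in $\hr^{\st}_{Br,K}(\et)$ (not Theorem~\ref{vanishing}, whose hypotheses are not met in the relevant degree). Your identification of the real difficulty --- matching $p^*$ with the normalized preimage --- is exactly right, and the paper resolves it with this vanishing of $y_1\susp c$.

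There is, however, a genuine error in your treatment of the $\susp b^n$ and $\susp cb^{n-1}$ formulas. You claim that $y_3$ ``becomes a unit on $\et$ by Theorem~\ref{middle}''. Theorem~\ref{middle} concerns the cohomology of the orbits $C_2$, $\Si_2$, $K/\Delta$ (the middle level of the Mackey functor of a point), not $\et$; on $\et$ the invertible classes are $x_1$ and $x_3$ (Theorem~\ref{isomt}), and in fact $y_3\cdot\susp c=0$ since $\hr^{\ep+\so}_{Br,K}(\et)\cong\hr^{-1+\ep}_{Br,\Si_2}(\bc)=0$. Also, the targets $\fc{\ta_3}{y_3^{n-1}}x_2^n$ and $\fc{\ta_3}{y_3^{n-1}}y_2x_2^{n-1}$ live in degrees with $p=0$, so they are in the two-index range of Proposition~\ref{PC}(e), not in a Mixed Cone of Type~I.

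The paper's route for these two formulas is different and does not need any invertibility of $y_3$: for $\fc{\susp b^n}{x_3^{n+1}}$ one simply observes that $H^{n+1,0,n,-n-1}_{Br,K}(pt)=\Z/2\cdot\fc{\ta_3}{y_3^{n-1}}x_2^n$ and that $p^*$ is an isomorphism in this degree; for $\fc{\susp cb^{n-1}}{x_3^{n+1}}$ one uses that multiplication by $x_1$ is an isomorphism on the relevant groups (Lemma~\ref{x1case2v2}) together with the already-established third formula and the positive-cone relation $\ka_3x_3=x_1y_2+y_1x_2$ to compute $x_1\cdot p^*(\fc{\susp cb^{n-1}}{x_3^{n+1}})=p^*(y_1\fc{\susp b^n}{x_3^{n+1}})=y_1\fc{\ta_3}{y_3^{n-1}}x_2^n=x_1\fc{\ta_3}{y_3^{n-1}}y_2x_2^{n-1}$.
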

\begin{proof} The first statement follows from the fact that  $\fc{\susp c}{x_1x_3}\in \tilde{H}^{1-\sigma+\epsilon-\sigma\otimes\epsilon}_{Br,K}(\tilde{E}_{\Sigma_2}C_2)$ is the generator
and the isotropy sequence map $f:H^{1-\sigma+\epsilon-\sigma\otimes\epsilon}_{Br,K}(pt)\rightarrow H^{1-\sigma+\epsilon-\sigma\otimes\epsilon}_{Br,K}(E_{\Sigma_2}C_2)$ sends $\iota_2$ to zero. Indeed, suppose $f(\iota_2)\neq 0$. Because $$H^{1-\sigma+\epsilon-\sigma\otimes\epsilon}_{Br,K}(E_{\Sigma_2}C_2)\simeq \Z/2$$ we have $f(\iota_2)=\ka_2^{-1}$ (see Theorem \ref{cEC}). This means that $f(\ka_2\iota_2)=1$, but because $\ka_2\iota_2=\ka_2\ka_3\theta_1=y_1^2\theta_1=0$ we obtain a contradiction.

	For the second statement, we proceed by induction on $n_1.$ When $n_1=0,$ we have from the above and Proposition \ref{caseMC2} that $\fc{\susp c}{x_1x_3^{n_3+1}}\mt \ka_1\fc{\ta_3}{x_3^{n_3}}$ and this implies $\fc{\susp c}{x_1x_3^{n_3+1}}x_2^{n_2}y_2^{m_2}\mt \ka_1\fc{\ta_3}{x_3^{n_3}}x_2^{n_2}y_2^{m_2}.$
	
	Now assume that the result holds for all $n_1 \leq  N_1.$ Then, by induction,
	\begin{align*}
		p^*\left(\fc{\susp c}{x_1^{N_1+2}x_3^{n_3+1}}x_2^{n_2}y_2^{m_2}\right)  +  \fc{\ka_1}{x_1^{N_1+1}}\fc{\ta_3}{x_3^{n_3}}x_2^{n_2}y_2^{m_2}\in \ker\varphi_{-N_1-2}.
	\end{align*}
	with $\varphi_{-N_1-2}$ the multiplication with $x_1$.
	
	If we suppose that  $p^*\left(\fc{\susp c}{x_1^{N_1+2}x_3^{n_3+1}}x_2^{n_2}y_2^{m_2}\right)  +  \fc{\ka_1}{x_1^{N_1+1}}\fc{\ta_3}{x_3^{n_3}}x_2^{n_2}y_2^{m_2} \neq 0$ then
     $$p^*\left(\fc{\susp c}{x_1^{N_1+2}x_3^{n_3+1}}x_2^{n_2}y_2^{m_2}\right)  +  \fc{\ka_1}{x_1^{N_1+1}}\fc{\ta_3}{x_3^{n_3}}x_2^{n_2}y_2^{m_2}  = \ka_3^{n_3+1}\fc{\ta_1}{y_1^{N_1+1+n_3}}x_2^{n_2-N_1-1-n_3}y_2^{m_2+N_1+1}.$$ 
     By definition, we have that $\fc{\ka_1}{x_1^{N_1+1}}\fc{\ta_3}{x_3^{n_3}}x_2^{n_2}y_2^{m_2} \cp y_1^{N_1+1}y_3^{n_3} =0$, so
		$p^*\left(\fc{\susp c}{x_1^{N_1+2}x_3^{n_3+1}}x_2^{n_2}y_2^{m_2}\right) \cp   y_1^{N_1+1}y_3^{n_3}= \ka_3^{n_3+1}\fc{\ta_1}{y_1^{N_1+1+n_3}}x_2^{n_2-N_1-1-n_3}y_2^{m_2+N_1+1}\cp   y_1^{N_1+1}y_3^{n_3}= \ka_3\ta_1 x_2^{n_2-N_1-1-n_3}y_2^{m_2+N_1+1+n_3}.$
      But we have
	$$\fc{\susp c}{x_1^{N_1+2}x_3^{n_3+1}}x_2^{n_2}y_2^{m_2} \cp y_1 = 0,$$
	because $y_1\Sigma c=0$ therefore a contradiction.
	
For the third statement we have that $H^{n+1+n\epsilon-(n+1)\sigma\otimes\epsilon}_{Br,K}(pt)=\Z/2\frac{\theta_3}{y_3^{n-1}}x_2^n$ and because $p^*$ is an isomorphism for these indexes we have $p^*(\frac{\susp b^n}{x_3^{n+1}})=\frac{\theta_3}{y_3^{n-1}}x_2^{n}$. 

For the fourth statement notice that $H^{n+n\epsilon-(n+1)\sigma\otimes\epsilon}_{Br,K}(pt)\simeq \Z/2\frac{\theta_3}{y_3^{n-1}}y_2x_2^{n-1}\oplus\Z/2\frac{\theta_3}{y_3^{n-2}x_3}x^{n}_2$ and the map $x_1:H^{n+n\epsilon+(-n-1)\sigma\otimes\epsilon}_{Br,K}(pt)\rightarrow H^{n+\sigma+n\epsilon-(n+1)\sigma\otimes\epsilon}_{Br,K}(pt)$ is an isomorphism from Lemma \ref{x1case2v2}. Also notice that
$$0=x_2^{n-1}k_3x_3\frac{\theta_3}{y_3^{n-1}}=x_1\frac{\theta_3}{y_3^{n-1}}x_2^{n-1}y_2+y_1x_2^n\frac{\theta_3}{y_3^{n-1}}$$
therefore  $x_1\frac{\theta_3}{y_3^{n-1}}x_2^{n-1}y_2=y_1x_2^n\frac{\theta_3}{y_3^{n-1}}$. But on the other side, from the above,
$$x_1p^*(\frac{\susp cb^{n-1}}{x_3^{n+1}})=p^*(x_1\frac{\susp cb^{n-1}}{x_3^{n+1}})=p^*(y_1\frac{\susp b^n}{x_3^{n+1}})=y_1\frac{\theta_3}{y_3^{n-1}}x_2^n.$$
The conclusion follows because the multiplication by $x_1$ is an isomorphism.
\end{proof}
Directly from Lemma \ref{xid} we have the following corollary:
\begin{corollary}\label{ist}
	\begin{align*}
		y_1 \cp \fc{\ka_1}{x_1^{n_1}}\fc{\ta_3}{x_3^{n_3}}x_2^{n_2}y_2^{m_2} =	y_3 \cp \fc{\ka_1}{x_1^{n_1}}\fc{\ta_3}{x_3^{n_3}}x_2^{n_2}y_2^{m_2} = 0.
	\end{align*}
\end{corollary}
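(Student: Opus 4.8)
The plan is to read the corollary off the second displayed identity of Lemma~\ref{xid}, which identifies
$$\fc{\ka_1}{x_1^{n_1}}\fc{\ta_3}{x_3^{n_3}}x_2^{n_2}y_2^{m_2} = p^*\left(\fc{\susp c}{x_1^{n_1+1}x_3^{n_3+1}}x_2^{n_2}y_2^{m_2}\right)$$
as an element in the image of $p^*\colon \hr^\st_{Br,K}(\et)\to H^\st_{Br,K}(pt)$. Since $p^*$ is induced by a map of $K$-spectra, it is a homomorphism of $H^\st_{Br,K}(pt)$-modules, and since $x_1,x_3$ are units in $\hr^\st_{Br,K}(\et)$ one may move the scalars $y_1,y_3$ through $p^*$: multiplying the identity above by $y_1$ (respectively $y_3$) and pulling out $x_1^{-n_1-1}x_3^{-n_3-1}x_2^{n_2}y_2^{m_2}$, the whole statement reduces to
$$y_1\cp\susp c = 0 \txa y_3\cp\susp c = 0 \tx{ in } \hr^\st_{Br,K}(\et).$$

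The relation $y_1\susp c = 0$ is the one already used in the proof of Lemma~\ref{xid}. For $y_3\susp c = 0$ I would use the automorphism $\phi$ of $K = C_2\ti\Si_2$ that fixes the subgroup $C_2$ and the character $\ep$ while interchanging $\si$ and $\so$ (it exists, being the shear fixing $\ep$ in the dual of $\F_2^2$). Since $\phi$ fixes $C_2$ it preserves $\ek = \colim_n S(n\si+n\so)$ and hence $\et$ and $\bc = \ek/C_2$; since it fixes $\ep$ it fixes $x_2,y_2$ and the classes $c,b$ of Theorem~\ref{comp1}. Hence $\phi^*$ sends $y_1\mapsto y_3$ and fixes $\susp c$ up to the only possible correction term, namely a multiple of $x_2\cp\susp 1$ in the ($2$-dimensional, by Theorems~\ref{et} and~\ref{comp1}) group $\hr^{1+\ep}_{Br,K}(\et)$. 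Applying $\phi^*$ to $y_1\susp c = 0$ and using that $y_3\cp\susp 1 = 0$ because $\hr^{\so}_{Br,K}(\et) = 0$ (again a degree count from Theorems~\ref{et} and~\ref{comp1}), we obtain $y_3\susp c = 0$, whence $y_1\cp A = p^*(0) = 0$ and $y_3\cp A = p^*(0) = 0$ for $A$ the element in the statement.

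The scalar bookkeeping through $p^*$ is routine; the one genuinely delicate point is $y_3\susp c = 0$, i.e. making the symmetry argument precise. This means keeping track of the degree shift in the identification $\hr^{\st+1}_{Br,K}(\et)\cong\hr^\st_{Br,\Si_2}(\bc)$ of Theorem~\ref{et}, confirming that the two small groups $\hr^{\so}_{Br,K}(\et)$ and $\hr^{1+\ep}_{Br,K}(\et)$ have the stated dimensions, and checking that the correction term for $\phi^*(\susp c)$ is annihilated by $y_3$. Everything else is formal once Lemma~\ref{xid} is in hand.
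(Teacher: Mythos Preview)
Your approach is correct and is precisely how the paper intends the corollary to follow from Lemma~\ref{xid}: identify the element as $p^*$ of something proportional to $\Sigma c$, use that $p^*$ is a module map, and reduce to $y_1\Sigma c=0$ and $y_3\Sigma c=0$ in $\tilde H^\st_{Br,K}(\et)$.

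Your treatment of the $y_3$ case is more elaborate than necessary, however. You do not need the automorphism $\phi$ or any correction-term bookkeeping. By Theorem~\ref{isomt} (which is symmetric in $x_1$ and $x_3$), the element $y_3\Sigma c$ lies in
\[
\tilde H^{\ep+\so}_{Br,K}(\et)\;\cong\; x_3\cdot \tilde H^{-1+\ep}_{Br,\Si_2}(B_{\Sigma_2}C_2)=0,
\]
by exactly the same degree check that gives $y_1\Sigma c=0$ (just replace $x_1$ by $x_3$ and $\si$ by $\so$). Incidentally, your claim that $\tilde H^{1+\ep}_{Br,K}(\et)$ is $2$-dimensional is off: in reduced cohomology only $\Sigma c$ survives (the would-be term ``$x_2\cdot\Sigma 1$'' vanishes since $\tilde H^{0}_{Br,\Si_2}(B_{\Sigma_2}C_2)=0$), so the group is $1$-dimensional and $\phi^*(\Sigma c)=\Sigma c$ with no correction. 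This error is harmless for your argument, but the direct degree check avoids it entirely.
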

The following proposition gives a formula for a non-trivial product between the motivic cohomology class $y_1$ and a motivic cohomology class from the Mixed Cone of Type II. The above corollary gives the case $m_3=0$.
\begin{proposition}\label{y1case}
	Suppose that $m_3,n_1 \geq 1,$ and $M:=n_2+m_2\geq n_3+m_3.$  Then we have that
	\begin{equation*}
		\resizebox{\textwidth}{!}{%
			$\begin{aligned}
				(1) \quad y_1 \cp	\fc{\ka_1}{x_1^{n_1}}\fc{\ta_3}{x_3^{n_3}y_3^{m_3}}x_2^{n_2}y_2^{m_2}	= &\sum_{j=n_2}^{m_3-1+n_2}	\fc{\ka_1}{x_1^{n_1-1}} \fc{\ta_3}{x_3^{n_3-n_2+j+1}y_3^{m_3+n_2-j-1}}x_2^{j}y_2^{M-j}\tx{if $m_2 \geq m_3-1\geq 0$ },\\
				(2) \quad y_1 \cp  \fc{\ka_1}{x_1^{n_1}}\fc{\ta_3}{x_3^{n_3}y_3^{m_3}}x_2^{n_2}y_2^{m_2}	= &\sum_{j=n_2-n_3-1}^{n_2-1}\fc{\ka_1}{x_1^{n_1-1}} \fc{\ta_3}{x_3^{n_3-n_2+j+1}y_3^{m_3+n_2-j-1}}x_2^{j}y_2^{M-j}\tx{if $m_2<m_3-1$}.
			\end{aligned}$
		}
	\end{equation*}
\end{proposition}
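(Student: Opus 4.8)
The plan is to prove the two displayed formulas simultaneously by induction on $n_1\ge 1$, exploiting that multiplication by $x_1$ is, by the very construction of the division classes in Proposition \ref{propcase1v2}, the operation $x_1\cdot\frac{\kappa_1}{x_1^{k}}\frac{\theta_3}{x_3^{n_3}y_3^{m_3}}x_2^{n_2}y_2^{m_2}=\frac{\kappa_1}{x_1^{k-1}}\frac{\theta_3}{x_3^{n_3}y_3^{m_3}}x_2^{n_2}y_2^{m_2}$. Applied term by term to the right-hand side for the parameter $n_1$, it yields exactly the right-hand side for $n_1-1$ (only the power of $x_1$ in the denominator drops, nothing else changes), and it sends $y_1\cdot\frac{\kappa_1}{x_1^{n_1}}\frac{\theta_3}{x_3^{n_3}y_3^{m_3}}x_2^{n_2}y_2^{m_2}$ to $y_1\cdot\frac{\kappa_1}{x_1^{n_1-1}}\frac{\theta_3}{x_3^{n_3}y_3^{m_3}}x_2^{n_2}y_2^{m_2}$. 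Hence, after multiplying the asserted identity by $x_1$, it becomes the identity for $n_1-1$, true by induction; so the difference $D$ of the two sides of the identity for $n_1$ lies in $\ker(\cdot\,x_1)$ in the four-degree $(m_3-m_2,-n_1,1+n_2+m_2,-1-n_3-m_3)$, which sits at $\sigma$-level $-n_1$ inside the Mixed Cone of Type II with $b+q=n_2+m_2-n_3-m_3\ge 0$.

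For the inductive step $n_1\ge 2$ I would argue as follows. By Proposition \ref{propcase1v2} (its proof), $\ker(\cdot\,x_1)$ in this degree is at most one-dimensional, spanned by a single $\kappa_3$-chain $z$, which is characterised there as the unique basis element \emph{not} annihilated by $y_1^{\,n_1-1}y_3^{\,n_3+m_3}$ (the relevant exponents being $M_1=-p=n_1-1$ and $M_3=-q-1=n_3+m_3$). On the other hand $D$ \emph{is} annihilated by $y_1^{\,n_1-1}y_3^{\,n_3+m_3}$: each summand $\frac{\kappa_1}{x_1^{n_1-1}}\frac{\theta_3}{x_3^{a}y_3^{b}}x_2^{c}y_2^{d}$ on the right is, by construction, the chosen preimage killed by $y_1^{\,n_1-1}y_3^{\,a+b}$, with $a+b=n_3+m_3$; and $y_1\cdot\frac{\kappa_1}{x_1^{n_1}}\frac{\theta_3}{x_3^{n_3}y_3^{m_3}}x_2^{n_2}y_2^{m_2}$ is killed by $y_1^{\,n_1-1}y_3^{\,n_3+m_3}$ because $\frac{\kappa_1}{x_1^{n_1}}\frac{\theta_3}{x_3^{n_3}y_3^{m_3}}x_2^{n_2}y_2^{m_2}$ is itself killed by $y_1^{\,n_1}y_3^{\,n_3+m_3}$. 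Hence $D=0$ and the inductive step is complete.

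The base case $n_1=1$ is where the actual computation happens. Multiplying both sides once more by $x_1$ and using the positive-cone relations $\kappa_1 y_1=y_2y_3$ and $\kappa_1 x_1=x_2y_3+x_3y_2$ together with $x_3\theta_3=y_3\theta_3=0$, the left side becomes $y_1\kappa_1\frac{\theta_3}{x_3^{n_3}y_3^{m_3}}x_2^{n_2}y_2^{m_2}=y_2y_3\frac{\theta_3}{x_3^{n_3}y_3^{m_3}}x_2^{n_2}y_2^{m_2}=x_2^{n_2}y_2^{m_2+1}\frac{\theta_3}{x_3^{n_3}y_3^{m_3-1}}$, while each right-hand summand $\kappa_1\frac{\theta_3}{x_3^{a_j}y_3^{b_j}}x_2^{c_j}y_2^{d_j}$ splits into two $\theta_3$-chains and the resulting sum telescopes: consecutive summands share one $\theta_3$-chain which cancels over $\Z/2$, one boundary term dies because it carries a factor $\theta_3y_3$ or $\theta_3x_3$, and the only survivor is $x_2^{n_2}y_2^{m_2+1}\frac{\theta_3}{x_3^{n_3}y_3^{m_3-1}}$ — the two displayed ranges for $j$ being exactly the two ways this telescope truncates, according to whether $m_2\ge m_3-1$ or $m_2<m_3-1$. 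Thus $x_1D=0$, and it remains to exclude the ambiguity in $\ker(\cdot\,x_1)$ at $\sigma$-level $-1$: using the long exact sequence of $C_{2+}\to pt_+\to S^{\sigma}$ and Theorem \ref{middle} for $H^{\st}_{Br,K}(C_2)$ (in particular that the $\theta$-classes restrict to zero on $C_2$), one finds this kernel is zero in case (2) outright, and is at most $\Z/2$ in case (1) with its generator detected by $y_3^{\,n_3+m_3}$ via a nonzero $\iota_2$-multiple (Lemma \ref{iononzero}), whereas $D$ is killed by $y_3^{\,n_3+m_3}$ by the same mechanism as in the inductive step (all $a_j\ge 1$ in the case-(1) range). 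Hence $D=0$.

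The hard part is this base case: performing the telescoping cancellation and tracking which boundary term survives in each of the two cases, and pinning down $\ker(\cdot\,x_1)$ at $\sigma$-level $-1$ sharply enough to rule out the one-dimensional ambiguity. Everything else reduces to formal bookkeeping with the defining property of the division classes and the relations $\kappa_1 x_1=x_2y_3+x_3y_2$, $\kappa_1 y_1=y_2y_3$, $x_3\theta_3=y_3\theta_3=0$.
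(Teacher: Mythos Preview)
Your proposal is correct and follows essentially the same argument as the paper: induction on $n_1$, with the inductive step ($n_1\ge 2$) handled by multiplying by $x_1$ to reduce to $n_1-1$ and then killing the one-dimensional ambiguity in $\ker(\cdot\,x_1)$ via the detecting monomial $y_1^{n_1-1}y_3^{n_3+m_3}$ (exactly the $y_1^{M_1}y_3^{M_3}$ from Proposition~\ref{propcase1v2}), and the base case $n_1=1$ handled by the telescoping identity after multiplying by $x_1$, together with the observation that the kernel generator $\gamma=\kappa_3^{n_3+m_3+1}\frac{\theta_1}{y_1^{n_3+m_3}}x_2^{n_2-n_3-1}y_2^{m_2-m_3+1}$ fails to exist in case~(2) (since $m_2-m_3+1<0$) and is detected by $y_3^{n_3+m_3}$ in case~(1). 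The only cosmetic difference is that the paper identifies $\gamma$ explicitly rather than invoking the long exact sequence and Theorem~\ref{middle}, but this is the same content.
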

\begin{proof} We proceed by induction on $n_1.$ When $n_1=1,$ we have
	\begin{align*}
		x_1 \cp y_1 \cp	\fc{\ka_1}{x_1}\fc{\ta_3}{x_3^{n_3}y_3^{m_3}}x_2^{n_2}y_2^{m_2} = y_2y_3\fc{\ta_3}{x_3^{n_3}y_3^{m_3}}x_2^{n_2}y_2^{m_2} = \fc{\ta_3}{x_3^{n_3}y_3^{m_3-1}}x_2^{n_2}y_2^{m_2+1}.
	\end{align*}
	If $m_2 \geq m_3-1\geq 0$, then 
	\begin{equation*}
				x_1\sum_{j=n_2}^{m_3-1+n_2}{\ka_1} \fc{\ta_3}{x_3^{n_3-n_2+j+1}y_3^{m_3+n_2-j-1}}x_2^{j}y_2^{M-j}=\fc{\ta_3}{x_3^{n_3}y_3^{m_3-1}}x_2^{n_2}y_2^{m_2+1},
	\end{equation*}
	so 
	\begin{align*}
		\ub{y_1 \cp	\fc{\ka_1}{x_1}\fc{\ta_3}{x_3^{n_3}y_3^{m_3}}x_2^{n_2}y_2^{m_2}}_{=:\al} +\ub{\sum_{j=n_2}^{m_3-1+n_2}{\ka_1} \fc{\ta_3}{x_3^{n_3-n_2+j+1}y_3^{m_3+n_2-j-1}}x_2^{j}y_2^{M-j}}_{=:\be}\in \ker \varphi.
	\end{align*}
	Here $\varphi$ is multiplication by $x_1$. If $\ker\varphi = 0$ in this degree, we are done. If $\ker\varphi\neq 0$ in this degree, then the product
	\begin{align*}
		\gamma := \ka_3^{m_3+n_3+1}\fc{\ta_1}{y_1^{m_3+n_3}}x_2^{n_2-n_3-1}y_2^{m_2-m_3+1}
	\end{align*}
	can be identified with the generator of $\ker \varphi$ by degree reasons.
	Suppose for contradiction that $\al + \be = \gamma.$ Then we have that $y_3^{n_3+m_3} \cp \gamma \neq 0$ and $y_3^{n_3+m_3} \cp \be = 0,$ so $y_3^{n_3+m_3} \cp \al \neq 0.$ This implies that $y_3^{n_3+m_3} \cp y_1 \cp \fc{\ka_1}{x_1}\fc{\ta_3}{x_3^{n_3}y_3^{m_3}}x_2^{n_2}y_2^{m_2} \neq 0.$ But this contradicts the definition of $\fc{\ka_1}{x_1}\fc{\ta_3}{x_3^{n_3}y_3^{m_3}}x_2^{n_2}y_2^{m_2},$ which is the element defined such that the multiplication by $y_3^{n_3+m_3}y_1$ is zero.
		
	If $m_2<m_3-1$, then
	\begin{equation*}
		x_1 \cp \sum_{j=n_2-n_3-1}^{n_2-1}\ka_1\fc{\ta_3}{x_3^{n_3-n_2+j+1}y_3^{m_3+n_2-j-1}}x_2^{j}y_2^{M-j}=\fc{\ta_3}{x_3^{n_3}y_3^{m_3-1}}x_2^{n_2}y_2^{m_2+1},
	\end{equation*}
	so 
	\begin{align*}
		\ub{y_1 \cp	\fc{\ka_1}{x_1}\fc{\ta_3}{x_3^{n_3}y_3^{m_3}}x_2^{n_2}y_2^{m_2}}_{=:\al} +\ub{\sum_{j=n_2-n_3-1}^{n_2-1} {\ka_1} \fc{\ta_3}{x_3^{n_3-n_2+j+1}y_3^{m_3+n_2-j-1}}x_2^{j}y_2^{M-j}}_{=:\be}\in \ker \varphi.
	\end{align*}
	The condition $m_2<m_3-1$ implies that $\ker\varphi = 0$ in this degree from Proposition \ref{kerphi}, so we are done. 
	
	Now suppose that the result holds for $1\leq n_1 \leq N_1.$
	
	If $m_2 \geq m_3-1\geq 0$, then 
	\begin{align*}
		\ub{y_1 \cp	\fc{\ka_1}{x_1^{N_1+1}}\fc{\ta_3}{x_3^{n_3}y_3^{m_3}}y_2^{m_2}x_2^{n_2}}_{=:\al}+ &\ub{\sum_{j=n_2}^{m_3-1+n_2}\fc{\ka_1}{x_1^{N_1}} \fc{\ta_3}{x_3^{n_3-n_2+j+1}y_3^{m_3+n_2-j-1}}y_2^{M-j}x_2^{j}}_{=:\be}\in \ker\varphi 
	\end{align*}
	by the inductive hypothesis. If $\ker \varphi = 0$ in this degree, then we are done. Otherwise let
	\begin{align*}
		\gamma := \ka_3^{m_3+n_3+1}\fc{\ta_1}{y_1^{m_3+n_3+N_1}}x_2^{n_2-n_3-1-N_1}y_2^{m_2-m_3+1+N_1}
	\end{align*}
	be the generator of $\ker \varphi$. Suppose for contradiction that $\al+\be = \gamma.$ By definition, each of the summands of $\be$ are zero under multiplication with $y_1^{N_1}y_3^{n_3+m_3},$ so we must have that $y_1^{N_1}y_3^{n_3+m_3} \cp \al \neq 0$ which implies  $y_1^{N_1+1}y_3^{n_3+m_3} \cp \fc{\ka_1}{x_1^{N_1+1}}\fc{\ta_3}{x_3^{n_3}y_3^{m_3}}y_2^{m_2}x_2^{n_2} \neq 0.$ But this contradicts the definition of $\fc{\ka_1}{x_1^{N_1+1}}\fc{\ta_3}{x_3^{n_3}y_3^{m_3}}x_2^{n_2}y_2^{m_2}.$
		
	The case  $m_2<m_3-1$ follows similarly using the induction step.
\end{proof}
The following proposition gives a non-trivial product between the motivic cohomology class $y_3$ and a motivic cohomology class from the Mixed Cone of Type II.
\begin{proposition}
	\label{y3case}
	Let $m_3,n_1\geq 1.$ Suppose that $m_2+n_2 \geq m_3+n_3.$
	We have that
	\begin{equation*}
		\resizebox{\textwidth}{!}{%
			$\begin{aligned}
				&(1) \quad y_3\cp  \fc{\ka_1}{x_1^{n_1}}\fc{\ta_3}{x_3^{n_3}y_3^{m_3}}x_2^{n_2}y_2^{m_2} = \fc{\ka_1}{x_1^{n_1}}\fc{\ta_3}{x_3^{n_3}y_3^{m_3-1}}x_2^{n_2}y_2^{m_2}+\\
				&+{n_3+m_3-m_2-2 \choose n_3}\ka_3^{n_3+m_3}\fc{\ta_1}{x_1^{m_2-m_3+n_1+1}y_1^{2m_3+n_3-m_2-2}}x_2^{n_2+m_2-n_3-m_3+1} \tx{if $2\leq m_3-m_2\leq n_1+1$ },\\
				&(2) \quad y_3\cp  \fc{\ka_1}{x_1^{n_1}}\fc{\ta_3}{x_3^{n_3}y_3^{m_3}}x_2^{n_2}y_2^{m_2} = \fc{\ka_1}{x_1^{n_1}}\fc{\ta_3}{x_3^{n_3}y_3^{m_3-1}}x_2^{n_2}y_2^{m_2}  \tx{otherwise}.
			\end{aligned}$
		}
	\end{equation*}
\end{proposition}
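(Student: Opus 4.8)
The plan is to run exactly the strategy used for Proposition \ref{y1case}, namely induction on $n_1$, with the single step of multiplying the asserted identity by $x_1$. For the base case $n_1=1$ one computes, using $\ka_1y_3=?$ — more precisely using that $x_1\cp\fc{\ka_1}{x_1}=\ka_1$ and $y_3\fc{\ta_3}{x_3^{n_3}y_3^{m_3}}=\fc{\ta_3}{x_3^{n_3}y_3^{m_3-1}}$ —
\begin{align*}
	x_1\cp\Big(y_3\cp\fc{\ka_1}{x_1}\fc{\ta_3}{x_3^{n_3}y_3^{m_3}}x_2^{n_2}y_2^{m_2}\Big)=\ka_1\fc{\ta_3}{x_3^{n_3}y_3^{m_3-1}}x_2^{n_2}y_2^{m_2},
\end{align*}
a choice‑free boundary class in the $p=-1$ part of the Mixed Cone of Type II. On the right side, multiplication by $x_1$ turns the first summand into the same class, while it annihilates the correction term: in the range $2\le m_3-m_2\le n_1+1$ the power of $x_1$ in its denominator is $m_2-m_3+n_1+1\le n_1-1$, so for $n_1=1$ this denominator power is $0$ (or the term is absent), and $x_1\ta_1=0$ in $H^{*+*\si}_{\Br}(pt)$. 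Hence the two sides of the claimed identity differ, for every $n_1$, by an element of $\ker(\cp x_1)$ in the $4$‑degree $(1+m_3-m_2,\,-1-n_1,\,1+n_2+m_2,\,-1-n_3-m_3)$; the inductive step is identical, invoking the hypothesis for $n_1-1$ (under $\cp x_1$ the correction term and its binomial coefficient are unchanged).

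Next I would identify $\ker(\cp x_1)$ in that degree. By the dimension count underlying Proposition \ref{propcase1v2} (equivalently, reading off the relevant case of Proposition \ref{caseMC2} as in the proof of Theorem \ref{kconj}), this kernel is zero outside $2\le m_3-m_2\le n_1+1$, which gives case (2) at once, and inside that range it is one–dimensional, spanned by the explicit $\ka_3$‑chain
\begin{align*}
	z=\ka_3^{n_3+m_3}\fc{\ta_1}{x_1^{m_2-m_3+n_1+1}y_1^{2m_3+n_3-m_2-2}}x_2^{n_2+m_2-n_3-m_3+1}.
\end{align*}
It then remains to decide, inside the range, whether the difference equals $0$ or $z$, and to pin down the coefficient $\binom{n_3+m_3-m_2-2}{n_3}$. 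Here I would multiply the difference by a carefully chosen monomial $y_1^{a}y_3^{b}$: both $\fc{\ka_1}{x_1^{n_1}}\fc{\ta_3}{x_3^{n_3}y_3^{m_3-1}}x_2^{n_2}y_2^{m_2}$ and $y_3\cp\fc{\ka_1}{x_1^{n_1}}\fc{\ta_3}{x_3^{n_3}y_3^{m_3}}x_2^{n_2}y_2^{m_2}$ vanish under $y_1^{n_1}y_3^{n_3+m_3-1}$ by the defining property of the chosen preimages (Proposition \ref{propcase1v2}: their product with $y_1^{n_1}y_3^{n_3+m_3}$ is zero), so one must use a strictly smaller power of $y_1$ or $y_3$; after reducing $z$ by $\ka_3^{k}y_3^{k}=(y_1y_2)^{k}$ and $\ka_3y_1=y_2y_3$ one lands on a multiple of $\ka_3\ta_1 x_2^{n}y_2^{m}=\io_2 x_2^{n}y_2^{m}$, which is nonzero by Lemma \ref{iononzero}, and on the left one feeds in the already‑established values of the $y_1$‑products from Proposition \ref{y1case}. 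Comparing the two expansions determines both the exact endpoints $2\le m_3-m_2\le n_1+1$ and the parity recorded by the binomial coefficient.

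The hard part is precisely this last step: it is a genuinely combinatorial bookkeeping problem, because the most natural detection monomial kills every term in sight, so one is forced to work with a less aggressive monomial and then to count the several chain–product contributions that arise — this is where $\binom{n_3+m_3-m_2-2}{n_3}$ enters, via a Pascal/Vandermonde identity that one checks is compatible with the $n_1$‑induction. Everything else — the base case, the reduction by $\cp x_1$, the identification of $\ker(\cp x_1)$, and the final assembly — is routine manipulation with the relations $\ka_iy_i=y_jy_k$, $\ka_ix_i=x_jy_k+x_ky_j$, $x_1\ta_1=0$, Lemma \ref{iononzero}, Corollary \ref{ist} (which is the $m_3=0$ case), and the structural results of Section 3.
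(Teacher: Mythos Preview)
Your overall strategy --- induction on $n_1$, reduce by multiplying with $x_1$, land in $\ker(\cdot x_1)$, then detect with a monomial in $y_1,y_3$ using Proposition~\ref{y1case} --- is exactly the paper's approach. But two specific claims in your outline are wrong and would derail the execution.

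First, your element $z$ is \emph{not} the generator of $\ker(\cdot x_1)$. The correction term in the statement has $x_1^{m_2-m_3+n_1+1}$ in the denominator of $\ta_1$; when $m_3-m_2<n_1+1$ this exponent is strictly positive, and multiplying by $x_1$ then produces a nonzero basis element of Proposition~\ref{propcase1v2}, not zero. The actual kernel generator (coming from the proof of Proposition~\ref{propcase1v2}) is
\[
\gamma=\ka_3^{n_3+m_3}\,\fc{\ta_1}{y_1^{\,n_1+n_3+m_3-1}}\,x_2^{\,n_2-n_3-n_1}\,y_2^{\,m_2-m_3+n_1+1},
\]
with no $x_1$ in the denominator. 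Your $z$ and $\gamma$ agree only at the boundary $m_3-m_2=n_1+1$; inside the range they are distinct basis elements. So after you show $\mathrm{LHS}-\mathrm{RHS}\in\ker(\cdot x_1)$ via the inductive hypothesis, the dichotomy is ``$0$ or $\gamma$'', and you must prove it is $0$; the binomial coefficient is \emph{not} determined at this step but is inherited unchanged from $n_1-1$ exactly as you said --- it is pinned down only at the new boundary value $m_3-m_2=n_1+1$, where the inductive hypothesis gives case~(2) and the difference is then shown to equal $\gamma=z$.

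Second, your claim that $\ker(\cdot x_1)=0$ outside $2\le m_3-m_2\le n_1+1$ is false on the lower end: from Lemma~\ref{kerphi} the kernel is nonzero precisely when $m_3-m_2\le n_1+1$ and $n_2\ge n_1+n_3$, which includes plenty of case~(2) degrees with $m_3-m_2\le 1$. There one still has to check that the difference vanishes, and the paper does this by the detection monomial $y_1^{\,n_1}y_3^{\,n_3+m_3-1}$: it kills $\be$ (the first RHS term) by the defining choice, kills $\be_2$ (the correction term, when present) because its $y_1$-denominator exponent is smaller than $n_1$ in that range, and kills $\al$ by feeding Proposition~\ref{y1case} back in. So case~(2) is not ``at once''; it needs the same detection argument, just with a cleaner outcome. (Minor: the $4$-degree you wrote is that of the element \emph{before} multiplying by $y_3$; the correct target degree is $(m_3-m_2,\,-1-n_1,\,1+n_2+m_2,\,-n_3-m_3)$.)
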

\begin{proof}
	We proceed by induction over $n_1$. Let $n_1=1.$ Then
	\begin{align*}
		\ub{y_3\cp  \fc{\ka_1}{x_1}\fc{\ta_3}{x_3^{n_3}y_3^{m_3}}x_2^{n_2}y_2^{m_2}}_{=:\al} + \ub{\fc{\ka_1}{x_1}\fc{\ta_3}{x_3^{n_3}y_3^{m_3-1}}x_2^{n_2}y_2^{m_2}}_{=:\be} \in \ker\varphi.
	\end{align*}
	If $m_3-m_2 > 2$ or $m_3-m_2 < n_3+m_3-n_2-m_2+1,$ then $\ker \varphi = 0$, so we are done. If ${m_3-m_2 \in  [n_3+m_3-n_2-m_2+1,1]},$ then $\ker \varphi = \Z/2,$ so either $\al+\be=0$ or $\al+\be$ is the generator of $\ker\varphi.$ Suppose for contradiction that $\al+\be = \gamma,$ where 
	\begin{align*}
		\gamma := \ka_3^{n_3+m_3}\fc{\ta_1}{y_1^{n_3+m_3}}x_2^{n_2-n_3-1}y_2^{m_2-m_3+2}.
	\end{align*}
	By definition, we have that $y_1y_3^{n_3+m_3-1} \cp \be = 0,$ so $y_1y_3^{n_3+m_3-1} \cp \al \neq 0,$ and this implies $$y_1y_3^{n_3+m_3} \cp \fc{\ka_1}{x_1}\fc{\ta_3}{x_3^{n_3}y_3^{m_3}}x_2^{n_2}y_2^{m_2} \neq 0.$$
	
	But by Proposition \ref{y1case} (1), we have that
	\begin{equation*}
		\resizebox{\textwidth}{!}{%
			$\begin{aligned}
				y_1y_3^{n_3+m_3} \cp \fc{\ka_1}{x_1}\fc{\ta_3}{x_3^{n_3}y_3^{m_3}}x_2^{n_2}y_2^{m_2} = y_3^{n_3+m_3}\sum_{j=n_2}^{m_3-1+n_2}	\ka_1 \fc{\ta_3}{x_3^{n_3-n_2+j+1}y_3^{m_3+n_2-j-1}}x_2^{j}y_2^{m_2+n_2-j} = 0,
			\end{aligned}$
		}
	\end{equation*}
	a contradiction.\\
	If $m_3-m_2 = 2,$ then $\ker \varphi = \Z/2.$ Then suppose for contradiction that
		$$\ub{y_3\cp  \fc{\ka_1}{x_1}\fc{\ta_3}{x_3^{n_3}y_3^{m_2+2}}x_2^{n_2}y_2^{m_2}}_{=:\al} + \ub{\fc{\ka_1}{x_1}\fc{\ta_3}{x_3^{n_3}y_3^{m_2+1}}x_2^{n_2}y_2^{m_2}}_{=:\be} \neq\ub{\ka_3^{n_3+m_2+2}\fc{\ta_1}{y_1^{n_3+m_2+2}}x_2^{n_2-n_3-1}}_{=:\gamma}$$
This implies $\al + \be = 0.$
	By definition, we have that $y_1y_3^{n_3+m_2+1} \cp \be = 0,$ so $$y_1y_3^{n_3+m_2+1} \cp \al = 0=y_1y_3^{n_3+m_2+2} \cp \fc{\ka_1}{x_1}\fc{\ta_3}{x_3^{n_3}y_3^{m_2+2}}x_2^{n_2}y_2^{m_2} .$$
	But by Proposition \ref{y1case} (2), we have that
		$$y_1y_3^{n_3+m_2+2} \cp \fc{\ka_1}{x_1}\fc{\ta_3}{x_3^{n_3}y_3^{m_2+2}}x_2^{n_2}y_2^{m_2}= y_3^{n_3+m_2+2} \sum_{j=n_2-n_3-1}^{n_2-1}	\ka_1 \fc{\ta_3}{x_3^{n_3-n_2+j+1}y_3^{m_2+n_2-j+1}}x_2^{j}y_2^{m_2+n_2-j}=$$$$=\ka_1\theta_3x_2^{n_2-n_3-1}y_2^{m_2+n_3+1} \neq 0,$$
	a contradiction. 
	
	Now suppose that the result holds for all $1\leq n_1 \leq N_1.$ If $m_3-m_2\geq N_1+2>N_1+1,$ then by the inductive hypothesis we have that
	\begin{align*}
		\ub{y_3\cp  \fc{\ka_1}{x_1^{N_1+1}}\fc{\ta_3}{x_3^{n_3}y_3^{m_3}}x_2^{n_2}y_2^{m_2}}_{=:\al} + \ub{\fc{\ka_1}{x_1^{N_1+1}}\fc{\ta_3}{x_3^{n_3}y_3^{m_3-1}}x_2^{n_2}y_2^{m_2}}_{=:\be} \in \ker\varphi.
	\end{align*}
	If $m_3-m_2 > N_1+2,$ then $\ker\varphi = 0$, so we are done. If $m_3-m_2 = N_1+2,$ then we have that
	\begin{align*}
		\ker \varphi = \Z/2\left\{\ka_3^{n_3+m_2+N_1+2}\fc{\ta_1}{y_1^{2N_1+m_2+2+n_3}}x_2^{n_2-n_3-N_1-1}\right\}.
	\end{align*}
	By definition, we have that $y_1^{N_1+1}y_3^{n_3+m_3-1} \cp \be = 0,$ so
	\begin{equation*}
		\resizebox{\textwidth}{!}{%
			$\begin{aligned}
				y_1^{N_1+1}y_3^{n_3+m_3-1} \cp (\al+\be)=0 \iff y_1^{N_1+1}y_3^{n_3+m_3-1}\cp \al = 0 \iff y_1^{N_1+1}y_3^{n_3+m_3} \cp \fc{\ka_1}{x_1^{N_1+1}}\fc{\ta_3}{x_3^{n_3}y_3^{m_3}}x_2^{n_2}y_2^{m_2} = 0.
			\end{aligned}$
		}
	\end{equation*}
	Since $m_3-m_2 =N_1+2 > 1,$ by Proposition \ref{y1case} (2) we have that
	\begin{equation*}
		\resizebox{\textwidth}{!}{%
			$\begin{aligned}
				y_1^{N_1+1}y_3^{n_3+m_3} \cp \fc{\ka_1}{x_1^{N_1+1}}\fc{\ta_3}{x_3^{n_3}y_3^{m_3}}x_2^{n_2}y_2^{m_2} = y_3^{n_3+m_3}y_1^{N_1}\sum_{j=n_2-n_3-1}^{n_2-1}\fc{\ka_1}{x_1^{N_1}} \fc{\ta_3}{x_3^{n_3-n_2+j+1}y_3^{m_3+n_2-j-1}}x_2^{j}y_2^{m_2+n_2-j}.
			\end{aligned}$
		}
	\end{equation*}
	
	Then $m_3+n_2-j-1-(m_2+n_2-j) = m_3-m_2-1 = N_1 + 1 > 1,$ since $N_1 \geq 1,$ so we can repeatedly apply Proposition \ref{y1case} (2) to the RHS until we are left with
	\begin{equation*}
		\resizebox{\textwidth}{!}{%
			$\begin{aligned}
				&y_3^{n_3+m_3} \cp \left({n_3+N_1 \choose n_3}\ka_1 \fc{\ta_3}{y_3^{m_3+n_3}}x_2^{n_2-n_3-N_1-1}y_2^{m_2+n_3+N_1+1}+\text{terms $\ka_1\fc{\ta_3}{x_3^{k_i'}y_3^{k_i}}x_2^{k_i''}y_2^{k_i'''}$ where $k_i < m_3+n_3$}\right)\\
				=& {n_3+N_1 \choose n_3}\ka_1\ta_3 x_2^{n_2-n_3-N_1-1}y_2^{m_2+n_3+N_1+1}.
			\end{aligned}$
		}
	\end{equation*}
	We conclude that 
	\begin{align*}
		\al + \be = {n_3+N_1 \choose n_3}\ka_3^{n_3+m_2+N_1+2}\fc{\ta_1}{y_1^{2N_1+m_2+2+n_3}}x_2^{n_2-n_3-N_1-1}.
	\end{align*}
	If $2 \leq m_3-m_2 \leq N_1+1,$ then by the inductive hypothesis we have that
	\begin{align*}
		&\ub{y_3\cp  \fc{\ka_1}{x_1^{N_1+1}}\fc{\ta_3}{x_3^{n_3}y_3^{m_3}}x_2^{n_2}y_2^{m_2}}_{=:\al} + \ub{\fc{\ka_1}{x_1^{N_1+1}}\fc{\ta_3}{x_3^{n_3}y_3^{m_3-1}}x_2^{n_2}y_2^{m_2}}_{=:\be_1}\\
		&+ \ub{{n_3+m_3-m_2-2 \choose n_3}\ka_3^{n_3+m_3}\fc{\ta_1}{x_1^{m_2-m_3+N_1+2}y_1^{2m_3+n_3-m_2-2}}x_2^{n_2+m_2-n_3-m_3+1}}_{=:\be_2} \in \ker\varphi.
	\end{align*}
	If $\ker\varphi = 0$ in this degree, then we are done. Otherwise let
	\begin{align*}
		\gamma := \ka_3^{n_3+m_3}\fc{\ta_1}{y_1^{N_1+m_3+n_3}}x_2^{n_2-n_3-N_1-1}y_2^{m_2-m_3+N_1+2}
	\end{align*}
	be the generator of $\ker \varphi,$ whose multiplication by $y_1^{N_1+1}y_3^{n_3+m_3-1}$ is nonzero. Then
	\begin{align*}
		\al +\be_1+\be_2 = 0 \iff y_1^{N_1+1}y_3^{n_3+m_3-1} \cp (\al +\be_1+\be_2) = 0.
	\end{align*}
	By definition, we have that
	\begin{align*}
		y_1^{N_1+1}y_3^{n_3+m_3-1} \cp \be_1 = 0.
	\end{align*}
	Similarly, we also have by definition that 
	\begin{align*}
		y_1^{N_1+1}y_3^{n_3+m_3-1} \cp \al = y_1^{N_1+1}y_3^{n_3+m_3}\fc{\ka_1}{x_1^{N_1+1}}\fc{\ta_3}{x_3^{n_3}y_3^{m_3}}x_2^{n_2}y_2^{m_2} = 0,
	\end{align*}
	since the condition $m_3-m_2 \leq N_1+1$ implies that if  $$\ka_3^{n_3+m_3}\fc{\ta_1}{y_1^{N_1+m_3+n_3}}x_2^{n_2-n_3-N_1-1}y_2^{m_2-m_3+N_1+2}$$ is a kernel element, then so is $$\ka_3^{n_3+m_3+1}\fc{\ta_1}{y_1^{N_1+m_3+n_3+1}}x_2^{n_2-n_3-N_1-1}y_2^{m_2-m_3+N_1+1}.$$ Finally, we have that ${y_1^{N_1+1}y_3^{n_3+m_3-1} \cp \be_2 = 0,}$ since the condition ${m_3-m_2 \leq N_1+1}$ implies that ${m_3-m_2-1 \leq N_1 < N_1+1.}$\\
	If $m_3-m_2 <  2 \leq N_1+1,$ then by the inductive hypothesis we have that $\al+\be \in \ker\varphi$ as before. If $\ker\varphi = 0$ in this degree, then we are done. Otherwise let $\gamma$
	be the generator of $\ker \varphi$ as before. Then
	\begin{align*}
		\al +\be = 0 \iff y_1^{N_1+1}y_3^{n_3+m_3-1} \cp (\al +\be) = 0.
	\end{align*}
	We have by definition that $y_1^{N_1+1}y_3^{n_3+m_3-1} \cp \be = 0.$ Similarly, we also have by definition that 
	\begin{align*}
		y_1^{N_1+1}y_3^{n_3+m_3-1} \cp \al = y_1^{N_1+1}y_3^{n_3+m_3}\fc{\ka_1}{x_1^{N_1+1}}\fc{\ta_3}{x_3^{n_3}y_3^{m_3}}x_2^{n_2}y_2^{m_2} = 0,
	\end{align*}
	since the condition $m_3-m_2 < 2\leq  N_1+1$ implies that if $$\ka_3^{n_3+m_3}\fc{\ta_1}{y_1^{N_1+m_3+n_3}}x_2^{n_2-n_3-N_1-1}y_2^{m_2-m_3+N_1+2}$$ is a kernel element, then so is $$\ka_3^{n_3+m_3+1}\fc{\ta_1}{y_1^{N_1+m_3+n_3+1}}x_2^{n_2-n_3-N_1-1}y_2^{m_2-m_3+N_1+1}.$$
	
\end{proof}

\begin{proposition}
	\label{x3case2}
	Let $n_1,m_3\geq 1.$ Suppose that $m_2+n_2 \geq m_3.$
	We have that
	\begin{equation*}
		\resizebox{\textwidth}{!}{%
			$\begin{aligned}
				(1) \quad x_3\cp  \fc{\ka_1}{x_1^{n_1}}\fc{\ta_3}{y_3^{m_3}}x_2^{n_2}y_2^{m_2} &= \ka_3^{m_3}\fc{\ta_1}{x_1^{m_2-m_3+n_1}y_1^{2m_3-m_2-1}}x_2^{n_2+m_2-m_3+1} \tx{if $2\leq m_3-m_2\leq n_1$ },\\
				(2) \quad x_3\cp  \fc{\ka_1}{x_1^{n_1}}\fc{\ta_3}{y_3^{m_3}}x_2^{n_2}y_2^{m_2} &=  \ka_3^{m_3}\fc{\ta_1}{x_1^{n_1-1}y_1^{m_3}}x_2^{n_2}y_2^{m_2-m_3+1} \tx{if $m_3-m_2\leq 1$ },\\
				(3) \quad x_3\cp  \fc{\ka_1}{x_1^{n_1}}\fc{\ta_3}{y_3^{m_3}}x_2^{n_2}y_2^{m_2} &= 0  \tx{otherwise}.
			\end{aligned}$
		}
	\end{equation*}
\end{proposition}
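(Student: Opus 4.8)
The plan is to follow the inductive scheme already used for Propositions~\ref{y1case} and~\ref{y3case}: induction on $n_1$, with the three cases of the statement governed by the size of $m_3-m_2$ relative to $n_1$. The observation that drives the induction is that multiplying by $x_1$ sends $\frac{\ka_1}{x_1^{n_1}}$ to $\frac{\ka_1}{x_1^{n_1-1}}$ (and to $\ka_1$ when $n_1=1$), sends $\frac{\ta_1}{x_1^{k}y_1^{\ell}}$ to $\frac{\ta_1}{x_1^{k-1}y_1^{\ell}}$, and annihilates $\ta_1$ (since $x_1\ta_1\in H^{2,-1,0,0}_{Br,K}(pt)=0$ by Proposition~\ref{PC}(c)). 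So if I multiply the claimed identity in degree $(a,p,b,q)$ by $x_1$, the left side becomes $x_3\cp\bigl(x_1\cp\tfrac{\ka_1}{x_1^{n_1}}\tfrac{\ta_3}{y_3^{m_3}}x_2^{n_2}y_2^{m_2}\bigr)=x_3\cp\tfrac{\ka_1}{x_1^{n_1-1}}\tfrac{\ta_3}{y_3^{m_3}}x_2^{n_2}y_2^{m_2}$, which by the inductive hypothesis equals the appropriate right side for $n_1-1$; and the right side of the claimed identity, multiplied by $x_1$, is seen by inspection to reproduce that same expression (in case (1) the boundary $m_3-m_2=n_1$ collapses onto the "otherwise" value $0$ at $n_1-1$, since then $x_1\ta_1=0$). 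Thus $x_1$ times the difference of the two sides lies in $\ker(\cp x_1)$ on $H^{a,p,b,q}_{Br,K}(pt)$; by the long exact sequence of $C_{2+}\to pt_+\to S^\si$ together with the control on this kernel coming from Lemmas~\ref{x1case1} and~\ref{x1case2v2} and Proposition~\ref{caseMC2}, this kernel is at most one-dimensional, generated by a single class of the form $\ka_3^{\bullet}\tfrac{\ta_1}{y_1^{\bullet}}x_2^{\bullet}y_2^{\bullet}$.

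For the base case $n_1=1$ I would compute $x_1\cp\bigl(x_3\cp\tfrac{\ka_1}{x_1}\tfrac{\ta_3}{y_3^{m_3}}x_2^{n_2}y_2^{m_2}\bigr)=x_3\cp\bigl(\ka_1\tfrac{\ta_3}{y_3^{m_3}}x_2^{n_2}y_2^{m_2}\bigr)$ directly, using the ring relations $\ka_1 y_1=y_2y_3$, the $1\leftrightarrow 3$ image $\ka_3x_3=x_1y_2+y_1x_2$ of $\ka_1x_1=x_2y_3+y_2x_3$, and $\ka_1\ta_3=\ka_3\ta_1=\io_2$; this reduces to bookkeeping that has essentially been carried out in Lemma~\ref{xid}, Corollary~\ref{ist} and Proposition~\ref{y3case}, which I would invoke rather than redo. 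In the range $m_3-m_2\ge n_1+1$ (case (3)) I would instead exploit that the output of $x_3$ on such a class is $y_3$-divisible of too high an order to survive: multiplying by $y_3^{m_3}$ yields $x_3\cp\tfrac{\ka_1}{x_1^{n_1}}\ta_3\,x_2^{n_2}y_2^{m_2}$, which vanishes because $x_3\ta_3\in H^{2,0,0,-1}_{Br,K}(pt)=0$, and a counting argument as in Proposition~\ref{y3case} then upgrades this to $x_3\cp\tfrac{\ka_1}{x_1^{n_1}}\tfrac{\ta_3}{y_3^{m_3}}x_2^{n_2}y_2^{m_2}=0$ in the ambient (in general nonzero) group.

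To close each inductive step I must exclude the possibility that the difference of the two sides equals the nonzero generator $z$ of $\ker(\cp x_1)$. As in the nearby proofs I would multiply that difference by the monomial $y_1^{n_1}y_3^{m_3}$ (resp.\ $y_1^{n_1}y_3^{m_3-1}$, matching the degree of $z$): by Lemmas~\ref{x1case1}--\ref{x1case2v2} the divided class $\tfrac{\ka_1}{x_1^{n_1}}\tfrac{\ta_3}{x_3^{n_3}y_3^{m_3}}x_2^{n_2}y_2^{m_2}$ was chosen precisely so that $y_1^{n_1}y_3^{n_3+m_3}$ annihilates it, hence $y_1^{n_1}y_3^{m_3}$ kills $x_3$ times it; the right-hand side terms in cases (1) and (2), being $\ka_3$-multiples of $\ta_1$-fractions of the indicated orders, are killed by the same monomial for divisibility reasons; but $z$ is not killed, since after the cancellations $\ka_3^{\bullet}y_3^{\bullet}=y_1^{\bullet}y_2^{\bullet}$ it becomes $\ka_3\ta_1\,x_2^{\bullet}y_2^{\bullet}\neq 0$ by Lemma~\ref{iononzero}. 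Hence the difference cannot be $z$, so it is $0$.

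The main obstacle I anticipate is the case bookkeeping together with the base-case computation. One has to verify that the three ranges of $m_3-m_2$ degrade correctly under $n_1\mapsto n_1-1$ --- in particular that the boundary $m_3-m_2=n_1$ of case (1) drops to case (3) (value $0$) for $n_1-1$ when $n_1\ge2$ and to case (2) when $n_1=1$ --- and that the right-hand sides of cases (1) and (2) really are the $x_1$-preimages annihilated by $y_1^{n_1}y_3^{m_3}$; in case (1) this includes checking that the exponent $m_2-m_3+n_1$ in the denominator is never negative throughout the stated range. One must also confirm that, in contrast with Proposition~\ref{y3case}, no binomial coefficients arise here, which should be automatic since specializing $n_3=0$ collapses the sums of Propositions~\ref{y1case} and~\ref{y3case} to a single term. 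Carrying out the vanishing assertion of case (3) carefully is the other delicate point, since the relevant group $H^{a,p,b,q}_{Br,K}(pt)$ is in general nonzero, so the claim is genuinely about this particular element and not merely about an empty degree.
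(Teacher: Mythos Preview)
Your inductive skeleton is the same as the paper's: reduce to $n_1-1$ by multiplying by $x_1$, observe that the difference lands in the at-most-one-dimensional $\ker(\cdot\,x_1)$, and then rule out (or rule in) the kernel generator $z$ by a $y_1^{?}y_3^{?}$--test. That part is fine, including your bookkeeping that at the boundary $m_3-m_2=n_1$ case~(1) degenerates correctly because $x_1\cdot\tfrac{\ta_1}{y_1^{k}}=0$.

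The genuine gap is in the distinguishing test. The monomial that actually separates the kernel generator from everything else is $y_1^{n_1}y_3^{m_3-1}$ (this is the $y_1^{M_1}y_3^{M_3}$ attached to the new degree $q'=-m_3$ after multiplying by $x_3$). Your argument instead appeals to the defining property of the divided class, which only gives annihilation by $y_1^{n_1}y_3^{m_3}$. But $y_1^{n_1}y_3^{m_3}$ also kills $z$: using $\ka_3 y_3=y_1y_2$ one gets $y_1^{n_1}y_3^{m_3}\cdot z = y_1^{n_1+m_3}\tfrac{\ta_1}{y_1^{n_1+m_3-1}}\cdot(\text{monomial}) = y_1\ta_1\cdot(\cdots)=0$, so this test cannot distinguish $\al-\be$ from $z$. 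Conversely, with the correct monomial $y_1^{n_1}y_3^{m_3-1}$ one must show $y_1^{n_1}y_3^{m_3-1}\cdot\al$ is zero in cases~(2) and~(3) and nonzero exactly at the boundary $m_3-m_2=n_1$ of case~(1); this does \emph{not} follow from the defining annihilation of the divided class and is the heart of the paper's proof, carried out by repeated application of Proposition~\ref{y1case}. Your sentence ``hence $y_1^{n_1}y_3^{m_3}$ kills $x_3$ times it'' therefore proves something true but useless, and the subsequent claim that $z$ is not killed is only valid for $y_1^{n_1}y_3^{m_3-1}$, so the two halves of your test do not mesh.

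A secondary gap is your direct argument for case~(3) via $y_3^{m_3}$. The action of $y_3$ on $\tfrac{\ka_1}{x_1^{n_1}}\tfrac{\ta_3}{y_3^{m_3}}x_2^{n_2}y_2^{m_2}$ is \emph{not} simply to decrement $m_3$; Proposition~\ref{y3case} shows that an extra $\ka_3$--term appears in the range $2\le m_3-m_2\le n_1+1$, so ``multiplying by $y_3^{m_3}$ yields $x_3\cdot\tfrac{\ka_1}{x_1^{n_1}}\ta_3\,x_2^{n_2}y_2^{m_2}$'' is false in general. In the paper case~(3) is handled inside the same induction by noting that when $m_3-m_2>n_1$ the kernel of $\cdot\,x_1$ already vanishes in that degree (Lemma~\ref{kerphi}), so no test is needed.
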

\begin{proof}
	When $n_1 = 1,$ we have that
	\begin{align*}
		\ub{x_3\cp  \fc{\ka_1}{x_1}\fc{\ta_3}{y_3^{m_3}}x_2^{n_2}y_2^{m_2}}_{=:\al} \in \ker \varphi.
	\end{align*}
	If ${m_3-m_2 > 1}$, then $\ker\varphi = 0,$ so we are done. Otherwise, we have that 
	\begin{align*}
		\al=\ga := \ka_3^{m_3}\fc{\ta_1}{y_1^{m_3}}x_2^{n_2}y_2^{m_2-m_3+1}\iff y_1y_3^{m_3-1} \cp \al \neq 0.
	\end{align*}
	Since $m_3-m_2 \leq 1,$ we can apply Proposition $\ref{y1case}$ (1) to obtain
	\begin{align*}
		y_1y_3^{m_3-1} \cp \al  = y_3^{m_3-1}x_3 \sum_{j=n_2}^{m_3-1+n_2}\ka_1 \fc{\ta_3}{x_3^{-n_2+j+1}y_3^{m_3+n_2-j-1}}x_2^{j}y_2^{m_2+n_2-j} \neq 0,
	\end{align*}
	so we are done. For $n_1 \geq 2,$ we will proceed by induction. Let $n_1=2.$  When $m_3-m_2 \leq 1$, we have from the $n_1=1$ case that
	\begin{align*}
		\ub{x_3\cp  \fc{\ka_1}{x_1^2}\fc{\ta_3}{y_3^{m_3}}x_2^{n_2}y_2^{m_2}}_{=:\al} +\ub{\ka_3^{m_3}\fc{\ta_1}{x_1y_1^{m_3}}x_2^{n_2}y_2^{m_2-m_3+1}}_{=:\be} \in \ker \varphi.
	\end{align*}
	
	If $\ker\varphi = 0$ in this degree, then we are done. Otherwise, we have that
	\begin{align*}
		\al+\be = \ga := \ka_3^{m_3}\fc{\ta_1}{y_1^{m_3+1}}x_2^{n_2-1}y_2^{m_2-m_3+2}\iff y_1^2y_3^{m_3-1} \cp \al \neq 0,
	\end{align*}
	since $y_1^2y_3^{m_3-1} \cp \be = 0.$  Now we can apply Proposition \ref{y1case}(1) twice to $\al$ to obtain
	\begin{align*}
		y_1^2y_3^{m_3-1} \cp \al  = y_3^{m_3-1} x_3\cp \left(\text{terms $\ka_1\fc{\ta_3}{x_3^{k_i'}y_3^{k_i}}x_2^{k_i''}y_2^{k_i'''}$ where $k_i\leq m_3-2< m_3-1$}\right)=0, 
	\end{align*}
	so we are done. If $m_3-m_2=2,$ then
	we have from the $n_1=1$ case that
	\begin{align*}
		\ub{x_3\cp  \fc{\ka_1}{x_1^2}\fc{\ta_3}{y_3^{m_3}}x_2^{n_2}y_2^{m_2}}_{=:\al} \in \ker \varphi.
	\end{align*}
	Then 
	\begin{align*}
		\al = \ga := \ka_3^{m_3}\fc{\ta_1}{y_1^{m_3+1}}x_2^{n_2-1}\iff y_1^2y_3^{m_3-1} \cp \al \neq 0.
	\end{align*}
	We can apply Proposition \ref{y1case} (2) to obtain
	\begin{align*}
		&y_1^2y_3^{m_3-1} \cp \al  = y_1y_3^{m_3-1}x_3\sum_{j=n_2-1}^{n_2-1}\fc{\ka_1}{x_1} \fc{\ta_3}{x_3^{-n_2+j+1}y_3^{m_3+n_2-j-1}}x_2^{j}y_2^{m_2+n_2-j}\\
		&= y_1y_3^{m_3-1}x_3\fc{\ka_1}{x_1} \fc{\ta_3}{y_3^{m_3}}x_2^{n_2-1}y_2^{m_2+1}\\
		&= y_3^{m_3-1}x_3\sum_{j=n_2-1}^{m_3+n_2-2}\ka_1\fc{\ta_3}{x_3^{-n_2+2+j}y_3^{m_3+n_2-2-j}}x_2^jy_2^{m_2+n_2-j}\tx{(Proposition \ref{y1case}(1))}\\
		&= y_3^{m_3-1}x_3 \ka_1\fc{\ta_3}{x_3y_3^{m_3-1}}x_2^{n_2-1}y_2^{m_2+1} = \ka_1\ta_3 x_2^{n_2-1}y_2^{m_2+1} \neq 0,
	\end{align*}
	so we are done. Now suppose that the result holds for $2\leq n_1 \leq N_1.$ If $m_3-m_2 \geq N_1+1,$ then 
	\begin{align*}
		\ub{x_3\cp  \fc{\ka_1}{x_1^{N_1+1}}\fc{\ta_3}{y_3^{m_3}}x_2^{n_2}y_2^{m_2}}_{=:\al} \in \ker \varphi
	\end{align*}
	by the inductive hypothesis. If ${m_3-m_2> N_1+1,}$ then $\ker\varphi=0$ in this degree so we are done. If ${m_3-m_2=N_1+1,}$ then
	\begin{align*}
		\al = \ga := \ka_3^{m_3}\fc{\ta_1}{y_1^{N_1+m_3}}x_2^{n_2-N_1}\iff y_1^{N_1+1}y_3^{m_3-1} \cp \al \neq 0.
	\end{align*}
	Then we can apply Proposition \ref{y1case} (2) $N_1$ times to obtain
	\begin{equation*}
		\resizebox{\textwidth}{!}{%
			$\begin{aligned}
				&	y_1^{N_1+1}y_3^{m_3-1} \cp \al = 	y_1^{N_1+1}y_3^{m_3-1} x_3\cp  \fc{\ka_1}{x_1^{N_1+1}}\fc{\ta_3}{y_3^{m_3}}x_2^{n_2}y_2^{m_2}=y_3^{m_3-1}x_3y_1 \cp\fc{\ka_1}{x_1} \fc{\ta_3}{y_3^{m_3}}x_2^{n_2-N_1}y_2^{m_2+N_1}\\
				&= y_3^{m_3-1}x_3 \cp \sum_{j=n_2-N_1}^{m_3+n_2-N_1-1}{\ka_1} \fc{\ta_3}{x_3^{N_1-n_2+j+1}y_3^{m_3+n_2-N_1-1-j}}x_2^{j}y_2^{m_2+n_2-j}\tx{(Proposition \ref{y1case}(1))}\\
				&= y_3^{m_3-1}x_3 \cp \ka_1\fc{\ta_3}{x_3y_3^{m_3-1}}x_2^{n_2-N_1}y_2^{m_2+N_1} =  \ka_1 \ta_3 x_2^{n_2-N_1}y_2^{m_2+N_1}\neq 0,
			\end{aligned}$
		}
	\end{equation*}
	so we are done. If $2 \leq m_3-m_2 \leq N_1,$ then by the inductive hypothesis we have that
	\begin{align*}
		\ub{x_3\cp  \fc{\ka_1}{x_1^{N_1+1}}\fc{\ta_3}{y_3^{m_3}}x_2^{n_2}y_2^{m_2}}_{=:\al} &+\ub{\ka_3^{m_3}\fc{\ta_1}{x_1^{m_2-m_3+N_1+1}y_1^{2m_3-m_2-1}}x_2^{n_2+m_2-m_3+1}}_{=:\be}\in \ker \varphi.
	\end{align*}
	If $\ker\varphi = 0$ in this degree, then we are done. Otherwise, we have that
	\begin{align*}
		\al+\be= \ga := \ka_3^{m_3}\fc{\ta_1}{y_1^{N_1+m_3}}x_2^{n_2-N_1}y_2^{m_2-m_3+N_1+1}\iff y_1^{N_1+1}y_3^{m_3-1}\cp (\al+\be) \neq 0.
	\end{align*}
	We have that ${y_1^{N_1+1}y_3^{m_3-1} \cp \be = 0,}$ since the condition ${m_3-m_2 \leq N_1}$ implies that\\ ${m_3-m_2 < N_1+1.}$ Then it remains to show that ${y_1^{N_1+1}y_3^{m_3-1}\cp \al = 0.}$ We have that
	\begin{equation*}
		\resizebox{\textwidth}{!}{%
			$\begin{aligned}
				&y_1^{N_1+1}y_3^{m_3-1} \cp \al = y_1^{N_1+1}y_3^{m_3-1} x_3 \cp \fc{\ka_1}{x_1^{N_1+1}}\fc{\ta_3}{y_3^{m_3}}x_2^{n_2}y_2^{m_2}\\
				&=y_1^{N_1-m_3+m_2}y_1^{2}y_1^{m_3-m_2-1}y_3^{m_3-1} x_3 \cp \fc{\ka_1}{x_1^{N_1+1}}\fc{\ta_3}{y_3^{m_3}}x_2^{n_2}y_2^{m_2} \tx{(since $2\leq m_3-m_2\leq N_1$)}\\
				&= y_1^{N_1-m_3+m_2}y_1^{2}y_3^{m_3-1} x_3\cp\left(\text{terms $\fc{\ka_1}{x_1^{N_1-m_3+m_2+2}}\fc{\ta_3}{x_3^{k_i'}y_3^{k_i}}x_2^{k_i''}y_2^{k_i'''}$ where $k_i \leq m_3$}\right) \tx{(Proposition \ref{y1case} (2))}\\
				&= y_1^{N_1-m_3+m_2}y_3^{m_3-1} x_3\cp\left(\text{terms $\fc{\ka_1}{x_1^{N_1-m_3+m_2}}\fc{\ta_3}{x_3^{k_i'}y_3^{k_i}}x_2^{k_i''}y_2^{k_i'''}$ where $k_i \leq m_3-2$}\right) \tx{(Proposition \ref{y1case} (1))}\\
				&= y_3^{m_3-1} x_3\cp\left(\text{terms ${\ka_1}\fc{\ta_3}{x_3^{k_i'}y_3^{k_i}}x_2^{k_i''}y_2^{k_i'''}$ where $k_i \leq m_3-2$}\right) \tx{(Proposition \ref{y1case} (1))}\\
				&= 0.
			\end{aligned}$
		}
	\end{equation*}
	
	If $m_3-m_2 < 2 \leq N_1,$ then by the inductive hypothesis we have that
	\begin{align*}
		\ub{x_3\cp  \fc{\ka_1}{x_1^{N_1+1}}\fc{\ta_3}{y_3^{m_3}}x_2^{n_2}y_2^{m_2}}_{=:\al}+\ub{\ka_3^{m_3}\fc{\ta_1}{x_1^{N_1}y_1^{m_3}}x_2^{n_2}y_2^{m_2-m_3+1}}_{=:\be} \in \ker \varphi.
	\end{align*}
	If $\ker\varphi = 0$ in this degree, then we are done. Otherwise, we have that
	\begin{align*}
		\al = \ga := \ka_3^{m_3}\fc{\ta_1}{y_1^{N_1+m_3}}x_2^{n_2-N_1}y_2^{m_2-m_3+N_1+1}\iff y_1^{N_1+1}y_3^{m_3-1}\cp (\al+\be) \neq 0.
	\end{align*}
	We have that ${y_1^{N_1+1}y_3^{m_3-1}\cp\be = 0}$ so it remains to check $\al.$ We have that
	\begin{align*}
		&y_1^{N_1+1}y_3^{m_3-1} \cp \al = y_1^{N_1+1}y_3^{m_3-1} x_3 \cp \fc{\ka_1}{x_1^{N_1+1}}\fc{\ta_3}{y_3^{m_3}}x_2^{n_2}y_2^{m_2}\\
		&=y_1^{N_1-1}y_1^{2}y_3^{m_3-1} x_3 \cp \fc{\ka_1}{x_1^{N_1+1}}\fc{\ta_3}{y_3^{m_3}}x_2^{n_2}y_2^{m_2} \tx{(since $N_1\geq 2$)}\\
		&= y_3^{m_3-1} x_3\cp\left(\text{terms ${\ka_1}\fc{\ta_3}{x_3^{k_i'}y_3^{k_i}}x_2^{k_i''}y_2^{k_i'''}$ where $k_i \leq m_3-2$}\right) \tx{(Proposition \ref{y1case} (1))}\\
		&= 0,
	\end{align*}
	so we are done.
\end{proof}

	
	

%
\subsection{Poincar\'e Series and Generators}
In this subsection we prove some results used in the main text about the correspondence of Poincar\'e series and generators.
\begin{lemma}
	\label{k1series}
	We have that  $x^q(1+\cd+x^{b-1})(1+\cd+x^{-q-1})$ is the Poincar\'e series for products of the form
	\begin{align*}
		\ka_1\fc{\ta_3}{x_3^{n_3}y_3^{m_3}}x_2^{n_2}y_2^{m_2}
	\end{align*}
	when $b+q \geq 0$ in the Mixed Cone of Type II corresponding to $\io_2.$ More precisely, the number of products in $H^{-a'+p\sigma+b\epsilon+q\sigma\otimes\epsilon}_{Br,K}(pt)$, $b+q\geq 0, p,q\leq -1,b\geq 1$ is given by the coefficient of $x^{a'}$ in the above Poincar\'e series.
\end{lemma}

\begin{proof}
	From the equations
	\begin{align*}
		a' &= m_2 - m_3 - 1\\
		p &= -1\\
		b &= n_2 + m_2 + 1\\
		q &= -n_3-m_3-1
	\end{align*}
	we obtain the equations
	\begin{align*}
		m_2 &= b-n_2-1\\
		m_3 &= b-n_2-2-a'\\
		n_3 &= -q-m_3-1= -(b+q)+n_2+a'+1. 
	\end{align*}
	We then have the equivalences
	\begin{align*}
		m_2 \geq 0 &\iff b-1 \geq n_2\\
		m_3 \geq 0 &\iff n_2 \leq b-2-a'\\
		n_3 \geq 0 &\iff n_2 \geq b+q-a'-1,
	\end{align*}
	from which we see that 
	\begin{align*}
		n_2 \in [0,b-1] \cap [b+q-a'-1, b-2-a'].
	\end{align*}
	Then 
	\begin{align*}
		\# \text{products }_{a'}
		&=  \left|[0,b-1] \cap [b+q-a'-1, b-2-a']\right|\\ &=\min(b-1,b-2-a')-\max(b+q-a'-1,0)+1.
	\end{align*}
	If the last expression is negative, then it should be interpreted as zero. Evaluating this expression, we obtain
	\begin{align*}
				\# \text{products }_{a'} =\begin{cases}
					a'-q+1& \text{if } q\leq a' \leq -1,\\
					-q & \text{if } -1\leq a' \leq b+q-1,\\
				b-1-a' & \text{if } b+q-1\leq a' \leq b-2,\\
				\end{cases} 
			\end{align*}
	and from Lemma \ref{coeff} we see that $\# \text{products }_{a'}$ coincides with the coefficient of $x^{a'}$ in the Poincar\'e series 
	\begin{align*}
		x^q(1+\cd+x^{b-1})(1+\cd+x^{-q-1}).
	\end{align*}
\end{proof}

\begin{lemma}
	\label{k3series}
	We have that $x^p(1+\cd+x^{-p-2})(1+\cd+x^{b+q})$ is the Poincar\'e series for products of the form
	\begin{align*}
		\ka_3^{m_3+1}\cdot \fc{\ta_1}{x_1^{n_1}y_1^{m_3+m_1}}	\cdot x_2^{n_2}y_2^{m_2-m_3},
	\end{align*}
	when $b+q \geq 0$ in the Mixed Cone of Type II corresponding to $\io_2,$ and $0\leq m_3\leq m_2$ and $m_1 \geq 1.$ 
	
	More precisely, the coefficient of $x^{a'}$ in the above Poincar\'e series gives the number of cohomology classes of the above form in $H^{-a'+p\sigma+b\epsilon+q\sigma\otimes\epsilon}_{Br,K}(pt)$, $b+q\geq 0$, $p,q\leq -1$ and $b\geq 1$.
\end{lemma}

\begin{proof}
	From the equations
	\begin{align*}
		a' &= m_2 - m_1-m_3 - 1\\
		p &= -1-n_1-m_1\\
		b &= n_2 + m_2 + 1\\
		q &= -m_3-1
	\end{align*}
	we obtain the equations
	\begin{align*}
		m_1 &= m_2 + q - a'\\
		n_1 &= -1+a'-m_2-q-p\\
		n_2 &= b-1-m_2.
	\end{align*}
	We then have the equivalences
	\begin{align*}
		n_1 \geq 0 &\iff m_2 \leq  -1+a'-q-p\\
		n_2 \geq 0 & \iff m_2 \leq b-1\\
		m_3 \leq m_2 & \iff m_2 \geq -q-1\\
		m_1 \geq 1 & \iff m_2 \geq a'-q+1,
	\end{align*}
	from which we see that 
	\begin{align*}
		m_2 \in [-q-1,b-1] \cap [a'-q+1, a'-q-p-1].
	\end{align*}
	Then 
	\begin{align*}
		\# \text{products }_{a'}
		&=  \left| [-q-1,b-1] \cap [a'-q+1, a'-q-p-1]\right|\\ &=\min(b-1,a'-q-p-1)-\max(-q-1,a'-q+1)+1.
	\end{align*}
	If the last expression is negative, then it should be interpreted as zero. Evaluating this expression, we obtain 
		\begin{align*}
				\# \text{products }_{a'}  =\begin{cases}
					a'-p+1& \text{if } p\leq a' \leq b+q+p,\\
					-q & \text{if } b+q+p\leq a' \leq -2,\\
				b-1-a'+q & \text{if } -2\leq a' \leq b+q-2,\\
				\end{cases} 
			\end{align*}
when we assume $b+q+1\leq -p-1$ (the other case is symmetric) and therefore we see that by Lemma \ref{coeff} that $\# \text{products }_{a'}$  in $H^{-a'+p\sigma+b\epsilon+q\sigma\otimes\epsilon}_{Br,K}(pt)$ agrees with the coefficient of $x^{a'}$ in the Poincar\'e series 
	\begin{align*}
		x^p(1+\cd+x^{-p-2})(1+\cd+x^{b+q})
	\end{align*}
\end{proof}

\begin{lemma}
	\label{kerphi}
	The Poincar\'e series for the dimension of  kernel of $$\varphi_{p-1}:=\cp x_1: \pi_{a'}((\Si^{(p-1)\si+b\ep+q\so}H\ul{\Z/2})^{\mathcal{K}}) \twoheadrightarrow  \pi_{a'}((\Si^{p\si+b\ep+q\so}H\ul{\Z/2})^{\mathcal{K}})$$ is given by
	\begin{align*}
		x^{p-1}(1+\cd+x^{b+q}),
	\end{align*}
	when $b+q \geq 0$ in the Mixed Cone of Type II corresponding to $\io_2.$ 
	\end{lemma}
\begin{proof}
	By Lemma \ref{x1case1}, we have that $\varphi_{p-1}$ is surjective, so by Theorem \ref{caseMC2} (2), the Poincar\'e series for $\ker\varphi_{p-1}$ is given by
	\begin{align*}
		&{x^{p-1}}(1+...+x^{-p-1})(1+...+x^{b+q})+{x^q}(1+...+x^{b-1})(1+...+x^{-q-1})\\
		-&{x^p}(1+...+x^{-p-2})(1+...+x^{b+q})-{x^q}(1+...+x^{b-1})(1+...+x^{-q-1})\\
		&= 		x^{p-1}(1+\cd+x^{b+q}).
	\end{align*}
\end{proof}

\begin{theorem} 
		\label{ioproof}
	The Poincar\'e series for $\dim\ker f_{-a',p,b,q}$ in the Mixed Cone of Type II corresponding to $\io_2$ is given by:\\
	(1) \textbf{Case} $-p,-q\geq b+1$:
	\begin{align*}
		(x-1)^{-2}x^{-b - 1} (x^b - 1) (x^{b + 1} - 1).
	\end{align*}
	The index condition is that of  case (1) in Proposition \ref{caseMC2}.
	
	(2) \textbf{Case} $-q\leq b$:
	\begin{align*}
		(x-1)^{-2}\left( x^p(x^{-p-1} - 1) (x^{b +q+ 1} - 1) + x^q(x^b-1)(x^{-q}-1)\right).
	\end{align*}
	The index condition is that of the case (2) in Proposition \ref{caseMC2}.
\end{theorem}
\begin{proof}
	Let $T_{a,p,b,q}$ be the set of $\ta_1\ta_3$-generators in degree $(a,p,b,q);$ i.e. elements of the form
	\begin{align*}
		\fc{\ta_1\ta_3}{x_1^{n_1}y_1^{m_1}x_3^{n_3}y_3^{m_3}}x_2^{n_2}y_2^{m_2}.
	\end{align*}
	Recall that these elements give  not necessary nonzero cohomology classes  
		\begin{align*}
		\fc{\ta_1}{x_1^{n_1}y_1^{m_1}}\fc{\ta_3}{x_3^{n_3}y_3^{m_3}}x_2^{n_2}y_2^{m_2}.
	\end{align*}
	
	Similarly, let $I_{a,p,b,q}$ be the set of $\io_2$-generators in degree $(a,p,b,q);$ i.e. elements of the form
	\begin{align*}
		\fc{\io_2}{x_1^{n_1}y_1^{m_1}x_3^{n_3}y_3^{m_3}}x_2^{n_2}y_2^{m_2}.
	\end{align*}
	From Theorem \ref{m2cone} the homology of the complex
	\begin{align*}
			0\to \ub{\frac{\Z/2[x_1,y_1,x_2,y_2,x_3,y_3]}{(x_1^\infty,y_1^\infty,x_3^\infty,y_3^\infty)}\{\io_2\}}_{I} \xrightarrow{f} \ub{\frac{\Z/2[x_1,y_1,x_2,y_2,x_3,y_3]}{(x_1^\infty,y_1^\infty,x_3^\infty, y_3^\infty)}\{\theta_1\theta_3\}}_T \to 0.
		\end{align*}
		gives the Mixed Cone of Type II corresponding to $\io_2$. In other words, this  cohomology is $\ker f \op T/fT.$ Write
		\begin{align*}
			f_{a,p,b,q} : I_{a,p,b,q} \to T_{a+1,p,b,q}
		\end{align*}
		for the restriction of $f$ to the graded piece of degree $(a,p,b,q)$, a $\Z/2$ vector space.
	Recall that we denote $a'=-a$. We first compute
		$$\dim T_{a,p,b,q}-\dim I_{a-1,p,b,q}=\dim T_{-a',p,b,q}-\dim I_{-(a'+1),p,b,q}.$$
	For $T_{a,p,b,q},$ we have the equations
	\begin{align*}
		a &= m_1+m_3-m_2+4\\
		p &= -2-n_1-m_1\\
		b &= n_2+m_2\\
		q &= -2-n_3-m_3.
	\end{align*}
	Solving these equations and applying the condition that $n_i,m_i \geq 0,$ we have the inequalities
	\begin{align*}
		m_3 &= -2-n_3-q \geq 0 \implies -2-q \geq n_3,\\
		m_2 &= b-n_2 \geq 0 \implies n_2 \leq b,\\
		m_1 &= a+m_2-m_3-4\\
		&= a+b-n_2+2+n_3+q-4 \geq 0 \implies n_2-n_3 \leq a+b+q-2, \txa\\
		n_1 &= -2-p-a-b+n_2-n_3-q+2 \geq 0 \implies n_2-n_3 \geq a+p+b+q.
	\end{align*}
	Then
	\begin{equation*}
		\resizebox{\textwidth}{!}{%
			$\begin{aligned}
				\dim T_{a,p,b,q} = \left|\left\{(n_2,n_3) \in \Z_{\geq0}^2 : n_3 \leq -q-2 \xx{ and } n_2\leq b \xx{ and } a+p+b+q \leq n_2-n_3 \leq a+b+q-2 \right\}\right|.
			\end{aligned}$
		}
	\end{equation*}
	Similarly, for $I_{a,p,b,q}$ we have the equations
	\begin{align*}
		a &= m_1+m_3-m_2+1\\
		p &= -1-n_1-m_1\\
		b &= n_2+m_2+1\\
		q &= -1-n_3-m_3.
	\end{align*}
	
	Solving these equations and applying the condition that $n_i,m_i \geq 0,$ we have the inequalities
	\begin{align*}
		m_3 &= -1-n_3-q \geq 0 \implies -1-q \geq n_3,\\
		m_2 &= b-n_2 -1\geq 0 \implies n_2 \leq b-1,\\
		m_1 &= a+m_2-m_3-1\\
		&= a+b-n_2-1+1+n_3+q-1 \geq 0 \implies n_2-n_3 \leq a+b+q-1, \txa\\
		n_1 &= -1-p-a-b+n_2-n_3-q+1 \geq 0 \implies n_2-n_3 \geq a+p+b+q.
	\end{align*}
	Then
	\begin{equation*}
		\resizebox{\textwidth}{!}{%
			$\begin{aligned}
				&\dim I_{a,p,b,q} =\left|\left\{(n_2,n_3) \in \Z_{\geq0}^2 : n_3 \leq -q-1 \xx{ and } n_2\leq b-1\xx{ and } a+p+b+q \leq n_2-n_3 \leq a+b+q-1 \right\}\right|.
			\end{aligned}$
		}
	\end{equation*}
	Then we have that $\dim T_{-a',p,b,q}-\dim I_{-(a'+1),p,b,q}$ is given by
	\begin{align*}
		&\sum_{j=0}^{-q-2}\sum_{i=0}^{b}\ic_{[-a'+p+b+q,-a'+b+q-2]}(i-j)-\sum_{j=0}^{-q-1}\sum_{i=0}^{b-1}\ic_{[-a'-1+p+b+q,-a'+b+q-2]}(i-j)\\
		=& \ob{\sum_{j=0}^{-q-2}\sum_{i=0}^{b-1}\ic_{[-a'+p+b+q,-a'+b+q-2]}(i-j)-\ic_{[-a'-1+p+b+q,-a'+b+q-2]}(i-j)}^{L:=}\\
		&+ \ob{\sum_{j=0}^{-q-2}\ic_{[-a'+p+b+q,-a'+b+q-2]}(b-j)}^{M_1:=}+ \ob{\sum_{i=0}^{b-1}-\ic_{[-a'-1+p+b+q,-a'+b+q-2]}(i+q+1)}^{M_2:=}.
	\end{align*}
	Then we have that
	\begin{align*}
		L=& -\left|\left\{(i,j):i-j = -a'-1+p+b+q\right\}\cap\left\{(i,j): i \in [0,b-1], j\in[0,-q-2]\right\}\right|,\\
		M_1 =& \left|[-a'+p+b+q,-a'+b+q-2]\cap[b+q+2,b]\right|, \txa\\
		M_2 =& \left|[-a'-1+p+b+q,-a'+b+q-2]\cap[q+1,b+q]\right|.
	\end{align*}
	Observe that, geometrically, as $a'$ varies, the magnitude of $L$ is the cardinality of the intersection of a moving line with a fixed rectangular integer lattice. As the line varies, we see that the cardinality is described precisely by a Poincar\'e series of the form appearing in Lemma \ref{coeff}. Hence, the Poincar\'e series for $L$ is given by
	\begin{align*}
		-\fc{x^{p+q}}{(x-1)^2} (x^b-1)(x^{-q-1}-1).
	\end{align*}
	Similarly, as $a'$ varies, the magnitude of $M_i$ can be described as the cardinality of the intersection of a moving interval with a fixed interval,
	which is again a Poincar\'e series of the form appearing in Lemma \ref{coeff}. Then the Poincar\'e series for $M_1$ and $M_2$ are given by
	\begin{align*}
		\fc{x^{p+q}}{(x-1)^2} (x^{-p-1}-1)(x^{-q-1}-1)\txa-\fc{x^{p-1}}{(x-1)^2}(x^b-1)(x^{-p}-1).
	\end{align*}
	respectively. Then the Poincar\'e series for $\dim T_{-a',p,b,q}-\dim I_{-(a'+1),p,b,q} = L+M_1+M_2$ is given by
	\begin{align*}
		\tag{T1}
		(x-1)^{-2}\left(x^{b + p + q} - x^{b - 1} - x^{p - 1} - x^{q - 1} + x^{-2} + x^{-1}\right).
	\end{align*}
	We have that
	\begin{align*}
		&\pi_{a'}((\Si^{p\si+b\ep+q\so}H\ul{\Z/2})^{\mathcal{K}}) =\dim H_{Br,K}^{-a'+p\sigma+b\epsilon+q\sigma\otimes\epsilon}(pt)\\
		&= \dim\ker f_{-a',p,b,q} + \dim T_{-a',p,b,q}-\dim\im f_{-a'-1,p,b,q}\\
		&= \dim\ker f_{-a',p,b,q}+ \dim\ker f_{-a'-1,p,b,q} + \dim T_{-a',p,b,q}-\dim I_{-a'-1,p,b,q}.
	\end{align*}
	By Theorem \ref{caseMC2}, the full Poincar{\'e} series is
	\begin{align*}
		(x-1)^{-2}\left(x^{p+b+q}-x^{p-1}-x^{q-1}+x^{-b-2}+x^{-b-1}+x^b-x^{-1}-1\right),
	\end{align*}
	if $-p,-q \geq b+1,$ and
	\begin{align*}
		(x-1)^{-2}\left(-x^{p+q+b+1}+x^p+x^q+x^b-x^{-1}-1\right),
	\end{align*}
	if $b \geq -q.$ Then the Poincar{\'e} series for $\dim\ker f_{-a',p,b,q}  + \dim\ker f_{-a'-1,p,b,q} $ is given by taking the full Poincar{\'e} series and subtracting (T1) which is
	\begin{align*}
		&(x-1)^{-2}\left(x^{b - 1} + x^{-b - 2} + x^{-b - 1} + x^b - x^{-2} - 2x^{-1} - 1\right)\\
		=&(x-1)^{-2}(x + 1) x^{-b - 2} (x^b - 1) (x^{b + 1} - 1),
	\end{align*}
	if $-p,-q \geq b+1,$ and
	\begin{equation*}
		\resizebox{\textwidth}{!}{%
			$\begin{aligned}
				&(x-1)^{-2}\left(-x^{b + p + q} - x^{b + p + q + 1} + x^{b - 1} + x^b + x^{p - 1}+ x^p + x^{q - 1} + x^q - x^{-2} - 2x^{-1} - 1\right)\\
				=&	(x-1)^{-2}(x+1)(-x^{b + p + q} + x^{b - 1} + x^{p - 1} + x^{q - 1} - x^{-2} - x^{-1}),
			\end{aligned}$
		}
	\end{equation*}
	if $b \geq -q.$ This implies that the Poincar{\'e} series for $\dim\ker f_{-a',p,b,q}$ is given by
	\begin{align*}
		(x-1)^{-2}x^{-b - 1} (x^b - 1) (x^{b + 1} - 1),
	\end{align*}
	if $-p,-q \geq b+1,$ and
	\begin{align*}
		&(x-1)^{-2}(-x^{p+q+b+1}+x^p+x^q+x^b-x^{-1}-1)\\
		=&(x-1)^{-2}\left( x^p(x^{-p-1} - 1) (x^{b +q+ 1} - 1) + x^q(x^b-1)(x^{-q}-1)\right) ,
	\end{align*}
	if $b \geq -q.$ Note that in this case, the kernel is the full Poincar{\'e} series, so every $\ta_1\ta_3$-generator is in the image of $f.$
\end{proof}
\begin{proposition}
	\label{k1all}
	The Poincar{\'e} series for the number of cohomology classes of the form 
	\begin{align*}
		\kappa_1^{-p}\frac{\theta_2}{x_2^{n_2}y_2^{m_2}}\frac{\theta_3}{x_3^{n_3}y_3^{m_3}}
	\end{align*}
	in $H_{Br,K}^{-a'+p\sigma+b\epsilon+q\sigma\otimes\epsilon}(pt)$ is given by
	\begin{align*}
		x^{p+b+q}(1+\cdots+x^{-p-b-2})(1+\cdots+x^{-p-q-2}).
	\end{align*}
more precisely by  the coefficient of $x^{a'}$ in the Poincar\'e series. Notice that in the case $p,b,q\leq -1$ then Proposition \ref{gen} says that all these cohomology classes are non-zero.
\end{proposition}

\begin{proof}
	We consider all elements of the form
	
	\begin{align*}
		\kappa_1^{-p}\frac{\theta_2}{x_2^{n_2}y_2^{m_2}}\frac{\theta_3}{x_3^{n_3}y_3^{m_3}}
	\end{align*}
	in $H_{Br,K}^{-a'+p\sigma+b\epsilon+q\sigma\otimes\epsilon}(pt).$ From the equations
	\begin{equation}
	\begin{aligned} \label{eq5}
		a' &= -p -m_2 - m_3 -4 \\
		b &= -p-n_2 -m_2 - 2\\
		q &= -p-n_3 -m_3 - 2
	\end{aligned}
	\end{equation}
	we obtain the equations
	\begin{equation}\label{eq6}
	\begin{aligned}
		m_3 &= -p-q-2-n_3\\
		m_2 &= q-a'-2+n_3\\
		n_2 &= -p-b-q+a'-n_3.
	\end{aligned}
	\end{equation}
	We then have the equivalences
	\begin{align*}
		n_2 \geq 0 & \iff n_3\leq -p-b-q+a'\\
		m_2 \geq 0  & \iff n_3 \geq a'+2-q\\
		m_3 \geq 0 & \iff n_3 \leq -p-q-2,
	\end{align*}
	from which we see that 
	\begin{align*}
		n_3 \in [ a'+2-q,-p-b-q+a'] \cap [0, -p-q-2].
	\end{align*}
	Then 
	\begin{align*}
		t_{a'}:=\# \text{products }
		&=  \left| [ a'+2-q,-p-b-q+a'] \cap [0, -p-q-2]\right|\\ &=\min(-p-b-q+a',-p-q-2)-\max(0,a'+2-q)+1.
	\end{align*}
	If the last expression is negative, then it should be interpreted as zero. Evaluating this expression, we obtain 
	\begin{align*}
				t_{a'}  =\begin{cases}
					a'-p-b-q+1& \text{if } p+b+q\leq a' \leq b-2,\\
					-p-q-1 & \text{if } b-2\leq a' \leq q-2,\\
				-p-1-a' & \text{if } q-2\leq a' \leq -p-4,\\
				\end{cases} 
			\end{align*}
			where we assumed $b \leq q$ without reducing generality because of the symmetry.
	We now see that by Lemma \ref{coeff} that $t_{a'}$ agrees with the coefficient of $x^{a'}$ in the Poincar{\'e} series 
	\begin{align*}
		x^{p+b+q}(1+\cdots+x^{-p-b-2})(1+\cdots+x^{-p-q-2}).
	\end{align*}
\end{proof}
\begin{proposition}
	\label{k1less}
	Let $p\leq -2, q,b\leq -1$. The Poincar{\'e} series for the number of cohomology classes of the form 
	\begin{align*}
		\kappa_1^{-p}\frac{\theta_2}{x_2^{n_2}y_2^{m_2}}\frac{\theta_3}{x_3^{n_3}y_3^{m_3}}
	\end{align*}
	in $H_{Br,K}^{-a'+p\sigma+b\epsilon+q\sigma\otimes\epsilon}(pt)$ with the property that $m_3,m_2 \leq -p-2$ is given by the coefficient $t_{a'}$ of $x^{a'}$	
	\begin{align*}
		x^{p-1}(x+\cdots+x^{-p-1})(1+\cdots+x^{-p-2}).
	\end{align*}
\end{proposition}
\begin{proof}
	We consider elements of the form
	
	\begin{align*}
		\kappa_1^{-p}\frac{\theta_2}{x_2^{n_2}y_2^{m_2}}\frac{\theta_3}{x_3^{n_3}y_3^{m_3}}
	\end{align*}
	in $H_{Br,K}^{a=-a'+p\sigma+b\epsilon+q\sigma\otimes\epsilon}(pt),$ where $m_3,m_2 \leq -p-2.$ From the equations \ref{eq5} and \ref{eq6} we have the equivalences
	\begin{align*}
		n_2 \geq 0 & \iff n_3\leq -p-b-q+a'\\
		m_2 \geq 0  & \iff n_3 \geq a'+2-q\\
		m_3 \geq 0 & \iff n_3 \leq -p-q-2\\
		m_2 \leq -p-2  & \iff n_3 \leq a'-p-q\\
		m_3 \leq -p-2 & \iff n_3 \geq -q,
	\end{align*}
	from which we see that 
	\begin{align*}
		m_2 \in [ a'+2-q,-p-q+a'] \cap [-q, -p-q-2].
	\end{align*}
	Then 
	\begin{align*}
		\# \text{products }
		&=  \left| [ a'+2-q,-p-q+a'] \cap [-q, -p-q-2]\right|\\ &=\min(-p-q+a',-p-q-2)-\max(-q,a'+2-q)+1.
	\end{align*}
	If the last expression is negative, then it should be interpreted as zero. Evaluating this expression, we obtain 
	
	\begin{align*}
				t_{a'}  =\begin{cases}
					a'-p+1& \text{if } p\leq a' \leq -2,\\
					-p-1 & \text{if } -2\leq a' \leq -2,\\
				-p-3-a' & \text{if } -2\leq a' \leq -p-4,\\
				\end{cases} 
			\end{align*}
	and we see that by Lemma \ref{coeff}, it agrees with the coefficient of $x^{a'}$ in the Poincar{\'e} series 
	\begin{align*}
		&x^p(1+\cdots+x^{-p-2})(1+\cdots+x^{-p-2})\\
		=&x^{p-1}(x+\cdots+x^{-p-1})(1+\cdots+x^{-p-2}).
	\end{align*}
\end{proof}
\bibliographystyle{plain}
 \bibliography{road5}
\end{document}